\DeclareSymbolFontAlphabet{\mathbb}{AMSb}
\DeclareSymbolFontAlphabet{\mathbbl}{bbold}
\newcommand{\U}{\mathsf{U}}
\def\Sp{\mathsf{Sp}}
\def\O{\mathsf{O}}
\definecolor{darkred}{rgb}{0.7,0,0} 
\newcommand{\defn}[1]{{\color{darkred}\emph{#1}}} 
\renewcommand{\mid}{:}
\numberwithin{equation}{section}
\theoremstyle{definition}
\newtheorem* {theorem*}{Theorem}
\newtheorem* {corollary*}{Corollary}
\newtheorem* {conjecture*}{Conjecture}
\newtheorem{theorem}{Theorem}[section]
\newtheorem{problem}[theorem]{Problem}
\theoremstyle{definition}
\newtheorem* {example*}{Example}
\newtheorem{lemma}[theorem]{Lemma}
\theoremstyle{definition}
\newtheorem{definition}[theorem]{Definition}
\theoremstyle{definition}
\newtheorem{conjecture}[theorem]{Conjecture}
\newtheorem{proposition}[theorem]{Proposition}
\newtheorem{corollary}[theorem]{Corollary}
\newtheorem{remark}[theorem]{Remark}
\newtheorem*{remark*}{Remark}
\theoremstyle{definition}
\newtheorem {example}[theorem]{Example}
\theoremstyle{definition}
\theoremstyle{definition}
\theoremstyle{definition}
\def\modu{\ (\mathrm{mod}\ }
\def\({\left(}
\def\){\right)}
\newcommand{\CC}{\mathbb{C}}
\newcommand{\QQ}{\mathbb{Q}}
\newcommand{\cK}{\mathcal{K}}
\newcommand{\cR}{\mathcal{R}}
\newcommand{\cC}{\mathcal{C}}
\def\cX{\mathcal{X}}
\def\cY{\mathcal{Y}}
\def\CC{\mathbb{C}}
\def\RR{\mathbb{R}}
\def\ZZ{\mathbb{Z}}
\def\GL{\textsf{GL}}
\def\ch{\operatorname{ch}}
\newcommand{\g}{\mathfrak{g}}
\def\fk{\mathfrak}
\def\barr{\begin{array}}
\def\earr{\end{array}}
\def\ba{\begin{aligned}}
\def\ea{\end{aligned}}
\def\be{\begin{equation}}
\def\ee{\end{equation}}
\def\Cyc{\mathsf{Cyc}}
\def\qquand{\qquad\text{and}\qquad}
\def\quand{\quad\text{and}\quad}
\newcommand{\gl}{\mathfrak{gl}}
\def\hs{\hspace{0.5mm}}
\def\fkS{\fk S}
\def\ben{\begin{enumerate}}
\def\een{\end{enumerate}}
\def\bei{\begin{itemize}}
\def\eei{\end{itemize}}
\def\hs{\hspace{0.5mm}}
\def\fpf{{\textsf{fpf}}}
\def\D{\hat D}
\def\Des{\mathrm{Des}}
\def\Ifpf{I^{\fpf}}
\def\x{\textbf{x}}
\def\e{\textbf{e}}
\newcommand{\Fl}{\textsf{Fl}}
\newcommand{\cA}{\mathcal{A}}
\newcommand{\cB}{\mathcal{B}}
\def\arcstart{\ \xy<0cm,-.15cm>\xymatrix@R=.1cm@C=.3cm }
\newcommand{\arcstartc}[1]{\ \xy<0cm,-.15cm>\xymatrix@R=.1cm@C=#1cm}
\def\q{\mathfrak{q}}
\def\qq{\mathfrak{q}^+}
\def\Inv{\mathsf{Inv}}
\def\Marked{\mathsf{Marked}}
\newcommand{\row}{\operatorname{row}}
\def\simCK{\overset{\textsf{CK}}\sim}
\def\simOCK{\mathbin{\overset{\textsf{O}}\sim}}
\def\simFCK{\mathbin{\overset{\textsf{Sp}}\sim}}
\newcommand{\weight}{\operatorname{wt}}
\def\row{\textsf{row}}
\def\revrow{\textsf{revrow}}
\def\col{\textsf{col}}
\def\path{\mathsf{path}_{\textsf{OEG}}}
\def\whSym
\def\whQSym
\def\D{\textsf{D}}
\def\SD{\textsf{SD}}
\def\fkD{\fk D}
\def\PP{\mathbb{P}}
\def\NN{\mathbb{N}}
\def\EG{\textsf{EG}}
\def\PO{P_{\textsf{EG}}^\O}
\newcommand{\ytab}[1]{
\ytableausetup{boxsize = .4cm,aligntableaux=center}
{\small\begin{ytableau}  #1  \end{ytableau}}
}
\newcommand{\ytabsmall}[1]{
\ytableausetup{boxsize = .35cm,aligntableaux=center}
{\small\begin{ytableau}  #1  \end{ytableau}}
}
\def\pair{\textsf{pair}}
\def\fkSO{\fkS^{\O}}
\def\fkSS{\fkS^{\Sp}}
\def\T{\mathsf{T}}
\def\BB{\mathbb{B}}
\def\unprime{\mathsf{unprime}}
\newcommand{\abs}[1]{\lvert #1 \rvert}
\numberwithin{equation}{section}
\renewcommand{\ytab}[1]{
\ytableausetup{boxsize = .5cm,aligntableaux=center}
{\small\begin{ytableau}  #1  \end{ytableau}}
}
\def\Ifpf{I^{\mathsf{fpf}}}
\def\cA{\mathcal{A}_{\mathsf{inv}}}
\def\bar{\overline}
\def\half{\mathsf{half}_\leq}
\def\shalf{\mathsf{half}_<}
\def\cA{\mathcal{A}}
\def\rf{\mathrm{RF}}
\def\brf{\mathrm{BRF}}
\def\upORF{{\mathrm{u}\rf}{}^\O_n}
\def\orf{\mathrm{RF}^\O}
\def\borf{\mathrm{BRF}^\O}
\def\sprf{\mathrm{RF}^\Sp}
\def\bsprf{\mathrm{BRF}^\Sp}
\def\RF{\rf_n}
\def\BRF{\brf_n}
\def\ORF{\orf_n}
\def\SpRF{\sprf_n}
\def\rev{\mathrm{rev}}
\renewcommand{\mid}{:}
\lstdefinelanguage{Sage}[]{Python}
{morekeywords={False,sage,True},sensitive=true}
\definecolor{dblackcolor}{rgb}{0.0,0.0,0.0}
\definecolor{dbluecolor}{rgb}{0.01,0.02,0.7}
\definecolor{dgreencolor}{rgb}{0.2,0.4,0.0}
\definecolor{dgraycolor}{rgb}{0.30,0.3,0.30}
\newcommand{\flagvar}{\Fl}  
\newcommand{\Schub}{\mathfrak{S}}
\newcommand{\key}{\kappa}
\newcommand{\SymO}{\mathsf{SyM}}
\newcommand{\iso}{\cong}
\newcommand{\xx}{\mathbf{x}}
\newcommand{\zero}{\mathbf{0}}
\newcommand{\bop}{{\color{blue}(}}
\newcommand{\bcp}{{\color{blue})}}
\newcommand{\genop}{\mathfrak{o}}  
\def\qsymbol{\mathsf{Q}}
\def\qkey{\kappa^\qsymbol}
\def\psymbol{\mathsf{P}}
\def\pkey{\kappa^\psymbol}
\def\fkR{\mathfrak{R}}
\def\BSpRF{\bsprf_n}
\def\BORF{\borf_n}
\def\reverse{\mathsf{reverse}}
\def\cY{\mathcal{Y}}
\def\gap{\hspace{0.5mm} /\hspace{0.5mm}}
\def\WD{\mathsf{WD}}
\definecolor{mycolor}{rgb}{0.95,0.95,0.95}
\tikzset{
bnode/.style={
  text width=15mm,
  align=center,
  draw=black,
  fill=white
  },
onode/.style={
  text width=15mm,
  align=center,
  fill=mycolor
  },
}
\begin{document}
\title{Crystals for shifted key polynomials}
\author{
Eric Marberg \\ Department of Mathematics \\  Hong Kong University of Science and Technology \\ {\tt eric.marberg@gmail.com}
\and 
Travis Scrimshaw \\ Department of Mathematics \\ Hokkaido University \\ {\tt tcscrims@gmail.com}
}
\date{}

\maketitle

\begin{abstract}
This article continues our study of $P$- and $Q$-key polynomials, which are  (non-symmetric) ``partial'' Schur $P$- and $Q$-functions as well as ``shifted'' versions of key polynomials. Our main results provide a crystal interpretation of $P$- and $Q$-key polynomials, namely, as the characters of certain connected subcrystals of normal crystals associated to the queer Lie superalgebra $\mathfrak{q}_n$. In the $P$-key case, the ambient normal crystals are the $\mathfrak{q}_n$-crystals studied by Grantcharov et al., while in the $Q$-key case, these are replaced by the extended $\mathfrak{q}_n$-crystals recently introduced by the first author and Tong. Using these constructions, we propose a crystal-theoretic lift of several conjectures about the decomposition of involution Schubert polynomials into $P$- and $Q$-key polynomials. We verify these generalized conjectures in a few special cases. Along the way, we establish some miscellaneous results about normal $\mathfrak{q}_n$-crystals and Demazure $\mathfrak{gl}_n$-crystals.
\end{abstract}

\setcounter{tocdepth}{2}
\tableofcontents

\section{Introduction}

  \subsection{Classical background}
  
For a fixed positive integer $n$, denote by $\GL_n := \GL_n(\CC)$ the group of $n \times n$ invertible matrices, $B_n$ its standard Borel subgroup of upper triangular matrices, and $B_n^- := B_n^{\T}$ the opposite Borel subgroup of lower triangular matrices.
Let $\xx = (x_1, x_2, \ldots)$ be a sequence of commuting indeterminates.
Write $S_n$ for the symmetric group of  permutations of $\{1,2,\dots,n\}$ with length function $\ell \colon S_n \to \NN$ and longest element $w_0 := n \dotsm 21 \in S_n$. 
For $w \in S_n$, the \defn{Schubert variety} $X_w$ is the closure of the (right) $B_n$-orbit $B_n^- \backslash B_n^- w_0 wB_n$. This is an $\ell(w)$-dimensional affine space in the \defn{(complete) flag variety} $\flagvar_n := B_n^- \backslash \GL_n$.
Since Schubert varieties give a CW decomposition of $\flagvar_n$, they determine a basis of \defn{Schubert classes} for the cohomology ring $H^*(\flagvar_n; \ZZ)$ \cite[\S3.6]{Manivel}.

Schubert varieties also give rise to an important family of $B_n$-representations called \defn{Demazure modules} $V_w(\lambda)$~\cite{Demazure74}. For each $w \in S_n$ and partition $\lambda$ with at most $n$ parts, $V_w(\lambda)$ is the $B_n$-module of global sections of the principal line bundle on $X_w$ whose fiber is the $1$-dimensional $B_n$-representation of weight $-\lambda$ (restricting the \defn{Borel--Weil construction}; see~\cite[Ch.~23.3]{FultonHarris}).
The \defn{Bott--Samelson resolution} of $X_w$ leads to the \defn{Demazure character formula}~\cite{Andersen85,Demazure74II,Joseph85}
\[
\key_{w,\lambda}  = \ch V_w(\lambda) = \pi_w (x_1^{\lambda_1}  x_2^{\lambda_2} \dotsm x_n^{\lambda_n}),
\]
where
$\pi_w$ is the \defn{isobaric divided difference operator} indexed by $w$ (see Section~\ref{key-sect} for the precise definition). 

The characters $\key_{w,\lambda}$ are also known as \defn{key polynomials} from the joint work of Lascoux and Sch\"utzenberger~\cite{LS1,LS2}. They introduced the notion of the \defn{(right) key} for a semistandard tableau with entries in $\{1,2, \dotsc, n\}$ to give a combinatorial formula for $\key_{w,\lambda}$ using natural objects from the representation theory of $\GL_n$.
This encodes the fact that $\key_{w,\lambda}$ is a ``partial'' Schur function since for $w = w_0$, we have $X_{w_0} = \flagvar_n$
and $V_{w_0}(\lambda) = V(\lambda)$, and so the key polynomial
$
\key_{w_0,\lambda} = s_{\lambda}(x_1, x_2,\dotsc, x_n)
$
is obtained by restricting the symmetric power series $s_\lambda(\xx)$ to $n$ variables.

It is often more natural to index the key polynomial $\key_{w,\lambda} $ by the \defn{weak composition} $\alpha = w\lambda$
obtained by letting $w$ act on $\lambda$ by permuting entry indices. If we write $\key_\alpha$ in place of $\key_{w,\lambda}$, then
then the set $\{\key_{\alpha}\mid \alpha\text{ any weak composition} \}$ is a $\ZZ$-basis for $\ZZ [\xx]$~\cite[Thm.~17]{ReinerShimozono}.

We can encode the representation theory of $\GL_n$ with  combinatorial data by using the \defn{(Kashiwara) crystals}~\cite{Kashiwara90,Kashiwara91} associated to the Lie algebra 
 $\gl_n$ (or more precisely, its associated Drinfel'd--Jimbo quantum group $U_q(\gl_n)$; see also \cite{Lusztig1990, Lusztig1991}).
Work of Kashiwara \cite{Kashiwara93} shows that every finite-dimensional highest weight
 $\GL_n$-representation $V(\lambda)$ has a crystal basis $B(\lambda)$,
and that each Demazure module also has a crystal basis $B_w(\lambda)$ given by intersecting $B(\lambda)$ with $V_w(\lambda)$.
Moreover, it is known~\cite{BBBG21,ReinerShimozono} that the tableaux from Kashiwara's construction are exactly those given by Lascoux and Sch\"utzenberger in~\cite{LS1,LS2}.

In related work~\cite{LS82,LS82II}, Lascoux and Sch\"utzenberger  identified 
polynomial representatives for the cohomology classes of Schubert varieties with several nice properties. These representatives are the now well-known 
\defn{Schubert polynomials} $\Schub_w$ indexed by permutations $w \in S_n$.
One combinatorial definition of  $\Schub_w $ is given by the Billey--Jockusch--Stanley (BJS) formula~\cite{BJS}   using certain bounded decreasing factorizations of $w$, where a \defn{decreasing factorization} is a reduced expression partitioned into consecutive, possibly empty, decreasing subwords.
Taking the stable limit $F_w(\xx) = \lim_{k\to\infty} \Schub_{1^k \times w}$ produces the \defn{Stanley symmetric function}~\cite{Stanley} as a sum over all decreasing factorizations of $w$.

Morse and Schilling~\cite{MorseSchilling} constructed a natural   $\gl_n$-crystal $\RF(w)$
on these factorizations  that intertwines with the Edelman--Green correspondence from \cite{EG}. The existence of this crystal leads to another proof of the 
Schur-positivity property that
$
F_w(\xx) = \sum_{\lambda} c_w^{\lambda} s_{\lambda}(\xx)
$
for some nonnegative integers $c_w^{\lambda} \in \NN$.
By restricting (a twist of) this crystal structure to the factorizations in the BJS formula, Assaf and Schilling~\cite{AssafSchilling} gave a crystal-theoretic proof 
of the key-positivity property
$
\Schub_w = \sum_{\alpha} c_w^{\alpha} \key_{\alpha},
$
where the sum is over some set of weak compositions $\alpha$ with $c_w^{\alpha} \in \NN$.
This recovers a result of Lascoux and Sch\"utzenberger~\cite{LS1} which is restated as \cite[Thm.~4]{ReinerShimozono}.
Specifically, Assaf and Schilling constructed a $\gl_n$-crystal $\BRF(w)$ of bounded decreasing factorizations of $w$,
and proved that it is isomorphic to a direct sum of \defn{Demazure crystals} whose characters are key polynomials.

\subsection{Shifted constructions}

Now, instead of $B_n$-orbits in $\flagvar_n$, we consider the orbits for the symplectic group $\Sp_n := \Sp_n(\CC)$ when $n$ is even
(respectively, the orthogonal group $\O_n := \O_n(\CC)$ when $n$ is any positive integer), which are indexed by the set of fixed-point-free involutions (respectively, all involutions) $z \in S_n$.
The cohomology classes of the closures of these orbits have  polynomial representatives $\{ \Schub_z^{\Sp} \}_z$ and $\{ \Schub_z^{\O} \}_z$ computed by Wyser and Yong~\cite{WyserYong}, with many nice properties in common with Schubert polynomials.

Following \cite{HMP1}, we refer to these representatives as \defn{involution Schubert polynomials}.
These polynomials have stable limits $F_z^{\Sp}(\xx)$ and $F_z^{\O}(\xx)$  analogous to Stanley symmetric functions \cite{HMP1}. Hamaker, the first author, and Pawlowski  showed in~\cite{HMP5,HMP4} that $F_z^\Sp(\xx)$ expands positively in terms of the \defn{Schur $P$-functions} $P_{\lambda}(\xx)$,
while $F_z^\O(\xx)$ expands positively in terms of the \defn{Schur $Q$-functions} $Q_{\lambda}(\xx)$.

In our previous work~\cite{MS2023}, we introduced shifted analogues of key polynomials that we call \defn{$P$-key polynomials} $\pkey_{w,\lambda}$ 
and \defn{$Q$-key polynomials} $\qkey_{w,\lambda}$. These are formed by applying isobaric divided difference operators to certain \defn{dominant} involution Schubert polynomials.
The name was chosen as these polynomials are partial versions of Schur $P$- and $Q$-functions
in the sense that 
$
P_{\lambda}(x_1,x_2,\dots,x_n) = \pkey_{w_0,\lambda}
$
and
$
Q_{\lambda}(x_1,x_2,\dots,x_n) = \qkey_{w_0,\lambda}$
by \cite[Thm.~2.35]{MS2023}.
Part of our interest in $P$- and $Q$-key polynomials stems from the following conjecture.

\begin{conjecture}[{\cite[Conjs.~2.31 and 2.33]{MS2023}}]
\label{fkSS-conj}
  Each involution Schubert polynomial 
  $
  \fkSS_z
  $ (respectivley,   $
  \fkSO_z
  $)
is an $\NN$-linear combination of $P$-key polynomials (respectively, $Q$-key polynomials).
\end{conjecture}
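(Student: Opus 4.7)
The plan is to imitate the strategy that Assaf and Schilling~\cite{AssafSchilling} used to prove key-positivity of ordinary Schubert polynomials, replacing $\gl_n$-crystals on bounded decreasing factorizations by their shifted/queer analogues. Concretely, I would introduce sets $\BSpRF(z)$ and $\BORF(z)$ of ``involution'' bounded decreasing factorizations attached to a fixed-point-free (resp.\ arbitrary) involution $z \in S_n$, chosen so that their characters are, by construction, the involution Schubert polynomials $\fkSS_z$ and $\fkSO_z$ via an involution analogue of the BJS formula. The first step is to verify such a BJS-type formula in the shifted setting; for $\fkSS_z$ and $\fkSO_z$ this should follow from the known expansions of involution Schubert polynomials in terms of involution pipe dreams or the involution Edelman--Green correspondence.

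Next, I would equip $\BSpRF(z)$ with the structure of a subset of a normal $\q_n$-crystal (using the Grantcharov et al.\ model, as alluded to in the abstract) and equip $\BORF(z)$ with the structure of a subset of an extended $\q_n$-crystal in the sense of Marberg--Tong. The key technical claim to establish is that these subsets are stable under the appropriate ``Demazure-type'' truncations of the queer raising operators, i.e.\ one must define analogues of the Demazure crystal operators $\mathfrak{D}_i$ in the $\q_n$ setting and check that applying them to a highest-weight element produces exactly $\BSpRF(z)$ (resp.\ $\BORF(z)$), up to a decomposition into connected components. This would realize each connected component as a ``shifted Demazure crystal'' whose character is, by the main results of the paper (the crystal interpretation of $\pkey_\alpha$ and $\qkey_\alpha$ as characters of connected subcrystals of normal/extended $\q_n$-crystals), a single $P$- or $Q$-key polynomial.

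The $\NN$-positive expansion $\fkSS_z = \sum_\alpha c_z^\alpha \pkey_\alpha$ (and its orthogonal counterpart) then follows formally: $\fkSS_z$ is the character of $\BSpRF(z)$, and the latter decomposes as a disjoint union of shifted Demazure subcrystals, each contributing one $\pkey_\alpha$ with multiplicity equal to the number of components of that isomorphism type. The $Q$-case is analogous, using extended $\q_n$-crystals.

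The main obstacle, and the reason the paper only verifies the conjecture in special cases, is step two: there is no established Demazure theory for $\q_n$-crystals, so one must \emph{construct} the right notion of a shifted Demazure operator and prove a closure/exhaustion property analogous to Kashiwara's theorem for $\gl_n$. Even granting the crystal interpretation of $\pkey_\alpha$ and $\qkey_\alpha$ from the present paper, showing that the involution BJS factorizations assemble into a union of such Demazure-type subcrystals (rather than a more general union of connected components whose characters are not individually $P$- or $Q$-keys) seems to require new compatibility results between the queer crystal operators and the bounded-factorization combinatorics; this is where I would expect to be forced into case analysis, and where the ``few special cases'' of the paper would give the first concrete evidence.
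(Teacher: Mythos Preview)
Your proposal mirrors the paper's own strategy almost exactly, and you correctly recognize that this remains a conjecture: the paper does not prove Conjecture~\ref{fkSS-conj}, but instead lifts it to the crystal-theoretic Conjectures~\ref{sp-demazure-conj} and~\ref{o-demazure-conj}, which are left open (and verified only in special cases such as Theorem~\ref{thm:single_row_o_tableau}). Your identification of the obstacle---the absence of a closure/exhaustion theorem showing that $\BSpRF(z)$ and $\BORF(z)$ decompose into Demazure-type subcrystals---is precisely the gap the paper leaves.

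One point where your outline deviates slightly from the paper's execution: you propose defining new ``queer'' analogues of the Demazure crystal operators $\fkD_i$. The paper does \emph{not} do this. Instead, it uses the ordinary $\gl_n$ crystal Demazure operators $\fkD_i$ from~\eqref{cDo-eq} (which only involve $e_i$ for $i\in[n-1]$), and defines a Demazure $\q_n$-crystal as $\fkD_w\,\BSpRF(z^\fpf_\lambda)$ applied to the \emph{entire} bounded crystal for a \emph{dominant} $z^\fpf_\lambda$, not to a single highest-weight element. The nontrivial content (Theorem~\ref{sp-crystal-thm}) is that $\BSpRF(z^\fpf_\lambda)$ is already connected with character $\pkey_\lambda$; applying $\fkD_w$ and invoking Lemma~\ref{sp-ij-lem} then gives character $\pkey_{w\circ\lambda}$. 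This sidesteps any need to invent Demazure operators adapted to the queer indices $\overline{\imath},\underline{i}$, but at the cost that the resulting Demazure $\q_n$-crystals are not closed under all raising operators $e_{\overline{\imath}}$ (as the paper explicitly notes), making the decomposition conjecture genuinely delicate.
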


The main purpose of this article is to outline a crystal-theoretic approach to proving this conjecture.
More precisely, we will describe how this statement lifts to a more general conjecture 
about how certain crystals related to involution Schubert polynomials decompose as direct sums of two different kinds of ``queer'' Demazure crystals.

Let us explain these ideas in more detail.
Schur $P$-functions are Schur positive~\cite{Stembridge89} and arise as the characters of  $\gl_n$-crystal structures on
certain sets of  \defn{shifted tableaux}~\cite{HPS}.
These objects are usually disconnected as $\gl_n$-crystals, and 
to turn them into connected crystals, we need to replace $\gl_n$ with the queer Lie superalgebra $\q_n$   introduced in \cite{Kac77}.
For example, Grantcharov \textit{et al.}~\cite{GJKKK15,GJKKK10} have shown that the Schur $P$-function $P_{\lambda}(x_1, \dotsc, x_n)$ is the
character of a connected $\q_n$-crystal on \defn{decomposition tableaux} corresponding to polynomial representation $V(\lambda)$ for $\q_n$.
There is an isomorphic $\q_n$-crystal structure on \defn{shifted (primed) tableaux} \cite{AO20,HPS,Hiroshima,Hiroshima2018}, which 
is closely related to the \defn{type $B$ and $C$ Stanley symmetric functions} studied in \cite{Lam95}.

The true character of $V(\lambda)$ is technically  $Q_{\lambda}(x_1, \dotsc, x_n)$ rather than
 $P_{\lambda}(x_1, \dotsc, x_n)$ (see~\cite{CW12} or \cite{Kac78}), but the difference is just a matter of rescaling since  $2^{\ell(\lambda)} P_{\lambda}(\xx) = Q_{\lambda}(\xx)$.
To encode the extra $2^{\ell(\lambda)}$ factor directly into the relevant character, the first author and Tong introduced a new family of objects called \defn{(extended) $\qq_n$-crystals} in~\cite{MT2021}.

Now, there are BJS-type formulas for $\Schub^{\Sp}_z$ and $\Schub^{\O}_z$ as sums over certain bounded decreasing factorizations~\cite{HMP1}.
Removing the boundedness condition transforms these formulas to generating series for the stable limits $F^{\Sp}_z(\xx)$ and $F^{\O}_z(\xx)$.
There are known constructions of a $\q_n$-crystal $\SpRF(z)$~\cite{Marberg2019b} and a $\qq_n$-crystal $\ORF(z)$ \cite{MT2021} on the sets of reduced factorizations appearing in these generating functions.
Adding back the boundedness condition identifies two distinguished subcrystals $\BSpRF(z)\subseteq \SpRF(z)$ and $\BORF(z)\subseteq \ORF(z)$ consisting of decreasing factorizations of certain reduced words.
Our main results are about these objects, and can be summarized as follows.
\bei
\item We prove some properties of the crystals $\BRF(w)\subseteq \RF(w)$, $\BSpRF(z)\subseteq \SpRF(z)$, and  $\BORF(z)\subseteq \ORF(z)$.
This includes several facts not explicitly stated in prior literature; 
see, for example, Theorems~\ref{reduced-tab-lem}, \ref{sp-reduced-tab-lem}, and \ref{o-reduced-tab-lem}
and Propositions~\ref{gl-highest-lem}, \ref{gl-lowest-lem}, \ref{sp-lowest-prop}, and \ref{o-lowest-prop}.

\item 
The $\q_n$-crystal $\BSpRF(z)$ usually has multiple components, which are classified by certain \defn{$\Sp$-reduced tableaux}; see Proposition~\ref{sp-subcrystal-prop}.
When $z$ is a \defn{dominant} fixed-point-free involution, however, we prove that $\BSpRF(z)$ is connected and its character is the $P$-key polynomial $\pkey_{1, \lambda(z)}$ (Theorem~\ref{sp-crystal-thm}), where $\lambda(z)$ is the  partition shape to $z$.
 We define a \defn{Demazure $\q_n$-crystal} to be any $\q_n$-crystal isomorphic to $\fkD_w \BSpRF(z)$, where $z$ is dominant and $\fkD_w$ is a \defn{crystal Demazure operator} as specified in~\eqref{cDo-eq}. Each $\fkD_w \BSpRF(z)$ is connected and its character is naturally $\pkey_{w,\lambda(z)}$.
In this way Demazure $\q_n$-crystals generalize $P$-key polynomials.

\item Similarly, the $\qq_n$-crystal $\BORF(z)$ usually has multiple components, which are classified by certain \defn{$\O$-reduced tableaux}; see Proposition~\ref{o-subcrystal-prop}.
When $z$ is a \defn{dominant} involution, we show $\BORF(z)$ is connected and its character is the $Q$-key polynomial $\qkey_{1,\lambda(z)}$ (Theorem~\ref{o-crystal-thm}). 
We define a \defn{Demazure $\qq_n$-crystal} to be any $\qq_n$-crystal isomorphic to $\fkD_w \BORF(z)$ for some $w \in S_n$ and dominant $z$.
Each $\fkD_w \BORF(z)$ is connected and its character is naturally $\qkey_{w,\lambda(z)}$,
 so we can view Demazure $\qq_n$-crystals as generalizations of $Q$-key polynomials.


\item We conjecture that $\BSpRF(z)$ is a direct sum of Demazure $\q_n$-crystals (Conjecture~\ref{sp-demazure-conj}) 
and that $\BORF(z)$ is a direct sum of Demazure $\qq_n$-crystals (Conjecture~\ref{o-demazure-conj}). 
Figures~\ref{sp-fig1}, \ref{sp-fig2}, and \ref{o-fig} show data supporting these conjectures, which we verify in a few special cases (Theorem~\ref{thm:single_row_o_tableau}).

\item Finally, we also discuss how Conjectures~\ref{sp-demazure-conj} and \ref{o-demazure-conj} imply a priori more general statements concerning crystals of ``flagged'' decreasing factorizations; compare Conjectures~\ref{sp-demazure-conj2} and \ref{o-demazure-conj2}
with Propositions~\ref{sp-equiv-prop} and \ref{o-equiv-prop}.
\eei

Unlike for $\gl_n$, we are not aware of any theory of Demazure modules or characters for $\q_n$ (or $\qq_n$-crystals) already appearing in the literature.
To explain some potential the representation-theoretic obstructions, recall that Demazure modules (for $\gl_n$) are indecomposable modules of the Borel subalgebra that give a filtration on highest weight representations parameterized by elements of the Weyl group.
For $\q_n$, it is not clear that such a construction would be independent of the choice of Borel subalgebra, and the Weyl group (which is $S_n$ from the even subalgebra $\gl_n$) does not seem to naturally encode the highest weight crystal or a filtration of the representation (such as coming from a BGG-type resolution or a Schubert variety decomposition for the Lie supergroup).
Despite this, from our results, we expect our crystals are good $\q_n$- and $\qq_n$-analogues for Demazure crystals and come from a global crystal basis of a yet-to-be-defined $U_q(\q_n)$ analogue of Demazure modules.

However, there are some differences between these new objects and the classical theory of Demazure $\gl_n$-crystals.
For example, although each Demazure $\q_n$- and $\qq_n$-crystal uniquely embeds into a \defn{normal} crystal of the same type just as in the classical $\gl_n$-case (compare Propositions~\ref{unique-embed-prop}, \ref{unique-embed-prop2}, and~\ref{unique-embed-prop3}), these subcrystals are no longer closed under all raising crystal operators $e_i$.
In particular, it is not straightforward to show directly that each Demazure $\q_n$- and $\qq_n$-crystal is connected.
Our proof of this fact instead relies on a difficult technical property shown in~\cite{GHPS} (see Lemma~\ref{ghps-lem}).
We expect that this reflects how the Borel subalgebras of $\q_n$ are not as well-behaved as those of $\gl_n$ and that our construction is not coming from representation theory.

\subsection{Outline}

This paper is organized as follows.
In Section~\ref{sec:prelims}, we set up our notation and give the precise definitions of (shifted) key polynomials
and (involution) Schubert polynomials used later on.
Section~\ref{sec:gl_crystals} is partially expository and surveys the main properties of normal and Demazure $\gl_n$-crystals.
Sections~\ref{sec:P_crystals} and \ref{sec:Q_crystals} contain our main new results. 
In Section~\ref{sec:Q_crystals} we develop a theory of Demazure $\q_n$-crystals whose characters are $P$-key polynomials. Section~\ref{sec:Q_crystals} then provides a set of complementary results about Demazure $\qq_n$-crystals for $Q$-key polynomials.

\subsection*{Acknowledgments}

The first author was partially supported by Hong Kong RGC grants 16306120 and 16304122.
The second author was partially supported by Grant-in-Aid for JSPS Fellows 21F51028 and for Scientific Research for Early-Career Scientists 23K12983.
We thank Jon Brundan, Takeshi Ikeda, Brendan Pawlowski, Anne Schilling, and Alexander Yong for useful discussions.
The second author also thanks Tomoo Matsumura for the discussion and references regarding Hessenberg varieties and degeneracy loci, and for International Christian University's hospitality during his visits.

\section{Preliminaries}
\label{sec:prelims}
 
In this section we  fix our notation and review some relevant background material.
 Specifically, we will discuss the algebraic definitions of \defn{key polynomials}, \defn{Schubert polynomials},
 and \defn{crystals}.

\subsection{Notation}\label{not-section}

Throughout, $n$ is a positive integer, $[n] := \{1,2,\dotsc,n\}$, $\NN := \{0,1,2,\dots\}$, and $\PP := \{1,2,3,\dots\}$.
Continue to let $\xx := (x_1,x_2,x_3,\ldots)$ be an infinite sequence of commuting variables.
We write $\e_i$ for $i \in \PP$ to denote the $i$-th standard basis element of $\ZZ^n$.

A \defn{weak composition} is an infinite sequence $\alpha = (\alpha_1,\alpha_2,\ldots)$ of nonnegative integers with finite sum.
Given such a sequence, we set $\xx^\alpha := \prod_{i} x_i^{\alpha_i}$ and $\abs{\alpha} := \sum_i \alpha_i$
and say that $\alpha$ is a weak composition of $\abs{\alpha}$.
The \defn{length} $\ell(\alpha)$ of a weak composition $\alpha$ is either the largest index $\ell$ such that $\alpha_{\ell} > 0$,
or $0$ when $\alpha=\emptyset := (0,0,0,\ldots)$ is the unique empty composition. 
In examples, we often write weak compositions $(\alpha_1,\alpha_2,\dotsc,\alpha_{\ell},0,0,\ldots)$
as finite words $\alpha_1\alpha_2\cdots \alpha_{\ell}$.

\begin{remark}
Throughout this article, we identify $\NN^n$ with the set of weak compositions of length at most $n$ and we write $\alpha \in \NN^n$ to indicate that $\alpha$ is such a composition.
In other words, we treat $\NN^n$ as the set of infinite sequences $\alpha = (\alpha_1,\alpha_2,\ldots)$ with $\alpha_i = 0$ for all $i > n$ and \emph{not} as the set of finite tuples $\alpha = (\alpha_1,\alpha_2,\ldots,\alpha_n)$.
This convention makes it sensible to write that $\NN^n \subset \NN^{n+1}$.
\end{remark}

A \defn{partition} is a weak composition that is weakly decreasing.
The \defn{Young diagram} of a partition $\lambda$ is  the set of pairs $\D_{\lambda}:=\{(i, j) \in \PP\times \PP \mid j \leq \lambda_i \}$.
We draw Young diagrams in English convention, viewing its elements as positions in a matrix.
Given a weak composition $\alpha$, let $\lambda(\alpha)$ be the unique partition that can be formed by rearranging its parts.

Let $s_i := (i \: i+1)$ for $i \in \ZZ$ be the permutation of $\ZZ$ that interchanges $i$ and $i+1$ while fixing all other integers.
Define groups
$
 S_\ZZ := \langle s_i \mid i \in \ZZ\rangle \supset
 S_\infty := \langle s_i \mid i \in \PP\rangle\supset
 S_n := \langle s_i \mid i \in [n-1]\rangle.
$
A \defn{reduced word} for $w \in S_\ZZ$ a minimal length sequence of integers $i_1i_2\cdots i_\ell$ with $w=s_{i_1}s_{i_2}\cdots s_{i_{\ell}}$.
The \defn{length} of $w$ is the (finite) length $\ell(w) = \ell$ of any of its reduced words.
For each $w \in S_\ZZ$, we write $\cR(w)$ for its set of reduced words.
An integer $i \in \ZZ$ is a \defn{descent} of $w \in S_\ZZ$ if $w(i) > w(i+1)$.

The group $S_{\infty}$ acts on $\ZZ [\xx]$  by permuting variables, 
and on the set of all weak compositions by permuting positions.
Under these actions, the simple transposition $s_i$ for $i \in \PP$ maps  
\[\ba f = f(\ldots, x_i, x_{i+1}, \ldots) &\mapsto  f(\ldots, x_{i+1}, x_i, \ldots) =: s_if\quad\text{and}\\
\alpha = (\ldots, \alpha_i, \alpha_{i+1}, \ldots) &\mapsto   (\ldots, \alpha_{i+1}, \alpha_i, \ldots) =: s_i\alpha.
\ea\]

Write $\circ$ for the \defn{Demazure product} on $S_\ZZ$, which is the unique associative operation $S_\ZZ \times S_\ZZ \to S_\ZZ$ with $v\circ w =vw$ if $\ell(vw)=\ell(v) + \ell(w)$ and $s_i \circ s_i = s_i$ for all $i \in \ZZ$;
see~\cite[Thm.~7.1]{Humphreys}.
This makes $(S_\ZZ,\circ)$ into a monoid, which contains $(S_\infty,\circ)$ and $(S_n,\circ)$ as submonoids.

There is a unique action of $(S_{\infty}, \circ)$ on weak compositions, also denoted $\circ$, in which the simple transpositions operate as follows.
For $i \in \PP$ and $\alpha$ a weak composition, define $s_i \circ \alpha$ to be $\alpha$ if $\alpha_i  \leq \alpha_{i+1}$, and to be $s_i\alpha$ 
otherwise.
Since we view elements of $\NN^n$ as $0$-padded infinite sequences, 
the notation $w \circ \alpha$ is well-defined for any $w \in S_\infty$ and $\alpha \in \NN^n$;
however, when $\alpha \in \NN^n$ but $w \notin S_n$ we might have $w \circ \alpha \notin \NN^n$.

For each weak composition $\alpha$, there exists a unique minimal-length permutation $u(\alpha)\in S_\infty$ with $\alpha = u(\alpha) \circ \lambda(\alpha) = u(\alpha)   \lambda(\alpha)$. 
This can be computed inductively via the identity $u(s_i \circ \alpha) = s_i \circ u(\alpha)$ for any $i \in \PP$, with base case $u(\lambda(\alpha)) = 1$.

\subsection{Key polynomials}\label{key-sect}

Suppose we have operators $\genop_i$ indexed by $i \in [n-1]$ (or $i \in \PP$) acting on a set $\cY$.
We say that these operators
 \defn{satisfy the braid relations} on a subset $\cX\subseteq \cY$
 if 
$
\genop_i \genop_{i+1} \genop_i \cX = \genop_{i+1} \genop_i \genop_{i+1} \cX$ for all $i$
and
$\genop_i \genop_j \cX = \genop_j \genop_i \cX$
whenever $\abs{i - j} > 1$.
When these conditions hold, for any $w \in S_{n}$ (or $w \in S_\infty$) we can define $\genop_w := \genop_{i_1} \genop_{i_2} \dotsm \genop_{i_{\ell}}$ using any reduced word 
$i_1i_2\cdots i_\ell \in \cR(w)$.

For each $i \in \PP$, let $\partial_i $ and $\pi_i$ be the \defn{divided difference operator} and \defn{isobaric divided difference operators}, respectively,
on $\ZZ[\xx]$ defined by
$
\partial_i f := \frac{f-s_if}{x_i-x_{i+1}}
$
and
$
\pi_i f := \partial_i (x_i f) = \frac{x_i f - x_{i+1} s_i f}{x_i-x_{i+1}}.
$  
The operators $\partial_i$ and $\pi_i$ satisfy the braid relations on $\ZZ[\xx]$ and 
have $\partial_i ^2 =0 $ and $ \pi_i^2=\pi_i$ for all $i \in \PP$.
The following definition originates in work of Demazure \cite{Demazure74II} and Lascoux--Sch\"utzenberger \cite{LS1,LS2},
 using slightly different terminology. Our conventions follow \cite{ReinerShimozono}.

\begin{definition}
The \defn{key polynomial} of a weak composition $\alpha$
is the unique element $\kappa_\alpha \in \ZZ[\xx]$
satisfying $\kappa_\alpha = \pi_w \xx^\lambda$ for all $w \in S_\infty$ and partitions $\lambda$ such that $\alpha = w\circ \lambda$.
\end{definition}

A partition $\lambda$ is \defn{symmetric} if $\lambda=\lambda^\T$.
A weak composition $\alpha$ is \defn{symmetric} if $\lambda(\alpha) = \lambda(\alpha)^\T$.
 The objects in the following definition are the same as the polynomials
 $\pkey_{w,\lambda}$ and $\qkey_{w,\lambda}$ discussed in the introduced, but 
 are now presented through a more streamlined notation following \cite{MS2023}.

\begin{definition}\label{spo-key-def}
Let $\alpha$ be a symmetric weak composition with $u=u(\alpha)$ and $\lambda = \lambda(\alpha)$.
Define
\[ 
\ba
\pkey_{\alpha} := \pi_u \(\prod_{\substack{(i,j) \in \D_{\lambda} \\ i \geq j} } (x_i + x_{j})\)
\quand
\qkey_{\alpha} := \pi_u \(\prod_{\substack{(i,j) \in \D_{\lambda} \\ i >j} } (x_i + x_{j})\).
\ea
\]
We refer to these functions as \defn{$P$- and $Q$-key polynomials}, respectively.
\end{definition}

By~\cite[Prop.~2.15]{MS2023}, we have $\pkey_\alpha = \pi_w \pkey_\lambda$ and $\qkey_\alpha = \pi_w \qkey_\lambda$ for any $w \in S_\infty$ with $\alpha = w\circ \lambda$.
Moreover, the coefficients of $\qkey_{w\lambda}$ are all divisible by $2^{\ell(\lambda)}$.
For example, we have
\[\ba \pkey_{3143} &= \pi_2\pi_1\pi_3 \bigl( (x_1+x_2)(x_1+x_3)(x_1+x_4)(x_2+x_3) \bigr)
 = \kappa_{0022} + \kappa_{0031} + \kappa_{0112},
\\
 \qkey_{2031} &= \pi_2\pi_1\pi_3 \bigl( 4x_1x_2(x_1+x_2)(x_1+x_3) \bigr)
 = 4 \kappa_{103} + 4 \kappa_{202} + 4 \kappa_{1021}.
\ea\]
The positive expansion into key polynomials exhibited here is typical:
if $\alpha$ is any symmetric weak composition,
then $\pkey_{\alpha}$ and  $ \qkey_{\alpha}$ are nonzero linear combinations of key polynomials $\kappa_{\beta}$ with nonnegative integer coefficients~\cite[Thm.~2.9]{MS2023}.

 A symmetric partition $\lambda$ is \defn{skew-symmetric} if for the maximal $i$ such that $(i, i) \in \D_\lambda $, neither $\D_\lambda \cup \{(i,i+1)\}$
nor $\D_\lambda \setminus \{(i,i+1)\}$ is the diagram of a partition distinct from $\lambda$.
Both $\lambda=\emptyset$ and $\lambda=(2,2)$ are skew-symmetric, but
neither $\lambda =(1)$ nor $\lambda=(2,1)$ is skew-symmetric.
A weak composition $\alpha$ is \defn{skew-symmetric} if  $\lambda(\alpha)$ 
is skew-symmetric.
The partition $ \lambda = (4,3,3,1)$  is skew-symmetric,
and $\alpha = (3,0,1,4,0,0,3)$ is a skew-symmetric weak composition with $\lambda(\alpha) = \lambda$.

If $\lambda$ is a symmetric partition that is not skew-symmetric, then there is a unique diagonal box that be added or removed from $\D_\lambda$
to obtain the diagram of a skew-symmetric partition $\mu$, and for this partition it holds that $\pkey_\lambda = \pkey_\mu$.
This implies that if $\alpha$ is a symmetric weak composition that is not skew-symmetric then we can add or subtract one from a single part of $\alpha$ to obtain a skew-symmetric weak composition $\beta$ with $\pkey_\alpha = \pkey_\beta$ (see~\cite[Lem.~2.16]{MS2023}).

For this reason, we usually consider $P$-key polynomials to be indexed by skew-symmetric weak compositions.
This is still not a unique indexing set; 
see~\cite[\S2.2]{MS2023}.
On the other hand, we do not know of distinct symmetric compositions $\alpha\neq\beta$ with 
$\qkey_\alpha=\qkey_\beta$; see~\cite[Conj.~2.18]{MS2023}.

\subsection{Schubert polynomials}\label{schub-sect}

 The \defn{Rothe diagram} of $w \in S_\ZZ$ is the set $D(w) := \{ (i, w(j)) \mid i,j \in \ZZ,\ i<j,\ w(i)>w(j)\}$.
 This finite set has $D(w) \subset \PP\times \PP$ if $w \in S_\infty$ and $D(w) \subset \{ (i,j) \mid i+j \in [n]\}$
 if $w \in S_n$.
For each partition $\lambda$, 
there is a unique $w_\lambda \in S_\infty$ with $D(w_\lambda) = \D_\lambda$,
called the \defn{dominant} permutation of shape $\lambda$; see, e.g.,~\cite[Prop.~4.7]{HMP6}.
As $\lambda$ ranges over all partitions in $\NN^n$ (that is, with at most $n$ nonzero parts), $w_\lambda$ ranges over all $132$-avoiding elements of $S_n$~\cite[Ex.~2.2.2]{Manivel}.
The following definition is well-known  \cite[\S2]{Manivel}:

\begin{definition} 
The \defn{Schubert polynomials} $\fkS_w$ for $w \in S_\infty$ are the unique elements of $\ZZ[\xx]$ 
 such that $\fkS_w = \xx^\lambda$ if $w$ is dominant of shape $\lambda$ and $\partial_i \fkS_{w} =   \fkS_{ws_i}$ if $i \in \PP$ has $w(i)>w(i+1)$.
 \end{definition}

Schubert polynomials are cohomology representatives of Borel orbit closures in the complete flag variety.
  There are analogous \defn{involution Schubert polynomials} introduced in \cite{WyserYong}
  that represent cohomology classes of orbit closures in the same space for the orthogonal and symplectic groups.
To define these polynomials, let $I_\ZZ$ be the set of involutions in $S_\ZZ$, and let $I_\infty$ and $I_n$ be the subsets of elements of $I_\ZZ$ preserving $\PP$ and $[n]$, respectively.
If $\lambda$ is a symmetric partition then the dominant permutation $w_\lambda \in S_\infty$ must be an involution since $D(w)^\T = D(w^{-1})$.

For $z \in I_\ZZ$ let $\cA^\O(z)$ be the set of minimal-length permutations $w \in S_\ZZ$ with  $z = w^{-1}\circ  w$.
The operation $w \mapsto w^{-1} \circ w$ is a surjective map $S_\ZZ \to I_\ZZ$~\cite[\S6.1]{HMP2}, so $\cA^\O(z)$ is always nonempty.
Write $\Cyc(z) $ for the finite set of pairs $(a,b) \in \ZZ\times \ZZ$ with $a < b = z(a)$.
  
\begin{definition}[\cite{WyserYong}]\label{ischub-def}
The \defn{involution Schubert polynomials of orthogonal type} for $z \in I_\infty$ are 
\be\label{atom-eq-o}
\fkSO_z := 2^{\abs{\Cyc(z)}}\sum_{w \in \cA^\O(z)} \fkS_w\in\ZZ[x_1,x_2,\ldots].
\ee
Equivalently, these are the unique polynomials indexed by $z \in I_\infty$ satisfying both
  \be\label{ischub-eq1}
  \partial_i \fkSO_{z} = \begin{cases} 0&\text{if } z(i) < z(i+1), \\ 
    2 \fkSO_{zs_i} &\text{if } z(i) = i+1,\\
    \fkSO_{s_i z s_i} &\text{otherwise,}
    \end{cases}
  \ee
  for all $i \in \PP$ and $\fkSO_z = \qkey_\lambda$ if $z$ is dominant of shape $\lambda$.
\end{definition}

Next let $\Ifpf_\ZZ$ (respectively, $\Ifpf_\infty$) be the $S_\ZZ$-orbit (respectively, $S_{\infty}$-obit) under conjugation of the permutation $1_\fpf$ mapping $i \mapsto i - (-1)^i$ for all $i \in \ZZ$, which is given in cycle notation as
\[
1_\fpf = \cdots (-1 \: 0)(1 \: 2)(3 \: 4)(5 \: 6)\cdots.
\]
Note that $\Ifpf_{\ZZ}$ (respectively $\Ifpf_{\infty}$) is a subset of the group of \emph{all} bijections of $\ZZ$ (respectively $\PP$), also known as the extended permutation group.
In particular, they are not subgroups of $S_{\ZZ}$ or $S_{\infty}$ as their elements do not have a finite number of fixed points.

The Rothe diagram for $z \in \Ifpf_\infty$ is defined in the same way as for elements of $S_\ZZ$.
If $\lambda$ is a skew-symmetric partition, then there is a unique $z^\fpf_\lambda \in \Ifpf_\infty$ with $\{ (i,j) \in D(z) \mid i \neq j\} = \{ (i,j) \in \D_\lambda \mid i\neq j\}$~\cite[Prop.~4.31]{HMP6}, which we call the \defn{dominant} element of $ \Ifpf_\infty$ of shape $\lambda$.
For $z \in \Ifpf_\infty$ let $\cA^\Sp(z)$ be the (nonempty) set of minimal-length permutations $w \in S_\ZZ$ with $z = w^{-1} 1_\fpf w$.   

\begin{definition}[\cite{WyserYong}]\label{fschub-def}
The \defn{involution Schubert polynomials of symplectic type} for $z \in \Ifpf_\infty$
are   \be\label{atom-eq-sp}
\fkSS_z := \sum_{w \in \cA^\Sp(z)} \fkS_w\in\ZZ[\xx].\ee
Equivalently, these are the unique polynomials indexed by $z \in\Ifpf_\infty$ satisfying both
\be\label{fschub-eq1}
  \partial_i \fkSS_{z} = \begin{cases} 0&\text{if }z(i) < z(i+1) \text{ or } z(i)=i+1, \\ 
  \fkSS_{s_i z s_i} &\text{otherwise,}
  \end{cases}
    \ee
    for all $i \in \PP$ and  
 $\fkSS_z = \pkey_\lambda$ if $z$ is dominant of shape $\lambda$.
\end{definition}

Wyser and Yong~\cite{WyserYong} originally constructed $\fkSO_z$ and $\fkSS_z$ just using formulas 
\eqref{ischub-eq1} and~\eqref{fschub-eq1}.
To derive~\eqref{atom-eq-o} and~\eqref{atom-eq-sp} from this, see~\cite{HMP5,HMP1}.
One needs~\cite[Thm.~1.3]{HMP1} and~\cite[Thm.~4.2]{HMP5} to justify the formulas $\fkSO_z = \qkey_\lambda$  and $\fkSS_z = \pkey_\lambda$ for all dominant $z$.  
For other geometric interpretations of these polynomials, see \cite{HMP6,Pawlowski2019}.
For $K$-theoretic generalizations, see \cite{MP2019a,MP2019b,WyserYong}.

\subsection{Normal crystals}
\label{crystal-sect}

This section introduces a non-standard definition of $\g$-crystals for certain Lie (super)algebras $\g$.  
This material will help streamline the presentation of our main results, which concern highest weight crystals and their subcrystals.
As a technical matter, our definition of abstract $\g$-crystals roughly follows Kashiwara for type $\g=\gl_n$~\cite[Def.~1.2.1]{Kashiwara93} and Grancharov \textit{et al.}\ for type $\g=\q_n$~\cite[Def.~1.9]{GJKKK15}.

Throughout, $\g$ be will be one of three specific types $\gl_n$, $\q_n$, or $\qq_n$, each depending on a positive integer parameter $n$.
When discussing types $\q_n$ and $\qq_n$, we require $n\geq 2$.
For each choice of $\g$ there will be an associated index set $I$, which will always contain $[n-1]=\{1,2,\dotsc,n-1\}$ as a subset. The exact description of the index set and other data associated to each $\g$
will be presented in Sections~\ref{gl-sect},~\ref{q-sect}, and~\ref{qq-sect}.

We first explain the definition of a category of \defn{abstract $\g$-crystals} whose objects are nonempty sets $\cB$ with several associated maps.
Each abstract $\g$-crystal $\cB$ is equipped with a family of
\defn{crystal operators} $e_i, f_i \colon \cB \to \cB \sqcup \{\zero\}$ (where $\zero\notin \cB$ is an auxiliary element),
a family of statistics $\varepsilon_i, \varphi_i \colon \cB \to \NN$, 
and a \defn{weight function} $\weight \colon \cB \to \NN^n$.
The crystal operators and statistics are indexed by $i \in I$.
We require these maps to satisfy the following axioms:
\begin{enumerate}
\item[(1)] $\weight(e_i b) = \weight(b) + \e_i - \e_{i+1}$ for all $i \in [n-1]$ and $b \in \cB$ such that $e_i b \neq \zero$;
\item[(2)] $\weight(b)_i - \weight(b)_{i+1} = \varphi_i(b) - \varepsilon_i(b)$ for all $i \in [n-1]$ and $b \in \cB$;
\item[(3)] $\varepsilon_i(e_i b) = \varepsilon_i(b) - 1$ for all $i \in I$ and $b \in \cB$ such that $e_i b \neq \zero$;
\item[(4)] $\varphi_i(e_i b) = \varphi_i(b) + 1$ for all $i \in I$ and $b \in \cB$ such that $e_i b \neq \zero$;
\item[(5)] $e_i b = c$ if and only if $b = f_i c$ for all $i \in I$ and $b,c \in \cB$.
\end{enumerate}
We will often refer to the set $\cB$ as an abstract $\g$-crystal when the crystal operators, statistics $\varepsilon_i$ and $ \varphi_i$, and weight function are clear from context.

For two abstract $\g$-crystals $\cB$ and $\cC$,   a \defn{crystal morphism} $\psi \colon \cB \to \cC$ is a set-theoretic map $\psi \colon \cB \to \cC \sqcup \{\zero\}$ 
such that for all $i \in I$ the following holds:
\begin{enumerate}
\item[(a)] if $b,e_i b \in \cB$ and $\psi(b), \psi(e_i b) \in \cC$ then $\psi(e_i b) = e_i \psi(b)$, and similarly for $f_i$; and
\item[(b)] if $b \in \cB$ and $\psi(b) \in \cC$ then $\varepsilon_i$, $\varphi_i$, and $\weight$ each take the same values on $b$ and $\psi(b)$.
\end{enumerate}
A crystal morphism is \defn{strict} if it commutes with $e_i$ and $f_i$ for all $i \in I$, where it is understood that $e_i \zero = f_i \zero = \zero$.
The morphism $\psi$ is an \defn{embedding} if it is an injective map of sets $\cB \to \cC$ (so that, in particular, $\zero$ is not in the image).
An \defn{isomorphism} $\cB \to \cC$ is a strict morphism that defines a bijective map of sets $\cB \to \cC$.
When $\cB$ is isomorphic to $\cC$, we write $\cB \iso \cC$.

An abstract $\g$-crystal $\cB$ is a \defn{subcrystal} of another abstract $\g$-crystal  $\cC$ if $\cB$ is a subset of $\cC$ and the inclusion map is a crystal embedding. We indicate this situation by writing  $\cB \subseteq \cC$. 
We say $\cB$ is a \defn{full subcrystal} if $\cB \subseteq \cC$ and the embedding $\psi$ is strict.
An abstract $\g$-crystal $\cB$ is \defn{connected} if the only full subcrystal is $\cB$ itself. 
The disjoint union of two abstract $\g$-crystals $\cB$ and $ \cC$ naturally forms a larger abstract $\g$-crystal, which we denote by $\cB \oplus \cC$.

Next, we explain how 
to construct two full subcategories of abstract $\g$-crystals, whose objects will be called \defn{$\g$-crystals} and \defn{normal $\g$-crystals}.
This will involve two pieces of additional data associated with $\g$.
First, for each $\g$ there will be a distinguished abstract $\g$-crystal $\BB$ with $|\BB|<\infty$ called the \defn{standard crystal}.
Second, there will be a tensor product rule that describes an abstract $\g$-crystal structure on the set $\BB^{\otimes m}$ for each $m \in \NN$.
If $m>0$ then the elements of $\BB^{\otimes m}$ are the formal tensors $b_1\otimes b_2\otimes \cdots \otimes b_m$, where each $b_i \in \BB$.
When $m=0$ we interpret $\BB^{\otimes m}$ as the single-element set $\{ \mathbf{1} \}$, where $\weight(\mathbf{1}) = 0 \in \NN^n$.

The way that the crystal operators $e_i $ and $f_i$ are defined on $\BB^{\otimes m}$ will vary for each choice of $\g$.
However, in all cases, the weight function on $\BB^{\otimes m}$ will be given by 
\be\label{weight-eqs}
\weight(b_1 \otimes b_2 \otimes \dotsm \otimes b_m) = \weight(b_1)+\weight(b_2) + \cdots + \weight(b_m) \in \NN^n
\ee
and the statistics $\varepsilon_i, \varphi_i \colon \BB^{\otimes m} \to \NN$ are defined in terms of the crystal operators by
\be\label{string-eqs}
\varepsilon_i(b) := \max \left\{ k\in \NN \mid e_i^kb \neq \zero \right\}
\quand
\varphi_i(b) := \max \left\{ k \in \NN \mid f_i^kb \neq \zero \right\}.
\ee
We can now specify our two full subcategories of interest.

\begin{definition}
For each type $\g$ (with an associated index set $I$, standard crystal $\BB$, and tensor product crystals $\BB^{\otimes m}$ for $m \in \NN$),
the category of \defn{$\g$-crystals} (respectively, \defn{normal $\g$-crystals}) is the smallest full subcategory of abstract $\g$-crystals that is closed under finite direct sums and contains every object isomorphic to a subcrystal (respectively, full subcrystal) of $\BB^{\otimes m}$ for some $m \in \NN$.
\end{definition}

If $\cB \subseteq \BB^{\otimes p}$ and $\cC\subseteq \BB^{\otimes q}$ are nonempty sets for $p,q \in \NN$, then we can identify the set of formal tensors $\cB \otimes \cC := \{ b\otimes c \mid b \in \cB\text{ and }c\in \cC\}$ with a nonempty subset of $\BB^{\otimes (p+q)}$.
The abstract $\g$-crystal structure on  $\BB^{\otimes (p+q)}$ induces an abstract $\g$-crystal structure  on $\cB \otimes \cC$  in which the weight function and statistics $\varepsilon_i$ and $ \varphi_i$ take the values in \eqref{weight-eqs} and \eqref{string-eqs}, but the crystal operators are modified to act as zero whenever they leave  $\cB \otimes\cC$. 
Our notion of $\g$-crystals forms a tensor category in this way.
The tensor product rules for our choices of $\g$ will always have the property that if $\cB \subseteq \BB^{\otimes p}$ and $\cC\subseteq \BB^{\otimes q}$ are full subcrystals, then $\cB \otimes \cC$ is a union of full subcrystals of $\BB^{\otimes (p+q)}$.
We can therefore also view normal $\g$-crystals as a tensor category.

The \defn{character} of a finite $\g$-crystal $\cB$ is the polynomial
\[
\ch (\cB) := \sum_{b \in \cB} \xx^{\weight(b)} \in \NN[x_1, x_2,\dotsc, x_n].
\]
The character behaves functorially in the sense $\ch(\cB \otimes \cC) = \ch(\cB)  \ch(\cC)$, $\ch(\cB \oplus \cC) = \ch(\cB) + \ch(\cC)$, and if $\cB\cong \cC$ then $\ch(\cB) = \ch(\cC)$.

Let $\cB$ be a $\g$-crystal.
For $J \subseteq I$, an element $b \in \cB$ is called \defn{$J$-highest weight} (respectively \defn{$J$-lowest weight}) if $e_i b = \zero$ (respectively $f_i b = \zero$) for all $i \in J$.
We refer to an $I$-highest (respectively $I$-lowest) weight element as \defn{highest weight} (respectively \defn{lowest weight}).
If for all $i \in I$ and $b \in \cB$ it holds that $e_i b = \zero$ (respectively $f_i b = \zero$) if and only if $\varepsilon_i(b) = 0$ (respectively $\varphi_i(b)$), then $\cB$ is \defn{upper seminormal} (respectively \defn{lower seminormal}).
The crystal $\cB$ is \defn{seminormal} if it is upper and lower seminormal.
For $i \in I$, the \defn{$i$-string} containing $b \in \cB$ 
is the set of nonzero elements in the sequence
\[
\ldots, \quad e_i^3 b,\quad e_i^2 b, \quad e_ib, \quad b, \quad f_ib,\quad f_i^2b,\quad f_i^3b,\quad \ldots,
\]
which we view as a connected (but typically not full) subcrystal.
When $\cB$ is seminormal, $\varepsilon_i(b)$ (respectively $\varphi_i(b)$) measures how far $b \in \cB$ is from the start (respectively end) of the $i$-string containing $b$ (which is necessarily unique for any fixed $i \in I$).

\begin{remark}
Our crystal axioms are a slight variation of~\cite[Def.~1.2.1]{Kashiwara93}, and are useful for simplifying the description of $\q_n$-crystals~\cite{GJKKK15} and $\qq_n$-crystals~\cite{MT2021} in the sequel.
Our way of defining the term \defn{normal} is nonstandard, but the resulting property will match the usual definition for what we consider here.
In particular, normal crystals will always be seminormal.

We are also breaking with standard terminology as our $\g$-crystals do not have to correspond to the crystal basis of a (Drinfeld--Jimbo) quantum group module when $\g = \qq_n$. 
However, in the cases when $\g = \gl_n$ and $\g= \q_n$, our normal $\g$-crystals will correspond to quantum group modules.
\end{remark}

It is often useful to visualize an abstract $\g$-crystal $\cB$ by drawing its  \defn{crystal graph}, which is the (weighted) directed graph with vertex set $\cB$ and $I$-labeled edges of the form $b \xrightarrow[\hspace{15pt}]{i} c$
for each $b,c\in \cB$ and $i \in I$ with $f _i b = c$.
The crystal graph completely encodes the crystal operators, but does not uniquely specify the weight function or the statistics $\varepsilon_i$ or $ \varphi_i$.

Finally, we mention how some terminology can be reformulated in terms of the crystal graph.
An $i$-string corresponds to a directed path of maximal length whose edges are all labeled by $i$ in the crystal graph.
A highest (respectively lowest) weight element is a source (respectively sink) vertex.
An abstract $\g$-crystal is connected if its crystal graph is weakly connected as a directed graph.
Relative to the crystal graph, subcrystals correspond to (not necessarily induced) subgraphs while full subcrystals correspond to (weakly) connected components.

\section{Crystals for the general linear Lie algebra}
\label{sec:gl_crystals}

We start this section by reviewing some facts about highest weight $\gl_n$-crystals, which originate in the classical semistandard tableaux description of $\GL_n$-modules.
We then construct a normal $\gl_n$-crystal on the set of ``primed decreasing factorizations'' of reduced words for a fixed permutation.
This slightly generalizes the construction in~\cite{MorseSchilling}. 
Most other crystals in this paper will be derived from these objects, though often with additional crystal operators.

\subsection{Standard crystals and tensor products}\label{gl-sect}

\ytableausetup{boxsize=1.2em}%
Section~\ref{crystal-sect} gave a definition of $\g$-crystals involving an index set $I$, a standard crystal $\BB$, and a crystal structure 
on each tensor power $\BB^{\otimes m}$ for $m \in \NN$.
Here we explain the input data that goes along with this construction 
when $\g=\gl_n$ is   the complex general linear Lie algebra.

For this type, we have $I = [n-1]$.
The \defn{standard $\gl_n$-crystal} is the set $\BB = \left\{ \ytableaushort{1},\ytableaushort{2},\dotsc,\ytableaushort{n}\right\}$ with weight function $\weight(\ytableaushort{i}):=\e_i$ and crystal graph
\[
    \begin{tikzpicture}[xscale=2.4, baseline=-5, yscale=1,>=latex]
      \node at (0,0) (T0) {$\ytableaushort{1}$};
      \node at (1,0) (T1) {$\ytableaushort{2}$};
      \node at (2,0) (T2) {$\ytableaushort{3}$};
      \node at (3,0) (T3) {${\cdots}$};
      \node at (4,0) (T4) {$\ytableaushort{n}$};
      \draw[->,thick]  (T0) -- (T1) node[midway,above,scale=0.75] {$1$};
      \draw[->,thick]  (T1) -- (T2) node[midway,above,scale=0.75] {$2$};
      \draw[->,thick]  (T2) -- (T3) node[midway,above,scale=0.75] {$3$};
      \draw[->,thick]  (T3) -- (T4) node[midway,above,scale=0.75] {$n-1$};
     \end{tikzpicture}.
\]
This means that  $f_i \ytableaushort{i} = \ytableaushort{j}$ if $j = i + 1$ but $f_i \ytableaushort{k} = \zero$ for all $k \neq i$.
The statistics $\varepsilon_i,\varphi_i \colon \BB\to \NN$ for $i \in I$ are defined such that $\varphi_i(\ytableaushort{j}) =1$ if $i=j \in [n-1]$ and otherwise  $\varphi_i(\ytableaushort{j}) =0$,
while $ \varepsilon_i(\ytableaushort{j}) = 1$ if $i=j-1 \in [n-1]$ and otherwise $ \varepsilon_i(\ytableaushort{j}) =0$.
The unique highest and lowest weight elements of $\BB$ are $\ytableaushort{1}$ and $\ytableaushort{n}$, respectively.

It remains to describe the crystal structure on $\BB^{\otimes m}$ for $m\geq 2$.
Recall that the weight function and statistics $\varepsilon_i, \varphi_i$ are given by~\eqref{weight-eqs} and~\eqref{string-eqs}.
The crystal operators are defined inductively by 
the following tensor product rule: if $b \in \BB$ and $c \in \BB^{\otimes(m-1)}$ then
\be
\label{eq:gl_tensor_product}
e_i(b\otimes c) := \begin{cases}
b \otimes (e_i c) &\text{if }\varepsilon_i(b) \leq \varphi_i(c), \\
(e_i b) \otimes c &\text{if }\varepsilon_i(b) > \varphi_i(c),
\end{cases}
\quand
f_i(b\otimes c) := \begin{cases}
b \otimes (f_i c) &\text{if }\varepsilon_i(b) < \varphi_i(c), \\
(f_i b) \otimes c &\text{if }\varepsilon_i(b) \geq \varphi_i(c),
\end{cases}
\ee
where it is understood that $b\otimes \zero = \zero \otimes c = \zero$.
This tensor product uses the left-right convention of Bump--Schilling~\cite{BumpSchilling}, which is the opposite of Kashiwara~\cite{Kashiwara91}.

\begin{remark}
An alternative way to encode the crystal operators on $\BB^{\otimes m}$ is by using the \defn{signature rule}.
Fix some $i \in I$, and consider $b := b_1 \otimes \cdots \otimes b_m \in \BB^{\otimes n}$.
For each box $\ytableaushort{i}$ (respectively, $\ytableaushort{j}$ for $j = i + 1$) we write a right parenthesis ``$\bcp$'' (respectively, a left parenthesis ``$\bop$''), and ignore all other factors.
We then cancel matching pairs ``$\bop \bcp$'' 
until we reach a reduced $i$-signature of the form
\be\label{bcp-eq}
\underbrace{\bcp \bcp \cdots \bcp}_{p \text{ terms}} \, \underbrace{\bop \bop \cdots \bop}_{q\text{ terms}} .
\ee
If $q=0$ (respectively, $p=0$), then $e_ib$ (respectively, $f_ib$) is defined to be $\zero$.
Otherwise, $e_i b$ (respectively $f_i b$) is formed by changing the box corresponding to the leftmost (respectively rightmost) unpaired ``$\bop$'' (respectively ``$\bcp$'') to $\ytableaushort{i}$ (respectively $\ytableaushort{j}$).
If this box is in factor $k$ then $e_i b = b_1 \otimes \cdots \otimes (e_i b_k) \otimes \cdots \otimes b_m$ 
and $f_i b = b_1 \otimes \cdots \otimes (f_i b_k) \otimes \cdots \otimes b_m$.
It follows 
from~\eqref{string-eqs}
that the values of $p$ and $q$ in~\eqref{bcp-eq} are actually $p=\varphi_i(b)$ and $q=\varepsilon_i(b)$.
See \cite[\S2.4]{BumpSchilling} for more details.
\end{remark}

As explained in Section~\ref{crystal-sect}, the above data gives rise to categories of \defn{$\gl_n$-crystals} and \defn{normal $\gl_n$-crystals} 
that are closed under tensor products~\cite{Kashiwara91} (see also~\cite[\S2.3]{BumpSchilling}).
We mention some useful properties of normal $\gl_n$-crystals.

Whether an abstract $\gl_n$-crystal is normal can be detected using local conditions called the \defn{Stembridge axioms}~\cite{Stembridge2003}.
The isomorphism class of any given connected normal $\gl_n$-crystal is determined by the weight $\lambda$ of its unique highest weight element~\cite{Kashiwara91}.
Choose a connected normal $\gl_n$-crystal $\cB(\lambda)$ in the isomorphism class with highest weight $\lambda$. 
The value of $\lambda$ is always a partition in $\NN^n$ and any such partition can occur as the highest weight of some connected normal $\gl_n$-crystal.
The crystal $\cB(\lambda)$ can be identified with the \defn{crystal basis} of a highest weight $U_q(\gl_n)$-module $V(\lambda)$ and its character is the \defn{Schur polynomial}
$
s_{\lambda}(x_1, \dotsc, x_n) := \ch\bigl( \cB(\lambda) \bigr)$~\cite{Kashiwara90}.

Suppose $\cB$ is a tensor power $\BB^{\otimes m}$ of the standard crystal, or more generally any normal $\gl_n$-crystal.
There is an action of $S_n$ on $\cB$ given as follows.
Let $\sigma_i \colon \cB \to \cB$ for $i \in I$ be the map
\be
\label{sigma-def}
\sigma_i(b) 
=
\begin{cases} e_i^{-k}(b) &\text{if }k\leq 0, \\
f_i^k(b) &\text{if }k\geq 0,\end{cases}
\quad\text{where }k := \varphi_i(b) - \varepsilon_i(b) = \weight(b)_i - \weight(b)_{i+1}.
\ee
In terms of the crystal graph, this operator reverses the order of the $i$-strings.
We adopt the convention that $\sigma_w \zero = \zero$ for all $w \in S_n$.

Letting the simple transposition $s_i \in S_n$ act as $\sigma_i$ on $\cB$ extends to a group action~\cite[Thm.~11.14]{BumpSchilling}, and
so for any $w \in S_n$, we can define an operator $\sigma_w := \sigma_{i_1}\sigma_{i_2}\cdots \sigma_{i_{\ell}}$ where $i_1 i_2 \cdots i_{\ell} \in \cR(w)$ is any reduced word.
Applying $\sigma_i$ to $b \in \cB$ affects the weight by interchanging $\weight(b)_i$ and $\weight(b)_{i+1}$; that is, we have $\weight(b) = s_i \weight(b)$.
This recovers the well-known fact that $\ch(\cB)$ is a symmetric polynomial.

Let $u_\lambda$ be the unique highest weight element in $\cB(\lambda)$.
Then $\cB(\lambda)$ has precisely one element of weight $w \lambda$, given by $\sigma_w u_{\lambda}$, for any $w \in S_n$.
In particular, if $w_0 = n\cdots 321 \in S_n$ is the reverse permutation then $\sigma_{w_0} u_{\lambda}$ is the unique element of weight $w_0 \lambda = (\lambda_n,\dotsc,\lambda_2,\lambda_1)$; this element is also the unique lowest weight element of $\cB(\lambda)$.

\subsection{Primed words primer}

Let $\ZZ' := \ZZ - \frac{1}{2}$ and define $x' := x-\frac{1}{2}$ for $x \in \ZZ$, so that 
we have $\lceil x'\rceil = x$ and  $1' < 1 < 2' < 2 < \cdots$ under the natural ordering of $\QQ$.
A number $x \in \ZZ\sqcup\ZZ'$ is \defn{primed} if $x \in \ZZ'$.
\defn{Reversing the prime} on $x \in \ZZ\sqcup \ZZ'$ means to subtract $\frac{1}{2}$ if $x \in \ZZ$ and to add $\frac{1}{2}$ if $x \in \ZZ'$; in symbols, this is the operation
\be
x \mapsto x - \tfrac{1}{2}  + \lceil x \rceil - \lfloor x \rfloor.
\ee
\defn{Swapping the primes} on two elements $x,y \in \ZZ\sqcup\ZZ'$ means to reverse the primes on both numbers if exactly one is in $\ZZ$ and the other in $\ZZ'$ and to leave the numbers unchanged otherwise.
We refer to finite sequences $i_1i_2\cdots i_\ell$ with $i_j \in \ZZ\sqcup \ZZ'$ as \defn{primed words}.
When $i=i_1i_2\cdots i_\ell$ is a primed word, let $\lceil i\rceil := \lceil i_1 \rceil \lceil i_2 \rceil \cdots \lceil i_\ell \rceil$.

Fix an element $w \in S_\ZZ$.
 For $j \in \ZZ$ let $s_{j'} := s_j \in S_\ZZ$.
 Define $\cR^+(w)$ to be the set of primed words $i_1i_2\cdots i_\ell$ with $\lceil i_1i _2\cdots i_\ell\rceil \in \cR(w)$.
 For each primed word $i=i_1i_2\cdots i_\ell \in \cR^+(w)$, let 
  \be\label{abj-eq}
  a_j := s_{i_\ell}s_{i_{\ell-1}}\cdots s_{i_{j+1}}(\lceil i_j\rceil)
  \quand b_j := s_{i_\ell}s_{i_{\ell-1}}\cdots s_{i_{j+1}}(\lceil i_j\rceil + 1)
\quad\text{for each $j \in [\ell]$}\ee and then  define $\Marked(i) := \{ (a_j,b_j) \mid j \in [\ell]\text{ with } i_j \in \ZZ'\}$.
We refer to the elements of $\Marked(i)$ as \defn{marked inversions}. 
 This is reasonable as $\Marked(i) $ is a subset of the usual inversion set 
 \be
 \Inv(w) := \{ (a,b) \in \ZZ\times\ZZ \mid a<b\text{ and }w(a)>w(b)\},
\ee
 which is equal to $\{(a_1,b_1),(a_2,b_2),\dotsc,(a_\ell,b_\ell)\}$ by~\cite[Lem.~2.1.4]{Manivel}.

 \begin{example}\label{marked-ex}
If $w=231$ and $i = 1'2 \in \cR^+(w)$ then 
\[
(a_1,b_1) = (s_2(1), s_2(2)) = (1,3)\quand (a_2,b_2) = (2,3),
\]
so we have $\Marked(i) = \{(1,3)\}$.
If $w=3412$ and $i=23'12' \in \cR(w)$, then 
\[
(a_1,b_1)=(1,4), \quad (a_2,b_2)=(2,4), \quad (a_3,b_3)=(1,3), \quand (a_4,b_4)= (2, 3),
\]
so we have $\Marked(i) = \{ (2,4),(2,3)\}$.
 \end{example}
 
The set of marked inversions can be easily read off from the wiring diagram of $i \in \cR^+(w)$.
The \defn{wiring diagram} of an (unprimed) word $i_1i_2\cdots i_\ell$ is given as follows.
Start by drawing horizontal line segments at height $h$ for each $h \in \ZZ$.
Retain the height $h$ as the label of the right endpoint of each line, and 
divide each line into $l$ segments ordered from left to right.
Then for each $j \in[\ell]$, replace the parallel wires in segment $j$ at heights $i_j$ and $1+i_{j}$ by crossing wires.
To convert this picture into a finite diagram, we omit the flat wires at all heights $h \in \ZZ$
that do not satisfy $i_j \leq h \leq i_k$ for any $j,k \in [\ell]$.
For example, the wiring diagrams of the words $12$ and $2321$ are 
\begin{center}
\begin{tikzpicture}[baseline=(dummynode.base), scale=0.5]
\draw (0,0) -- (1,1) -- (2,2);
\draw (0,1) -- (1,0) -- (2,0);
\draw (0,2) -- (1,2) -- (2,1);
\node (dummynode) at (0, 1.0) {};
\node at (2.5,0) {1};
\node at (2.5,1) {2};
\node at (2.5,2) {3};
\end{tikzpicture}
\qquand
\begin{tikzpicture}[baseline=(dummynode.base), scale=0.5]
\draw (0,0) -- (1,0) -- (2,0) -- (3,0) -- (4,1);
\draw (0,1) -- (1,2) -- (2,3) -- (3,3) -- (4,3);
\draw (0,2) -- (1,1) -- (2,1) -- (3,2) -- (4,2);
\draw (0,3) -- (1,3) -- (2,2) -- (3,1) -- (4,0);
\node (dummynode) at (0, 1.5) {};
\node at (4.5,0) {1};
\node at (4.5,1) {2};
\node at (4.5,2) {3};
\node at (4.5,3) {4};
\end{tikzpicture}
\end{center}
respectively.
The wiring diagram of a primed word $i$ is obtained from that of $\lceil i \rceil$
by marking the unique crossing in segment $j$ when $i_j \in \ZZ'$.
The wiring diagrams of $1'2 $ and $23'12' $ are
\begin{center}
\begin{tikzpicture}[baseline=(dummynode.base), scale=0.5]
\draw (0,0) -- (1,1) -- (2,2);
\draw (0,1) -- (1,0) -- (2,0);
\draw (0,2) -- (1,2) -- (2,1);
\node (dummynode) at (0, 1.0) {};
\node at (2.5,0) {1};
\node at (2.5,1) {2};
\node at (2.5,2) {3};
\filldraw [black,fill=black] (0.5,0.5) circle (4pt);
\end{tikzpicture}
\qquand
\begin{tikzpicture}[baseline=(dummynode.base), scale=0.5]
\draw (0,0) -- (1,0) -- (2,0) -- (3,1) -- (4,2);
\draw (0,1) -- (1,2) -- (2,3) -- (3,3) -- (4,3);
\draw (0,2) -- (1,1) -- (2,1) -- (3,0) -- (4,0);
\draw (0,3) -- (1,3) -- (2,2) -- (3,2) -- (4,1);
\node (dummynode) at (0, 1.5) {};
\node at (4.5,0) {1};
\node at (4.5,1) {2};
\node at (4.5,2) {3};
\node at (4.5,3) {4};
\filldraw [black,fill=black] (1.5,2.5) circle (4pt);
\filldraw [black,fill=black] (3.5,1.5) circle (4pt);
\end{tikzpicture}\end{center}
for example.
As we see by comparing these pictures with Example~\ref{marked-ex}, if $i \in \cR^+(w)$, then
a pair $(a,b) \in \ZZ\times \ZZ$ belongs to $ \Marked(i)$ if and only if $a<b$ and wires $a$ and $b$ intersect at a marked crossing
in the wiring diagram of $i$.

Recall that \defn{Coxeter--Knuth equivalence} is the transitive closure of the symmetric relation on (unprimed) words with
$
 \cdots acb \cdots \sim \cdots cab \cdots 
$
and
$
\cdots bca \cdots \sim \cdots bac \cdots 
$
for all integers $a<b<c$, along with
$
\cdots aba \cdots \sim \cdots  bab \cdots 
$
for all integers $a$ and $b$ with $\abs{a-b}=1$.
In these expressions, the corresponding ellipses on either side are required to mask identical subwords.
We define \defn{primed Coxeter--Knuth equivalence} to be the transitive closure of 
the symmetric relations
 \be\label{ock2-eq}
  \cdots ACB \cdots \sim \cdots CAB \cdots 
\quand
\cdots BCA \cdots \sim \cdots BAC \cdots 
\ee
for all $A,B,C \in \frac{1}{2}\ZZ$ with $\lceil A \rceil < \lceil B\rceil  < \lceil C  \rceil$, together with the symmetric relations
 \be\label{ock3-eq}
 \ba \cdots aba\cdots \sim  \cdots bab\cdots, \\
 \cdots a'b'a'\cdots \sim  \cdots b'a'b'\cdots,
  \ea
  \quad
  \ba  
\cdots a'ba\cdots &\sim \cdots bab'\cdots,
  \\
    \cdots ab'a\cdots &\sim  \cdots ba'b\cdots,
    \ea\quad
    \ba
      \cdots a'b'a\cdots &\sim  \cdots ba'b'\cdots,
          \\
      \cdots a'ba'\cdots &\sim  \cdots b'ab'\cdots,
      \ea
  \ee
  for all $a,b \in \ZZ$ with $|a-b|=1$. We denote primed Coxeter--Knuth equivalence by $\simCK$.
Two words equivalent under $\simCK$ have the same number of primed letters,
 so primed Coxeter--Knuth equivalence restricts to ordinary Coxeter--Knuth equivalence on unprimed words.
 
 \begin{lemma}
Suppose $w \in S_\ZZ$ and $i,j\in \cR^+(w)$. If $i \simCK j$ then $\Marked(i) = \Marked(j)$.
 \end{lemma}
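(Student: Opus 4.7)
The plan is to show that $\simCK$-invariance of $\Marked$ reduces to a local check, and then to carry out that check using the wiring-diagram characterization stated immediately before the lemma: for $i \in \cR^+(w)$, a pair $(a,b)$ lies in $\Marked(i)$ exactly when $a<b$ and wires $a,b$ meet at a \emph{marked} crossing in the wiring diagram of $i$. Since $\simCK$ is by definition the transitive closure of the elementary relations in~\eqref{ock2-eq} and~\eqref{ock3-eq}, it suffices to verify that each such elementary move preserves $\Marked$. Each move alters $i$ only in three consecutive positions, so the wiring diagrams agree outside the corresponding three-column strip; only the local picture needs to be inspected, and I need only check that the set of wire-pairs meeting at a marked crossing inside that strip is the same on both sides.

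For the commutation moves $ACB \sim CAB$ and $BCA \sim BAC$ in~\eqref{ock2-eq}, the strict inequalities $\lceil A\rceil < \lceil B\rceil < \lceil C\rceil$ force $\lceil C\rceil - \lceil A\rceil \geq 2$. Hence the swaps at heights $\lceil A\rceil$ and $\lceil C\rceil$ involve four pairwise distinct wires, and exchanging their order in the word yields a wiring diagram differing only by the vertical repositioning of two disjoint crossings. Each crossing retains its underlying pair of wires and its marking, and the third (unchanged) crossing at height $\lceil B\rceil$ involves the same wires on both sides. So $\Marked$ is unchanged.

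For the braid moves in~\eqref{ock3-eq}, only three wires $u<v<w$ are involved. A direct unwinding of the wiring diagrams of $aba$ and $bab$ shows that on each side the three crossings realize the three unordered pairs $\{u,v\}, \{u,w\}, \{v,w\}$ exactly once; this gives a canonical bijection between crossings on the two sides via the wire-pair that is interchanged. The verification then reduces to checking, for each of the six listed relations, that the set of pairs among $\{u,v\}, \{u,w\}, \{v,w\}$ corresponding to marked crossings agrees on both sides. For instance, in $ab'a \sim ba'b$ the single marked crossing on each side swaps the outer wires $u$ and $w$; in $a'b'a \sim ba'b'$ the marked crossings on each side are exactly those swapping $\{u,v\}$ and $\{u,w\}$; and in $a'ba' \sim b'ab'$ they are those swapping $\{u,v\}$ and $\{v,w\}$. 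The remaining three relations are handled identically.

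The main obstacle is the enumeration for~\eqref{ock3-eq}: because the primes on the two sides of each braid relation sit in different positions, the matching of marked crossings cannot be done symbol-by-symbol but must be read off via the wire-pair bijection above. Once one adopts the wiring-diagram viewpoint, however, each of the six cases reduces to a short inspection, and the commutation cases are essentially trivial.
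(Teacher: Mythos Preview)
Your argument is correct and follows essentially the same wiring-diagram approach as the paper: reduce to elementary moves, observe that commutations swap two disjoint crossings, and for the braid moves match crossings on the two sides via the pair of wires they exchange. The paper streamlines the braid case with one uniform observation you did not make explicit: in every relation of~\eqref{ock3-eq} the marked positions on the right are precisely the reversal (first $\leftrightarrow$ last) of those on the left, and the wire-pair bijection is also this reversal, so no separate case analysis is needed. One small slip: your stated wire-pairs for $a'b'a \sim ba'b'$ are $\{u,v\},\{u,w\}$ only when $a>b$; when $a<b$ they are $\{v,w\},\{u,w\}$, though of course both sides still agree, so your conclusion is unaffected.
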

 
 \begin{proof}
 If $i$ and $j$ differ by a relation in~\eqref{ock2-eq},
then the wiring diagram of $j$ is obtained from that of $i$  by switching the order of the two crossings
in the adjacent positions of $A$ and $C$. These crossings involve disjoint pairs of wires since $|\lceil A \rceil - \lceil C\rceil| >1$,
so $\Marked(i) = \Marked(j)$.
 If $i$ and $j$ differ by a relation in~\eqref{ock3-eq}, then the wiring diagram of $j$ is obtained from that of $i$ 
 by applying  
\[
\begin{tikzpicture}[baseline=(dummynode.base), scale=0.5]
\draw (0,0) -- (1,1) -- (2,2) -- (3,2);
\draw (0,1) -- (1,0) -- (2,0) -- (3,1);
\draw (0,2) -- (1,2) -- (2,1) -- (3,0);
\node (dummynode) at (0, 1.0) {};
\end{tikzpicture}
\quad\leftrightarrow
\quad
\begin{tikzpicture}[baseline=(dummynode.base), scale=0.5]
\draw (0,0) -- (1,0) -- (2,1) -- (3,2);
\draw (0,1) -- (1,2) -- (2,2) -- (3,1);
\draw (0,2) -- (1,1) -- (2,0) -- (3,0);
\node (dummynode) at (0, 1.0) {};
\end{tikzpicture}
\]
to three consecutive crossings and then reversing the order of the associated markings, so that the first/middle/last crossing is marked in 
$j$ if and only if the last/middle/first crossing is respectively marked in $i$. 
The wires that intersect at the  first/middle/last crossing in $j$ are the same as  the wires that 
respectively intersect at the last/middle/first crossing in $i$, so again $\Marked(i) = \Marked(j)$.
\end{proof}

\subsection{Crystal structure on reduced factorizations}
\label{rf-sect}

Continue to fix $w \in S_\ZZ$.
Let $\rf(w)$ (respectively, $\rf^+(w)$) be the set of sequences $a=(a^1,a^2,\cdots )$ of 
 decreasing words with concatenation $a^1a^2\cdots\in \cR(w)$
 (respectively, $a^1a^2\cdots\in \cR^+(w)$).
 We refer to the elements of $\rf^+(w)$ as  \defn{(decreasing) reduced factorizations} of $w$.

 For $A \subseteq \Inv(w)$, let $\cR^+(w,A)$ be the set of primed words $i\in \cR^+(w)$ with $\Marked(i)=A$,
 and let $\rf^+(w,A)$ be the set of $a= (a^1,a^2,\ldots) \in \rf^+(w)$ with $a^1a^2\cdots \in \cR^+(w,A)$.
Then we have 
\[
\cR^+(w) = \bigsqcup_{A} \cR^+(w,A),
\quad
\rf^+(w) = \bigsqcup_{A} \rf^+(w,A),
\quand
\rf(w) = \rf^+(w,\emptyset),
\]
where the unions are over all subsets $A\subseteq \Inv(w)$.
Define  $\unprime \colon \rf^+(w) \to \rf(w)$ by
 \be
 \unprime(a) := (\lceil a^1\rceil, \lceil a^2\rceil ,\ldots).
 \ee 

Let $\rf_n^+(w)$ denote the set of  tuples $a=(a^1,a^2,\cdots) \in \rf^+(w)$ with $a^i$ empty for all $i>n$.
Define $\RF(w)\subseteq \rf(w)$ and $\RF^+(w,A) \subseteq \rf^+(w,A)$ 
similarly.
When convenient, we identify $a=(a^1,a^2,\ldots) \in \RF^+(w)$ with the $n$-tuple $(a^1,a^2,\dotsc,a^n)$.
The \defn{weight} of $a   \in \RF^+(w)$ is 
\be\label{weight-eq}\weight(a) := (\ell(a^1),\ell(a^2),\dotsc,\ell(a^n))\in \NN^n.\ee
The set  $\RF^+(w,A)$ can be empty if $n$ is too small. 
Specifically, recall that the \defn{(Lehmer) code} of $w \in S_\ZZ$
consists of the numbers
$ c_i(w) := \abs{ \{ j \in \ZZ \mid i < j \text{ and } w(i) > w(j)\} }$ for $i \in \ZZ$.

\begin{proposition}\label{rf-nonempty-prop}
The set $\RF^+(w,A)$ is nonempty if and only if $c_i(w^{-1}) \leq n$ for all $i \in\ZZ$.
\end{proposition}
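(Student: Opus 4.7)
The plan is to reduce the statement to the unprimed case, then prove each direction of the resulting biconditional. For the reduction, given any $a = (a^1, \dots, a^n) \in \rf_n(w)$ with concatenated reduced word $i \in \cR(w)$, the bijection $j \mapsto (a_j, b_j)$ from positions of $i$ to $\Inv(w)$ allows us to prime exactly those positions corresponding to elements of $A$. Since priming converts $\lceil x \rceil$ to $\lceil x \rceil - \tfrac{1}{2}$ within the total order $1' < 1 < 2' < 2 < \cdots$, and this preserves strict decreasing order within each factor, the result is an element of $\rf_n^+(w, A)$. Consequently $\RF^+(w, A) \neq \emptyset$ iff $\RF(w) \neq \emptyset$, and it suffices to show that $w$ admits a reduced factorization into at most $n$ decreasing blocks iff $c_i(w^{-1}) \leq n$ for all $i$.

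The forward direction proceeds by a wire-counting argument in the wiring diagram of the concatenation. For each right-labeled wire $r$, a direct analysis using the definition of $(a_j, b_j)$ identifies the up-crossings of wire $r$ with the inversions $(a, r) \in \Inv(w)$ having $a < r$, giving a total of $|\{a < r : w(a) > w(r)\}| = c_{w(r)}(w^{-1})$ such up-crossings. The central observation is that each wire participates in at most one up-crossing within any single decreasing factor $a^k = b_1 > b_2 > \cdots > b_m$: if the wire first moves up at crossing $j$ from height $b_j$ to $b_j + 1$, then every subsequent crossing $j' > j$ in the same factor satisfies $b_{j'} < b_j$, so $b_{j'} + 1 \leq b_j < b_j + 1$, meaning the wire at height $b_j + 1$ is not involved in crossing $j'$. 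Summing over the $n$ factors yields $c_v(w^{-1}) \leq n$ for every $v$.

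The reverse direction is more delicate and would be argued by induction on $\ell(w)$, with trivial base case $\ell(w) = 0$. For the inductive step, pick any right descent $s_k$ of $w$; writing $a = w(k)$ and $b = w(k+1)$, a case analysis on whether $v \in \{a,b\}$, $v < b$, $b < v < a$, or $v > a$ verifies that $c_v((ws_k)^{-1}) \leq c_v(w^{-1}) \leq n$ for all $v$. By induction, $w' = ws_k$ admits a decreasing factorization $(b^1, \dots, b^n)$, and one then appends $k$ to the last factor $b^j$ whose entries all strictly exceed $k$, obtaining a decreasing factorization of $w$. The main obstacle is showing that such a factor always exists among the first $n$ slots and that the appended letter preserves the reduced-word property; this may require strengthening the inductive hypothesis to track the structure of the last non-empty factor, or choosing $k$ more carefully. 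A cleaner alternative invokes the character-theoretic interpretation: $\rf_n(w)$ has character equal to the truncated Stanley symmetric function $F_w(x_1, \dots, x_n)$, which is nonzero iff some Schur component $s_\mu$ in the expansion of $F_w$ satisfies $\ell(\mu) \leq n$, and the minimum such $\ell(\mu)$ is known to equal $\max_v c_v(w^{-1})$.
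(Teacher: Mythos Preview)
Your reduction to the unprimed case and your forward direction are both correct. The wire-counting argument---identifying up-crossings of wire $r$ with inversions $(a,r) \in \Inv(w)$ and observing that a wire can move up at most once within any strictly decreasing factor---cleanly gives $c_{w(r)}(w^{-1}) \leq n$ for every $r$, and is more explicit than anything the paper writes down.

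The reverse direction, however, has a genuine gap. In your inductive step you propose appending $k$ to an intermediate factor $b^j$, but this changes the concatenated word to $b^1 \cdots b^j\, k \, b^{j+1} \cdots b^n$, which is generally \emph{not} a reduced word for $w = w' s_k$; reducedness of $b^1 \cdots b^n$ for $w'$ only guarantees that $b^1 \cdots b^n k$ is reduced for $w$, so the letter $k$ can only be legally appended to the last factor. Your fallback via the Schur expansion of $F_w$ asserts that $\min\{\ell(\mu) : a_{w,\mu} > 0\} = \max_v c_v(w^{-1})$, but this identity is essentially equivalent to the proposition itself and is not independently easier to establish. The paper avoids arguing either direction directly: after reducing to $\RF(w)$, it observes that $a \mapsto (\reverse(a^n), \dots, \reverse(a^1))$ is a bijection from $\RF(w)$ to $n$-tuples of strictly \emph{increasing} words with concatenation in $\cR(w^{-1})$, and then cites a known criterion for the nonemptiness of that set in terms of the code of $w^{-1}$.
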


\begin{proof}
We have $|\RF^+(w,A)|=|\RF(w)|$.
The map $a \mapsto (\reverse(a^n), \dots, \reverse(a^2), \reverse(a^1))$
is a bijection from $\RF(w)$ to the set of $n$-tuples of strictly increasing words with concatenation in $\cR(w^{-1})$.
The latter set is nonempty if and only if $c_i(w^{-1}) \leq n$ for all $i \in\ZZ$ by~\cite[Rem.~3.4]{Marberg2019b}.
\end{proof}

The goal of this section is to describe a normal $\gl_n$-crystal structure on $\RF^+(w,A)$.
This will involve the following pairing procedure based on a similar construction in~\cite[\S3.2]{MorseSchilling}:

\begin{definition}\label{pair-def}
Suppose $i=i_1i_2\cdots i_M$ and $j=j_1j_2\cdots j_N$ are strictly decreasing primed words.
Form a set of paired letters $\pair(i,j)$ by iterating over the letters in $j$ from right to left; at each iteration,
the current letter $j_q$ is paired with the largest unpaired letter $i_p$ with $\lceil i_p\rceil <\lceil j_q\rceil $ (if such a letter exists)
and then $(i_p,j_q)$ is added to $\pair(i,j)$.
\end{definition}

\begin{example}
If $i=12',11,10,9,6,4,3$ and $j=12,8',5,2',1$ then $\pair(i,j) = \{ (4,5), (6,8'),(11,12)\}$.
\end{example}

Our crystal operators $e_i$ and $f_i$ will be given by the following formulas:

\begin{definition}
\label{ef-def}
Suppose $i \in \PP$ and $a = (a^1,a^2,\ldots) \in \rf^+(w)$. 
Construct $e_i a$ and $f_i a$ from $a$ by the following procedures.
\begin{enumerate}
\item[\defn{$e_i$}:] Set $e_i a := \zero$ if every letter in $a^{i+1}$ is the last term of some $(b,c)\in\pair(a^i,a^{i+1})$.
 
 Otherwise, let $x \in \frac{1}{2}\ZZ$ be the largest unpaired letter in $a^{i+1}$.

Then let $q \in \NN$ be minimal with $\lceil x\rceil +q$ not in $ \lceil a^{i}\rceil $.

Finally, form $e_i(a)$  by removing $x$ from $a^{i+1}$, adding $x+q$ to $a^{i}$ in the position of a decreasing word, and 
 swapping the primes on each pair $(b,c) \in \pair(a^i,a^{i+1})$ with $\lceil x\rceil +q \geq \lceil c\rceil >\lceil x\rceil$.

\item[\defn{$f_i$}:]  Set  $f_i a := \zero$ if every letter in $a^i$ is the first term of some $(b,c)\in\pair(a^i,a^{i+1})$.
 
 Otherwise, let $y \in \frac{1}{2}\ZZ$ be the smallest unpaired letter in $a^i$.

Then let $q \in \NN$ be minimal with $\lceil y\rceil -q$ not in $ \lceil a^{i+1}\rceil $.

Finally, form $f_i(a)$ by removing $y$ from $a^i$, adding $y-q$ to $a^{i+1}$ in the position of a decreasing word, and 
  swapping the primes on each pair $(b,c) \in \pair(a^i,a^{i+1})$ with $\lceil y\rceil > \lceil b\rceil \geq \lceil y\rceil- q$.
  \end{enumerate}
\end{definition}

\begin{example}
We have $e_1(97'651,763') = f_1(7' 651,9763') = \zero$, while
 \[
 (9{\color{red}7'}651,763')  \xmapsto{\ f_1\ }
  (9651,76{\color{red}5'}3')
  \quand
 (9{\color{red}7}6{\color{red}5}41,7{\color{red}6'}52)  \xmapsto{\ f_1\ } 
 (96{\color{red}5'}41,7{\color{red}6}5{\color{red}4}2),
 \]
 with $e_1$ acting in reverse.
Note that  $\weight\bigl( (97'651,763') \bigr) = (5,3)$ and $\weight\bigl( (9651,765'3') \bigr) = (4,4)$.
\end{example}

\begin{remark}
\label{rem:Lusztig_twist}
We can already deduce from known results that the set $\RF(w)$ is a normal $\gl_n$-crystal (when is it is nonempty)
relative to the given crystal operators and weight map.
The \defn{Lusztig involution} is an operation on normal $\gl_n$-crystals that swaps the highest and lowest weight elements of each connected component while interchanging the crystal operators $e_i \leftrightarrow f_{n-i}$; see~\cite{Lenart07} or~\cite[Ex. 5.2]{BumpSchilling}.
Our prospective crystal structure on $\RF(w)$ is given by twisting
the normal $\gl_n$-crystal structure in~\cite[\S3]{MorseSchilling} (and later used in \cite[\S4.3]{AssafSchilling}) by the Lusztig involution.
\end{remark}

To show that $\RF^+(w)$ is a normal $\gl_n$-crystal, 
we will leverage the preceding observations with the following lemma,
which  is clear from the definitions:

\begin{lemma}
\label{lemma:prime_reduction}
The operators $e_i$ and $f_i$ commute with $\unprime$ under the convention $\unprime(\zero) = \zero$.
\end{lemma}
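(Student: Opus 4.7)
The statement is essentially a direct unpacking of Definition~\ref{ef-def} combined with Definition~\ref{pair-def}, so my plan is to verify each clause of the $e_i$ and $f_i$ procedures is compatible with taking ceilings. I will focus on $f_i$; the argument for $e_i$ is symmetric.

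The first key observation is that the pairing procedure itself depends only on ceilings: in Definition~\ref{pair-def}, the comparison ``$\lceil i_p \rceil < \lceil j_q \rceil$'' is manifestly insensitive to whether the letters are primed. Hence as indexed subsets of positions, $\pair(a^i, a^{i+1})$ and $\pair(\lceil a^i\rceil, \lceil a^{i+1}\rceil)$ coincide. In particular, since ceilings are strictly decreasing in each factor, the ``smallest unpaired letter'' $y$ in $a^i$ sits at exactly the position where $\lceil y\rceil$ is the smallest unpaired letter in $\lceil a^i\rceil$, and the zero-condition ``every letter in $a^i$ is the first term of some pair'' holds for $a$ if and only if it holds for $\unprime(a)$. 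This immediately handles the case $f_i a = \zero$ via the convention $\unprime(\zero) = \zero$.

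Next, the integer $q$ depends only on $\lceil y\rceil$ and $\lceil a^{i+1}\rceil$, so it takes the same value for $a$ and $\unprime(a)$. In the nonzero case, $f_i a$ is obtained from $a$ by three operations: (i) removing $y$ from $a^i$, (ii) inserting $y - q$ into $a^{i+1}$ in the unique position maintaining a decreasing word, and (iii) swapping primes on certain pairs in $\pair(a^i, a^{i+1})$. Operation (iii) leaves all ceilings unchanged. For (i) and (ii), I will use the elementary identities $\lceil y\rceil$ is the ceiling of the removed letter and $\lceil y - q\rceil = \lceil y\rceil - q$ (which holds since $q \in \NN$), so the effect on ceilings is precisely to remove $\lceil y\rceil$ from $\lceil a^i\rceil$ and insert $\lceil y\rceil - q$ into $\lceil a^{i+1}\rceil$ in the position of a decreasing word. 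Since the order-preserving insertion position is also determined by ceilings, this matches exactly the result of applying $f_i$ to $\unprime(a)$.

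Combining these observations yields $\unprime(f_i a) = f_i \unprime(a)$, and the same argument with the roles of $a^i$ and $a^{i+1}$ interchanged gives $\unprime(e_i a) = e_i \unprime(a)$. There is no real obstacle; the only care needed is in confirming that the insertion position in step (ii) and the identification of the extremal unpaired letter are both determined purely by the ceiling data, which follows from the strict decrease of ceilings within each factor.
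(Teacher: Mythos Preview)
Your proposal is correct and takes essentially the same approach as the paper, which simply asserts the lemma is ``clear from the definitions'' without further argument. Your write-up spells out precisely why: the pairing, the zero test, the choice of extremal unpaired letter, the value of $q$, and the insertion position all depend only on ceilings, while the prime-swapping step is invisible to $\unprime$; the one implicit fact you rely on---that ceilings within each factor are strictly decreasing---holds because a reduced word cannot repeat a letter in adjacent positions, so no factor contains both $x$ and $x'$.
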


An important property that is not clear from the definitions is that $e_i$ and $f_i$ actually define maps $\rf^+(w) \to \rf^+(w)\sqcup\{\zero\}$.
By  Lemma~\ref{lemma:prime_reduction}, this is equivalent to the claim that $e_i $ and $f_i$ are maps $\rf(w) \to \rf(w)\sqcup\{\zero\}$, which 
can be deduced from results in~\cite{MorseSchilling} via Remark~\ref{rem:Lusztig_twist}.

We want to prove the stronger statement that the crystal operators define maps $\rf^+(w, A) \to \rf^+(w, A) \sqcup\{\zero\}$ for any fixed $A \subseteq \Inv(w)$.
To check this, we need one technical lemma from~\cite{MorseSchilling}.
We include a short proof to explain how our version of this result follows from prior work.

 \begin{lemma}[{See~\cite[\S3.3]{MorseSchilling}}]
 \label{cons-lem}
 Suppose $i \in \PP$ and $a=(a^1,a^2,\cdots ) \in \rf^+(w)$.  
 \ben 
 \item[(a)] If $e_i a \neq \zero$ and $x$ and $q$ are as in Definition~\ref{ef-def},
then
 \bei
\item  $\lceil a^i\rceil $ contains 
 $\lceil x \rceil+q-1,\dotsc, \lceil x \rceil+2, \lceil x \rceil+1,\lceil x \rceil$
but not  $\lceil x \rceil - 1$ or $\lceil x \rceil+q$, and
 \item   $\lceil a^{i+1}\rceil $ contains
 $\lceil x \rceil+q,\dotsc, \lceil x \rceil+2, \lceil x \rceil+1,\lceil x \rceil$
 but not $\lceil x \rceil+q+1$.
 \eei
 
 \item[(b)]
 If $f_ib \neq \zero$ and $y$ and $q$ are as in Definition~\ref{ef-def},
then 
\bei
\item $\lceil a^i\rceil $ contains
 $\lceil y \rceil , \lceil y \rceil-1, \lceil y \rceil-2,\dotsc,\lceil y \rceil-q$
but not $\lceil y \rceil-q-1$, and

  \item $\lceil a^{i+1}\rceil $ contains
$\lceil y \rceil , \lceil y \rceil-1, \lceil y \rceil-2,\dotsc,\lceil y \rceil-q+1$
 but not $\lceil y\rceil +1$ or  $\lceil y \rceil-q$. 
\eei
\een
 \end{lemma}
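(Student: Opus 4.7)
The plan is to reduce to the classical (unprimed) setting and then import the analogous statement from \cite{MorseSchilling} via the Lusztig-involution twist described in Remark~\ref{rem:Lusztig_twist}. The crucial observation is that the conclusions of both parts (a) and (b) only involve the underlying unprimed sequences $\lceil a^i\rceil$ and $\lceil a^{i+1}\rceil$, and the quantities $x$, $y$, $q$ in Definition~\ref{ef-def} are themselves computed from these unprimed sequences together with the pairing $\pair(a^i,a^{i+1})$, which only refers to the ceilings of letters. So the statements do not actually depend on which letters of $a$ carry primes.

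Concretely, I would first invoke Lemma~\ref{lemma:prime_reduction} to say that $\unprime(e_ia) = e_i\unprime(a)$ and $\unprime(f_ia)=f_i\unprime(a)$, so if we set $b:=\unprime(a)\in \rf(w)$ then the values of $x$, $y$, and $q$ produced by $e_i$ and $f_i$ applied to $a$ agree with those produced when the same operators act on $b$. Hence it suffices to prove the lemma when $a=b\in\rf(w)$ consists of ordinary (unprimed) decreasing factors. Next, I would apply the Lusztig involution twist of Remark~\ref{rem:Lusztig_twist}: the crystal operators $e_i,f_i$ on $\rf(w)$ defined above are conjugates of the Morse--Schilling operators $\tilde e_{n-i},\tilde f_{n-i}$ from \cite[\S3]{MorseSchilling} by the map sending $a=(a^1,\dots,a^n)$ to $\bar a = (a^n,\dots,a^1)$. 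Under this relabelling, the factor $a^i$ becomes $a^{n-i+1}$ and the pairing procedure of Definition~\ref{pair-def} matches the pairing of \cite[\S3.2]{MorseSchilling} up to reversal, so the values of $x$ and $q$ produced by our $e_i$ correspond exactly to those of $\tilde e_{n-i}$ in their convention, and similarly for $f_i$.

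With this dictionary in place, part (a) becomes the assertion that when $\tilde e_j$ fires on an element of $\rf(w^{-1})$, the chain of consecutive integers $\lceil x\rceil,\lceil x\rceil+1,\dotsc,\lceil x\rceil+q$ appears in the expected way in the two affected factors and is bounded on the appropriate side by the absence of $\lceil x\rceil-1$ (respectively, $\lceil x\rceil+q+1$); this is precisely the content of the technical description of $\tilde e_j$ in \cite[\S3.3]{MorseSchilling}. Part (b) follows from the analogous description of $\tilde f_j$. Both statements boil down to the definition of $q$ as the minimal nonnegative integer for which the chain breaks, together with the fact that the letters strictly below $\lceil x\rceil$ (respectively, strictly above $\lceil y\rceil$) are the ones that were paired, hence absent from the relevant factor at the listed positions.

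The main obstacle will be bookkeeping through the Lusztig twist: checking carefully that our left-to-right pairing on decreasing factors matches (after reversal) the right-to-left pairing on increasing factors in \cite{MorseSchilling}, so that \emph{largest unpaired letter in} $a^{i+1}$ on our side really does correspond to what controls $\tilde e_{n-i}$ on their side, and similarly for $f_i$. Once this dictionary is verified, both (a) and (b) are immediate translations of the corresponding statements in \cite[\S3.3]{MorseSchilling}; no new calculation is needed beyond tracing through the prime-reduction and the reversal.
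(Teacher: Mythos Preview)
Your approach matches the paper's: reduce to the unprimed case (since $x$, $y$, $q$, and the pairing all depend only on ceilings) and then cite \cite{MorseSchilling}; the paper does this in two lines, invoking \cite[Lems.~3.8 and~3.9]{MorseSchilling} directly rather than routing through any explicit Lusztig-twist dictionary. Your description of that twist as conjugation by factor reversal $(a^1,\dots,a^n)\mapsto(a^n,\dots,a^1)$ is not literally correct (the Lusztig involution on a normal crystal is more subtle, and the appearance of $\rf(w^{-1})$ is a red flag), but since you ultimately defer the unprimed statement to \cite[\S3.3]{MorseSchilling} anyway, this imprecision does not damage the argument.
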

 
\begin{proof}
No word in $\cR^+(w)$ contains $x$ and $x'$ as adjacent letters,
so no decreasing factor $a^i$ can contain both $x$ and $x'$.
As our pairing ignores all primes, it suffices to demonstrate this lemma when $a \in \rf(w)$, 
and then the desired claims are equivalent to~\cite[Lems.~3.8 and 3.9]{MorseSchilling}.
 \end{proof}

 \begin{lemma}\label{up-lem}
Suppose $A \subseteq \Inv(w)$.
For each $i \in \PP$, 
 both $e_i$ and $f_i$ restrict to maps $\rf^+(w,A) \to \rf^+(w,A)\sqcup\{\zero\}$.
 \end{lemma}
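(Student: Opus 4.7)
By Lemma~\ref{lemma:prime_reduction} we have $\unprime\circ e_i = e_i\circ \unprime$ and $\unprime\circ f_i = f_i\circ \unprime$, and via the Lusztig twist noted in Remark~\ref{rem:Lusztig_twist} the Morse--Schilling crystal structure already guarantees that $e_i$ and $f_i$ send $\rf(w)$ into $\rf(w)\sqcup\{\zero\}$. Since Definition~\ref{ef-def} also preserves the multiset of primed/unprimed letters, it follows that $e_i$ and $f_i$ send $\rf^+(w)$ into $\rf^+(w)\sqcup\{\zero\}$. Hence the lemma reduces to the single claim that $\Marked(f_ia)=\Marked(a)$ whenever $f_ia\neq\zero$; the analogous statement for $e_i$ then follows from $e_if_ia=a$.

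The first step is to localize the argument. Fix $a\in\rf^+(w)$ with $f_ia\neq\zero$, and let $y$ and $q$ be as in Definition~\ref{ef-def}. By Lemma~\ref{cons-lem}(b), the letters of $\lceil a^i\rceil$ and $\lceil a^{i+1}\rceil$ near $\lceil y\rceil$ have a completely rigid form: $\lceil a^i\rceil$ contains $\lceil y\rceil,\lceil y\rceil-1,\dotsc,\lceil y\rceil-q$ (but not $\lceil y\rceil-q-1$), while $\lceil a^{i+1}\rceil$ contains $\lceil y\rceil,\lceil y\rceil-1,\dotsc,\lceil y\rceil-q+1$ (but not $\lceil y\rceil+1$ or $\lceil y\rceil-q$). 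Every letter of $a^1a^2\cdots$ lying outside this block is untouched by $f_i$ and contributes identically to $\Marked(a)$ and $\Marked(f_ia)$, so we may work entirely inside this window.

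The main step is to show that $f_ia$ is obtained from $a$ by a sequence of primed Coxeter--Knuth moves from \eqref{ock3-eq}, and therefore $\Marked$ is preserved by the preceding lemma on $\simCK$. Concretely, one transports $y$ successively past $\lceil y\rceil-1,\lceil y\rceil-2,\dotsc,\lceil y\rceil-q$, at each step realizing the exchange by one of the braid-type moves from \eqref{ock3-eq}; the resulting cascade deposits $y-q$ into $a^{i+1}$ in decreasing-word position. The prime-swap condition $\lceil y\rceil>\lceil b\rceil\geq\lceil y\rceil-q$ in Definition~\ref{ef-def} is engineered to match, pair-by-pair, the prime reshuffling produced by these moves: each pair $(b,c)\in\pair(a^i,a^{i+1})$ in that range corresponds to two adjacent crossings in the wiring diagram whose relative order is reversed during the cascade, and \eqref{ock3-eq} specifies exactly how the primes on those crossings are exchanged.

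\textbf{Main obstacle.} The chief difficulty is bookkeeping: one must match the pair-indexed prime swaps of Definition~\ref{ef-def} against the move-by-move prime reshuffling from \eqref{ock3-eq}, and verify that each intermediate primed word remains in $\cR^+(w)$ so that the Coxeter--Knuth moves are legitimately applicable. I expect to handle this by induction on $q$, with $q=0$ essentially immediate (the pair-swap condition is vacuous and $f_i$ simply shifts $y$ to the front of $a^{i+1}$), and the inductive step peeling off the rightmost paired element within the active window. Once this bookkeeping is complete, the chain $a\simCK f_ia$ gives $\Marked(a)=\Marked(f_ia)$, concluding the proof.
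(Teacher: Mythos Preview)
Your high-level strategy---reduce to showing $\Marked(a)=\Marked(f_ia)$ and deduce this from the earlier lemma that $\simCK$ preserves $\Marked$---is reasonable, but the concrete mechanism you propose does not work. Two specific problems:

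\emph{First}, the assertion that only moves from \eqref{ock3-eq} are needed is false. Already for $q=0$, the letter $y$ need not sit at the end of $a^i$: Lemma~\ref{cons-lem}(b) only rules out $\lceil y\rceil-1$, so $a^i$ may contain letters $\leq\lceil y\rceil-2$ after $y$. Moving $y$ past these letters changes the concatenated word and is not an \eqref{ock3-eq} move. Likewise for $q\geq 2$: in the bare block
\[
\lceil y\rceil,\ \lceil y\rceil-1,\ \ldots,\ \lceil y\rceil-q,\ \lceil y\rceil,\ \lceil y\rceil-1,\ \ldots,\ \lceil y\rceil-q+1
\]
no three consecutive letters form an $aba$ pattern with $|a-b|=1$, so no \eqref{ock3-eq} move is even applicable at the start; one must begin with an \eqref{ock2-eq} commutation. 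Thus your base case is not ``essentially immediate'' and your inductive step cannot be powered by braid-type moves alone.

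\emph{Second}, even after allowing both \eqref{ock2-eq} and \eqref{ock3-eq}, writing down an explicit sequence of primed CK moves from $a$ to $f_ia$ and verifying that the resulting prime placements match the pair-indexed swap rule in Definition~\ref{ef-def} is exactly the ``main obstacle'' you flag---and it is the entire content of the lemma, not a bookkeeping afterthought.

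The paper takes a different and more direct route that avoids CK moves entirely. It works with $e_i$ rather than $f_i$, and uses Lemma~\ref{cons-lem}(a) to pin down the exact local structure of the wiring diagram of $a^ia^{i+1}$ in the window of heights $\lceil x\rceil-1$ through $\lceil y\rceil+2$ (with $y=x+q$). In that window there are $2q+1$ crossings, and one can list precisely which pair of wires meets at each: one crossing is between wires $\lceil x\rceil$ and $\lceil x\rceil+1$, and the remaining $2q$ split into $q$ pairs, each pair involving the same two wires. The same analysis for $e_ia$ shows the identical set of wire-pairs occurs, in a different order; the prime-swap in Definition~\ref{ef-def} is then seen directly to transfer the marks to the matching crossings. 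This gives $\Marked(e_ia)=\Marked(a)$ without ever producing a chain of CK moves.
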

 
  \begin{proof}
First suppose  $a \in \rf^+(w,A)$
and $b := e_i a \neq \zero$.
Let $x$ and $q$ be as in Definition~\ref{ef-def}, and set $y := x+q$.
Consider the section of the wiring diagram of $a^ia^{i+1}$ containing the wires between heights $\lceil x\rceil -1$ and $\lceil y \rceil +2$
and the crossings between the positions of $\lceil y\rceil-1 \in \lceil a^i\rceil$
and $\lceil x\rceil \in \lceil a^{i+1}\rceil$.
We denote this by $\WD(a)$ and define $\WD(b)$ to be the part of the wiring diagram of $b^ib^{i+1}$ in the same location.
Label the wires in both diagrams according to the height of their right endpoints.

The operator $e_i$ acts on $a$  by transforming $\WD(a)$ 
to $\WD(b)$, and if we ignore all primes then this transformation looks like the following picture (where $q=4$):
\[
\begin{tikzpicture}[baseline=(dummynode.base), scale=0.5]
\draw (0,0) -- (1,0) -- (2,0) -- (3,0) -- (4,0) -- (5,0) -- (6,0) -- (7,0) -- (8,0) -- (9,0) -- (10,0) -- (11,0);
\draw (0,1) -- (1,1) -- (2,1) -- (3,1) -- (4,2) -- (5,2) -- (6,2) -- (7,2) -- (8,2) -- (9,2) -- (10,3) -- (11,3);
\draw (0,2) -- (1,2) -- (2,2) -- (3,3) -- (4,3) -- (5,3) -- (6,3) -- (7,3) -- (8,3) -- (9,4) -- (10,4) -- (11,4);
\draw (0,3) -- (1,3) -- (2,4) -- (3,4) -- (4,4) -- (5,4) -- (6,4) -- (7,4) -- (8,5) -- (9,5) -- (10,5) -- (11,5);
\draw (0,4) -- (1,5) -- (2,5) -- (3,5) -- (4,5) -- (5,5) -- (6,5) -- (7,6) -- (8,6) -- (9,6) -- (10,6) -- (11,6);
\draw[darkred,thick] (0,5) -- (1,4) -- (2,3) -- (3,2) -- (4,1) -- (5,1) -- (6,1) -- (7,1) -- (8,1) -- (9,1) -- (10,1) -- (11,2);
\draw[blue,thick] (0,6) -- (1,6) -- (2,6) -- (3,6) -- (4,6) -- (5,6) -- (6,6) -- (7,5) -- (8,4) -- (9,3) -- (10,2) -- (11,1);
\draw (0,7) -- (1,7) -- (2,7) -- (3,7) -- (4,7) -- (5,7) -- (6,7) -- (7,7) -- (8,7) -- (9,7) -- (10,7) -- (11,7);
\node (dummynode) at (0, 3.5) {};
\node[text width=10mm,align=left] at (12.25,0) {\footnotesize$\lceil x\rceil-1$};
\node[text width=10mm,align=left] at (12.25,1) {\footnotesize$\lceil x\rceil$};
\node[text width=10mm,align=left]  at (12.25,2) {\footnotesize$\lceil x\rceil+1$};
\node at (12.5,3) {};
\node at (12.5,4) {};
\node[text width=10mm,align=left]  at (12.25,5) {\footnotesize$\lceil y\rceil$};
\node[text width=10mm,align=left]  at (12.25,6) {\footnotesize$\lceil y\rceil+1$};
\node[text width=10mm,align=left]  at (12.25,7) {\footnotesize$\lceil y\rceil+2$};
\end{tikzpicture}
\; \xmapsto[\hspace{15pt}]{e_i} \quad
\begin{tikzpicture}[baseline=(dummynode.base), scale=0.5]
\draw (0,0) -- (1,0) -- (2,0) -- (3,0) -- (4,0) -- (5,0) -- (6,0) -- (7,0) -- (8,0) -- (9,0) -- (10,0) -- (11,0);
\draw (0,1) -- (1,1) -- (2,1) -- (3,1) -- (4,1) -- (5,2) -- (6,2) -- (7,2) -- (8,2) -- (9,2) -- (10,2) -- (11,3);
\draw (0,2) -- (1,2) -- (2,2) -- (3,2) -- (4,3) -- (5,3) -- (6,3) -- (7,3) -- (8,3) -- (9,3) -- (10,4) -- (11,4);
\draw (0,3) -- (1,3) -- (2,3) -- (3,4) -- (4,4) -- (5,4) -- (6,4) -- (7,4) -- (8,4) -- (9,5) -- (10,5) -- (11,5);
\draw (0,4) -- (1,4) -- (2,5) -- (3,5) -- (4,5) -- (5,5) -- (6,5) -- (7,5) -- (8,6) -- (9,6) -- (10,6) -- (11,6);
\draw[darkred,thick] (0,5) -- (1,6) -- (2,6) -- (3,6) -- (4,6) -- (5,6) -- (6,6) -- (7,6) -- (8,5) -- (9,4) -- (10,3) -- (11,2);
\draw[blue,thick] (0,6) -- (1,5) -- (2,4) -- (3,3) -- (4,2) -- (5,1) -- (6,1) -- (7,1) -- (8,1) -- (9,1) -- (10,1) -- (11,1);
\draw (0,7) -- (1,7) -- (2,7) -- (3,7) -- (4,7) -- (5,7) -- (6,7) -- (7,7) -- (8,7) -- (9,7) -- (10,7) -- (11,7);
\node (dummynode) at (0, 3.5) {};
\node[text width=10mm,align=left] at (12.25,0) {\footnotesize$\lceil x\rceil-1$};
\node[text width=10mm,align=left] at (12.25,1) {\footnotesize$\lceil x\rceil$};
\node[text width=10mm,align=left]  at (12.25,2) {\footnotesize$\lceil x\rceil+1$};
\node at (12.5,3) {};
\node at (12.5,4) {};
\node[text width=10mm,align=left]  at (12.25,5) {\footnotesize$\lceil y\rceil$};
\node[text width=10mm,align=left]  at (12.25,6) {\footnotesize$\lceil y\rceil+1$};
\node[text width=10mm,align=left]  at (12.25,7) {\footnotesize$\lceil y\rceil+2$};
\end{tikzpicture}
\]
The features of this example hold in general by 
Lemma~\ref{cons-lem}, which implies that in $\WD(a)$:
 \bei
 \item wires $\lceil x\rceil -1$ and $\lceil y \rceil +2$  are flat;
\item  the first $q$ crossings from the left
 are between wire $\lceil x\rceil +1$ and wires $\lceil y \rceil+ 1$, $\lceil y \rceil$, \dots, $\lceil x \rceil+ 2$;
\item the next $q$ crossings are between wire $\lceil x\rceil$ and 
 wires $\lceil y \rceil+ 1$, $\lceil y \rceil$, \dots, $\lceil x \rceil+ 2$; and
\item the final crossing is between wires $\lceil x\rceil$ and $\lceil x\rceil +1$.
\eei
The same lemma plus the definition of $e_i$ implies that in $\WD(b)$:
  \bei
 \item wires $\lceil x\rceil -1$ and $\lceil y \rceil +2$  are again flat;
\item the first crossing from the left is between wires $\lceil x\rceil$ and $\lceil x\rceil +1$;
\item  the next $q$ crossings 
 are between wire $\lceil x\rceil$ and wires $\lceil y \rceil+ 1$, $\lceil y \rceil$, \dots, $\lceil x \rceil+ 2$; and
\item the last $q$ crossings are between wire $\lceil x\rceil+1$ and 
 wires $\lceil y \rceil+ 1$, $\lceil y \rceil$, \dots, $\lceil x \rceil+ 2$.
\eei
Both wiring diagrams involve the same crossings.
The last crossing in $\WD(a)$ is marked if and only if the first crossing in $\WD(b)$ is marked,
which occurs if and only if $x \in \ZZ'$ (equivalently, $y \in \ZZ'$). 
On the other hand, the first $q$ crossings in the $a$-diagram are successively paired with the next $q$ crossings in $\pair(a^i,a^{i+1})$,
and the primes on these pairs are swapped when constructing $e_i a$.
Given the preceding observations and our description of marked inversions via wiring diagrams,
we deduce that $b \in \rf^+(w,A)$.

If instead $b \in \rf^+(w,A)$ and $a := f_i b \neq \zero$,
then  $a \in \rf^+(w,B)$ for some subset $B \subseteq \Inv(w)$ by the remarks before Lemma~\ref{cons-lem}.
But then $b = e_i a\neq \zero$ so $b \in \rf^+(w,B)$ and $A=B$.
 \end{proof}
 
We can now obtain the result promised earlier.

 \begin{theorem}
 \label{up-thm}
 Let $w \in S_\ZZ$ and $A \subseteq \Inv(w)$. 
When nonempty, the sets
 $\RF^+(w,A)$ and  $\RF^+(w)$ are normal $\gl_n$-crystals for the given weight function and with $e_i$ and $f_i$ as in Definition~\ref{ef-def},
and the map $\unprime \colon \RF^+(w,A) \to \RF(w)$ is a $\gl_n$-crystal isomorphism.
\end{theorem}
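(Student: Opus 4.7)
The plan is to reduce everything to the unprimed case via the $\unprime$ map and then invoke the known crystal structure on $\RF(w)$. First, I would verify that for each $A\subseteq \Inv(w)$ with $\RF^+(w,A)$ nonempty, the map $\unprime \colon \RF^+(w,A) \to \RF(w)$ is a bijection that preserves the weight function: a primed word $i \in \cR^+(w)$ is uniquely determined by the pair $(\lceil i\rceil, \Marked(i))$, since knowing the underlying unprimed word and the set of marked inversions identifies exactly which crossings of the wiring diagram carry primes; and clearly the weight~\eqref{weight-eq} ignores primes.

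Next, I would invoke Remark~\ref{rem:Lusztig_twist} to assert that $\RF(w)$ already carries the structure of a normal $\gl_n$-crystal: Morse--Schilling~\cite{MorseSchilling} established this with certain operators, and our operators in Definition~\ref{ef-def}, restricted to unprimed factorizations, are obtained from theirs by the Lusztig involution, which preserves normality since it is an involutive automorphism of the category of normal $\gl_n$-crystals. Combining this with Lemma~\ref{lemma:prime_reduction} (which says $\unprime$ intertwines each $e_i$ and $f_i$) and Lemma~\ref{up-lem} (which says $e_i$ and $f_i$ preserve the subset indexed by $A$), we conclude that $\unprime\colon\RF^+(w,A) \to \RF(w)$ is a weight-preserving bijection intertwining the crystal operators. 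Since the statistics $\varepsilon_i$ and $\varphi_i$ are determined by the crystal operators via~\eqref{string-eqs}, they are also intertwined. This shows $\unprime$ is a strict isomorphism of abstract $\gl_n$-crystals and transports the normal structure from $\RF(w)$ to $\RF^+(w,A)$.

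Finally, for $\RF^+(w)$ itself: by Lemma~\ref{up-lem} the operators $e_i$ and $f_i$ never carry an element of $\RF^+(w,A)$ into $\RF^+(w,A')$ for $A\neq A'$, so the decomposition
\[
\RF^+(w) \;=\; \bigsqcup_{A\subseteq \Inv(w)} \RF^+(w,A)
\]
realizes $\RF^+(w)$ as the direct sum of the normal $\gl_n$-crystals $\RF^+(w,A)$ (taken over the finitely many $A$ for which the summand is nonempty). Direct sums of normal $\gl_n$-crystals are normal, so the claim follows.

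The step I would expect to be the main obstacle is the precise matching in the second paragraph: namely, verifying that when all primes are stripped, the rule in Definition~\ref{ef-def}, together with the pairing in Definition~\ref{pair-def}, really does coincide with the Lusztig twist of the Morse--Schilling operators. Lemma~\ref{cons-lem} already does most of this bookkeeping (and was imported from~\cite{MorseSchilling} precisely for this purpose), but one must still carefully track the left/right conventions and the fact that our $e_i$ acts on factor $a^{i+1}$ while Morse--Schilling's $e_i$ acts on factor $a^i$, which is the fingerprint of the Lusztig twist. Once this identification is in place, everything else is formal.
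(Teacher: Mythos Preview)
Your proposal is correct and follows essentially the same approach as the paper's proof, which simply cites Remark~\ref{rem:Lusztig_twist} together with Lemmas~\ref{lemma:prime_reduction} and~\ref{up-lem}. You spell out the details the paper leaves implicit (the bijectivity of $\unprime$ on each $\RF^+(w,A)$ and the direct-sum decomposition of $\RF^+(w)$), and your identification of the Lusztig-twist verification as the only nontrivial step is apt---the paper absorbs this entirely into Remark~\ref{rem:Lusztig_twist}.
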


  \begin{proof}
  As $\RF(w)$ is a normal $\gl_n$-crystal by Remark~\ref{rem:Lusztig_twist},
  this holds by
 Lemmas~\ref{lemma:prime_reduction} and~\ref{up-lem}.
\end{proof}

See Figure~\ref{gl3-fig} for an example of the normal $\gl_n$-crystal $\RF^+(w,A)$.
The character of this crystal is the \defn{Stanley symmetric polynomial}
$F_w(x_1,x_2,\dotsc,x_n)$ introduced in~\cite{Stanley}.

\begin{figure}
\begin{center}
\input{gl3-decr-crystal.tex}
\end{center}
\caption{The $\gl_3$-crystal graph of $\rf^+_3(w,A)$ for $w=21543\in S_5$ and $A = \{ (1,2),(3,4),(3,5)\}$.
}
\label{gl3-fig}
\end{figure}

\begin{remark}
\label{rem:negation}
Results in~\cite{MT2021,Marberg2019b} 
describe crystal operators on tuples $a=(a^1,a^2,\ldots)$, where each $a^i$ is a (primed) word that is strictly \emph{increasing} rather than decreasing.
In addition to the Lusztig involution, there is a second operation that translates these $\gl_n$-crystals to the ones here.

For any bijection (that is, a permutation) $w \colon \ZZ\to\ZZ$, let $w^\ast$ be the permutation mapping $i \mapsto 1 - w(1-i)$ for all $i \in \ZZ$.
For  $x \in \frac{1}{2}\ZZ$, let $x^\ast := -x - (\lceil x \rceil - \lfloor x \rfloor)$,
so that if $x \in \ZZ$ then $x^\ast = -x$ and $(x')^\ast= (-x)' = -(x') -1$.
Finally, for primed words $i=i_1i_2\cdots i_p$, let $i^\ast := (i_1)^\ast (i_2)^\ast \cdots (i_p)^\ast$
and for tuples of primed words $a=(a^1, a^2, \ldots)$, let 
$ a^\ast := ((a^1)^*, (a^2)^*, \ldots).$

The operation $\ast $ defines inverse bijections $\cR^+(w) \leftrightarrow \cR^+(w^\ast)$.
In turn, $\ast$ gives a bijection from the set of
 tuples $a=(a^1,a^2,\ldots)$ of increasing primed words with $a^1a^2\cdots\in \cR^+(w)$ to $\RF^+(w^\ast)$.
Further restricted to unprimed factorizations, the latter map is an isomorphism from the $\gl_n$-crystal denoted  $\cR_n(w)$ 
in~\cite[\S3.1]{Marberg2019b} 
to the $\gl_n$-crystal $\RF(w^\ast)$.
\end{remark}

We will need a description of the highest and lowest weight elements in  $\rf_n(w)$. 
Recall that the \defn{(Young) diagram} of a partition $\lambda$ is the set $\D_\lambda := \{ (i,j) \in \PP\times \PP \mid 1 \leq j \leq \lambda_i\}$.
A \defn{tableau} of partition shape $\lambda$ is a map $T \colon \D_\lambda \to \ZZ$, written $(i,j) \mapsto T_{ij}$.
The \defn{row reading word} of a tableau $T$ is the sequence $\row(T)$ formed by reading the rows left to right, but starting with the last row.
The \defn{reverse row reading word} of $T$ is the reverse of this sequence, denoted $\revrow(T)$.
The \defn{column reading word} of $T$ is the sequence $\col(T)$ formed by reading the columns from bottom to top, starting with the first row.
For example, $ T=\ytabsmall{ 1 & 2 & 3 \\ 2 & 3}$ has $ \row(T) = 23123$, $\revrow(T) = 32132$, and $\col(T) = 21323$.
A tableau is \defn{increasing}/\defn{decreasing}  if its rows and columns are strictly increasing/decreasing.

\begin{theorem}  \label{reduced-tab-lem}
Fix a Coxeter--Knuth equivalence class $\cK\subseteq \cR(w)$ for $w \in S_\ZZ$. 
There is a unique increasing tableau $U$ with $\revrow(U) \in \cK$ and a unique decreasing tableau $V$ with $\row(V) \in \cK$.
Additionally, both $U$ and $V$ have the same shape.
\end{theorem}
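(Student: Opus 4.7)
The plan is to invoke the Edelman--Green (EG) correspondence from~\cite{EG}, which provides a bijection between $\cR(w)$ and pairs $(P, Q)$ of same-shaped tableaux with $P$ increasing and $Q$ standard, and whose defining property is that two reduced words are Coxeter--Knuth equivalent if and only if they produce the same $P$-tableau. Each CK class $\cK$ is therefore determined by a common insertion tableau $P_\cK$ of some shape $\mu$.

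For the increasing tableau $U$, the key technical lemma to establish is the identity $\EG(\revrow(U))_P = U^T$ for every increasing tableau $U$, where $U^T$ denotes the transpose. I would prove this by induction on the number of cells of $U$: identify the first letter of $\revrow(U)$ with the entry in the upper-right corner of $U$, track its EG bumping trajectory carefully, and apply the inductive hypothesis to the smaller tableau obtained by removing this corner. A quick check in the small case $U = \ytabsmall{1 & 2 & 3 \\ 2 & 3}$, where $\revrow(U) = 32132$ inserts to $U^T = \ytabsmall{1 & 2 \\ 2 & 3 \\ 3}$, confirms the form of the identity. Given this lemma, the unique increasing tableau with $\revrow(U) \in \cK$ is $U := P_\cK^T$: existence follows from $\EG(\revrow(P_\cK^T))_P = (P_\cK^T)^T = P_\cK$, while uniqueness holds because any $U'$ with $\revrow(U') \in \cK$ must satisfy $(U')^T = P_\cK$.

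For the decreasing tableau $V$, an analogous argument applies, based on a companion identity that expresses $\EG(\row(V))_P$ in terms of a canonical increasing tableau associated to $V$ via a shape-conjugating bijection between decreasing tableaux of shape $\mu^T$ and increasing tableaux of shape $\mu$. Such a bijection can be realized either by a \emph{jeu-de-taquin}-style rectification or by passing to column insertion on $\row(V)$. With this in hand, the unique decreasing tableau $V$ with $\row(V) \in \cK$ is the preimage of $P_\cK$ under this bijection, and $V$ automatically has the same shape $\mu^T$ as $U$, yielding the final ``same shape'' claim.

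The main technical obstacle is rigorously verifying the two insertion identities for $\revrow(U)$ (with $U$ increasing) and $\row(V)$ (with $V$ decreasing). These identities are classical in flavor but require careful inductive bookkeeping to handle the subtle special cases of EG bumping, particularly when the newly inserted letter already appears in the receiving row (so that no replacement occurs but a bump still happens). Once the correct shape-conjugating bijection between increasing and decreasing tableaux of conjugate shapes is identified, the same-shape conclusion follows immediately from the fact that both $U$ and $V$ are determined by $P_\cK$ in a shape-preserving way.
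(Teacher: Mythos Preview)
Your treatment of $U$ is correct and matches the paper: the identity $P_{\EG}(\revrow(U)) = U^{\T}$ follows at once from $\revrow(U) = \col(U^{\T})$ together with the standard fact $P_{\EG}(\col T) = T$ for increasing $T$, and the paper packages this as $U = P_{\EG}(i^{\rev}) = P_{\EG}(i)^{\T}$ via \cite[Lem.~6.23, Cor.~7.21, Cor.~7.22]{EG}. Proving it by a direct cell-by-cell induction is possible but unnecessary.

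The argument for $V$ has a genuine gap. You postulate a shape-conjugating bijection $\psi$ with $P_{\EG}(\row(V)) = \psi(V)$, but neither proposed realization supplies such a $\psi$ with an independent description: there is no \emph{jeu de taquin} to perform on a straight-shape decreasing tableau, and column insertion of $\row(V)$ still only yields an EG insertion tableau with no explicit combinatorial formula in terms of $V$. Without an independent $\psi$, you are characterizing $V$ by the equation $P_{\EG}(\row(V)) = P_{\cK}$, which is just $\row(V) \in \cK$ restated, so neither uniqueness nor the shape claim is actually established. The paper's device is both cleaner and different: apply the entrywise negation $\ast$, a \emph{shape-preserving} bijection from decreasing to increasing tableaux that commutes with $\row$ and respects Coxeter--Knuth equivalence. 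One then has $\row(V) \in \cK \iff V^{\ast} = P_{\EG}(i^{\ast})$ for any $i \in \cK$, so $V = P_{\EG}(i^{\ast})^{\ast}$ is the unique solution. The ``same shape'' assertion becomes the equality of the shapes of $P_{\EG}(i^{\ast})$ and $P_{\EG}(i^{\rev}) = P_{\EG}(i)^{\T}$, which the paper extracts from the same EG citations and which your outline simply assumes.
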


\begin{proof}
Choose $i=i_1i_2\cdots i_k \in \cK$. Let $i^\rev := i_k\cdots i_2i_1$, and recall that $i^\ast = (-i_1)(-i_2)\cdots(-i_k)$.
Form $T^\ast$  from a given tableau $T$ by applying $\ast$ to all of its entries.
The \defn{Edelman--Greene insertion algorithm} from~\cite{EG} gives an increasing tableau $P_\EG(i)$ with row reading word in $\cK$~\cite[Lem.~6.23]{EG} and with $P_\EG(i) = P_\EG(j)$ when $j \in \cR(w)$ if and only if $ j \in \cK$~\cite[Thm.~6.24]{EG}.
It is straightforward to deduce from properties of EG insertion (see~\cite[Lem.~6.22, Cor.~7.21, Cor.~7.22]{EG}) that the desired tableaux are $U = P_\EG(i^\rev) = P_\EG(i)^\T$ and $V = P_\EG(i^\ast)^\ast$, which have the same shape.
%
%
%
\end{proof}

We define a \defn{reduced tableau} for $w \in S_\ZZ$ to be a tableau $T$ that is increasing with $\revrow(T) \in \cR(w)$ or decreasing with $\row(T) \in \cR(w)$.
This terminology is unambiguous since $T$ can only be both increasing and decreasing when $\revrow(T) = \row(T)$. 
Given a reduced tableau $T$ for $w$, let $\rf_n(T) \subseteq \rf_n(w)$ be the subset of decreasing reduced factorizations $a=(a^1,a^2,\ldots)$ that have $a^1a^2\cdots \simCK \revrow(T)$ if $T$ is increasing and $a^1a^2\cdots \simCK \row(T)$ if $T$ is decreasing.

\begin{example} \label{ex:reduced_tableaux}
The increasing/decreasing reduced tableaux for $w=21543 \in S_5$ are 
\[
\ytabsmall{ 1 & 3 \\ 3 & 4}
\qquad \ytabsmall{1 & 3 & 4 \\ 4 }
\qquad \ytabsmall{ 1 & 4 \\ 3 \\ 4}
\qquand
 \ytabsmall{ 4 & 3 \\ 3 & 1}
\qquad \ytabsmall{ 4 & 3 & 1  \\ 3}
\qquad \ytabsmall{4 & 3 \\ 3 \\ 1}.
\]
It is useful to compare these with Figure~\ref{gl3-fig} (ignoring primes).
\end{example}

\begin{proposition}[{See~\cite[\S4]{MorseSchilling}}]
\label{gl-subcrystal-prop}
The map $T \mapsto \rf_n(T)$ is a  bijection from increasing (equivalently, decreasing) reduced tableaux for $w \in S_\ZZ$ with at most $n$ rows 
to full subcrystals of $\rf_n(w)$.
If $T$ is a reduced tableau for $w \in S_\ZZ$ then $\rf_n(T)$ is empty if and only if $T$ has more than $n$ rows.
\end{proposition}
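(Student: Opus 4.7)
The plan is to transfer the results of \cite[\S4]{MorseSchilling} to our setting via the Lusztig twist from Remark~\ref{rem:Lusztig_twist}. The central input is that connected components of the Morse--Schilling crystal on increasing reduced factorizations are precisely the Coxeter--Knuth classes of the concatenated word $a^1 a^2 \cdots$; this follows from the compatibility of their crystal operators with Edelman--Greene insertion, whose fibers on $\cR(w)$ are exactly the CK classes. The Lusztig twist preserves the partition of the underlying set into connected components, so the same classification holds for our crystal on $\rf_n(w)$: its connected components correspond bijectively to CK classes of the concatenation $a^1a^2\cdots$. By Theorem~\ref{reduced-tab-lem}, CK classes in $\cR(w)$ are in bijection with reduced tableaux for $w$, and under this bijection $\rf_n(T)$ is identified with the component whose concatenations lie in the CK class of $\row(V)$, where $V$ is the decreasing reduced tableau paired with $T$. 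Hence each nonempty $\rf_n(T)$ is a single connected full subcrystal, and $T \mapsto \rf_n(T)$ is a bijection from reduced tableaux with nonempty $\rf_n(T)$ onto the full subcrystals of $\rf_n(w)$.

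It remains to determine when $\rf_n(T)$ is nonempty. Let $k$ denote the number of rows of $T$, let $U$ be the increasing reduced tableau for the CK class of $T$, and write $U_1, \dotsc, U_k$ for its rows (top to bottom). If $n \geq k$, the tuple
\[
\mathbf{a} := \bigl(\reverse(U_1), \reverse(U_2), \dotsc, \reverse(U_k), \emptyset, \dotsc, \emptyset\bigr)
\]
concatenates to $\revrow(U) \in \cR(w)$, which is CK-equivalent to every word in the CK class of $T$, so $\mathbf{a} \in \rf_n(T)$ and nonemptiness is clear. Conversely, a Greene-type theorem for EG insertion asserts that the length of the first row of $P_{EG}(i)$ equals the longest weakly increasing subsequence of $i \in \cR(w)$, which by a standard pigeonhole argument equals the minimum number of decreasing parts in any factorization of $i$. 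Since this first-row length is constant on CK classes and equals $k$ (the shape of $T$ is the transpose of the $P_{EG}$-shape by the proof of Theorem~\ref{reduced-tab-lem}, so $k$ is the first part of the $P_{EG}$-shape), no word in the CK class admits a decreasing factorization into fewer than $k$ parts, and $n < k$ forces $\rf_n(T) = \emptyset$.

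The main obstacle is the first step, namely establishing that connected components of $\rf_n(w)$ correspond to CK classes. This property is well-documented in \cite[\S4]{MorseSchilling} for the increasing-factorization variant of our crystal, but transferring it through the Lusztig twist and the switch to decreasing factors requires careful bookkeeping in the style of Remark~\ref{rem:Lusztig_twist}.
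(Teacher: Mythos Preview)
Your argument for the bijection between full subcrystals and reduced tableaux via Coxeter--Knuth classes, transferred through the Lusztig twist, is essentially the paper's argument. The nonemptiness direction (exhibiting an explicit element of $\rf_n(T)$ from the rows of $U$ when $n\ge k$) also matches.

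For the emptiness direction your approach diverges from the paper's, and as written it contains an error. You assert that the longest weakly increasing subsequence of $i$ \emph{equals} the minimum number of strictly decreasing consecutive factors of $i$, by ``a standard pigeonhole argument.'' Pigeonhole only yields the inequality $\mathrm{LWIS}(i)\le (\text{min number of decreasing factors})$; the reverse fails already for the reduced word $i=2413\in\cR(s_2 s_4 s_1 s_3)$, which has $\mathrm{LWIS}(i)=2$ but cannot be split into fewer than three strictly decreasing consecutive blocks. Fortunately your conclusion only needs the pigeonhole direction: if $\mathrm{LWIS}(i)=k$ for every $i$ in the CK class, then no such $i$ admits a decreasing factorization into fewer than $k$ parts, and $\rf_n(T)=\emptyset$ for $n<k$. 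But this still rests on the ``Greene-type theorem for EG insertion'' (first row of $P_{\EG}(i)$ equals $\mathrm{LWIS}(i)$), which you assert without proof or precise citation.

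The paper sidesteps both issues by instead using the recording tableau of the semistandard Edelman--Greene correspondence from \cite[Thm.~4.11]{MorseSchilling}: any $a\in\rf_n(T)$ produces (after applying $\ast$) a semistandard tableau $Q$ with the same shape as $T$ and all entries in $[n]$, and no such $Q$ exists when $T$ has more than $n$ rows. This is shorter and reuses the correspondence already invoked in the first part of the proof.
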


\begin{proof}
It follows from~\cite[Thm.~4.11]{MorseSchilling} (via the properties of the \defn{Edelman--Greene correspondence} in~\cite[\S6]{EG} and 
Remark~\ref{rem:negation}; see~\cite[Thms. 3.27 and 3.28]{Marberg2019b}) that each subset $\rf_n(T)\subseteq \rf_n(w)$ is a full subcrystal or empty, and every full subcrystal arises as $\rf_n(T)$ for some reduced tableau $T$.

If $T$ is a reduced tableau with at most $n$ rows, then its row reading word gives rise to an element of the set $\rf_n(T)$; see Propositions~\ref{gl-highest-lem} and~\ref{gl-lowest-lem}.
It is implicit in the literature that $\rf_n(T)$ is empty if $T$ has more than $n$ rows, but we include another argument here for completeness.

Let $a \in \rf_n(T)$. 
Then the semistandard version of the {Edelman--Greene correspondence} in~\cite[Thm.~4.11]{MorseSchilling} 
maps $a^\ast$ to a pair of tableaux $(P,Q)$ of the same shape,
where $P = P_{\EG}\bigl( (a^1a^2\cdots a^n)^{\ast} \bigr)$ and where $Q$ is semistandard with all entries in $[n]$.
From the proof of Theorem~\ref{reduced-tab-lem}, we see that $P$ and $Q$ have the same shape as $T$.
As there are no semistandard tableaux $Q$ that have more than $n$ rows and all entries in $[n]$, $\rf_n(T)$ must be empty if $T$ has more than $n$ rows.
\end{proof}

Indexing the full subcrystals of $\rf_n(w)$ using increasing reduced tableaux makes it easy to write down all highest weight elements, while using decreasing tableaux gives the lowest weight elements.
For examples of the following these statements, consider the reduced tableaux in Example~\ref{ex:reduced_tableaux}.

\begin{proposition}\label{gl-highest-lem}
If $T$ is an increasing reduced tableau for $w \in S_\ZZ$ with at most $n$ rows, then
the unique highest weight element  of $\rf_n(T)$ is
$a=(a^1,\dotsc,a^n)$,
where $a^i$ is the reversal of row $i$ of $T$.
 \end{proposition}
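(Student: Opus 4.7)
The plan is to verify three things: (i) the proposed tuple $a$ actually lies in $\rf_n(T)$, (ii) it is killed by every raising operator $e_i$, and (iii) it is the only such element in $\rf_n(T)$.

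First I would observe that membership in $\rf_n(T)$ is essentially by construction. Each row of $T$ is strictly increasing since $T$ is an increasing tableau, so its reversal $a^i$ is strictly decreasing. The concatenation $a^1 a^2 \cdots a^n$ is exactly $\revrow(T)$, which by the definition of a reduced tableau is a reduced word for $w$. Hence $a \in \rf_n(w)$, and since $T$ has at most $n$ rows, padding by empty words gives $a \in \rf_n(w)$ with at most $n$ factors. The Coxeter--Knuth class condition $a^1 a^2 \cdots a^n \simCK \revrow(T)$ is vacuous since both sides are equal, so $a \in \rf_n(T)$.

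The main content of the proof is verifying the highest-weight property, which is where I expect the key work to lie. Fix $i \in [n-1]$. Per Definition~\ref{ef-def}, it suffices to show that in $\pair(a^i, a^{i+1})$ every letter of $a^{i+1}$ is paired. I would argue this by processing the letters of $a^{i+1}$ from right to left, which means from smallest to largest, namely $T_{i+1,1}, T_{i+1,2}, \ldots, T_{i+1, m_{i+1}}$. I would show by induction on $j$ that when we reach $T_{i+1,j}$, there is still an unpaired letter of $a^i$ strictly less than $T_{i+1,j}$. Indeed, because $T$ is an increasing tableau the entries $T_{i,1}, T_{i,2}, \ldots, T_{i,j}$ all lie in row $i$ and satisfy $T_{i,k} < T_{i+1,k} \leq T_{i+1,j}$ for $k \leq j$, so there are at least $j$ letters of $a^i$ less than $T_{i+1,j}$. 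The previously processed letters $T_{i+1,1}, \ldots, T_{i+1,j-1}$ each consumed one letter of $a^i$ that was less than $T_{i+1,k} < T_{i+1,j}$, hence less than $T_{i+1,j}$. This accounts for only $j-1$ consumed letters from the pool of $\geq j$, leaving at least one available to pair with $T_{i+1,j}$. Therefore every letter of $a^{i+1}$ is paired and $e_i a = \zero$.

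Finally, uniqueness will follow from the general crystal theory combined with Proposition~\ref{gl-subcrystal-prop}. That proposition identifies $\rf_n(T)$ as a full subcrystal of the normal $\gl_n$-crystal $\rf_n(w)$, and distinct reduced tableaux give distinct full subcrystals, so $\rf_n(T)$ is a connected component. Since a connected normal $\gl_n$-crystal has exactly one highest weight element, the element $a$ constructed above must be this unique highest weight element of $\rf_n(T)$. The chief obstacle is the combinatorial pairing argument in step two; once the inequality $T_{i,k} < T_{i+1,k}$ is used to bound the available pool from below and the previously paired letters from above, the inductive bookkeeping closes the argument cleanly.
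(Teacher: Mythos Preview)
Your proof is correct and follows essentially the same approach as the paper: reduce uniqueness to the fact that $\rf_n(T)$ is a connected normal $\gl_n$-crystal (via Proposition~\ref{gl-subcrystal-prop}), and then verify $e_i a=\zero$ by showing every letter of $a^{i+1}$ is paired. The paper asserts the pairing claim without details, whereas you supply the explicit counting argument using $T_{i,k}<T_{i+1,k}$; this extra detail is fine and the argument is clean.
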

 
\begin{proof}
Since $\rf_n(T)$ is a connected normal $\gl_n$-crystal, we know that it has a unique highest weight element.
We just need to check that the given factorization $a=(a^1,\dotsc,a^n)$
 has $e_i(a)=0$ for all $i \in [n-1]$.
This holds since every letter in $a^{i+1}$ is the second term in some $(b,c) \in \pair(a^i,a^{i+1})$.
\end{proof}

\begin{proposition}\label{gl-lowest-lem}
 If $T$ is a decreasing reduced tableau for $w \in S_\ZZ$ with at most $n$ rows, then 
the
  unique lowest weight element of $\rf_n(T)$ is $a=(a^1,a^2,\dotsc,a^n)$,
where $a^i$ is row $n + 1 - i$ of $T$.
\end{proposition}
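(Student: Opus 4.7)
First I observe that $a = (a^1, \dotsc, a^n) \in \rf_n(T)$: each $a^i$ is either empty or a row of $T$ (hence decreasing), and the concatenation satisfies $a^1 a^2 \cdots a^n = R_k R_{k-1} \cdots R_1 = \row(T)$, where $R_1,\dotsc,R_k$ denote the rows of $T$ from top to bottom. By Proposition~\ref{gl-subcrystal-prop}, $\rf_n(T)$ is a connected normal $\gl_n$-crystal, so it has a unique lowest weight element. It therefore suffices to verify that $f_i a = \zero$ for every $i \in [n-1]$, which by Definition~\ref{ef-def} amounts to showing that every letter of $a^i$ occurs as the first coordinate of some pair in $\pair(a^i, a^{i+1})$.

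Fix $i \in [n-1]$. If $a^i$ is empty (which occurs precisely when $i < n+1-k$), the claim is immediate. Otherwise $a^i = R_{n+1-i}$ and $a^{i+1} = R_{n-i}$ are two adjacent rows of $T$; writing $a^i = b_1 b_2 \cdots b_M$ and $a^{i+1} = t_1 t_2 \cdots t_N$ with $b_1 > \cdots > b_M$ and $t_1 > \cdots > t_N$, the partition shape of $T$ gives $N \geq M$, and the column-decreasing property of $T$ gives $t_j > b_j$ for each $j \in [M]$. The verification thus reduces to the following combinatorial lemma: \emph{given strictly decreasing integer sequences $b_1 > \cdots > b_M$ and $t_1 > \cdots > t_N$ with $N \geq M$ and $t_j > b_j$ for all $j \in [M]$, every $b_p$ is paired in $\pair(b_1 \cdots b_M,\, t_1 \cdots t_N)$.}

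I plan to prove this lemma by induction on $M + N$ (with the case $M = 0$ trivial) by inspecting the first iteration of the pairing procedure, which processes $t_N$. If $t_N > b_M$, set $p^* := \min\{p : b_p < t_N\}$; then $p^* \leq M$, and $t_N$ pairs with $b_{p^*}$. The remainder of the procedure coincides with $\pair$ applied to the reduced sequences $(b_1, \dotsc, \widehat{b_{p^*}}, \dotsc, b_M)$ and $(t_1, \dotsc, t_{N-1})$, and the shifted column condition $t_j > b_{j+1}$ for $j \geq p^*$ follows from $t_j > t_{j+1} > b_{j+1}$, so the inductive hypothesis applies. If instead $t_N \leq b_M$, then $N > M$ (otherwise the column condition would give $t_M = t_N > b_M$, contradicting $t_N \leq b_M$), $t_N$ pairs with nothing, and induction applies to $(b_1, \dotsc, b_M)$ and $(t_1, \dotsc, t_{N-1})$. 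The main obstacle is the combinatorial lemma itself---in particular, confirming that the column-dominance hypothesis survives the removal of $b_{p^*}$ in the first case---but once this verification is made the induction closes cleanly and the proposition follows.
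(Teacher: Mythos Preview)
Your argument is correct and follows exactly the approach indicated by the paper: reduce to showing that every letter of $a^i$ is the first coordinate of some pair in $\pair(a^i,a^{i+1})$, which the paper dismisses as ``a straightforward exercise.'' You have simply carried out that exercise in full, and your inductive proof of the combinatorial lemma (including the verification that the column-dominance condition survives removal of $b_{p^*}$) is clean and complete.
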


\begin{proof}
Similar to proof of Proposition~\ref{gl-highest-lem}, it is enough to check that
every letter in $a^{i}$ as described is the first term in some $(b,c) \in \pair(a^i,a^{i+1})$, and this is a straightforward exercise.
\end{proof}

Recall the definition of the dominant permutation $w_\lambda \in S_\infty$ from Section~\ref{schub-sect}.

\begin{proposition}\label{dom-prop}
Suppose $\lambda$ is a partition and $T_\lambda$ is the tableau of shape $\lambda$
with entry $i+j-1$ in each position $(i,j) \in \D_\lambda$.
Then  $\cR(w_\lambda)$ is a single Coxeter--Knuth equivalence class
and $T_\lambda$ is the unique increasing reduced tableau for $w_\lambda$.
\end{proposition}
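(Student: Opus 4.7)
The plan is to verify two things: (i) that $T_\lambda$ is indeed an increasing reduced tableau for $w_\lambda$, and (ii) that $\cR(w_\lambda)$ forms a single Coxeter--Knuth equivalence class. Uniqueness of $T_\lambda$ among increasing reduced tableaux will then follow from Theorem~\ref{reduced-tab-lem}.

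For step (i), the tableau $T_\lambda$ is visibly increasing: row $i$ reads $i, i+1, \dotsc, i+\lambda_i-1$ and column $j$ reads $j, j+1, \dotsc, j+\lambda'_j-1$, each strictly increasing. To show $\revrow(T_\lambda)\in\cR(w_\lambda)$, observe that this word is the concatenation of blocks $(\lambda_i+i-1)(\lambda_i+i-2)\cdots i$ for $i=1,2,\dotsc,\ell(\lambda)$. I would proceed by induction on the number of rows: writing $\tilde\lambda := (\lambda_1,\dotsc,\lambda_{r-1})$ for $r=\ell(\lambda)$, the inductive step amounts to verifying that the product
\[
w_{\tilde\lambda}\cdot s_{\lambda_r+r-1}s_{\lambda_r+r-2}\cdots s_r
\]
equals $w_\lambda$, with lengths summing correctly to $|\lambda|$. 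This is a direct computation on Rothe diagrams: multiplying on the right by each $s_{r+k}$ with $0\le k\le \lambda_r-1$ inserts a new box in row $r$ while leaving the rest of $D(w_{\tilde\lambda})=\D_{\tilde\lambda}$ undisturbed, producing exactly $\D_\lambda$.

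For step (ii), I would use Stanley's formula $|\cR(w_\lambda)|=f^\lambda$, where $f^\lambda$ is the number of standard Young tableaux of shape $\lambda$ (this is a classical fact; see, e.g., the exposition in~\cite{Manivel}, and follows from the identity $F_{w_\lambda}=s_\lambda$). The Edelman--Greene correspondence, mentioned in the proof of Theorem~\ref{reduced-tab-lem}, gives a bijection $\cR(w_\lambda)\leftrightarrow\bigsqcup_P\{P\}\times\SYT(\mathrm{shape}(P))$, where $P$ runs over all increasing reduced tableaux for $w_\lambda$, and two reduced words have the same $P$ if and only if they are Coxeter--Knuth equivalent. Since $T_\lambda$ is one such $P$ by step (i), and it has shape $\lambda$, we get
\[
f^\lambda = |\cR(w_\lambda)| = \sum_{P} f^{\mathrm{shape}(P)} \geq f^\lambda,
\]
with equality forcing $T_\lambda$ to be the only increasing reduced tableau for $w_\lambda$. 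Equivalently, $\cR(w_\lambda)$ consists of a single Coxeter--Knuth equivalence class, and Theorem~\ref{reduced-tab-lem} delivers the uniqueness statement for $T_\lambda$.

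The main obstacle is step (i): tracking carefully how the Rothe diagram evolves under the indicated product of simple transpositions. Everything else is either elementary book-keeping or a direct appeal to Stanley's count of reduced words of $132$-avoiding permutations together with the Edelman--Greene bijection already used in the proof of Theorem~\ref{reduced-tab-lem}.
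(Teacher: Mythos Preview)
Your argument is sound overall and arrives at the result by a slightly different route than the paper. The paper's proof is two citations: it invokes \cite[Thm.~8.1]{EG} directly for the single Coxeter--Knuth class claim and \cite[Rem.~2.1.9]{Manivel} for $\revrow(T_\lambda)\in\cR(w_\lambda)$, then applies Theorem~\ref{reduced-tab-lem}. Your step~(ii) instead unpacks a counting argument, combining Stanley's formula $|\cR(w_\lambda)|=f^\lambda$ with the Edelman--Greene bijection to force $T_\lambda$ to be the only increasing reduced tableau. This is a legitimate alternative: Stanley's formula for vexillary (in particular $132$-avoiding) permutations predates and is logically independent of Edelman--Greene insertion, so there is no circularity, and your version has the virtue of making the mechanism visible rather than hiding it behind a citation.

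One caution on step~(i): the assertion that right-multiplying by each individual $s_{r+k}$ ``inserts a new box in row $r$'' of the Rothe diagram, leaving the rest undisturbed, is not correct at intermediate stages. For instance, with $\lambda=(3,2)$ one has $w_{(3)}=4123$, and $4123\cdot s_3=4132$ has Rothe diagram $\{(1,1),(1,2),(1,3),(3,2)\}$ --- the new box lands in row~$3$, not row~$2$. Only after the full block $s_3 s_2$ is applied do the two row-$2$ boxes of $\D_{(3,2)}$ appear. The net effect of the whole block is indeed as you claim, so the induction is salvageable, but the step-by-step justification should be replaced by a direct check in one-line notation, or by simply citing the standard reference as the paper does.
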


\begin{proof}
The claim that  $\cR(w_\lambda)$ is a single equivalence class
 can be shown using~\cite[Thm.~8.1]{EG}.
 As we have $\revrow(T_\lambda) \in \cR(w_\lambda)$ by~\cite[Rem.~2.1.9]{Manivel},
the result follows by Theorem~\ref{reduced-tab-lem}.
\end{proof}

For each partition $\lambda$,
define $\rf_n(\lambda) := \rf_n(w_\lambda)$.
The following well-known properties are immediate consequences of 
Proposition~\ref{gl-subcrystal-prop} given the preceding result and Proposition~\ref{gl-highest-lem}.

\begin{corollary}
Let $\lambda$ be a partition. The set $\RF(\lambda)$ is nonempty if and only if  $\lambda$ has at most $n$ parts, 
in which case it is a connected normal $\gl_n$-crystal with unique highest weight $\lambda$.
Consequently, each connected normal $\gl_n$-crystal
is isomorphic to $ \RF(\lambda)$ for a unique partition $\lambda \in \NN^n$.
\end{corollary}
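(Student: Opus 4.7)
The plan is to assemble the three preceding results (Propositions~\ref{dom-prop},~\ref{gl-subcrystal-prop}, and~\ref{gl-highest-lem}) and then invoke the classification of connected normal $\gl_n$-crystals by highest weight recalled in Section~\ref{gl-sect}. This should be essentially bookkeeping rather than genuine work, so I do not expect any serious obstacle.

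First I would unpack the definition $\RF(\lambda) = \RF(w_\lambda)$. By Proposition~\ref{dom-prop}, $\cR(w_\lambda)$ forms a single Coxeter--Knuth equivalence class, whose unique increasing reduced tableau is $T_\lambda$. By Proposition~\ref{gl-subcrystal-prop}, the assignment $T \mapsto \RF(T)$ is a bijection between increasing reduced tableaux for $w_\lambda$ with at most $n$ rows and the full subcrystals of $\RF(w_\lambda)$. Since the only such tableau is $T_\lambda$, which has exactly $\ell(\lambda)$ rows, this already yields both the nonemptiness criterion ($\ell(\lambda) \le n$) and the fact that $\RF(\lambda)$ is then a single (hence connected) full subcrystal of itself.

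Next I would identify the highest weight. Proposition~\ref{gl-highest-lem} says the unique highest weight element of $\RF(T_\lambda)$ is $a = (a^1, \dotsc, a^n)$ where $a^i$ is the reversal of row $i$ of $T_\lambda$. Reading off the shape of $T_\lambda$ from Proposition~\ref{dom-prop}, row $i$ consists of the consecutive integers $i, i+1, \dotsc, i+\lambda_i - 1$, and therefore has length $\lambda_i$. By the definition of $\weight$ in~\eqref{weight-eq}, this gives $\weight(a) = (\lambda_1, \lambda_2, \dotsc, \lambda_n) = \lambda$.

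Finally, for the consequence, I would combine the classification of connected normal $\gl_n$-crystals by their highest weight (recalled at the start of Section~\ref{gl-sect}: any such crystal is isomorphic to $\cB(\mu)$ for a unique partition $\mu \in \NN^n$) with what has just been shown: $\RF(\lambda)$ is a connected normal $\gl_n$-crystal of highest weight $\lambda$ whenever $\ell(\lambda) \le n$, so it represents the isomorphism class indexed by $\lambda$. The uniqueness of the partition $\lambda \in \NN^n$ then follows either from the classification or from the observation that the character of $\RF(\lambda)$ is the Schur polynomial $s_\lambda(x_1, \dotsc, x_n)$, which determines $\lambda$.
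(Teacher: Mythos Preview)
Your proposal is correct and takes essentially the same approach as the paper: the paper simply states that the corollary is an immediate consequence of Proposition~\ref{gl-subcrystal-prop}, Proposition~\ref{dom-prop}, and Proposition~\ref{gl-highest-lem}, and you have supplied exactly those deductions in detail, together with the classification from Section~\ref{gl-sect} for the final clause.
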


Proposition~\ref{rf-nonempty-prop} characterizes when the set $\rf_n(w)$ is nonempty for general $w \in S_\ZZ$.
Determining the smallest $n$ such that $\rf_n(w)$ includes all highest weight elements of $\rf(w)$ (for $\gl_N$ with $N \gg 1$) is a more subtle question.
For example, $\rf_1(s_1 s_3)$ contains only one highest weight element but $\rf_n(s_1 s_3)$ has two highest weight elements for all $n \geq 2$.

\subsection{Demazure crystals}

Given a subset $\cX$ of a $\gl_n$-crystal $\cB$ and $i \in [n-1]$, define the \defn{crystal Demazure operator}
\be\label{cDo-eq}
\fkD^{\cB}_i \cX := \left\{ b \in \cB \mid e_i^k b \in \cX \text{ for some }k\in \NN\right\},
\ee
which we view as a subcrystal of $\cB$ by restricting the crystal structure.
It always holds that $\fkD^\cB_i\fkD^\cB_i = \fkD^\cB_i$.
When $\cB$ is clear, we will sometimes write $\fkD_i := \fkD_i^{\cB}$.

\begin{definition}\label{dem-def}
A \defn{Demazure $\gl_n$-crystal} is a $\gl_n$-crystal isomorphic to $\fkD^\cB_{i_1}\fkD^\cB_{i_2} \dotsm \fkD^\cB_{i_k} \{ u \}$ for a highest weight element $u$ in a 
normal $\gl_n$-crystal $\cB$ and any sequence of indices $i_1,i_2,\dotsc,i_{k} \in [n-1]$. 
\end{definition}

Like connected normal $\gl_n$-crystals, every Demazure $\gl_n$-crystal has a unique highest weight element $u$,
which is also the unique element at which $\epsilon_i(u) = 0$ for all $i \in I$.
Most Demazure crystals that we discuss are constructed as subsets of specific connected normal crystals.
It is useful to note that the latter objects are uniquely determined up to isomorphism:

\begin{proposition}\label{unique-embed-prop}
Suppose $\cB$ is a connected normal $\gl_n$-crystal with highest weight element $b$.
Let $\cX$ be a Demazure $\gl_n$-crystal with highest weight element $u$.
\ben
\item[(a)] There is a unique embedding $\cX \to \cB$ if $\weight(b) = \weight(u)$.
\item[(b)] There are no embeddings $\cX \to \cB$ if $\weight(b) \neq \weight(u)$.
\item[(c)] If $\cX\subseteq \cB$ then $u=b$ and $\cX = \fkD^\cB_{i_1} \fkD^\cB_{i_2}\cdots \fkD^\cB_{i_k} \{b\} $ for some $i_1,i_2,\dotsc,i_k \in [n-1]$.


\een
 \end{proposition}
 
We omit the proof of this proposition, which follows as a straightforward exercise from Definition~\ref{dem-def}.
This result is also implicit in~\cite{Kashiwara93}.

Assume $\cB$ is a normal $\gl_n$-crystal. Fix a partition $\lambda \in \NN^n$
and suppose $u_\lambda \in \cB$ is a highest weight element with $\weight(u_\lambda) = \lambda$.
The operators $\fkD^\cB_i$ satisfy the braid relations for $S_n$ when applied to $\cX = \{u_\lambda\}$ by results in~\cite{Kashiwara93} (see also~\cite[Thm.~13.5]{BumpSchilling}).
We can therefore define $\fkD^\cB_w \{u_\lambda\}$ for any $w \in S_n$.
By~\cite[Thm.~13.7]{BumpSchilling}, the character of this Demazure $\gl_n$-crystal is the key polynomial
\be\label{fkD-eq2}
\ch(\fkD^\cB_w \{u_\lambda\}) = \kappa_{w\circ \lambda} \in \NN[x_1,x_2,\dotsc,x_n]
\quad\text{for any }w \in S_n.
\ee
Every Demazure $\gl_n$-crystal is isomorphic to some $\fkD^\cB_w \{u_\lambda\}$, so the characters of Demazure $\gl_n$-crystals are the precisely the key polynomials $\kappa_\alpha$ indexed by $\alpha \in \NN^n$.
In fact, these polynomials are a $\ZZ$-basis for $\ZZ[x_1,x_2,\dotsc,x_n]$~\cite[Cor.~7]{ReinerShimozono} and two Demazure $\gl_n$-crystals are isomorphic if and only if they have the same character (see, e.g.,~\cite{Kashiwara93}).

\begin{example}
As $\alpha=(1,0,2,1) = w \circ \lambda$ for $w =  s_2 s_1 s_3$ and $\lambda = (2,1,1,0)$, we have 
\[
\kappa_{1021} = \pi_2\pi_1\pi_3 (\xx^{2110})
 = \xx^{2110} + \xx^{1210} + \xx^{1120} + \xx^{2101} + \xx^{2011} + \xx^{1201} + \xx^{1111} + \xx^{1021}.
\]
\ytableausetup{boxsize=1.2em}%
This is the character of the Demazure $\gl_4$-crystal $\fkD_2\fkD_1\fkD_3\left\{ \ytableaushort{3} \otimes \ytableaushort{2} \otimes \ytableaushort{1} \otimes \ytableaushort{1}\right\}\subset  \BB_4^{\otimes 4}$ shown as
\[
\begin{tikzpicture}[xscale=1.3,yscale=1.2,>=latex]
\node (hw) at (0,0) {$3 \otimes 2 \otimes 1 \otimes 1$};
\node (1) at (2, 1) {$3 \otimes 2 \otimes 1 \otimes 2$};
\node (3) at (2, -1) {$4 \otimes 2 \otimes 1 \otimes 1$};
\node (13) at (4, 0) {$4 \otimes 2 \otimes 1 \otimes 2$};
\node (213) at (7, 0) {$4 \otimes 2 \otimes 1 \otimes 3$};
\node (2213) at (10, 0) {$4 \otimes 3 \otimes 1 \otimes 3$};
\node (21) at (5, 1) {$3 \otimes 2 \otimes 1 \otimes 3$};
\node (23) at (5, -1) {$4 \otimes 3 \otimes 1 \otimes 1$};
\draw[->,color=blue] (hw) -- node[midway,above] {\tiny $1$} (1);
\draw[->,color=black] (hw) -- node[midway,below] {\tiny $3$} (3);
\draw[->,color=blue] (3) -- node[midway,below] {\tiny $1$} (13);
\draw[->,color=black] (1) -- node[midway,above] {\tiny $3$} (13);
\draw[->,color=red] (1) -- node[midway,above] {\tiny $2$} (21);
\draw[->,color=red] (3) -- node[midway,above] {\tiny $2$} (23);
\draw[->,color=red] (13) -- node[midway,above] {\tiny $2$} (213);
\draw[->,color=red] (213) -- node[midway,above] {\tiny $2$} (2213);
\end{tikzpicture}
\]
where we have written $a \otimes b \otimes c \otimes d$ for $\ytableaushort{a} \otimes \ytableaushort{b} \otimes \ytableaushort{c} \otimes \ytableaushort{d}$.
\end{example}

Later, we will use the following trivial extension of~\eqref{fkD-eq2}.

\begin{lemma}\label{sp-ij-lem}
Suppose $\cX$ is a direct sum of Demazure $\gl_n$-crystals that is a subcrystal of a normal $\gl_n$-crystal $\cB$.
Then the operators $\fkD^{\cB}_i$ for $i \in [n-1]$ satisfy the braid relations on $\cX$,
and each set $\fkD^{\cB}_i \cX$ is itself a direct sum of Demazure $\gl_n$-crystals with character $\ch(\fkD^{\cB}_i \cX) = \pi_i \ch(\cX)$.
\end{lemma}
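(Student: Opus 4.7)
The plan is to reduce the lemma to the classical setting of a single Demazure crystal inside a single connected normal $\gl_n$-crystal, where the character identity~\eqref{fkD-eq2} and Kashiwara's braid relations are already in hand. First I would decompose $\cX = \bigsqcup_j \cX_j$ into its given Demazure summands and write $\cB = \bigoplus_k \cB_k$ in terms of its connected components, with highest weight elements $u_k$. Since Demazure crystals are connected, each $\cX_j$ sits inside some unique $\cB_{k_j}$, and by Proposition~\ref{unique-embed-prop}(c) one has $\cX_j = \fkD^{\cB_{k_j}}_{v_j}\{u_{k_j}\}$ for some $v_j$ in the Demazure monoid $(S_n,\circ)$. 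The crucial observation at this stage is that the map $j \mapsto k_j$ is injective: two distinct summands landing in the same $\cB_k$ would, by Proposition~\ref{unique-embed-prop}(c), both contain $u_k$, contradicting their disjointness as subsets of $\cB$. Hence the $\cX_j$ live in pairwise distinct connected components of $\cB$.

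Because the crystal operators $e_i$ and $f_i$ preserve each connected component, so does $\fkD_i^{\cB}$. This forces $\fkD_i^{\cB} \cX = \bigsqcup_j \fkD_i^{\cB_{k_j}}\cX_j$ with the summands still in disjoint components, and each $\fkD_i^{\cB_{k_j}}\cX_j = \fkD_i^{\cB_{k_j}} \fkD^{\cB_{k_j}}_{v_j}\{u_{k_j}\}$ is a Demazure $\gl_n$-crystal by Definition~\ref{dem-def}. The character claim then follows summand by summand from~\eqref{fkD-eq2} together with the standard identity $\pi_i \kappa_\alpha = \kappa_{s_i \circ \alpha}$, giving $\ch(\fkD_i^{\cB}\cX) = \sum_j \pi_i\ch(\cX_j) = \pi_i\ch(\cX)$.

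For the braid relations, I would first observe that Kashiwara's braid relations on $\{u_\lambda\}$ combined with $\fkD_i\fkD_i = \fkD_i$ imply that $\fkD^{\cB_k}_{i_1}\dotsm \fkD^{\cB_k}_{i_\ell}\{u_k\}$ depends only on the Demazure product $s_{i_1}\circ\dotsb\circ s_{i_\ell}$, so $\fkD^{\cB_k}_v\{u_k\}$ is well defined for every $v \in (S_n,\circ)$. Applied to each $\cX_j = \fkD_{v_j}^{\cB_{k_j}}\{u_{k_j}\}$, the identities $s_i\circ s_{i+1}\circ s_i = s_{i+1}\circ s_i\circ s_{i+1}$ and $s_i\circ s_j = s_j\circ s_i$ for $|i-j|>1$ in $(S_n,\circ)$ (valid because the corresponding group products are reduced) deliver the braid relations of $\fkD_i^{\cB}$ on each $\cX_j$, and hence on $\cX$ after taking the disjoint union from Step 2. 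The main obstacle is really the injectivity step that forces distinct Demazure summands of $\cX$ into distinct components of $\cB$; once that is secured the operators no longer mix summands, and everything reduces transparently to the single-component classical theory.
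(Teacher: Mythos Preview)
Your argument is correct and is precisely the ``trivial extension'' the paper has in mind; the paper does not supply a proof of this lemma beyond that remark. The only substantive point is your injectivity observation---that two Demazure summands of $\cX$ cannot lie in the same connected component of $\cB$ because Proposition~\ref{unique-embed-prop}(c) forces each to contain the unique highest weight element of that component---and once this is noted, everything decouples into the single-component theory behind~\eqref{fkD-eq2}.
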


Now we examine certain $\gl_n$-crystals of unprimed factorizations that decompose as direct sums of Demazure crystals. These crystals are
closely related to the geometry of the complete flag variety.
For the rest of this section, we fix an element $w \in S_\infty$ rather than in $S_\ZZ$.

A \defn{flag}
$\phi = (\phi_1 \leq \phi_2 \leq \cdots)$ is a weakly increasing sequence with $i \leq \phi_i \in \PP$ for all $i $.
A sequence of words $a=(a^1,a^2,\ldots)$ is \defn{bounded} by $\phi$ if  any of the following equivalent conditions holds:
\begin{enumerate}
\item[(1)] for each $i \in \PP$, every letter $m$ appearing in $a^i$ is a positive integer with $i \leq \phi_m$;
\item[(2)] each $m \in \ZZ$  appears in $a^i$ only if $1\leq i \leq \phi_m$ where we set $\phi_m := 0$ for $m \leq 0$;
\item[(3)]  the ``conjugate'' sequence $\psi_i := \min\{ m \in \PP \mid i \leq \phi_m \}$ has $\min a^i \geq \psi_i$ whenever $a^i \neq \emptyset$.
\end{enumerate}
Given a permutation $w \in S_\infty$ and a flag $\phi$, let  
\[
\brf(w, \phi) \subseteq \rf(w)\quand \brf_n(w,\phi) \subseteq \rf_n(w)
\]
be the subsets of reduced factorizations that are bounded by $\phi$. 
We view $\brf_n(w, \phi)$ as a $\gl_n$-subcrystal of $\rf_n(w)$.
The \defn{standard flag} $\phi^S$ has $\phi_i^S = i$ for all $i$, and we let
\[ \brf(w) := \brf(w, \phi^S)\quand \brf_n(w) := \brf_n(w, \phi^S).\]
We refer to elements of these sets as \defn{bounded reduced factorizations} of $w$.
If  $\phi$ and $\psi$ are flags with $\phi_i \leq \psi_i$ for all $i$ then $\brf(w,\phi)\subseteq \brf(w,\psi)$,
 and so $\brf(w) \subseteq \brf(w,\phi)$ always holds.
 
 \begin{example}\label{brf-ex}
The crystal graph of $\brf_3(w)$ for $w=21543\in S_5$ (compare with Figure~\ref{gl3-fig}) is 
\[
\begin{tikzpicture}[xscale=2.5,yscale=1.5,>=latex]
\node [bnode] (a) at (0, 0) {$31 \gap 43 \gap \cdot$};
\node [bnode] (b) at (0, -1) {$31 \gap 4 \gap 3$};
\node [bnode] (c) at (0, -2) {$31 \gap \cdot \gap 43$};
\draw[->,thick,color=red] (a) -- node[midway,left,scale=0.75] {  $2$} (b);
\draw[->,thick,color=red] (b) -- node[midway,left,scale=0.75] {  $2$} (c);
\node [bnode] (d) at (1, 0) {$431 \gap 4 \gap \cdot$};
\node [bnode] (e) at (1, -1) {$431 \gap \cdot \gap 4$};
\draw[->,thick,color=red] (d) -- node[midway,left,scale=0.75] {  $2$} (e);
\node [bnode] (f) at (2, 0) {$41 \gap 3 \gap 4$};
\node [bnode] (g) at (2, -1) {$1 \gap 43 \gap 4$};
\node [bnode] (h) at (2, -2) {$1 \gap 3 \gap 43$};
\draw[->,thick,color=blue] (f) -- node[midway,left,scale=0.75] {  $1$} (g);
\draw[->,thick,color=red] (g) -- node[midway,left,scale=0.75] {  $2$} (h);
\end{tikzpicture}
\]
\end{example}

These crystals are of interest as their characters are Schubert polynomials.
 It is known that $\fkS_w \in \ZZ[x_1,x_2,\dotsc,x_n]$ if and only if $w(i) < w(i+1)$ for all $i >n$~\cite[\S5.2]{Manivel}.
The \defn{Billey--Jockusch--Stanley formula}~\cite[Thm.~1.1]{BJS} is equivalent 
to the identity $\fkS_w  = \sum_{a \in \brf(w)} \xx^{\weight(a)}$;
see~\cite[Prop.~5.5]{AssafSchilling} for a proof this equivalence.
 It follows that if $w \in S_\infty $ has no descents greater than $n$ then  
 \be\label{ch-brf-eq}
 \brf_n(w) = \brf(w)\quand 
 \ch\bigl( \brf_n(w) \bigr) = \fkS_w.
 \ee
 However, if $w $ has at least one descent greater than $n$, then $ \brf_n(w) $ is a proper subset of $ \brf(w)$
 and $\ch\bigl( \brf_n(w) \bigr)$ is the polynomial obtained from $\fkS_w$ by setting $x_{n+1}=x_{n+2}=\cdots=0$.

\begin{example}
The Schubert polynomial of $w=21543 \in S_5$ is
 \[\ba
 \fkS_{21543} &=
 \xx^{1021} + 
\xx^{1111} + 
\xx^{1120} + 
\xx^{1201} + 
\xx^{1210} + 
\xx^{2011} 
\\&
\quad+ 
\xx^{2020} + 
\xx^{2101} + 
2\xx^{2110} +  
\xx^{2200} + 
\xx^{3001} +
\xx^{3010} +
\xx^{3100} = \ch\bigl( \brf_4(21543) \bigr)
\ea\] 
while $\ch\bigl( \brf_3(21543) \bigr) = 
\xx^{112} + 
\xx^{121} + 
\xx^{202} + 
2\xx^{211} +  
\xx^{220} + 
\xx^{301} +
\xx^{310}$. 
\end{example}

Suppose $\lambda  \in \NN^n$ is a partition and $w_\lambda \in S_\infty$ is the dominant element of shape $\lambda$.
 Then $\brf_n(w_\lambda)$ contains only the unique highest weight element of $\rf_n(w_\lambda)=\rf_n(\lambda)$ by~\cite[Thm.~2.5.1 and Prop.~2.6.7]{Manivel}.
If $\alpha \in \NN^n$ is a weak composition with $\lambda=\lambda(\alpha)$, then $u(\alpha) \in S_n$ so we can define 
\be
\brf_n(\alpha) := \fkD_{i_1} \fkD_{i_2}\cdots \fkD_{i_k}\brf_n(w_\lambda),
\ee
where $\fkD_i := \fkD_i^{\RF(\lambda)}$ and $i_1i_2\cdots i_k$ is any reduced word for $u(\alpha)$.
It follows from~\eqref{fkD-eq2} that $\brf_n(\alpha)$ is a Demazure $\gl_n$-crystal with character $\kappa_{\alpha}$.
Therefore the following holds by the observations after Proposition~\ref{unique-embed-prop}:

\begin{proposition}\label{unique-embed-prop(d)}
If $\cX$ is a Demazure $\gl_n$-crystal with highest weight element $u$,
then there is a unique $\alpha \in \NN^n$ with $  \brf_n(\alpha) \cong \cX$, and for this $\alpha$ it holds that
 $\key_\alpha=\ch(\cX)$ and $\lambda(\alpha) = \weight(u)$.
\end{proposition}

\begin{example}\label{brf-alpha-ex}
We have $w_{(2,2)} = s_2s_1s_3s_2$,  
$w_{(3,1)} = s_3s_2s_1s_2$, and 
$w_{(2,1,1)} = s_2s_1s_2s_3$. 
The crystal graphs of $\brf_3(\alpha)$ for  $\alpha=(2,0,2)$, $(3,0,1)$, $(1,1,2)$ are respectively
\[
\begin{tikzpicture}[xscale=2.5,yscale=1.5,>=latex]
\node [bnode] (a) at (0, 0) {$21 \gap 32 \gap \cdot$};
\node [bnode] (b) at (0, -1) {$21 \gap 3 \gap 2$};
\node [bnode] (c) at (0, -2) {$21 \gap \cdot \gap 32$};
\draw[->,thick,color=red] (a) -- node[midway,left,scale=0.75] {  $2$} (b);
\draw[->,thick,color=red] (b) -- node[midway,left,scale=0.75] {  $2$} (c);
\node [bnode] (d) at (1, 0) {$321 \gap 2 \gap \cdot$};
\node [bnode] (e) at (1, -1) {$321 \gap \cdot \gap 2$};
\draw[->,thick,color=red] (d) -- node[midway,left,scale=0.75] {  $2$} (e);
\node [bnode] (f) at (2, 0) {$21 \gap 2 \gap 3$};
\node [bnode] (g) at (2, -1) {$1 \gap 21 \gap 3$};
\node [bnode] (h) at (2, -2) {$1 \gap 2 \gap 31$};
\draw[->,thick,color=blue] (f) -- node[midway,left,scale=0.75] {  $1$} (g);
\draw[->,thick,color=red] (g) -- node[midway,left,scale=0.75] {  $2$} (h);
\end{tikzpicture}
\]
Observe that $\brf_3(21543) \iso \brf_3((2,0,2)) \oplus\brf_3((3,0,1))\oplus \brf_3((1,1,2))$.
\end{example}


Boundedness excludes some elements from $\RF(w)$, but never the highest weight elements.

\begin{lemma}\label{each-highest-lem}
If $w \in S_\infty$ then each highest weight element in $\rf_n(w)$ is in $\brf_n(w)$.
\end{lemma}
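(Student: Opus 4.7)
The plan is to identify the highest weight elements explicitly using the classification from Propositions~\ref{gl-subcrystal-prop} and~\ref{gl-highest-lem}, and then verify boundedness by a direct numerical argument on the associated reduced tableau.

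First, I would recall that by Proposition~\ref{gl-subcrystal-prop}, every full subcrystal of $\rf_n(w)$ has the form $\rf_n(T)$ for some increasing reduced tableau $T$ for $w$ with at most $n$ rows; since the crystal is normal, each such component contains exactly one highest weight element. By Proposition~\ref{gl-highest-lem}, this element is $a = (a^1, a^2, \ldots, a^n)$, where $a^i$ is the reversal of row $i$ of $T$ (with $a^i$ empty if $T$ has fewer than $i$ rows). Thus it suffices to show that each such $a$ lies in $\brf_n(w)$, i.e., that every letter of $a^i$ is at least $i$.

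Since $w \in S_\infty$, all reduced words for $w$ consist of positive integers, so all entries of $T$ are in $\PP$. In particular $T_{1,1} \geq 1$, and since the first column of $T$ is strictly increasing in $\PP$, an easy induction gives $T_{i,1} \geq i$ for every row index $i$. Since the rows are also strictly increasing, $T_{i,j} \geq T_{i,1} \geq i$ for every entry in row $i$. Therefore every letter of $a^i$ (which is just a reordering of row $i$) satisfies $m \geq i$, which is exactly condition~(1) from the definition of boundedness by the standard flag $\phi^S_i = i$. Hence $a \in \brf_n(w)$, as required.

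There is no real obstacle here beyond correctly unpacking the definitions: the statement essentially says that a reduced tableau whose rows and columns strictly increase from positive values must have its $i$-th row consist of entries $\geq i$, which is immediate.
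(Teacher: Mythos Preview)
Your proof is correct and follows essentially the same approach as the paper: both identify the highest weight elements via Proposition~\ref{gl-highest-lem} and then observe that the first column of an increasing reduced tableau for $w \in S_\infty$ is a strictly increasing sequence of positive integers, forcing every entry in row $i$ to be at least $i$. The paper compresses this into a single sentence, whereas you have spelled out the induction and the boundedness condition explicitly.
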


\begin{proof}
The highest weight factorizations in Proposition~\ref{gl-highest-lem} are bounded since the first column of an increasing reduced tableau for $w \in S_\infty$ is an increasing sequence of positive numbers.
 \end{proof}
 
 It is also useful to note the following property:

\begin{lemma}\label{brf-e-lem}
Suppose  $a \in \brf_n(w, \phi)$ and $i \in [n-1]$.
Then $e_ia \in \brf_n(w, \phi) \sqcup \{ \zero \}$.
\end{lemma}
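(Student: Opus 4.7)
The plan is to unpack Definition~\ref{ef-def} directly and track what actually changes when $e_i$ is applied. Because $\brf_n(w,\phi)\subseteq \rf_n(w)$ consists entirely of unprimed factorizations, every instance of ``swapping primes on pairs'' in the definition is vacuous (since both members of each pair lie in $\ZZ$). Consequently, provided $e_i a \neq \zero$, the factorization $e_i a$ differs from $a$ in exactly one respect: the largest unpaired letter $x$ in $a^{i+1}$ is removed, and the letter $x+q$ is inserted in decreasing position into $a^i$, where $q\in\NN$ is the minimal value with $x+q\notin a^i$.

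Next I would verify that $e_i a$ lies in $\rf_n(w)$ at all. The hypothesis $i \in [n-1]$ ensures $i+1 \leq n$, so the only factors modified are within the first $n$ positions; since all later factors are empty in $a$ and remain empty in $e_i a$, membership in $\rf_n(w)$ follows from Theorem~\ref{up-thm}.

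It remains to check that the result is still bounded by $\phi$. Every letter $m$ unchanged by $e_i$ stays in the same factor with the same value, so its bound $j \leq \phi_m$ still holds. The only nontrivial verification is for the new letter $x+q$ sitting in factor $i$: we must show $i \leq \phi_{x+q}$. Since $x$ originally appeared in $a^{i+1}$ and $a$ is bounded by $\phi$, we have $i+1 \leq \phi_x$; because $q \geq 0$, the value $x+q \geq x$, and since $\phi$ is weakly increasing, $\phi_{x+q} \geq \phi_x \geq i+1 > i$, as required.

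There is really no obstacle here beyond keeping the bookkeeping straight: the key observation is simply that $e_i$ moves a letter \emph{to an earlier factor} and \emph{weakly increases its value}, and both of these operations are compatible with the bounded condition defined by an increasing flag. In particular, the identical argument would fail for $f_i$, which moves letters to later factors and decreases their values—this mirrors the well-known fact that Demazure subcrystals are closed under raising operators but not lowering operators.
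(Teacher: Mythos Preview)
Your proof is correct and follows essentially the same approach as the paper: both argue that $e_i$ removes a letter $x$ from factor $i+1$ and inserts a letter $y=x+q\geq x$ into factor $i$, and then use the monotonicity of $\phi$ to conclude $\phi_{y}\geq\phi_x\geq i+1>i$. Your version is more explicit about ancillary points (the vacuity of the prime-swapping, membership in $\rf_n(w)$), while the paper compresses the argument to two sentences.
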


\begin{proof}
If $e_i a \neq \zero$  then $e_{ i}a$ is formed from $a=(a^1,a^2,\ldots)$ by removing a letter $x$ from $a^{i+1}$ and adding a letter $y \geq x$ to $a^i$.
This gives another element of $ \brf_n(w,\phi)$ since $\phi_y \geq \phi_x \geq i+1$.
\end{proof}

We abbreviate by writing $\brf_n(T,\phi) := \rf_n(T) \cap \brf_n(w,\phi)$
and
$\brf_n(T) := \brf_n(T,\phi^S)$.

\begin{corollary}\label{dem-sub-cor}
The map $T \mapsto \brf_n(T,\phi)$ is a  bijection from increasing (equivalently, decreasing) reduced tableaux for $w \in S_\infty$ with at most $n$ rows 
to full subcrystals of $\brf_n(w,\phi)$.
\end{corollary}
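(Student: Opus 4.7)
The plan is to characterize the full subcrystals of $\brf_n(w,\phi)$ through their highest weight elements, showing these correspond bijectively with the highest weight elements of $\rf_n(w)$, which by Proposition~\ref{gl-subcrystal-prop} and Proposition~\ref{gl-highest-lem} are in turn parameterized by increasing reduced tableaux with at most $n$ rows. The $\gl_n$-crystal structure on $\brf_n(w,\phi)$ is obtained from that of $\rf_n(w)$ by sending crystal operators to $\zero$ whenever they would exit $\brf_n(w,\phi)$, and Lemma~\ref{brf-e-lem} says this truncation never occurs for the raising operators. Thus the operators $e_i$ on $\brf_n(w,\phi)$ agree with the restrictions of those on $\rf_n(w)$, and so an element $a \in \brf_n(w,\phi)$ is highest weight in $\brf_n(w,\phi)$ if and only if it is highest weight in $\rf_n(w)$.

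The key observation I would then verify is that the unique highest weight element of each $\rf_n(T)$ always lies in $\brf_n(w,\phi)$, hence in $\brf_n(T,\phi)$. By Proposition~\ref{gl-highest-lem}, this highest weight element is $a=(a^1,\dotsc,a^n)$ where $a^i$ is the reversal of row $i$ of $T$. Because $T$ is increasing, every entry $m$ in row $i$ satisfies $m \geq i$; since any flag $\phi$ is weakly increasing with $\phi_j \geq j$, any such $m$ has $\phi_m \geq \phi_i \geq i$, so $m$ may occupy factor $i$ and $a \in \brf_n(w,\phi)$. Combined with the previous paragraph, this produces a bijection between increasing reduced tableaux with at most $n$ rows and the highest weight elements of $\brf_n(w,\phi)$.

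It remains to identify $\brf_n(T,\phi) = \rf_n(T) \cap \brf_n(w,\phi)$ with the connected component of $\brf_n(w,\phi)$ containing the corresponding highest weight element. Closure under $e_i$ on $\brf_n(w,\phi)$ is immediate from Lemma~\ref{brf-e-lem} and the fact that $\rf_n(T)$ is a full subcrystal of $\rf_n(w)$. For closure under $f_i$, if $a \in \brf_n(T,\phi)$ and $f_i a \neq \zero$ in $\brf_n(w,\phi)$, then by construction $f_i a \in \brf_n(w,\phi)$ and, because $\rf_n(T)$ is $f_i$-closed inside $\rf_n(w)$, we also get $f_i a \in \rf_n(T)$, hence $f_i a \in \brf_n(T,\phi)$. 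Connectedness follows because every element of $\brf_n(T,\phi)$ can be brought to its highest weight element via raising operators that remain within $\brf_n(w,\phi)$. The equivalence with decreasing reduced tableaux is immediate from Theorem~\ref{reduced-tab-lem}. There is no serious obstacle here: the entire argument is bookkeeping on top of Proposition~\ref{gl-subcrystal-prop}, and the only substantive step is the elementary flag-boundedness check in the middle paragraph.
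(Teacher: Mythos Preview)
Your proof is correct and follows essentially the same route as the paper's. The paper compresses the argument to two lines by citing Lemma~\ref{each-highest-lem} (which packages your middle-paragraph boundedness check for the standard flag, and hence for any flag since $\brf_n(w)\subseteq\brf_n(w,\phi)$) together with Lemma~\ref{brf-e-lem}, whereas you unpack both the nonemptiness and connectedness arguments explicitly; the underlying ingredients and logic are identical.
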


\begin{proof}
In view of Proposition~\ref{gl-subcrystal-prop}, 
it suffices to show that   $\brf_n(T,\phi) $ is nonempty and connected for every reduced tableau $T$ with at most $n$ rows.
This holds by Lemmas~\ref{each-highest-lem} and~\ref{brf-e-lem}.
\end{proof}

Generalizing Examples~\ref{brf-ex} and~\ref{brf-alpha-ex},
 Assaf and Schilling~\cite{AssafSchilling} have shown that if $w \in S_\infty$ then $\brf_n(w)$  is always a direct sum of Demazure $\gl_n$-crystals.\footnote{%
  This follows from~\cite[Thm.~5.11]{AssafSchilling} after twisting by the Lusztig involution as in Remark~\ref{rem:Lusztig_twist}.
  One should note, however, that the proof of~\cite[Thm.~5.11]{AssafSchilling} relies on results from a preprint of Assaf that has been superseded by~\cite{Assaf2021}, which updates some of the definitions in~\cite[\S5]{AssafSchilling}.
  In particular, the definition of the \defn{lift} operation just before~\cite[Def.~5.6]{AssafSchilling} should be changed to follow~\cite[Def.~4.22]{Assaf2021}, but this does not affect the proof of~\cite[Thm.~5.11]{AssafSchilling}.} 
Taking characters 
recovers the result of \cite{LS2}  that each $\fkS_w$ is an $\NN$-linear combination of key polynomials.
These properties extend to the $\phi$-bounded setting via the following theorem.

\begin{theorem}[{\cite[Thm.~4.5]{Wen2023}}] \label{flag-thm}
Let $w \in S_\infty$.
Suppose $\phi$ is a  flag and $i $ is minimal with $i < \phi_i$.
\ben
\item[(a)] If $i \geq n$ or $\phi_i >n$ then $\BRF(w,\phi) = \BRF(w,\phi- \e_i)$. 
\item[(b)] Otherwise $\BRF(w,\phi) = \fkD_j^{\RF(w)} \BRF(w,\phi- \e_i)$ for $j=\phi_i-1$.
\een
\end{theorem}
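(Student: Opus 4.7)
The plan is to handle the two cases separately, using the explicit descriptions of $\BRF_n(w,\phi)$ built up in Section~\ref{rf-sect}.

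For (a), the two alternatives in the hypothesis coincide: if $i\geq n$ then the flag condition gives $\phi_i>i\geq n$, so in both subcases $\phi_i>n$. Since membership in $\BRF_n$ compares $\phi_m$ only against factor indices $k\leq n$, and both $\phi_i$ and $\phi_i-1$ exceed $n$, the constraint ``letter $m$ appearing in $a^k$ requires $k\leq \phi_m$'' is unchanged when $\phi_i$ is replaced by $\phi_i-1$. Hence $\BRF_n(w,\phi)=\BRF_n(w,\phi-\e_i)$.

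For (b), set $j:=\phi_i-1\in[n-1]$, where $1\leq i<\phi_i\leq n$. The minimality of $i$ with $\phi_i>i$ forces $\phi_m=m$ for all $m<i$, so letters below $i$ can appear in $a^k$ only if $k\leq m<i\leq j$. It follows that for any $a\in\BRF_n(w,\phi)$, both $a^j$ and $a^{j+1}$ contain only letters $\geq i$, and that $a\in\BRF_n(w,\phi-\e_i)$ precisely when the value $i$ does not appear in $a^{j+1}$. These two observations drive the rest of the argument.

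The containment $\BRF_n(w,\phi)\subseteq\fkD_j\BRF_n(w,\phi-\e_i)$ is the cleaner direction. Given $b\in\BRF_n(w,\phi)$, Lemma~\ref{brf-e-lem} shows that its $j$-string in $\RF_n(w)$ stays inside $\BRF_n(w,\phi)$ under repeated application of $e_j$. Let $b^\top$ denote the top of this string, so $e_j(b^\top)=\zero$. By the zero-criterion of Definition~\ref{ef-def}, every letter of $(b^\top)^{j+1}$ is the last term of some pair in $\pair((b^\top)^j,(b^\top)^{j+1})$. Any letter equal to $i$ in $(b^\top)^{j+1}$ would then have to be paired with some letter of value strictly less than $i$ in $(b^\top)^j$, of which there are none; therefore no such letter exists and $b^\top\in\BRF_n(w,\phi-\e_i)$, placing $b$ in $\fkD_j\BRF_n(w,\phi-\e_i)$.

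The reverse containment $\fkD_j\BRF_n(w,\phi-\e_i)\subseteq\BRF_n(w,\phi)$ is the technical crux. Given $a\in\BRF_n(w,\phi-\e_i)$, I would induct on $k\geq 0$ to show that $f_j^k a\in\BRF_n(w,\phi)\cup\{\zero\}$. Setting $c:=f_j^k a$ and applying Definition~\ref{ef-def} together with Lemma~\ref{cons-lem}, the inductive step can fail only in one specific configuration: the smallest unpaired letter of $c^j$ equals $i$ while $c^{j+1}$ already contains the value $i$, in which case $f_j$ would introduce a letter $i-1<i$ into the $(j+1)$-th factor and violate the $\phi$-bound. Ruling this out along the entire $f_j$-descent from $a$ requires a careful analysis of how $\pair(c^j,c^{j+1})$ evolves from the initial pairing $\pair(a^j,a^{j+1})$: one must show that the step at which the value $i$ first enters $c^{j+1}$ necessarily coincides with its removal from $c^j$, so that the two cannot simultaneously hold. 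This detailed combinatorial verification is the main obstacle and is precisely what is carried out in~\cite{Wen2023}.
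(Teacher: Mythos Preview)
The paper does not supply its own proof of this statement; it is recorded with attribution to \cite{Wen2023}. Your write-up therefore already goes further than the paper does, and your treatment of part~(a) and of the inclusion $\BRF(w,\phi)\subseteq\fkD_j\BRF(w,\phi-\e_i)$ in part~(b) is correct.

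The one place you stop short is the reverse inclusion in~(b), which you call the ``technical crux'' and defer to \cite{Wen2023}. In fact this step is already accessible from Lemma~\ref{cons-lem}(b). Assume inductively that $c:=f_j^{\,k} a\in\BRF(w,\phi)$ and $f_j c\neq\zero$; as you observe, every letter of $c^j$ is $\geq i$. Lemma~\ref{cons-lem}(b) asserts that $c^j$ contains the value $y-q$, so $y-q\geq i$, and hence $\phi_{y-q}\geq\phi_i=j+1$ since $\phi$ is weakly increasing. The new letter $y-q$ placed into component $j+1$ therefore satisfies the $\phi$-bound, giving $f_j c\in\BRF(w,\phi)$. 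No analysis of how $\pair(c^j,c^{j+1})$ evolves along the $j$-string is needed. (Your description of the sole obstruction as ``$y=i$ with $i\in c^{j+1}$'' is also slightly too narrow---a priori one could have $y>i$ with $y-q<i$---but the same one-line appeal to Lemma~\ref{cons-lem}(b) rules out all such cases uniformly.)
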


As $\BRF(w)$ is a direct sum of Demazure crystals, it follows from the preceding theorem and Lemma~\ref{sp-ij-lem} 
that $\brf_n(w,\phi)$ is a direct sum of Demazure $\gl_n$-crystals for any flag $\phi$.
Thus, each full subcrystal of $\brf_n(w,\phi)$ is  a Demazure crystal, and Corollary~\ref{dem-sub-cor} implies the following.

\begin{corollary}\label{is-dem-cor}
If $\phi$ is a flag and $T$ is a reduced tableau for $w \in S_\infty$ with at most $n$ rows,
then $\BRF(T,\phi)$ is a Demazure $\gl_n$-crystal and $\BRF(w,\phi)$ is a direct sum of Demazure $\gl_n$-crystals.
\end{corollary}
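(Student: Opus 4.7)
The plan is to prove both assertions at once, using the Assaf--Schilling result for $\BRF(w) = \BRF(w,\phi^S)$ as a base case and Theorem~\ref{flag-thm} together with Lemma~\ref{sp-ij-lem} to induct on a measure of ``distance'' from the standard flag. Since $w \in S_\infty$ has finite support, only finitely many $\phi_i$ affect $\BRF(w,\phi)$, so the quantity
\[
|\phi| := \sum_{i \in \PP\: :\: s_i \text{ appears in some } a^j \text{ for } a \in \rf(w)} (\phi_i - i)
\]
is a well-defined nonnegative integer, and induction on $|\phi|$ is legitimate.

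For the base case $|\phi| = 0$, effectively $\phi = \phi^S$ on all relevant indices and $\BRF(w,\phi) = \BRF(w)$, which by the paragraph preceding Theorem~\ref{flag-thm} (the twisted form of Assaf--Schilling's theorem \cite[Thm.~5.11]{AssafSchilling}) is a direct sum of Demazure $\gl_n$-crystals. For the inductive step, pick $i \in \PP$ minimal with $i < \phi_i$; since $\phi$ is weakly increasing and $\phi_i > i$, the sequence $\phi - \e_i$ is still a flag and $|\phi - \e_i| = |\phi| - 1$. By Theorem~\ref{flag-thm}, either $\BRF(w,\phi) = \BRF(w,\phi - \e_i)$, so the inductive hypothesis immediately applies, or $\BRF(w,\phi) = \fkD^{\RF(w)}_j \BRF(w,\phi-\e_i)$ for some $j \in [n-1]$. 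In the latter case, by induction $\BRF(w,\phi-\e_i)$ is a direct sum of Demazure $\gl_n$-crystals, and then Lemma~\ref{sp-ij-lem} applied with $\cB = \RF(w)$ shows $\fkD^{\RF(w)}_j \BRF(w,\phi - \e_i)$ is also a direct sum of Demazure $\gl_n$-crystals.

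To upgrade this to the statement about individual $\BRF(T,\phi)$, recall that every Demazure $\gl_n$-crystal is connected (it has a unique highest weight element, characterized by $\varepsilon_i = 0$ for all $i$), so the summands appearing in the decomposition of $\BRF(w,\phi)$ into Demazure crystals are exactly the full subcrystals of $\BRF(w,\phi)$. On the other hand, Corollary~\ref{dem-sub-cor} identifies these full subcrystals with the nonempty sets $\BRF(T,\phi)$ indexed by reduced tableaux $T$ for $w$ with at most $n$ rows. Thus each $\BRF(T,\phi)$ is a Demazure $\gl_n$-crystal, completing the proof.

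The only step that requires any real care is the base case, which is not proved in this paper but is cited via the footnoted correction to \cite[Thm.~5.11]{AssafSchilling}; the rest is a clean induction driven entirely by Theorem~\ref{flag-thm} and Lemma~\ref{sp-ij-lem}. In particular, the passage from ``direct sum of Demazure crystals'' to ``each $\BRF(T,\phi)$ is Demazure'' is free, because both the connectedness of Demazure crystals and the connected-component classification in Corollary~\ref{dem-sub-cor} are already in hand.
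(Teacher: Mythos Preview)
Your argument is correct and follows exactly the paper's route: start from the Assaf--Schilling result for $\BRF(w)$, induct via Theorem~\ref{flag-thm} using Lemma~\ref{sp-ij-lem}, and then invoke Corollary~\ref{dem-sub-cor} to pass from the direct-sum statement to the statement about each $\BRF(T,\phi)$.

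One small technical wrinkle worth patching: your measure $|\phi|$ sums only over indices $i$ with $s_i$ occurring in a reduced word of $w$, but the minimal $i$ with $i<\phi_i$ selected by Theorem~\ref{flag-thm} need not be such an index, so the claim $|\phi-\e_i|=|\phi|-1$ can fail and the induction as written need not terminate. The fix is painless: since $\BRF(w,\phi)$ depends only on $\min(\phi_m,n)$ for $m<n$, first replace $\phi$ by the flag $\phi'$ with $\phi'_m=\min(\phi_m,n)$ for $m<n$ and $\phi'_m=m$ for $m\geq n$; then the minimal $i$ with $i<\phi'_i$ always lies in $[n-1]$, and the finite measure $\sum_{i<n}(\phi'_i-i)$ strictly decreases at each step.
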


Suppose $\phi$ is a flag and $T$ is a reduced tableau for some $w \in S_\infty$.
If $T$ has more than $n$ rows then the set $\BRF(T,\phi)$ is empty and $\ch\bigl( \BRF(T,\phi) \bigr) = 0$.
If $T$ has at most $n$ rows, however, then the preceding corollary implies that 
there is a unique weak composition $\alpha_n(T,\phi) \in \NN^n$ with
\be
\BRF(T,\phi) \iso \BRF\bigl( \alpha_n(T,\phi) \bigr) \quand
\ch\bigl( \BRF(T,\phi) \bigr) = \kappa_{\alpha_n(T,\phi)}.
\ee
As $n$ increases, the sequence of weak compositions $\alpha_n(T,\phi)$ is eventually constant
since the set $\BRF(T,\phi)$ is equal to $\brf(T,\phi)$ if $n$ is sufficiently large.
Define \be\alpha(T) := \lim_{n\to\infty} \alpha_n(T,\phi^S)\quad\text{where }\phi^S = (1<2<3<\dots).\ee
 An algorithm is known for computing $\alpha(T)$: if $T$ is an increasing reduced tableau for an element of $S_\infty$  then   $\alpha(T)$ is the content of the \defn{left nil-key} of $T$, as defined in~\cite[Thm.~5(1)]{ReinerShimozono}.

We now explain how to express $\alpha_n(T,\phi)$ for any $n$ and $\phi$ in terms of $\alpha(T)$.
For positive integers $a\leq b$ define $s_{b\searrow a} := s_{b-1}s_{b-2}\cdots s_{a+1} s_{a}$
so that $s_{a\searrow a} = 1$.
Then for each flag $\phi$ let 
\be\label{Delta-eq}
\Delta_n(\phi) := s_{\max\{n,\phi_1\}\searrow 1} \circ s_{\max\{n,\phi_2\}\searrow 2} \circ  \cdots \circ s_{\max\{n,\phi_n\}\searrow n}  \in S_n.
\ee
There is a monoid homomorphism $(S_\infty,\circ) \to (S_n,\circ)$ that sends $s_i \mapsto s_i$ for all $i \in [n-1]$ and $s_i\mapsto 1$ for all $i \notin[n-1]$.
For any weak composition $\alpha$, let $u_n(\alpha) \in S_n$ be the image under this homomorphism of 
the shortest permutation $u(\alpha) \in S_\infty$ with $\alpha = u(\alpha)\circ \lambda(\alpha)$.
This permutation can be computed by taking any expression for $u(\alpha) = s_{i_1} \circ s_{i_2}\circ \cdots \circ s_{i_k}$
as a Demazure product of simple transpositions and then omitting all factors $s_{i_j}$ with $i_j \notin [n-1]$.

\begin{lemma}\label{fkR-lem}
Let $\cX \subseteq \cB$ be a subset of a $\gl_n$-crystal where $ 0\leq k \leq n$, and define
\[
\fkR_k \cX := \{ b \in \cX \mid  \weight(b)_{i} = 0\text{ for all $k<i\leq n$}\}.
\]
Then for any index $i \in [n-1]$ it holds that
$
\fkR_k \fkD^\cB_i \cX = \begin{cases}   
\fkD^\cB_i \fkR_k\cX & \text{if }i<k
\\
\fkR_k \cX &\text{if } i \geq k
.\end{cases}
$
\end{lemma}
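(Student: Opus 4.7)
The plan is to exploit axiom (1) of an abstract $\gl_n$-crystal, which states that $\weight(e_i b) = \weight(b) + \e_i - \e_{i+1}$ whenever $e_i b \neq \zero$. In particular, applying $e_i$ (or iterating it) affects only the $i$-th and $(i+1)$-th coordinates of the weight, leaving every other coordinate unchanged. I will treat the two cases $i < k$ and $i \geq k$ separately, and in each case the conclusion will follow by unpacking the definition of $\fkD^\cB_i$ in equation~\eqref{cDo-eq}.

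In the case $i < k$, both indices $i$ and $i+1$ lie in $\{1,2,\dots,k\}$, so for any $b \in \cB$ and $m \in \NN$ with $e_i^m b \neq \zero$ we have $\weight(e_i^m b)_j = \weight(b)_j$ for every $j$ with $k < j \leq n$. Consequently, $b$ satisfies the $\fkR_k$-condition if and only if $e_i^m b$ does. Combining this with the characterization $b \in \fkD^\cB_i \cX \iff e_i^m b \in \cX$ for some $m \in \NN$ yields both inclusions in $\fkR_k\fkD^\cB_i \cX = \fkD^\cB_i\fkR_k \cX$ with essentially no work.

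In the case $i \geq k$, we have $i+1 > k$. For any $b \in \fkR_k \cX$ (or more generally any $b$ satisfying the $\fkR_k$-condition) this forces $\weight(b)_{i+1} = 0$. Since the weight function has codomain $\NN^n$, if $e_i b$ were nonzero its $(i+1)$-th weight coordinate would equal $-1$, which is impossible. Hence $e_i b = \zero$ for every such $b$, so the only $m \in \NN$ with $e_i^m b \in \cX$ is $m = 0$, giving $e_i^0 b = b \in \cX$. This means that for $b$ satisfying the $\fkR_k$-condition, $b \in \fkD^\cB_i \cX$ is equivalent to $b \in \cX$, which yields $\fkR_k\fkD^\cB_i \cX = \fkR_k \cX$.

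There is no real obstacle here: the entire argument is a bookkeeping exercise with the weight axiom, and the assumption that weights take values in $\NN^n$ (rather than $\ZZ^n$) is what makes the $i \geq k$ case collapse. I would present the proof in two short paragraphs mirroring these two cases, cross-referencing axiom~(1) and the definition of $\fkD^\cB_i$ in~\eqref{cDo-eq}, without any further machinery.
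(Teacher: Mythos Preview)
Your proposal is correct and follows exactly the same approach as the paper: the paper's proof is the single sentence ``This follows as $\weight(\cX) \subset \NN^n$ and $\weight(e_ib) = \weight(b) + \e_i - \e_{i+1}$ for any $b \in \cB$ with $e_ib\neq \zero$,'' and your two cases simply spell out in detail the bookkeeping that this sentence leaves implicit.
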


\begin{proof}
This follows as $\weight(\cX) \subset \NN^n$ and $\weight(e_ib) = \weight(b) + \e_i - \e_{i+1}$ for any $b \in \cB$ with $e_ib\neq \zero$.
\end{proof}

\begin{proposition}\label{alpha-prop}
 Let $T$ be a reduced tableau of shape $\lambda \in \NN^n$ for an element of $ S_\infty$. Then
 \[
 \alpha_n(T,\phi) = \Delta_n(\phi) \circ u_n\bigl( \alpha(T) \bigr) \circ \lambda.
 \]
\end{proposition}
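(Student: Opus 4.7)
The plan is to induct on the nonnegative integer $\sum_{i \geq 1}(\phi_i - i)$, which measures how much $\phi$ exceeds the standard flag $\phi^S$. The inductive step uses Theorem~\ref{flag-thm} to control how $\BRF(T, \phi)$ changes when a single entry of $\phi$ is decreased by one, while the base case $\phi = \phi^S$ reduces to a key-polynomial identity under restriction of variables.

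For the base case, one first verifies by inspection of the defining product that $\Delta_n(\phi^S)$ collapses to a trivial Demazure product, so the formula amounts to $\alpha_n(T, \phi^S) = u_n(\alpha(T)) \circ \lambda$. For $m$ sufficiently large, $\brf_m(T, \phi^S) = \brf(T, \phi^S)$ and has character $\kappa_{\alpha(T)}$, and restricting to at most $n$ factors corresponds on characters to setting $x_{n+1} = x_{n+2} = \cdots = 0$. So the base case reduces to the identity
\[
\kappa_{\alpha(T)}(x_1, \ldots, x_n, 0, 0, \ldots) = \kappa_{u_n(\alpha(T)) \circ \lambda}.
\]
Writing $\kappa_{\alpha(T)} = \pi_{u(\alpha(T))} \xx^\lambda$ and fixing a reduced word for $u(\alpha(T))$, this follows from the commutation $\rho_n \circ \pi_i = \pi_i^{(n)} \circ \rho_n$ on $\ZZ[\xx]$, where $\rho_n$ is evaluation at $x_m = 0$ for $m > n$ and $\pi_i^{(n)}$ equals $\pi_i$ for $i < n$ and the identity for $i \geq n$. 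The commutation is immediate for $i < n$ from the locality of $\pi_i$; for $i \geq n$, a short calculation on the expression $(x_i f - x_{i+1} s_i f)/(x_i - x_{i+1})$ shows that setting $x_i = x_{i+1} = \cdots = 0$ recovers $f$ at the same evaluation. Iterating along the reduced word, the surviving factors assemble into a reduced Demazure-product expression for $u_n(\alpha(T))$.

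For the inductive step, let $i$ be the minimal index with $i < \phi_i$ and set $\phi' := \phi - \e_i$. In case~(a) of Theorem~\ref{flag-thm}, $\BRF(T, \phi) = \BRF(T, \phi')$, and inspection of the product defining $\Delta_n$ shows $\Delta_n(\phi) = \Delta_n(\phi')$, so the formula transfers from $\phi'$ to $\phi$. In case~(b), $\BRF(T, \phi) = \fkD_{\phi_i - 1} \BRF(T, \phi')$, and Lemma~\ref{sp-ij-lem} gives $\ch(\BRF(T, \phi)) = \pi_{\phi_i - 1} \kappa_{\Delta_n(\phi') \circ u_n(\alpha(T)) \circ \lambda}$. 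Using $\pi_j \kappa_\beta = \kappa_{s_j \circ \beta}$, it suffices to verify the Demazure-product identity $\Delta_n(\phi) = s_{\phi_i - 1} \circ \Delta_n(\phi')$. This follows from the minimality of $i$, which forces the first $i-1$ factors of $\Delta_n(\phi')$ to be trivial, combined with the reduced factorization $s_{\phi_i \searrow i} = s_{\phi_i - 1} \cdot s_{(\phi_i - 1) \searrow i}$ that absorbs the new generator into the $i$-th position without disturbing later factors.

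The main obstacle is the base case identity: the only step requiring a new computation beyond tracking Demazure products is the commutation between $\rho_n$ and $\pi_i$ for $i \geq n$. Once this technical lemma is in hand, the rest of the argument is essentially bookkeeping about how $\Delta_n(\phi)$ evolves under single-entry flag increments via the Demazure product, combined with the standard identity $\pi_j \kappa_\beta = \kappa_{s_j \circ \beta}$.
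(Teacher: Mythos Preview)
Your argument is correct and follows essentially the same structure as the paper's proof: both handle the inductive step identically via Theorem~\ref{flag-thm}, reducing the claim to the base case $\phi=\phi^S$. The only difference is that for the base case the paper works at the crystal level, using Lemma~\ref{fkR-lem} to commute the weight-restriction operator $\fkR_n$ past the crystal Demazure operators $\fkD_{i_j}$, whereas you carry out the dual computation at the character level by commuting the evaluation $\rho_n$ past the $\pi_{i_j}$; since Demazure $\gl_n$-crystals are determined by their characters, the two arguments are equivalent.
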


\begin{proof}
In view of Theorem~\ref{reduced-tab-lem} and Proposition~\ref{gl-highest-lem}, Proposition~\ref{unique-embed-prop(d)} implies that  $\lambda(\alpha(T)) = \lambda$.
Choose a reduced word $i_1i_2\cdots i_k$ for $u(\alpha(T)) \in S_\infty$.
If $N\gg n$ is sufficiently large, then there is an isomorphism $\brf_N(T) \iso \brf_N(\alpha(T))$ of $\gl_N$-crystals,
which means that we also have
\[
\brf_n(T) = \fkR_n \brf_N(T) \iso \fkR_n \brf_N(\alpha(T)) =  \fkR_n\fkD_{i_1}\fkD_{i_2}\cdots \fkD_{i_k} \brf_N(\lambda),
\]
where the isomorphism is as $\gl_n$-crystals and where $\fkD_i := \fkD_i^{\rf_N(\lambda)}$.
It follows from Lemma~\ref{fkR-lem} that the  last expression is exactly $\brf_n(u_n(\alpha(T)) \circ \lambda)$,
so $\alpha_n(T,\phi^S)=u_n(\alpha(T)) \circ \lambda$.
To extend this identity from the standard flag to arbitrary flags, inductively apply Theorem~\ref{flag-thm}.
\end{proof}

 \begin{example}
If $T=\ytabsmall{ 1 & 3 \\ 3 & 4}$ then
 $
 \alpha_2(T) = (2,2) \neq \alpha_3(T) =(2,0,2)= \alpha(T)
 $
  and for the flag $\phi=(2\leq 2 < 4 \leq 4)$ we have 
$
   \alpha_2(T,\phi) = (2,2)
   \neq 
  \alpha_3(T,\phi) = (0,2,2)
  \neq 
    \alpha_4(T,\phi) = (0,2,0,2) .
    $
 \end{example}

\section{Demazure crystals for \texorpdfstring{$P$}{P}-key polynomials}
\label{sec:P_crystals}

In this section, we discuss analogues of the crystals $\RF(w)$
that are associated to fixed-point-free involutions in the symmetric group. These constructions will
lead to a ``symplectic'' version of Demazure crystals, whose characters recover the family of $P$-key polynomials.
Our main results here consist of Theorem~\ref{sp-crystal-thm} and the associated (equivalent) Conjectures~\ref{sp-demazure-conj} and~\ref{sp-demazure-conj2}.

\subsection{Queer crystals}\label{q-sect}

The crystals relevant to this section are for the \defn{queer Lie superalgebra} $\q_n$ studied in~\cite{GJKKK15, GJKKK14, GJKKK10} rather than $\gl_n$.
Such $\q_n$-crystals are defined via Section~\ref{crystal-sect} by the following concrete data.

The index set for this type is the disjoint union of three sets  $I \sqcup \overline{I} \sqcup \underline{I}$, where $I := [n-1]$ as for $\gl_n$-crystals, $\overline{I} := \{\overline{1}, \overline{2} \dotsc, \overline{n-1}\}$, and $\underline{I} := \{\underline{1}, \underline{2},\dotsc, \underline{n-1}\}$.
\ytableausetup{boxsize=1.2em}%
The \defn{standard $\q_n$-crystal} is the set $\BB= \left\{ \ytableaushort{1},\ytableaushort{2},\dotsc,\ytableaushort{n}\right\}$ with weight function $\weight(\ytableaushort{i}):=\e_i$, crystal graph
\[
    \begin{tikzpicture}[xscale=2.4, yscale=1,>=latex,baseline=0]
      \node at (0,0) (T0) {$\ytableaushort{1}$};
      \node at (1,0) (T1) {$\ytableaushort{2}$};
      \node at (2,0) (T2) {$\ytableaushort{3}$};
      \node at (3,0) (T3) {${\cdots}$};
      \node at (4.6,0) (T4) {$\ytableaushort{n}$};
      \draw[->,thick]  (T0) -- (T1) node[midway,above,scale=0.75] {$1$, $\overline 1$, $\underline 1$};
      \draw[->,thick]  (T1) -- (T2) node[midway,above,scale=0.75] {$2$, $\overline 2$, $\underline 2$};
      \draw[->,thick]  (T2) -- (T3) node[midway,above,scale=0.75] {$3$, $\overline 3$, $\underline 3$};
      \draw[->,thick]  (T3) -- (T4) node[midway,above,scale=0.75] {${n-1}$, $ \overline{n-1}$, $\underline {n-1}$};
     \end{tikzpicture}
\]
and statistics $\varepsilon_i, \varphi_i$ defined by~\eqref{string-eqs}.
This construction is identical to the standard $\gl_n$-crystal but with additional crystal operators indexed by $\overline{I} \sqcup \underline{I}$.

If $ i \in I$ then the operators $e_i$, $e_{\overline{\imath}}$, $e_{\underline{i}}$ (respectively, $f_i$, $f_{\overline{\imath}}$, $f_{\underline{i}}$)
all act in the same way on the elements of $\BB$.
This property does not hold for the $\q_n$-crystal structure on tensor powers $\BB^{\otimes m}$ for $m\geq 2$, which is given as follows.
We just need to specify the crystal operators on tensors. 
For all $i \in I$, the definitions of $e_i$ and $f_i$ on $\BB^{\otimes m}$ are identical to the $\gl_n$-case
and given inductively by~\eqref{eq:gl_tensor_product}. In particular, the category of normal $\q_n$-crystals has a natural forgetful functor to the category of normal $\gl_n$-crystals.
The operators $e_{\overline{\imath}}$ are defined by the inductive formulas~\cite[Lemma~2.2]{GHPS}
\be\label{q-e-eq}
  e_{\overline 1}(b\otimes c) := \begin{cases} 
 b \otimes (e_{\overline{1}} c) & \text{if }\weight(b)_1 = \weight(b)_2 = 0,
 \\
  (e_{\overline{1}}b) \otimes c
&\text{otherwise,}
 \end{cases}
 \quand 
 e_{\overline{\imath}} := \sigma_{i-1} \sigma_i e_{\overline{\imath-1}} \sigma_i \sigma_{i-1}
\ee
for $b \in \BB$, $c \in \BB^{\otimes(m-1)}$, and $i \in \{2,3,\dotsc,n-1\}$.
The $\sigma_i$ operators here are the same as in~\eqref{sigma-def}, and give a well-defined $S_n$-action on tensor powers of $\BB$.
The operators $f_{\overline{\imath}}$ are defined by the repeating the formulas in~\eqref{q-e-eq} with every ``$e$'' replaced by ``$f$''. 
Lastly, for all $i \in I$ we define
\be
e_{\underline{i}} := \sigma_{w_0} f_{\overline{n-\imath}} \sigma_{w_0}
\quand
f_{\underline{i}} := \sigma_{w_0} e_{\overline{n-\imath}} \sigma_{w_0}.
\ee
Observe from these definitions that the entire set of crystal operators on $\BB^{\otimes m}$
is completely determined by just the operators $e_i$ and $f_i$ indexed by $i \in \{\overline{1}, 1,2,\dotsc,n-1\}$.

As explained in Section~\ref{crystal-sect}, this data gives rise to categories of \defn{$\q_n$-crystals} and \defn{normal $\q_n$-crystals} 
that are closed under tensor products~\cite[Thm.~1.8]{GJKKK14}. 
Just like $\BB^{\otimes m}$, the crystal operators on any normal $\q_n$-crystal are completely determined by just the operators $e_i$ and $f_i$ indexed by $i \in \{\overline{1}, 1,2,\dotsc,n-1\}$. When drawing  the graphs of normal $\q_n$-crystals, we often include only the arrows of these indices (see, for example, Figure~\ref{fig:Sp_crystal}).

A partition is \defn{strict} if its nonzero parts are all distinct.
Analogous to $\gl_n$-crystals, each connected normal $\q_n$-crystal has a unique highest weight element whose weight $\lambda$ uniquely determines its isomorphism class; these weights $\lambda$ range over all strict partitions in $\NN^n$, that is, with at most $n$ parts~\cite{GJKKK15} (see also~\cite[Thm.~1.14]{GJKKK14}). 
For each strict partition $\lambda \in \NN^n$, we can choose a connected normal $\q_n$-crystal  $\cB(\lambda)$ with highest weight element $u_{\lambda}$ of weight $\lambda$. Then $\cB(\lambda)$ also has a unique lowest weight element of weight $w_0 \lambda = (\lambda_n,\dotsc,\lambda_2,\lambda_1)$, given by $\sigma_{w_0} u_{\lambda}$.
The crystal $\cB(\lambda)$ can be identified with the crystal basis of a polynomial $U_q(\q_n)$-module
and its character is the \defn{Schur $P$-polynomial}
$
P_{\lambda}(x_1, \dotsc, x_n) 
$~\cite{GJKKK15,GJKKK14,GJKKK10}.

\subsection{Symplectic reduced factorizations}

Recall the definitions of $\Ifpf_\infty\subset \Ifpf_\ZZ$ from Section~\ref{schub-sect}.
For each $z \in \Ifpf_\ZZ$
 define 
\be\label{cRsp-eq}
\cR^\Sp(z) := \bigsqcup_{w \in \cA^\Sp(z)} \cR(w)
\ee
where $\cA^\Sp(z)$ is the set of minimal-length permutations $w \in S_\ZZ$ with $z = w^{-1} 1_\fpf w$ as in Definition~\ref{fschub-def}.
Following~\cite{HMP5,HMP1,Hiroshima}, we refer to
 elements of  $\cR^\Sp(z)$ as \defn{fpf-involution words}.

Whereas reduced words for permutations may be identified with maximal chains in the weak
order on the symmetric group, fpf-involution words correspond to maximal
chains in an analogous weak order on the closures of the orbits of the symplectic group 
acting on the complete flag variety; see~\cite{Brion2001,RichSpring}. This accounts for the $\Sp$ superscript in some of our notation.

\begin{remark}\label{fpf-rmk}
The easiest way  to construct  the set $\cR^\Sp(z) $ is as follows.
Consider a word $i=i_1i_2\cdots i_\ell$ with each $i_j \in \ZZ$.
Let $z_1 := 1_\fpf $ and for $j \in [\ell]$ let $z_{j+1} := s_{i_j}   z_j   s_{i_j}$. Then $i \in \cR^\Sp(z)$ if and only if $i_j$ is an ascent of $z_j$ for each $j \in [\ell]$ by~\cite[Lem.~3.12]{Marberg2019b}.
It follows when $i \in \cR^\Sp(z)$ that 
\begin{itemize}
\item[(a)] $i_1$ must be even and $i_1\neq i_3$;
\item[(b)] if $i_2$ is odd then  $i_2 = i_1 \pm 1$ and $i_1(i_1 \mp 1) i_3\cdots i_\ell  $ is also in $ \cR^\Sp(y)$;
\item[(c)] at most one of $i_2$ or $i_3$ can belong to $\{i_1-1,i_1+1\}$.
\end{itemize}
The set $\cR^\Sp(z) $ is automatically preserved by the usual braid relations for the symmetric group.
The set is spanned by these relations plus the single additional relation that interchanges words for the form
$i_1(i_1 + 1) i_3\cdots i_\ell \leftrightarrow i_1(i_1 - 1) i_3\cdots i_\ell$ \cite[Thm.~6.22]{HMP2}.
\end{remark}

For $m \in \NN$ 
let $\Ifpf_m$ be the subset of elements $z \in \Ifpf_\infty$ with $z([m]) = [m]$ and $z(i) = 1_\fpf(i)$ for all $i > m$.
This set is empty if $m$ is odd, and is in bijection with the fixed-point-free involutions in $S_m$ when $m$ is even.
Each fixed-point-free involution $z\in S_{m}$ uniquely extends to an element of $\Ifpf_{m}$
mapping $2i \mapsto 2i-1$ for all $i\notin[m]$, and in examples we implicitly identity $z$ with this extension.

\begin{example}
Let $z=(1\: 5)(2 \: 4)(3 \: 6) \in \Ifpf_6$.
Then $i=2143 \in \cR^\Sp(z)$ since
\[
z_1 =1_\fpf, \quad
z_2 = (1\: 3)(2 \: 4)(5\: 6),\quad
z_3 = (1\: 4)(2 \: 3)(5\: 6),\quad
z_4 = (1 \: 5)(2\: 3)(4\: 6),\quad
z_5 = z.
\]
More generally, we have
 $\cR^\Sp(z) = \{2143, 2343, 2413, 2431, 2434, 4213, 4231, 4234\}.$
\end{example}

Let $\SpRF(z)$ be the set of  all tuples of decreasing words $a=(a^1,a^2,\ldots)$
with $a^i $ empty for all $i>n$ and with $a^1a^2\cdots \in \cR^\Sp(z)$.
Define $\weight(a)$ for $a \in \SpRF(z)$ by~\eqref{weight-eq}.
We may also write
\be
\SpRF(z) :=\bigsqcup_{w \in \cA^\Sp(z)} \RF(w).
\ee
We refer to the elements of this union as \defn{symplectic reduced factorizations}.

Like $\RF(w)$, 
the set $\SpRF(z)$ can be empty if $n$ is too small; see Theorem~\ref{sprf-thm} below.
When nonempty,  $\SpRF(z)$ is a disjoint union of normal $\gl_n$-crystals so is itself
a normal $\gl_n$-crystal. 
Results in~\cite{Marberg2019b} (see also~\cite{Hiroshima}) extend this structure to a $\q_n$-crystal, as we now explain.

\begin{definition}
\label{def:sp_1bar}
For $a=(a^1,a^2,\ldots) \in \SpRF(z)$, define $e_{\overline{1}}a$ and $f_{\overline{1}}a$ as follows:
\begin{enumerate}
\item[\defn{$e_{\overline{1}}$}:]
Set $e_{\overline{1}}a := \zero$ if $a^2$ is empty or $\max(a^2) \leq \max(a^1)$.

Otherwise, let $y:=\max(a^2)$ and form $e_{\overline{1}}a$ from $a$ by doing one of the following:
\begin{itemize}
\item if $y$ is even then remove $y$ from $a^2$ and add $y$ to the start of $a^1$;
\item if $y$ is odd then remove $y$ from $a^2$ and add $y-2$ directly after the first letter of $a^1$.
\end{itemize}

\item[\defn{$f_{\overline{1}}$}:]
Set $f_{\overline{1}}a := \zero$ if $a^1$ is empty or $\max(a^1) \leq \max(a^2)$.

Otherwise, let $x:=\max(a^1)$ and form $f_{\overline{1}}a$ from $a$ by doing one of the following:
\begin{itemize}
\item if $x-1 \in a^1$ then remove $x-1$ from $a^1$ and add $x+1$ to the start of $a^2$;
\item if $x-1 \notin a^1$ then remove $x$ from $a^1$ and add $x$ to the start of $a^2$.
\end{itemize}
\end{enumerate}
\end{definition}

\begin{remark}
Let $a=(a^1,a^2,\ldots) \in \SpRF(z)$.
The observations in Remark~\ref{fpf-rmk} imply that: 
\begin{enumerate}
\item[(1)] if $a^1$ and $a^2$ are nonempty then $\max(a^1) \neq \max(a^2)$ and $\max(a^1)$ is even, and
\item[(2)] if $\max(a^1) < \max(a^2)$ and $\max(a^2)$ is odd, then $\max(a^2) = \max(a^1)+1$ and $\max(a^1)-1\notin a^1$.
\end{enumerate}
These properties make it clear that the factors of $e_{\overline 1}a$ are still strictly decreasing.
\end{remark}

\begin{example}
The $f_{\overline 1}$ operator sends 
$
(6421,53)  \xmapsto{\ f_{\overline 1}\ } 
(421,653)
$ and
$(4321,32)  \xmapsto{\ f_{\overline 1}\ }  
(421,532)
$
with $e_{\overline 1}$ acting in the reverse direction.
On the other hand, $e_{\overline 1}(6421,53) = f_{\overline 1}(421,532)=\zero$.
\end{example}

We claim that the operators $e_{\overline 1}$ and $f_{\overline 1}$ make the $\gl_n$-crystal $\SpRF(z)$ into a normal $\q_n$-crystal. This follows from~\cite{Marberg2019b}; however, the relevant statements in~\cite{Marberg2019b} concern a crystal structure on \emph{increasing} rather than decreasing factorizations of
fpf-involution words. 

To translate things into our current setup, recall the definition of $\ast$ for (primed) words, permutations, and tuples of words from
 Remark~\ref{rem:negation}.
The $\ast$ operation fixes $1_\fpf$ and defines a bijection $\Ifpf_\ZZ \to \Ifpf_\ZZ$.
It follows from Remark~\ref{fpf-rmk} that for any given 
$z \in \Ifpf_\ZZ$, the map $i\mapsto i^\ast$ is a bijection $\cR^\Sp(z) \to \cR^\Sp(z^\ast)$. 
On tuples of words, the $\ast$ operation converts the $\q_n$-crystal
  in~\cite[\S3.3]{Marberg2019b}
 into what is described in the following theorem.
In this statement, fix $z \in \Ifpf_\ZZ$ and let 
\be
c^\Sp_i(z) := \abs{ \{ j \in \ZZ \mid i < j \text{ and }\min\{i, z(i)\} > z(j)\} }
\ee
for each $i \in \ZZ$.
 These numbers make up what is called the \defn{(fpf-)involution code} of $z$ in~\cite{HMP5,HMP1}.

\begin{theorem}
\label{sprf-thm}
The set $\SpRF(z)$ is nonempty if and only if $c^\Sp_i(z) \leq n$ for all $i \in\ZZ$. When this  holds, 
 $\SpRF(z)$ 
has a normal $\q_n$-crystal structure with crystal operators
$e_i$ and $f_i$ for $i \in \{1,2,\dotsc,n-1\}\sqcup \{\overline 1\}$ given 
as in Definitions~\ref{ef-def} and~\ref{def:sp_1bar}.
\end{theorem}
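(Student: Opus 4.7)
The plan is to deduce both assertions from the corresponding results of~\cite{Marberg2019b} by transporting via the involution $\ast$ from Remark~\ref{rem:negation}. Recall that $\ast$ fixes $1_\fpf$ and induces an involution of $\Ifpf_\ZZ$, and that the decomposition
\[
\cR^\Sp(z) = \bigsqcup_{w \in \cA^\Sp(z)} \cR(w)
\]
together with the identity $(wv)^\ast = w^\ast v^\ast$ gives bijections $\cA^\Sp(z) \to \cA^\Sp(z^\ast)$ and $\cR^\Sp(z) \to \cR^\Sp(z^\ast)$ sending $i \mapsto i^\ast$. Thus tuples $a = (a^1,a^2,\ldots)$ of strictly \emph{decreasing} words with concatenation in $\cR^\Sp(z)$ correspond under $\ast$ to tuples of strictly \emph{increasing} words with concatenation in $\cR^\Sp(z^\ast)$; the latter is precisely the set on which~\cite[\S3.3]{Marberg2019b} defines a $\q_n$-crystal.

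For the nonemptiness claim, I would first observe (as in the proof of Proposition~\ref{rf-nonempty-prop}) that $\SpRF(z)$ is nonempty if and only if the corresponding set of $n$-tuples of increasing words for $z^\ast$ is nonempty, since they are related by $\ast$. The latter condition is characterized by the fpf-involution code in~\cite[Rem.~3.4]{Marberg2019b}. Finally I would verify the numerical identity $c^\Sp_i(z) = c^\Sp_{\iota(i)}(z^\ast)$ for some bijection $\iota \colon \ZZ \to \ZZ$, so that the bound $c^\Sp_i(z) \leq n$ for all $i \in \ZZ$ is preserved under $\ast$; this is a direct unwinding of the definition $c^\Sp_i(z) = |\{j > i : \min\{i,z(i)\} > z(j)\}|$ together with $z^\ast(i) = 1 - z(1-i)$.

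For the crystal structure, I would proceed in two steps. The $\gl_n$-operators $e_i, f_i$ of Definition~\ref{ef-def} already agree with the $\ast$-images of their increasing-factorization counterparts by Theorem~\ref{up-thm} and Remark~\ref{rem:negation}, so on each component $\RF(w) \subseteq \SpRF(z)$ we obtain a normal $\gl_n$-crystal that matches the one in~\cite{Marberg2019b}. It then remains to verify that Definition~\ref{def:sp_1bar} is the $\ast$-translate of the $e_{\overline{1}}, f_{\overline{1}}$ operators defined on increasing factorizations in~\cite{Marberg2019b}: under $\ast$, the max of a decreasing factor becomes the min of the corresponding increasing factor, parity is preserved up to a uniform shift (so ``even/odd'' cases translate directly), and the operations ``remove $y$ from $a^2$, prepend $y$ to $a^1$'' or ``prepend $y-2$ after the first letter'' turn into the appropriate insertions/removals on the increasing side. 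This is a routine case check, but it is the step where one must be most careful about conventions; I expect this bookkeeping to be the main obstacle, as the definitions in~\cite{Marberg2019b} involve several implicit sign conventions and ordering choices that interact subtly with the $\ast$ map.

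Once these identifications are in place, both the fact that $e_i, f_i, e_{\overline{1}}, f_{\overline{1}}$ preserve $\SpRF(z) \sqcup \{\zero\}$ and the normality of the resulting $\q_n$-crystal transfer directly from~\cite{Marberg2019b}, since $\ast$ intertwines the two crystal structures and preserves weights. This establishes both parts of the theorem.
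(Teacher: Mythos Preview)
Your overall strategy---transport everything through the involution $\ast$ from Remark~\ref{rem:negation}---is exactly the paper's approach, and your handling of the crystal structure (the $\gl_n$-operators via Theorem~\ref{up-thm} and then checking that Definition~\ref{def:sp_1bar} is the $\ast$-translate of the corresponding operators in~\cite{Marberg2019b}) matches the paper's citation of~\cite[Cor.~3.37]{Marberg2019b}.

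The gap is in your nonemptiness argument. You claim that $c^\Sp_i(z) = c^\Sp_{\iota(i)}(z^\ast)$ for some bijection $\iota$, and that this is a ``direct unwinding of the definition.'' It is not. If you carry out the substitution $z^\ast(k) = 1-z(1-k)$ and set $k = 1-i$, the condition $\min\{k, z^\ast(k)\} > z^\ast(j)$ becomes $\max\{i, z(i)\} < z(1-j)$, so $c^\Sp_{1-i}(z^\ast)$ counts $2$-cycles $(p,q)$ of $z$ satisfying a \emph{max}-type inequality rather than the \emph{min}-type inequality defining $c^\Sp_i(z)$. These counts do not agree entry-by-entry for any obvious choice of $\iota$, and the weaker statement that $\max_i c^\Sp_i(z) = \max_i c^\Sp_i(z^\ast)$ (which is all you actually need) is still not a formal unwinding.

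The paper closes this gap differently: it uses that~\cite[Rem.~3.16]{Marberg2019b} gives $\ch\bigl(\SpRF(z^\ast)\bigr)\neq 0$ if and only if $c^\Sp_i(z)\leq n$ for all $i$, and then invokes the character identity $\ch\bigl(\SpRF(z)\bigr) = \ch\bigl(\SpRF(z^\ast)\bigr)$, which follows by comparing~\cite[Rem.~3.16]{Marberg2019b} with~\cite[Cor.~5.10]{Marberg2019a}. This equality of characters (essentially the fact that the involution Stanley symmetric function is invariant under $\ast$) is the missing ingredient; it does the work that your code identity was supposed to do, but it comes from a separate source rather than from manipulating the definition of $c^\Sp_i$. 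Also note that the relevant reference in~\cite{Marberg2019b} is Remark~3.16, not Remark~3.4 (the latter is the $\gl_n$ statement used in Proposition~\ref{rf-nonempty-prop}).
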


\begin{proof}
The set $\SpRF(z)$ is a normal $\q_n$-crystal when it is nonempty 
by~\cite[Cor.~3.37]{Marberg2019b} via the preceding discussion.
By~\cite[Rem.~3.16]{Marberg2019b}, we have $\ch\bigl( \SpRF(z^\ast) \bigr) \neq 0$  if and only if  $c^\Sp_i(z) \leq n$ for all $i \in\ZZ$.
However, comparing~\cite[Rem.~3.16]{Marberg2019b} with~\cite[Cor.~5.10]{Marberg2019a} shows that $\ch\bigl( \SpRF(z^\ast) \bigr) = \ch\bigl( \SpRF(z) \bigr)$,
so $\SpRF(z)$ is nonempty if and only if $c^\Sp_i(z) \leq n$ for all $i$.
\end{proof}

Figure~\ref{fig:Sp_crystal} shows an example of the normal $\q_n$-crystal $\SpRF(z)$.
The character of $\SpRF(z)$ is the \defn{fpf-involution Stanley symmetric polynomial}
$P_z(x_1,x_2,\dotsc,x_n)$ studied in~\cite{HMP5}; see~\cite[Rem.~3.16]{Marberg2019b}.
Our next result is a ``symplectic'' analogue of Theorem~\ref{reduced-tab-lem}.
 
 \begin{figure}[h]
\begin{center}
\input{q3-decr-crystal.tex}
\end{center}
\caption{The $\q_4$-crystal graph of $\sprf_4(z)$ for the dominant fpf-involution $z = (1\: 5)(2\: 3)(4\: 6) \in \Ifpf_6$ of shape $\lambda=(4,1,1,1)$. 
The boxed elements make up the set of bounded factorizations $\bsprf_4(z)$.
Solid blue, red, and green arrows indicate $1$-, $2$-, and $3$-edges, respectively, while dashed blue arrows are $\overline 1$-edges.
The indices of the solid arrows can also be inferred from the difference in weight between the source and target vertices.
To make this picture easier to view, we have not drawn the $\overline 2$-, $\overline 3$-, $\underline 1$-, $\underline 2$-, or $\underline 3$-edges,
as these are determined by the displayed arrows.
}
\label{fig:Sp_crystal}
\end{figure}

Define \defn{symplectic Coxeter--Knuth equivalence} to be the transitive closure $\simFCK$
of Coxeter--Knuth equivalence $\simCK$ plus the symmetric relation on words that has 
$a(a-1) \cdots \simFCK  a(a+1) \cdots$ for all $a\in\ZZ$
and 
 $ ab \cdots \simFCK ba \cdots $
 for all $a,b\in\ZZ$ with $a\equiv b \modu 2)$.
These extra relations can only change the two letters at the start of a word.
As noted in Remark~\ref{fpf-rmk}, if $z \in \Ifpf_\ZZ$ then $\cR^\Sp(z)$ is a disjoint union of symplectic Coxeter--Knuth equivalence classes.

The shifted diagram of a strict partition $\lambda = (\lambda_1 > \lambda_2 > \cdots > \lambda_k \geq 0)$
is the set of positions $\SD_\lambda := \{ (i,i+j-1) \mid (i,j) \in \D_\lambda\}$. 
A \defn{shifted tableau} of   shape $\lambda$ is
a filling of $\SD_\lambda$ by elements of $\ZZ\sqcup\ZZ'$.
The \defn{row and column reading words} of a shifted tableau are defined just as for ordinary tableaux,
as are the notions of  \defn{increasing} and \defn{decreasing}.

\begin{theorem} \label{sp-reduced-tab-lem}
Fix a $\simFCK$-equivalence class $\cK\subseteq\cR^\Sp(z)$ for  $z \in \Ifpf_\ZZ$.
Then $\cK$ contains the row reading words of a unique increasing shifted tableau $U$ and a
unique decreasing shifted tableau $V$, which have the same shape. 
\end{theorem}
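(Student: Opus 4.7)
The plan is to mirror the proof of Theorem~\ref{reduced-tab-lem}, replacing ordinary Edelman--Greene insertion with a symplectic analogue adapted to fpf-involution words. Specifically, I would invoke a symplectic Edelman--Greene insertion map $P^\Sp \colon \cR^\Sp(z) \to \{\text{increasing shifted tableaux}\}$ of some strict shape with the following properties, all of which appear in the existing literature on fpf-involution words (see~\cite{Marberg2019b,Hiroshima} and, for related constructions, \cite{HMP2}): (a) for every $i \in \cR^\Sp(z)$, the reverse row reading word of $P^\Sp(i)$ lies in the same $\simFCK$-equivalence class as $i$; and (b) $P^\Sp(i) = P^\Sp(j)$ if and only if $i \simFCK j$. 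Granted these properties, the increasing tableau $U$ for a given class $\cK$ is uniquely determined by $U := P^\Sp(i)$ for any $i \in \cK$, since two distinct increasing shifted tableaux have distinct reverse row reading words.

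For the decreasing tableau, I would use the involution $\ast$ from Remark~\ref{rem:negation}. Since $\ast$ fixes $1_\fpf$, it restricts to a bijection $\cR^\Sp(z) \to \cR^\Sp(z^\ast)$ that intertwines the $\simFCK$ relation, because the additional symplectic relations in the definition of $\simFCK$ are preserved by the order-reversing map $i \mapsto i^\ast$ on $\frac{1}{2}\ZZ$. Setting $V := (P^\Sp(i^\ast))^\ast$, where $\ast$ is applied entrywise to the shifted tableau, yields a decreasing shifted tableau of the same strict shape as $P^\Sp(i^\ast)$. Because $\ast$ entrywise turns the reverse row reading word into the row reading word, we obtain $\row(V) = (\revrow P^\Sp(i^\ast))^\ast \simFCK (i^\ast)^\ast = i$, so $\row(V) \in \cK$ as desired. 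Uniqueness of $V$ follows symmetrically by transporting the uniqueness of $U$ for $\cR^\Sp(z^\ast)$ along $\ast$.

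The equality of shapes of $U$ and $V$ then follows because the symplectic Edelman--Greene insertion shape is invariant under $\ast$: applying $\ast$ does not alter the underlying shifted shape of a tableau, only its entries. The main obstacle I expect is pinning down the precise properties of the symplectic EG insertion, in particular confirming that its fibers are exactly the $\simFCK$-classes on $\cR^\Sp(z)$, and that $\ast$ commutes with the shape output. The fiber property should be extractable from~\cite{Marberg2019b,Hiroshima}, but if a direct reference is unavailable one can verify it by checking that each elementary relation defining $\simFCK$ (Coxeter--Knuth together with the two additional symplectic relations on the leading two letters) corresponds to an invariant of the insertion, using an induction on the length of the word.
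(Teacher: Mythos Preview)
Your overall strategy matches the paper's: invoke symplectic Edelman--Greene insertion to produce the increasing tableau $U = P^\Sp_\EG(i)$, then transport via $\ast$ to get the decreasing tableau $V = P^\Sp_\EG(i^\ast)^\ast$. The existence and uniqueness portions are essentially right, though note two small corrections. First, in the shifted setting the statement and the paper's proof use the \emph{row} reading word, not the reverse row reading word; the relevant property from \cite{Marberg2019a} is $i \simFCK \row(P^\Sp_\EG(i))$. Second, the paper does not cite the fiber statement ``$P^\Sp(i)=P^\Sp(j) \Leftrightarrow i\simFCK j$'' directly; it argues uniqueness via the column reading word (if $T$ is increasing shifted with $\row(T)\in\cK$ then $\col(T)\in\cK$ and $T=P^\Sp_\EG(\col(T))=P^\Sp_\EG(i)$), citing \cite[Lem.~2.7]{Marberg2019a} and \cite[Thm.~4.4]{Hiroshima}.

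The genuine gap is your shape argument. You write that ``applying $\ast$ does not alter the underlying shifted shape of a tableau, only its entries,'' and conclude that $U$ and $V$ have the same shape. But this only shows that $V=(P^\Sp_\EG(i^\ast))^\ast$ has the same shape as $P^\Sp_\EG(i^\ast)$. What is needed is that $P^\Sp_\EG(i)$ and $P^\Sp_\EG(i^\ast)$ have the same shape, and these are insertion tableaux of two \emph{different} words; there is no a priori reason the shapes coincide. You flag ``$\ast$ commutes with the shape output'' as a potential obstacle but then do not resolve it. The paper handles this with a nontrivial symmetric-function argument: it identifies $\sum_{i\in\cK} F_{\Des(i),k}$ and $\sum_{i\in\cK^\ast} F_{\Des(i),k}$ with Schur $P$-functions $P_\mu$ and $P_\nu$ (via the crystal characters from \cite{Marberg2019b}), observes that the two sums are related by the quasisymmetric automorphism $F_{S,k}\mapsto F_{[k-1]\setminus S,k}$, and uses that this automorphism fixes all Schur $P$-functions to deduce $P_\mu=P_\nu$, hence $\mu=\nu$. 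Without an argument of this kind (or a direct combinatorial proof that shifted EG insertion shape is $\ast$-invariant, which is not in the cited references), your proof is incomplete on exactly the point you anticipated.
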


\begin{proof}
Choose a word $i=i_1i_2\cdots i_k \in \cK$. 
The \defn{symplectic Edelman--Greene insertion algorithm} from~\cite{Marberg2019a} gives an increasing shifted tableau 
$P^\Sp_\EG(i)$ with $i \simFCK \row(P^\Sp_\EG(i))$ by~\cite[Cor.~3.22]{Marberg2019a}. 
This is the unique increasing shifted tableau with row reading word in $\cK$,
since if $T$ is such a tableau then
\ben
\item[(a)] $\cK$ also contains the \defn{column reading word} $\col(T)$ by~\cite[Lem.~2.7]{Marberg2019a};
\item[(b)] it is clear from the definition of $P^\Sp_\EG$ (see~\cite[Def.~3.23]{Marberg2019b}) that $T = P^\Sp_\EG(\col(T))$; and
\item[(c)] we have $P^\Sp_\EG(\col(T))=P^\Sp_\EG(i)$  
by~\cite[Thm.~4.4]{Hiroshima}
(see also~\cite[Thm.~3.35]{Marberg2019b}). 
\een
It follows that the desired shifted tableaux are
 $U := P^\Sp_\EG(i)$ and $V := P^\Sp_\EG(i^\ast)^\ast$, where $\ast$ 
 is the operation on words from Remark~\ref{rem:negation}, which we extend to tableaux
 by applying $\ast$ to each entry individually.

 Let $\mu$ and $\nu$ be the shapes of $U$ and $V$, which are also the shapes of $P^\Sp_\EG(i)$ and $P^\Sp_\EG(i^\ast)$. To show that $\mu=\nu$ we need some more notation.
For any subset $S \subseteq [k-1]$, let $F_{S,k}(\x)$ be the \defn{fundamental quasisymmetric function} given by summing all monomials $x_{a_1}x_{a_2}\cdots x_{a_k}$
with $1\leq a_1 \leq a_2 \leq \cdots \leq a_k$ and $a_j<a_{j+1}$ whenever $j \in S$.
Also let $\Des(i) := \{ j \in [k-1] \mid i_j > i_{j+1}\}$. 
Then  $\sum_{i \in \cK} F_{\Des(i),k}(\x)$ and $\sum_{i \in \cK^\ast} F_{\Des(i),k}(\x)$ are the characters of the full $\q_n$-subcrystals 
in~\cite[Thm.~3.32(a)]{Marberg2019b} corresponding to $P^\Sp_\EG(i)$ and $P^\Sp_\EG(i^\ast)$, or more precisely the  limits of these characters in the sense of formal power series as $n\to\infty$.
By~\cite[Rem.~2.13 and Thm.~3.32(b)]{Marberg2019b},
these limits respectively coincide with the Schur $P$-functions $P_\mu(\x)$ and $ P_\nu(\x)$.

Since 
$\Des(i^\ast) = [k - 1] \setminus \Des(i)$,
we have $\sum_{i \in \cK^\ast} F_{\Des(i),k}(\x) = \sum_{i \in \cK} F_{[k-1]\setminus\Des(i),k}(\x)$.
The $\ZZ$-linear map $\psi$ sending $F_{S,k}(\x) \mapsto F_{[k-1]\setminus S,k}(\x)$
is an automorphism of the ring of quasisymmetric functions~\cite[\S3.6]{KMW}
that fixes all Schur $P$-functions~\cite[\S{III}.8, Ex. 3(a)]{Macdonald}.
Therefore
{\small
\be\label{numu-eq}
P_\nu(\x) = \sum_{i \in \cK^\ast} F_{\Des(i),k}(\x) = \sum_{i \in \cK} F_{[k-1]\setminus\Des(i),k}(\x) = \psi\( \sum_{i \in \cK} F_{\Des(i),k}(\x)\) = \psi(P_\mu(\x)) = P_\mu(\x),
\ee}%
which can only hold if $\mu = \nu$ as Schur $P$-functions are linearly independent.
\end{proof}

We define a \defn{$\Sp$-reduced tableau} for $z \in \Ifpf_\infty$ to be a shifted tableau $T$ that is increasing or decreasing with $\row(T) \in \cR^\Sp(z)$.
Given such a tableau $T$, let
$\sprf_n(T) \subseteq \sprf_n(z)$ be the subsets of factorizations $a=(a^1,a^2,\ldots)$ with $a^1a^2\cdots \simFCK \row(T)$.

\begin{proposition} \label{sp-subcrystal-prop}
The map $T \mapsto \sprf_n(T)$ is a  bijection from increasing (equivalently, decreasing) $\Sp$-reduced tableaux for $z \in \Ifpf_\ZZ$ with at most $n$ rows 
to full $\q_n$-subcrystals of $\sprf_n(z)$.
 If $T$ is a $\Sp$-reduced tableau then $\sprf_n(T)$ is empty if and only if $T$ has more than $n$ rows.
\end{proposition}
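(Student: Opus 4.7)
The plan is to mirror the proof of Proposition~\ref{gl-subcrystal-prop} using the symplectic Edelman--Greene correspondence $P^\Sp_\EG$ in place of the classical one. The first step is to verify that the $\q_n$-crystal operators on $\sprf_n(z)$ preserve the $P^\Sp_\EG$-insertion tableau, equivalently, that they preserve $\simFCK$-equivalence of the concatenated factorization words. This intertwining property is essentially~\cite[Thm.~3.32]{Marberg2019b}, which is stated for increasing factorizations but can be transported to decreasing factorizations via the $\ast$-operation from Remark~\ref{rem:negation}; a statement for decreasing factorizations also appears in~\cite{Hiroshima}. Granted this, each nonempty $\sprf_n(T)$ is a union of full $\q_n$-subcrystals of $\sprf_n(z)$, distinct such $T$ give disjoint subsets, and by Theorem~\ref{sp-reduced-tab-lem} these subsets are indexed by increasing (equivalently, decreasing) $\Sp$-reduced tableaux $T$ whose shape arises as some $P^\Sp_\EG(a)$ for $a\in\sprf_n(z)$.

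The second step is to argue that each nonempty $\sprf_n(T)$ is connected rather than just a union of full subcrystals. Since connected normal $\q_n$-crystals are classified up to isomorphism by their strict partition highest weight~\cite{GJKKK15}, it suffices to show that any two elements of $\sprf_n(z)$ with the same value of $P^\Sp_\EG$ can be joined by a sequence of $\q_n$-crystal operators; this is the crystal-theoretic content of the symplectic EG correspondence of~\cite{Marberg2019a,Marberg2019b,Hiroshima}. Together with the indexing established in the first step, this yields the desired bijection.

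For the nonemptiness criterion, the easy direction is that if $T$ is a decreasing $\Sp$-reduced tableau for $z$ with rows $R_1, R_2, \ldots, R_k$ and $k \leq n$, then the tuple $a = (R_k, R_{k-1}, \ldots, R_1, \emptyset, \ldots, \emptyset)$ is a decreasing factorization whose concatenation is $\row(T) \in \cR^\Sp(z)$, giving $a \in \sprf_n(T)$. Conversely, if $T$ has more than $n$ rows, I would invoke the semistandard version of symplectic Edelman--Greene from~\cite{Marberg2019a,Hiroshima}: it sends any $a \in \sprf_n(z)$ to a pair $(P, Q)$ of shifted tableaux of the same shape, where $P = P^\Sp_\EG(a^1 a^2 \cdots)$ and $Q$ is a semistandard-type shifted tableau with entries bounded so that $Q$ has at most $n$ rows. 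If $P=T$ then $Q$ would have more than $n$ rows, which is impossible, forcing $\sprf_n(T) = \emptyset$. The main obstacle will be confirming the intertwining property between the crystal operators (particularly $e_{\overline 1}$ and $f_{\overline 1}$, whose parity-dependent definitions are peculiar to the symplectic setting) and $P^\Sp_\EG$ under our decreasing-factorization conventions, which requires carefully transporting the statements in~\cite{Marberg2019b} via the $\ast$-operation, and verifying that this operation commutes with both $P^\Sp_\EG$ and the appropriate crystal operators up to relabeling.
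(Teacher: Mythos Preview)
Your proposal is correct and follows essentially the same route as the paper. The paper cites \cite[Thms.~3.35 and 3.36]{Marberg2019b} (transported via $\ast$) to obtain directly that each $\sprf_n(T)$ is either empty or a full $\q_n$-subcrystal, which packages your first two steps together; for the emptiness criterion the paper likewise invokes the symplectic EG correspondence on $a^\ast$ (via \cite[Def.~3.23, Thm.~3.26]{Marberg2019b}) to force the recording tableau $Q$ to have entries in $\{1'<1<\cdots<n'<n\}$ and hence at most $n$ rows, exactly matching your argument.
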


\begin{proof}
Fix $z \in \Ifpf_\ZZ$.
As in the proof of Proposition~\ref{gl-subcrystal-prop}, the operation $\ast$ preserves $\simFCK$-equivalence.
Therefore~\cite[Thms. 3.35 and 3.36]{Marberg2019b} imply that each subset $\sprf_n(T)\subseteq \sprf_n(z)$ is a full subcrystal or empty, and by Theorem~\ref{sp-reduced-tab-lem} every full subcrystal arises as $\sprf_n(T)$ for some reduced tableau $T$.
 
If there exists a decreasing $\Sp$-reduced tableau $T$ with at most $n$ rows, then $\SpRF(T)$ is nonempty since it contains the sequence $a=(a^1,a^2,\dotsc,a^n)$ in which $a^i$ is row $n + 1 - i$ of $ T$.
It remains to check that $\SpRF(T)$ is empty if $T$ has more than $n$ rows.

For this, suppose $a \in \SpRF(T)$. 
Then the \defn{symplectic EG correspondence} described in~\cite[Def.~3.23, Thm.~3.26]{Marberg2019b} maps $a^\ast$ to a pair of shifted tableaux $(P,Q)$ of the same shape, where $P = P^\Sp_\EG((a^1a^2\cdots a^n)^\ast)$ and
where $Q$ is 
semistandard with all entries in $\{1'<1<\cdots<n'<n\}$.
From the proof of Theorem~\ref{sp-reduced-tab-lem}, we see that $P$ and $Q$ have the same shape as $T$.
Yet there are no semistandard shifted tableaux $Q$ with more than $n$ rows and all entries in $\{1'<1<\cdots<n'<n\}$, so $\SpRF(T)$ must be empty if $T$ has more than $n$ rows.
\end{proof}

We can characterize the $\q_n$-lowest weight elements of $\SpRF(z)$.

\begin{proposition}\label{sp-lowest-prop}
If $T$ is a decreasing $\Sp$-reduced tableau for $z \in \Ifpf_\ZZ$ with at most $n$ rows, then the unique $\q_n$-lowest weight element of $\sprf_n(T)$ is $a=(a^1, \dotsc,a^n)$, where $a^i$ is row $n + 1 - i$ of~$T$.
\end{proposition}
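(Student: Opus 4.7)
I would proceed in direct parallel with the proof of Proposition~\ref{gl-lowest-lem}. The first step is to invoke Proposition~\ref{sp-subcrystal-prop}: since $T$ is a decreasing $\Sp$-reduced tableau with at most $n$ rows, the set $\sprf_n(T)$ is nonempty and is a full $\q_n$-subcrystal of $\sprf_n(z)$, hence a connected normal $\q_n$-crystal. It therefore has a unique $\q_n$-lowest weight element, so the work reduces to identifying this element as the given $a$.

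For the identification, the cleanest route is to go through extremal weights rather than attempt to verify directly that $f_i a = \zero$ for every $i \in I \sqcup \bar I \sqcup \underline I$. Let $\mu$ denote the shape of $T$. The weight of $a$ is $(|\text{row }n|, |\text{row }n{-}1|, \dots, |\text{row }1|) = (\mu_n, \dots, \mu_2, \mu_1) = w_0\mu$. In any connected normal $\q_n$-crystal of highest weight $\mu$, the character is $P_\mu(x_1,\dots,x_n)$ and the monomial $x^{w_0\mu}$ appears with coefficient $1$, so $w_0\mu$ is attained by a unique element, which is necessarily the lowest weight element. Thus it suffices to pin down the highest weight of $\sprf_n(T)$ as $\mu$, after which $a$ must be the $\q_n$-lowest weight element simply by weight matching.

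To identify the highest weight as $\mu$, I would use Theorem~\ref{sp-reduced-tab-lem} to pass to the unique increasing shifted tableau $U$ in the same $\simFCK$-class as $\row(T)$, which has the same shape $\mu$. A $\q_n$-analogue of Proposition~\ref{gl-highest-lem} (which will be stated in the paper as Proposition~\ref{gl-highest-lem}'s $\q_n$ companion) produces a highest weight element of $\sprf_n(T)$ from $U$ of weight $\mu$, so the highest weight of the connected normal $\q_n$-crystal $\sprf_n(T)$ is $\mu$. Alternatively, one could verify directly that $f_i a = \zero$ for $i \in [n-1]$ (by the same pairing argument as in Proposition~\ref{gl-lowest-lem}, using that each entry of row $n+1-i$ pairs with an entry of row $n-i$ thanks to the column-strict structure) and that $f_{\bar 1}a = \zero$ (see below); combined with the fact that the character of $\sprf_n(T)$ is symmetric, these conditions pin down $a$ by a weight count.

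The $f_{\bar 1}$-check splits into two cases per Definition~\ref{def:sp_1bar}. If $T$ has fewer than $n$ rows then $a^1 = \emptyset$ and we are done. If $T$ has exactly $n$ rows then $\mu_{n-1} > \mu_n \geq 1$ forces $\mu_{n-1} \geq 2$, so row $n{-}1$ of $T$ contains entries at both columns $n{-}1$ and $n$, while row $n$ contains only the diagonal entry at column $n$. The column strictness of $T$ gives $T_{n-1,n} > T_{n,n}$ and the row strictness of row $n{-}1$ gives $T_{n-1,n-1} > T_{n-1,n}$, so $\max(a^2) = T_{n-1,n-1} > T_{n,n} = \max(a^1)$, confirming $f_{\bar 1}a = \zero$. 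The main obstacle is the first approach's reliance on knowing the highest weight of $\sprf_n(T)$ — this is where appealing to the $\q_n$-analogue of Proposition~\ref{gl-highest-lem} (which one expects to be stated in the same section) is crucial, since trying to check $f_{\bar\imath}a = \zero$ and $f_{\underline\imath}a = \zero$ for $i \geq 2$ directly would require unwinding the $\sigma$-operator definitions and is not straightforward.
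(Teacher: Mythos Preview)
Your core strategy matches the paper's: compute $\weight(a)=w_0\mu$ where $\mu$ is the shape of $T$, argue that $w_0\mu$ occurs with multiplicity one in the crystal, and conclude that $a$ is the unique lowest weight element. The gap is in how you establish that multiplicity-one fact. You invoke a ``$\q_n$-analogue of Proposition~\ref{gl-highest-lem}'' that would read off a highest weight element of weight $\mu$ from the increasing tableau $U$. No such result exists in the paper; in fact, the paper explicitly remarks after Corollary~\ref{sp-highest-cor} that describing the $\q_n$-highest weight elements of $\sprf_n(z)$ is an \emph{open problem}. Worse, the paper \emph{derives} Corollary~\ref{sp-highest-cor} (highest weight equals $\mu$) from this very proposition, so your first route is circular.

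Your alternative route---checking $f_i a=\zero$ for $i\in[n-1]$ and $f_{\overline 1}a=\zero$, then appealing to symmetry of the character---does not close the gap either: $\gl_n$-lowest weight plus $f_{\overline 1}a=\zero$ does not imply $\q_n$-lowest weight (the $f_{\overline\imath}$ and $f_{\underline\imath}$ for $i\geq 2$ remain), and the ``weight count'' still implicitly needs the highest weight to be $\mu$ to know that $x^{w_0\mu}$ appears with coefficient one.

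The paper avoids this circularity by importing external structure: it composes $\ast$ with the isomorphism of \cite[Thm.~3.36(b)]{Marberg2019b} to identify $\sprf_n(T)$ with the connected normal $\q_n$-crystal of semistandard shifted tableaux of shape $\mu$, and then cites \cite[Thm.~3.3]{Hiroshima2018} to know that in \emph{that} crystal the unique $\q_n$-lowest weight element is also the unique element of weight $w_0\mu$. Weight matching then finishes. So the missing ingredient in your argument is precisely this appeal to an explicit model of the crystal where the lowest weight is already characterized.
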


\begin{proof}
Suppose $T$ has shape $\mu$ and recall that $T = P^{\Sp}_{\EG}\bigl( (a^1a^2\cdots a^n)^\ast \bigr)^\ast$.
Composing $\ast $ with the map in~\cite[Thm.~3.36(b)]{Marberg2019b}
gives an isomorphism from $\sprf_n(T)$ to a connected normal $\q_n$-crystal of semistandard shifted tableaux of shape $\mu$,
whose unique $\q_n$-lowest weight element is also its unique element of weight $(\mu_n,\dotsc,\mu_2,\mu_1)$ by~\cite[Thm.~3.3]{Hiroshima2018}. 
Since the indicated factorization $a$ also has weight $(\mu_n,\dotsc,\mu_2,\mu_1)$,
it must be the unique $\q_n$-lowest weight element.
\end{proof}

\begin{corollary}\label{sp-highest-cor}
If $T$ is a $\Sp$-reduced tableau with at most $n$ rows, then the unique highest weight of the connected normal $\q_n$-crystal $\sprf_n(T)$ is the partition shape of $T$.
\end{corollary}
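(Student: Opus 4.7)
The plan is to derive this corollary directly from the previous two propositions together with the general theory of connected normal $\q_n$-crystals recalled in Section~\ref{q-sect}. The argument will proceed in three short steps.

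First, I will reduce to the case where $T$ is a decreasing $\Sp$-reduced tableau. By Theorem~\ref{sp-reduced-tab-lem}, the increasing and decreasing $\Sp$-reduced tableaux whose row reading words lie in a common $\simFCK$-equivalence class $\cK \subseteq \cR^\Sp(z)$ have the same shape. So if $T$ is given as an increasing $\Sp$-reduced tableau of shape $\mu$, there is a unique decreasing $\Sp$-reduced tableau $V$ of the same shape $\mu$ with $\row(V) \simFCK \row(T)$, and by definition $\sprf_n(T) = \sprf_n(V)$. Thus we may assume that $T$ itself is decreasing with shape $\mu$.

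Second, I will confirm that $\sprf_n(T)$ is a \emph{connected} normal $\q_n$-crystal. Proposition~\ref{sp-subcrystal-prop} shows it is a full $\q_n$-subcrystal of $\sprf_n(z)$, while the proof of Proposition~\ref{sp-lowest-prop} exhibits an explicit $\q_n$-crystal isomorphism from $\sprf_n(T)$ to a connected normal $\q_n$-crystal of semistandard shifted tableaux of shape $\mu$. In particular, $\sprf_n(T)$ has a unique highest weight element whose weight is a strict partition in $\NN^n$, and a unique lowest weight element.

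Third, I will pin down the highest weight via the lowest weight. By Proposition~\ref{sp-lowest-prop}, the unique lowest weight element of $\sprf_n(T)$ is the factorization $(a^1, \dotsc, a^n)$ with $a^i$ equal to row $n+1-i$ of $T$; from~\eqref{weight-eq} this element has weight $(\mu_n, \dotsc, \mu_2, \mu_1) = w_0 \mu$, where $w_0 \in S_n$ is the longest element. As noted in Section~\ref{q-sect}, in any connected normal $\q_n$-crystal the weights of the highest and lowest weight elements are interchanged by $w_0$, so the highest weight of $\sprf_n(T)$ must be $\mu$, as claimed.

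There is no genuine obstacle here: the corollary is essentially a one-line consequence of Proposition~\ref{sp-lowest-prop} combined with the fact that highest and lowest weights in connected normal $\q_n$-crystals are $w_0$-related. The only mild subtlety is to verify that both the increasing and decreasing cases are handled, which is why the appeal to Theorem~\ref{sp-reduced-tab-lem} is needed at the outset.
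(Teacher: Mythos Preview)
Your proof is correct and follows essentially the same approach as the paper: reduce to the decreasing case via Theorem~\ref{sp-reduced-tab-lem}, read off the lowest weight $(\mu_n,\dotsc,\mu_1)$ from Proposition~\ref{sp-lowest-prop}, and then invoke the $w_0$-relation between highest and lowest weights in connected normal $\q_n$-crystals from Section~\ref{q-sect}. The only difference is that you spell out the intermediate step confirming connectedness more explicitly than the paper does.
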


\begin{proof}
By Theorem~\ref{sp-reduced-tab-lem} we may assume that $T$ is decreasing. 
If this holds  and $T$ has shape $\mu \in \NN^n$, then
Lemma~\ref{sp-lowest-prop} shows that the unique lowest weight of  $\sprf_n(T)$ is $(\mu_n,\dotsc,\mu_2,\mu_1)$.
Reversing this tuple gives the unique highest weight by the discussion in Section~\ref{q-sect}.
\end{proof}

\begin{example}
The unique increasing and decreasing $\Sp$-reduced tableaux for the involution $z=(1\: 5)(2\: 3)(4\: 6) \in\Ifpf_6$ are $\ytabsmall{2 & 3 & 4}$ and $\ytabsmall{4&2&1}$.
Compare these with Figure~\ref{fig:Sp_crystal} and Proposition~\ref{sp-lowest-prop}.
\end{example}

Unlike the $\gl_n$-case, the increasing $\Sp$-reduced tableaux for $z \in \Ifpf_\ZZ$
do not identify the highest weight elements of $\sprf_n(z)$ in any simple way.
The following problem is open:

\begin{problem}
Describe the highest weight elements of the $\q_n$-crystals $\sprf_n(z)$ for $z \in \Ifpf_\ZZ$.
\end{problem}

Recall our definition of skew-symmetric partitions
from Section~\ref{key-sect}.
If $\lambda$ is any partition then let $\shalf(\lambda)$ be the strict partition
whose nonzero parts are $\{ \lambda_i - i \mid i \in \PP\} \cap \PP$.
For example, if
\[ \lambda = (4,3,3,1)= \ytabsmall{ \ & \ & \ & \ \\ \ & \ & \ \\ \ & \ & \ \\ \ }
\quad\text{then}\quad  \shalf(\lambda)=(3,1).\] 
The map $\lambda \mapsto \shalf(\lambda)$ is a bijection from skew-symmetric partitions
 to strict partitions.

 \begin{proposition}\label{sp-dom-prop}
Suppose $\lambda$ is a skew-symmetric partition and $\mu=\shalf(\lambda)$.
Let $z=z^\fpf_\lambda$ and define $T^\Sp_\lambda$ to be the shifted tableau of shape $\mu$
with entry $i+j$ in each position $(i,j) \in \SD_\mu$.
Then  $\cR^\Sp(z)$ is a single $\simFCK$-equivalence class
and $T^\Sp_\lambda$ is the unique increasing $\Sp$-reduced tableau for $z$.
\end{proposition}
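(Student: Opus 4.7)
The plan is to first show that $T^\Sp_\lambda$ is a well-defined increasing shifted tableau whose row reading word lies in $\cR^\Sp(z^\fpf_\lambda)$, and then to deduce uniqueness of the $\simFCK$-equivalence class from a character-theoretic argument using results already established earlier in the paper.

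First, $\mu = \shalf(\lambda)$ is a strict partition because $\lambda$ is skew-symmetric, so the sequence $\lambda_i - i$ is strictly decreasing on its positive part. The filling $(i,j) \mapsto i+j$ on $\SD_\mu$ is strictly increasing along rows, columns, and the main diagonal by inspection, giving a valid increasing shifted tableau of shape $\mu$. To verify that $\row(T^\Sp_\lambda) \in \cR^\Sp(z^\fpf_\lambda)$, I would proceed by induction on $|\mu|$. The base case $\lambda = \emptyset$ is trivial. For the inductive step, remove from $\lambda$ an outermost symmetric pair of boxes (or a single outermost diagonal box) to obtain a smaller skew-symmetric partition $\lambda^-$, so that $z^\fpf_\lambda$ is obtained from $z^\fpf_{\lambda^-}$ by conjugation by a suitable simple transposition $s_a$, while $T^\Sp_\lambda$ differs from $T^\Sp_{\lambda^-}$ by the addition of a single box. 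Correspondingly, $\row(T^\Sp_\lambda)$ is obtained from $\row(T^\Sp_{\lambda^-})$ by prepending or appending the letter $a$, and the criteria in Remark~\ref{fpf-rmk} can be checked directly to confirm that the new word remains in $\cR^\Sp(z^\fpf_\lambda)$.

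For uniqueness, I would combine Proposition~\ref{sp-subcrystal-prop} and Corollary~\ref{sp-highest-cor}: for $n \gg 0$, the $\q_n$-crystal $\SpRF(z)$ decomposes as a direct sum $\bigoplus_T \sprf_n(T)$ indexed by the increasing $\Sp$-reduced tableaux $T$ for $z$ (with at most $n$ rows), where the summand corresponding to $T$ of shape $\nu$ is a connected normal $\q_n$-crystal with character $P_\nu(x_1,\dots,x_n)$. Passing to the stable limit, one obtains
\begin{equation*}
F^\Sp_z(\xx) = \sum_T P_{\nu(T)}(\xx).
\end{equation*}
For $z = z^\fpf_\lambda$ dominant, the stable fpf-involution Stanley symmetric function is a single Schur $P$-function $P_\mu(\xx)$, which follows from the $P$-positivity results of~\cite{HMP5} together with the dominant identity $\fkSS_{z^\fpf_\lambda} = \pkey_\lambda$ and its stabilization. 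Since Schur $P$-functions are linearly independent, exactly one summand appears on the right-hand side, and it is necessarily the summand corresponding to a tableau of shape $\mu$. Combined with the first step, this unique tableau must be $T^\Sp_\lambda$, and hence $\cR^\Sp(z)$ is a single $\simFCK$-equivalence class.

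The main obstacle is the inductive verification in the first step: the interaction between adding a pair of boxes to a skew-symmetric diagram and the corresponding modification of the row reading word requires some careful case analysis, since the behavior differs significantly depending on whether the newly added box is on or off the main diagonal (the off-diagonal case modifies the associated simple transposition conjugation in a way that requires the second clause of Remark~\ref{fpf-rmk}(b)).
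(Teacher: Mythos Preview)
Your overall strategy matches the paper's: both reduce uniqueness to the character identity $F^\Sp_z=P_\mu$ for the fpf-involution Stanley symmetric function. However, both of your main steps have genuine gaps.

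\textbf{Step 2 (the induction).} Your claim that one can always remove a box from $\SD_\mu$ so that $\row(T^\Sp_\lambda)$ changes by \emph{prepending or appending} a single letter is false. For $\mu=(3,2)$ the only removable box giving a strict partition is $(2,3)$, and its entry $5$ sits in position $2$ of $\row(T^\Sp_\lambda)=45234$, neither first nor last. So the induction as stated does not go through; handling letters inserted mid-word is not covered by Remark~\ref{fpf-rmk}. The paper sidesteps this entirely by citing \cite[Thm.~3.12 and Lem.~4.30]{HMP6} for the containment $\row(T^\Sp_\lambda)\in\cR^\Sp(z)$.

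\textbf{Step 4 (why $F^\Sp_z=P_\mu$).} Knowing $P$-positivity and the dominant identity $\fkSS_{z^\fpf_\lambda}=\pkey_\lambda$ does not by itself force the Schur $P$-expansion of $F^\Sp_z$ to be a single term: $\pkey_\lambda$ is a non-symmetric polynomial whose ``stabilization'' is not $P_\mu$ in any sense you have established, and $P$-positivity alone allows multiple summands. The paper's argument is sharper: it invokes \cite[Thm.~1.4]{HMP5} to get $\widehat F^\fpf_z=P_\mu+\text{(lower terms)}$ and then \cite[Cor.~7.9]{HMP5}, a pattern-avoidance criterion on $z$, to kill the lower terms. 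The substantive content of the paper's proof is the verification that dominant fpf-involutions avoid the relevant list of sixteen patterns, which is done by inspecting Rothe diagrams. Your proposal contains no substitute for this step.
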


\begin{proof}
Define $F_{S,k}(\x)$ as in the proof of Theorem~\ref{sp-reduced-tab-lem} and
let $\widehat F^\fpf_z(\x) := \sum_{i \in \cR^\Sp(z)} F_{[k-1]\setminus\Des(i),k}(\x)$. This
is the \defn{fpf-involution Stanley symmetric function} of $z$ studied in~\cite{HMP1,HMP5}.
In view of~\eqref{numu-eq}, 
to prove that $\cR^\Sp(z^\fpf_\lambda)$ is a single $\simFCK$-equivalence class it suffices to show that
 $\widehat F^\fpf_z(\x) =P_{\mu}(\x)$.
 
 This identity follows from~\cite[Thm 1.4 and Cor.~7.9]{HMP5}.
 The first result asserts that $\widehat F^\fpf_z(\x) = P_\mu(\x) + \text{(lower order Schur $P$-terms in dominance order)}$,
 and the second result asserts that there are no lower order terms if $z$ avoids 
 the following list of 16 involution patterns:
 {\tiny\[
 \ba 
 &(1\: 3)(2\: 4)(5\: 8)(6\: 7),\quad(1\: 3)(2\: 5)(4\: 7)(6\: 8),\quad(1\: 3)(2\: 5)(4\: 8)(6\: 7),\quad(1\: 3)(2\: 6)(4\: 8)(5\: 7),\quad(1\: 4)(2\: 3)(5\: 7)(6\: 8),\quad(1\: 4)(2\: 3)(5\: 8)(6\: 7),\quad
 \\&(1\: 5)(2\: 3)(4\: 7)(6\: 8),\quad(1\: 5)(2\: 3)(4\: 8)(6\: 7),\quad(1\: 5)(2\: 4)(3\: 7)(6\: 8),\quad(1\: 5)(2\: 4)(3\: 8)(6\: 7),\quad
 (1\: 6)(2\: 3)(4\: 8)(5\: 7),\quad(1\: 6)(2\: 4)(3\: 8)(5\: 7),\quad
 \\&(1\: 6)(2\: 5)(3\: 8)(4\: 7),\quad(1\: 3)(2\: 4)(5\: 7)(6\: 9)(8\: 10),\quad(1\: 3)(2\: 5)(4\: 6)(7\: 9)(8\: 10),\quand(1\: 3)(2\: 4)(5\: 7)(6\: 8)(9\: 11)(10\: 12).
 \ea
 \]}%
 On inspecting the Rothe diagrams of these patterns, it becomes evident that any $y \in \Ifpf_\infty$ with $\{ (i,j) \in D(y) \mid i \neq j\} = \{ (i,j) \in \D_\lambda \mid i\neq j\}$ for a skew-symmetric partition $\lambda$ must avoid all of them.
 For example, the Rothe diagram of $(1\: 3)(2\: 4)(5\: 8)(6\: 7)$ is
 \[
  \left[\begin{smallmatrix} 
\square&\square&1&\cdot&\cdot&\cdot&\cdot&\cdot \\
\square&\square&\cdot&1&\cdot&\cdot&\cdot&\cdot \\
1&\cdot&\cdot&\cdot&\cdot&\cdot&\cdot & \cdot \\
\cdot&1&\cdot&\cdot&\cdot&\cdot&\cdot&\cdot \\
\cdot&\cdot&\cdot&\cdot&\square&\square&\square&1 \\
\cdot&\cdot&\cdot&\cdot&\square&\square&1&\cdot \\
\cdot&\cdot&\cdot&\cdot&\square &1&\cdot &\cdot \\
\cdot&\cdot&\cdot&\cdot&1&\cdot &\cdot&\cdot
\end{smallmatrix} \right]
\]
so if  $y \in \Ifpf_\infty$ does not avoid this pattern, then
its Rothe diagram will contain two positions $(a,b)$ and $(c,d)$ with $a<b<c<d$ 
while not containing $(a,d)$,
which is impossible if $y$ is dominant.

By Theorem~\ref{sp-reduced-tab-lem}, it remains only to show that $\row(T^\Sp_\lambda) \in \cR^\Sp(z)$, and this follows by combining~\cite[Thm.~3.12 and Lem.~4.30]{HMP6}.
%
\end{proof}

If $\lambda$ is a skew-symmetric partition then define $\SpRF(\lambda) := \SpRF(z^\fpf_\lambda)$,
where $z^\fpf_\lambda \in \Ifpf_\infty$ is dominant with shape $\lambda$.
By Theorem~\ref{sp-reduced-tab-lem}, there is a unique increasing $\Sp$-reduced tableau for $z^\fpf_\lambda$, which has shape $\shalf(\lambda)$, so the following holds by
Propositions~\ref{sp-subcrystal-prop} and \ref{sp-lowest-prop}:

\begin{corollary}
\label{sp-sum-cor}
Suppose $\lambda$ is a skew-symmetric partition
and $\mu = \shalf(\lambda)$. 
The normal $\q_n$-crystal $\SpRF(\lambda)$ is nonempty if and only if $\mu \in \NN^n$,
in which case it is connected with unique highest weight $\mu$.
Hence, each connected normal $\q_n$-crystal
is isomorphic to a unique $ \SpRF(\lambda)$.
 \end{corollary}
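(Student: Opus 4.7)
The plan is to assemble the corollary directly from the three results immediately preceding it, using the classification of connected normal $\q_n$-crystals from Section~\ref{q-sect} for the final sentence.

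First, since $\lambda$ is skew-symmetric, I would invoke Proposition~\ref{sp-dom-prop} to conclude that $\cR^\Sp(z^\fpf_\lambda)$ is a single $\simFCK$-equivalence class and that the tableau $T^\Sp_\lambda$ of shape $\mu=\shalf(\lambda)$ is the unique increasing $\Sp$-reduced tableau for $z^\fpf_\lambda$. Consequently, $\SpRF(\lambda) = \sprf_n(T^\Sp_\lambda)$, where the right-hand side is defined in terms of the $\simFCK$-class containing $\row(T^\Sp_\lambda)$.

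Next I would apply Proposition~\ref{sp-subcrystal-prop}: this set $\sprf_n(T^\Sp_\lambda)$ is a full $\q_n$-subcrystal when $T^\Sp_\lambda$ has at most $n$ rows and is empty otherwise. Since the number of rows of $T^\Sp_\lambda$ equals $\ell(\mu)$, the nonemptiness criterion is precisely $\mu \in \NN^n$. Being a full $\q_n$-subcrystal coming from a single $\simFCK$-class, $\SpRF(\lambda)$ is then connected. The identification of its unique highest weight with $\mu$ follows from Corollary~\ref{sp-highest-cor}, since the shape of $T^\Sp_\lambda$ is $\mu$.

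For the final sentence, I would combine the fact that $\lambda \mapsto \shalf(\lambda)$ is a bijection from skew-symmetric partitions onto strict partitions with the classification recalled in Section~\ref{q-sect}: isomorphism classes of connected normal $\q_n$-crystals are in bijection with strict partitions in $\NN^n$, via the highest weight. Given any such crystal with highest weight $\mu \in \NN^n$ strict, the unique skew-symmetric partition $\lambda$ with $\shalf(\lambda)=\mu$ produces $\SpRF(\lambda)$ in the correct isomorphism class by the previous step, and uniqueness of $\lambda$ gives the uniqueness statement. No step here should be difficult once the preceding propositions are in hand; the only subtlety is bookkeeping the translation between the number of rows of $T^\Sp_\lambda$ and the condition $\mu \in \NN^n$.
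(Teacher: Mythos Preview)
Your proposal is correct and follows essentially the same route as the paper: invoke Proposition~\ref{sp-dom-prop} to get a single $\simFCK$-class with tableau $T^\Sp_\lambda$ of shape $\mu$, then Proposition~\ref{sp-subcrystal-prop} for nonemptiness/connectedness, and Corollary~\ref{sp-highest-cor} (the paper cites the equivalent Proposition~\ref{sp-lowest-prop}) for the highest weight, finishing with the classification from Section~\ref{q-sect} and the bijection $\lambda\mapsto\shalf(\lambda)$.
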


 \subsection{Demazure \texorpdfstring{$\q_n$}{qn}-crystals}

Choose an element $z \in \Ifpf_\infty$ and a flag $\phi = (\phi_1 \leq \phi_2 \leq \cdots)$.
Let 
\be\label{bsprf-eq}
\bsprf_n(z,\phi) :=  \bigsqcup_{w \in \cA^\Sp(z)} \brf_n(w,\phi)\subseteq \sprf_n(z)
\ee
be the set of factorizations 
$a=(a^1,a^2,\dotsc,a^n) \in  \sprf_n(z)$
that are bounded by $\phi$ in the sense that every letter $m$ in $a^i$ has $i \leq \phi_m$.
We view this set as a $\q_n$-subcrystal of $\sprf_n(z)$;
by Corollary~\ref{is-dem-cor}, it is also
 a direct sum of Demazure $\gl_n$-crystals.
 Recall that the standard flag is $\phi^S = (1<2<3<\cdots)$ and 
write $\bsprf_n(z) := \bsprf_n(z,\phi^S)$.

 \begin{example}\label{bsprf-ex}
The crystal graph of $\bsprf_3(z)$ for $z= (1\: 5)(2\: 3)(4\: 7)(6\: 8)\in \Ifpf_8$ is 
\[
\begin{tikzpicture}[xscale=3.5,yscale=1.5,>=latex]
\node [bnode,text width=20mm] (a1) at (0, 0) {$6421 \gap \cdot \gap \cdot$};
\node [bnode,text width=20mm] (a2) at (0, -1) {$421 \gap 6 \gap \cdot$};
\node [bnode,text width=20mm] (a3) at (0, -2) {$421 \gap \cdot \gap 6$};
\node [bnode,text width=20mm] (a4) at (0, -3) {$21 \gap 4 \gap 6$};
\draw[->,thick,color=blue] (a1) -- (a2) node[midway,left,scale=0.75] {$\overline 1$};
\draw[->,thick,color=red] (a2) -- (a3) node[midway,left,scale=0.75] {$2$, $\overline 2$, $\underline 2$};
\draw[->,thick,color=blue] (a3) -- (a4) node[midway,left,scale=0.75] {$\overline 1$};
\node [bnode,text width=20mm] (b1) at (1.5, 0) {$621 \gap 4 \gap \cdot$};
\node [bnode,text width=20mm] (b2) at (1, -1) {$621 \gap \cdot \gap 4$};
\node [bnode,text width=20mm] (c2) at (2, -1) {$21 \gap 64 \gap \cdot$};
\node [bnode,text width=20mm] (b3) at (1, -2) {$21 \gap 6 \gap 4$};
\node [bnode,text width=20mm] (c3) at (2, -2) {$62 \gap 3 \gap 4$};
\node [bnode,text width=20mm] (b4) at (1, -3) {$21 \gap \cdot \gap 64$};
\node [bnode,text width=20mm] (c4) at (2, -3) {$2 \gap 63 \gap 4$};
\node [bnode,text width=20mm] (b5) at (1.5, -4) {$2 \gap 3 \gap 64$};
\draw[->,thick,color=red] (b1) -- (b2) node[midway,left,scale=0.75] {$2$, $\bar2$, $\underline 2$};
\draw[->,thick,color=blue] (b1) -- (c2) node[midway,above,scale=0.75] {$\bar1$};
\draw[->,thick,color=blue] (b2) -- (b3) node[midway,left,scale=0.75] {$\bar1$};
\draw[->,thick,color=red] (c2) -- (b3) node[midway,above,scale=0.75] {$2$, $\underline 2$};
\draw[->,thick,color=red] (c2) -- (c3) node[midway,right,scale=0.75] {$\bar2$};
\draw[->,thick,color=red] (b3) -- (b4) node[midway,left,scale=0.75] {$2$, $\bar2$};
\draw[->,thick,color=blue] (c3) -- (c4) node[midway,right,scale=0.75] {$1$, $\bar1$, $\underline 1$};
\draw[->,thick,color=blue] (b4) -- (b5) node[midway,left,scale=0.75] {$\overline 1$};
\draw[->,thick,color=red] (c4) -- (b5) node[midway,right,scale=0.75] {$2$, $\bar2$, $\underline 2$};
\end{tikzpicture}
\]
For another example of $\bsprf_n(z)$ see Figure~\ref{fig:Sp_crystal}. 
\end{example}

The characters of the $\q_n$-crystals $\bsprf_n(z)$
are closely related to the involution Schubert polynomials of symplectic type $\fkSS_z$
defined in Section~\ref{schub-sect}. We mention one property of these polynomials
which does not seem to appear in prior literature.
Define an integer $i \in \ZZ$ to be an \defn{fpf-descent} of $z \in \Ifpf_\ZZ$
if $i+1\neq z(i) > z(i+1) \neq i$. If $z \in \Ifpf_\infty$ then all of its fpf-descents are positive.

\begin{proposition}\label{sp-schub-prop}
Suppose $z \in \Ifpf_\infty$. Then $\fkSS_z \in \ZZ[x_1,x_2,\dots,x_n]$  if and only if $z$ has no fpf-descents greater than $n$.
\end{proposition}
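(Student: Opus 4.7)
The plan is to combine the nonnegativity of Schubert polynomial coefficients with the recursion~\eqref{fschub-eq1} to reduce the claim to a pointwise equivalence between descents of elements of $\cA^\Sp(z)$ and fpf-descents of $z$. Using the formula $\fkSS_z = \sum_{w \in \cA^\Sp(z)} \fkS_w$ from~\eqref{atom-eq-sp}, and the fact that each $\fkS_w$ has nonnegative integer coefficients (as follows from the Billey--Jockusch--Stanley formula recalled before~\eqref{ch-brf-eq}), there are no cancellations in this sum. Therefore $\fkSS_z \in \ZZ[x_1, \dotsc, x_n]$ if and only if every summand $\fkS_w$ lies in $\ZZ[x_1, \dotsc, x_n]$. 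Combined with the standard fact that $\fkS_w \in \ZZ[x_1, \dotsc, x_n]$ if and only if $w$ has no descents greater than $n$ (as recorded in Section~\ref{sec:gl_crystals} from~\cite[\S5.2]{Manivel}), this reduces the proposition to the pointwise claim that, for each $i \in \PP$, some $w \in \cA^\Sp(z)$ satisfies $w(i) > w(i+1)$ if and only if $i$ is an fpf-descent of $z$.

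To establish this pointwise equivalence, I would apply $\partial_i$ to both sides of~\eqref{atom-eq-sp}. Using the defining recurrence $\partial_i \fkS_w = \fkS_{ws_i}$ when $w(i) > w(i+1)$ and $\partial_i \fkS_w = 0$ otherwise (from Definition~\ref{schub-def}), we obtain
\[
\partial_i \fkSS_z = \sum_{\substack{w \in \cA^\Sp(z) \\ w(i) > w(i+1)}} \fkS_{ws_i}.
\]
Since all terms have nonnegative coefficients, this sum is zero if and only if no $w \in \cA^\Sp(z)$ has descent at $i$. On the other hand, comparing the definition $i+1 \neq z(i) > z(i+1) \neq i$ with the conditions in~\eqref{fschub-eq1} shows that the ``otherwise'' case in~\eqref{fschub-eq1} is precisely when $i$ is an fpf-descent of $z$, in which case $\partial_i \fkSS_z = \fkSS_{s_iz s_i}$ is a nonzero polynomial. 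Conversely, if $i$ is not an fpf-descent of $z$, then~\eqref{fschub-eq1} gives $\partial_i \fkSS_z = 0$. Combining these two characterizations of when $\partial_i \fkSS_z$ is zero yields the desired pointwise equivalence.

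The main obstacle is essentially bookkeeping: carefully matching the three cases in~\eqref{fschub-eq1} against the fpf-descent definition. In particular, one must observe that the degenerate two-cycle condition $z(i) = i+1$ (where $z(i) > z(i+1) = i$, but $(i,i+1)$ is a cycle of $z$) is explicitly excluded from being an fpf-descent, and correspondingly this case falls under $\partial_i \fkSS_z = 0$ in~\eqref{fschub-eq1}. Once this matching is made explicit, both directions of the proposition follow formally from the nonnegativity reduction in the first paragraph.
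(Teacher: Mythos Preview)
Your proof is correct and rests on the same key input as the paper's, namely the recursion~\eqref{fschub-eq1}, which identifies exactly when $\partial_i \fkSS_z$ vanishes. The paper's argument is slightly more direct: rather than passing through the atom decomposition~\eqref{atom-eq-sp} and the nonnegativity of Schubert polynomials, it simply observes that for a nonzero homogeneous polynomial $f$, the largest index $i$ for which $x_i$ appears in $f$ coincides with the largest $i$ for which $\partial_i f \neq 0$, and then reads off from~\eqref{fschub-eq1} that this index equals the largest fpf-descent of $z$. Your route has the side benefit of making explicit the pointwise equivalence ``some $w \in \cA^\Sp(z)$ has a descent at $i$ if and only if $i$ is an fpf-descent of $z$,'' which the paper's proof does not isolate.
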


\begin{proof}
 If $z=1_\fpf$ then $z$ has no fpf-descents and $ \fkSS_z=1\in \ZZ[x_1,x_2,\dots,x_n]$ for all $n$.
 Assume $z \neq 1_\fpf$ so that  $\fkSS_z$ is homogeneous of positive degree.
Let $i$ be the maximal index such that $x_i$ divides some monomial appearing in $\fkSS_z$.
We have 
$\fkSS_z \in \ZZ[x_1,x_2,\dots,x_n]$
 if and only if $i \leq n$.
As $i$ is also the maximal index with $\partial_i \fkSS_z \neq 0$,
it follows from \eqref{fschub-eq1} that $i \leq n$ if  and only if $z(j) < z(j+1)$ or $z(j)=j+1$ for each $j>n$, meaning $z$ has no fpf-descents greater than $n$.
\end{proof}

By  \eqref{atom-eq-sp}, \eqref{cRsp-eq}, and the Billey--Jockusch--Stanley formula 
 we have $\fkSS_z = \sum_{a \in \bsprf(z)} \xx^{\weight(a)}$ for each $z \in \Ifpf_\infty$.
Proposition~\ref{sp-schub-prop} tells us that if $z$ has no fpf-descents greater than $n$ then 
 \be\label{sp-schub-prop-eq}
 \BSpRF(z) = \bsprf(z) \quand \ch(\BSpRF(z)) = \fkSS_z.
 \ee
If $z$ has at least one fpf-descent greater than $n$,
then $ \BSpRF(z) $ is a proper subset of $ \bsprf(z)$ and 
$ \ch(\BSpRF(z)) $ is the polynomial obtained from $\fkSS_z$ by setting $x_{n+1}=x_{n+2}= \cdots = 0$.

 \begin{example}
The involution Schubert polynomial of $z =  (1\: 5)(2\: 3)(4\: 7)(6\: 8)\in \Ifpf_8$ is 
a polynomial in $x_1,x_2,x_3,x_4,x_5,x_6$ with $36$ terms and coefficients in
$\{1,2,3,4\}$, given by
\[
\fkSS_z = \xx^{011101} + \xx^{011110} + \xx^{011200} + \xx^{012100} + \cdots + 2\xx^{300100} + 2\xx^{301000} + 2\xx^{310000} + \xx^{400000},
\]
while $\ch\bigl( \bsprf_3(z) \bigr) = \xx^{112} + \xx^{121} + \xx^{202} + 3\xx^{211} + \xx^{220} + 2\xx^{301} + 2\xx^{310} + \xx^{400}$,
which is the polynomial obtained from $\fkSS_z$ by setting $x_4=x_5=x_6=0$.
\end{example}

Suppose 
$\alpha   $ is a skew-symmetric weak composition and  $\lambda:=\lambda(\alpha)$.
Assume   that  $ u(\alpha) \in S_n$ and $\shalf(\lambda) \in \NN^n$, so $\sprf_n(\lambda)$ is nonempty. 
Both conditions must hold if $\alpha \in \NN^n$, but this is not necessary (for example, if $\alpha = (5,1,1,1,1)$ and $n = 3$ as in Example~\ref{51111-ex} below).
Let $z^\fpf_\lambda \in \Ifpf_\infty$ be the  dominant element of shape $\lambda$.
Then by Lemma~\ref{sp-ij-lem} we can define
\be\label{bsprf-def}
\bsprf_n(\alpha) := \fkD_{i_1} \fkD_{i_2}\cdots \fkD_{i_k}\brf_n(z^\fpf_\lambda) \subseteq \sprf_n(\lambda),
\ee
where $\fkD_i = \fkD_i^{\sprf_n(\lambda)}$ and $i_1i_2\cdots i_k $ is any reduced word for $u(\alpha)$.
This gives a (nonempty) $\q_n$-crystal that is also a direct sum of Demazure $\gl_n$-crystals.

\begin{example}\label{51111-ex}
We have $z^\fpf_{(5,1,1,1,1)} = (1\: 5)(2\: 4)(3\: 6) \in \Ifpf_6$ and $z^\fpf_{(4,3,3,1)} = (1\: 5)(2\: 4)(3\: 6) \in\Ifpf_6$.
The connected crystal graphs of $\bsprf_3(\alpha)$ for $\alpha=(5,1,1,1,1)$ and $(4,3,3,1)$ are respectively
\[
\begin{tikzpicture}[xscale=3.5,yscale=1.5,>=latex]
\node [bnode,text width=20mm] (a1) at (0, 0) {$4321 \gap \cdot \gap \cdot$};
\node [bnode,text width=20mm] (a2) at (0, -1) {$421 \gap 5 \gap \cdot$};
\node [bnode,text width=20mm] (a3) at (0, -2) {$421 \gap \cdot \gap 5$};
\node [bnode,text width=20mm] (a4) at (0, -3) {$21 \gap 4 \gap 5$};
\draw[->,thick,color=blue] (a1) -- (a2) node[midway,left,scale=0.75] {$\overline 1$};
\draw[->,thick,color=red] (a2) -- (a3) node[midway,left,scale=0.75] {$2$, $\overline 2$, $\underline 2$};
\draw[->,thick,color=blue] (a3) -- (a4) node[midway,left,scale=0.75] {$\overline 1$};
\node [bnode,text width=20mm] (b1) at (1.5, 0) {$421 \gap 3 \gap \cdot$};
\node [bnode,text width=20mm] (b2) at (1, -1) {$421 \gap \cdot \gap 3$};
\node [bnode,text width=20mm] (c2) at (2, -1) {$21 \gap 43 \gap \cdot$};
\node [bnode,text width=20mm] (b3) at (1, -2) {$21 \gap 4 \gap 3$};
\node [bnode,text width=20mm] (c3) at (2, -2) {$42 \gap 3 \gap 4$};
\node [bnode,text width=20mm] (b4) at (1, -3) {$21 \gap \cdot \gap 43$};
\node [bnode,text width=20mm] (c4) at (2, -3) {$2 \gap 43 \gap 4$};
\node [bnode,text width=20mm] (b5) at (1.5, -4) {$2 \gap 3 \gap 43$};
\draw[->,thick,color=red] (b1) -- (b2) node[midway,left,scale=0.75] {$2$, $\bar2$, $\underline 2$};
\draw[->,thick,color=blue] (b1) -- (c2) node[midway,above,scale=0.75] {$\bar1$};
\draw[->,thick,color=blue] (b2) -- (b3) node[midway,left,scale=0.75] {$\bar1$};
\draw[->,thick,color=red] (c2) -- (b3) node[midway,above,scale=0.75] {$2$, $\underline 2$};
\draw[->,thick,color=red] (c2) -- (c3) node[midway,right,scale=0.75] {$\bar2$};
\draw[->,thick,color=red] (b3) -- (b4) node[midway,left,scale=0.75] {$2$, $\bar2$};
\draw[->,thick,color=blue] (c3) -- (c4) node[midway,right,scale=0.75] {$1$, $\bar1$, $\underline 1$};
\draw[->,thick,color=blue] (b4) -- (b5) node[midway,left,scale=0.75] {$\overline 1$};
\draw[->,thick,color=red] (c4) -- (b5) node[midway,right,scale=0.75] {$2$, $\bar2$, $\underline 2$};
\end{tikzpicture}
\]
Observe that $\bsprf_3\bigl( (1\: 5)(2\: 3)(4\: 7)(6\: 8) \bigr) \iso \bsprf_3\bigl( (5,1,1,1,1) \bigr) \oplus\bsprf_3\bigl( (4,3,3,1) \bigr)$.
\end{example}

\begin{example}
Unlike $\gl_n$-crystals $\BRF(\alpha) \subseteq \RF(\lambda(\alpha))$,
the subsets $ \bsprf_n(\alpha)  \subseteq \sprf_n(\lambda(\alpha))$ are not closed by all raising crystal operators $e_i$, at least if we consider $i \in \overline{I} \sqcup \underline{I}$.
The smallest example demonstrating this behavior is $ \bsprf_3(\alpha)$ for $\alpha = (3,1,1)$,
which is made up of the boxed vertices in the $\q_n$-crystal graph of $\sprf_3(\alpha)$ shown below:
 \[
\begin{tikzpicture}[xscale=4,yscale=1.5,>=latex]
\node (a1) at (0, 0) [bnode] {$21 \gap \cdot \gap \cdot$};
\node (a2) at (0, -1) [bnode] {$ {2 \gap 3 \gap \cdot}$};
\node (a3) at (0, -2) [bnode] {$ {2 \gap \cdot \gap 3}$};
\node (a4) at (1, -2) [bnode] {$ {\cdot \gap 2 \gap 3}$};
\node (b1) at (1, 0) [onode]  {$2 \gap 1 \gap \cdot$};
\node (b2) at (1, -1) [onode] {$\cdot \gap 21 \gap \cdot$};
\node (c1) at (2, 0) [onode] {$2 \gap \cdot \gap 1$};
\node (c2) at (2, -1) [onode] {$\cdot \gap 2 \gap 1$};
\node (c3) at (2, -2) [onode] {$\cdot \gap \cdot \gap 21$};
\draw[->,thick,color=blue] (a1) -- node[midway,left,scale=0.75] { $\overline 1$} (a2);
\draw[->,thick,color=blue] (a1) -- (b1) node[midway,above,scale=0.75] {  $1$, $ \underline 1$};
\draw[->,thick,color=red] (a2) -- (a3)  node[midway,left,scale=0.75] { 2, $\overline 2$, $ \underline 2$} ;
\draw[->,thick,color=blue] (a2) -- (b2) node[midway,above,scale=0.75] { $ \underline 1$};
\draw[->,thick,color=blue] (a3) -- (a4) node[midway,above,scale=0.75] {  $1$, $ \bar1$, $\underline 1$};
\draw[->,thick,color=blue] (b1) -- (b2)  node[midway,right,scale=0.75] { $1$, $\overline 1$} ;
\draw[->,thick,color=red] (b1) -- (c1) node[midway,above,scale=0.75] {  $2$, $ \bar2$, $\underline 2$};
\draw[->,thick,color=red] (b2) -- node[midway,right,scale=0.75] { $\overline 2$} (a4);
\draw[->,thick,color=red] (b2) -- (c2)  node[midway,above,scale=0.75] { $2$, $ \underline 2$};
\draw[->,thick,color=blue] (c1) -- (c2)  node[midway,right,scale=0.75] { $1$, $\overline 1$, $ \underline 1$};
\draw[->,thick,color=red] (c2) -- (c3)  node[midway,right,scale=0.75] { $2$, $\overline 2$} ;
\draw[->,thick,color=red] (a4) -- (c3) node[midway,above,scale=0.75] { $ \underline 2$};
\end{tikzpicture}
\]
\end{example}


Among the compositions $\alpha$ allowed in \eqref{bsprf-def}, there is a property distinguishing  the ones in $ \NN^n$:

\begin{proposition}\label{among-the-prop}
Suppose $\alpha$ is a skew-symmetric weak composition with $\shalf(\lambda(\alpha)) \in \NN^n$ and $u(\alpha)\in S_n$.
Then $\pkey_\alpha \in \ZZ[x_1,x_2,\dots,x_n]$ if and only if $\alpha \in \NN^n$.
\end{proposition}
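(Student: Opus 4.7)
The plan is to reduce both implications to elementary facts about the dominant polynomial $\pkey_{\lambda(\alpha)}$. Write $\lambda := \lambda(\alpha)$ and $u := u(\alpha) \in S_n$, so $\alpha = u \circ \lambda$ and $\pkey_\alpha = \pi_u \pkey_\lambda$ by~\cite[Prop.~2.15]{MS2023}. The crucial bookkeeping observation is that $u$ fixes every position $j > n$ pointwise, so any weak composition $\gamma$ satisfies $(u \circ \gamma)_j = \gamma_j$ for all $j > n$. In particular $\alpha \in \NN^n$ if and only if $\lambda \in \NN^n$, which is equivalent to $\ell(\lambda) \leq n$.

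The $(\Leftarrow)$ direction is then immediate: when $\ell(\lambda) \leq n$, the product formula
$\pkey_\lambda = \prod_{(i,j) \in \D_\lambda,\, i \geq j}(x_i + x_j)$
manifestly lies in $\ZZ[x_1,\ldots,x_n]$, and applying $\pi_u$ for $u \in S_n$ (a composition of $\pi_i$ with $i \leq n-1$) preserves this subring.

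For the nontrivial $(\Rightarrow)$ direction I would argue the contrapositive via key-positivity. Suppose $\ell(\lambda) > n$ and set $m := \ell(\lambda)$. Since $(m,1) \in \D_\lambda$ contributes the factor $(x_m + x_1)$, the monomial expansion of $\pkey_\lambda$ has positive integer coefficients on monomials containing $x_m$. By the positive key expansion $\pkey_\lambda = \sum_\gamma c_\gamma \kappa_\gamma$ with $c_\gamma \in \NN$ from~\cite[Thm.~2.9]{MS2023}, some $\gamma$ has $c_\gamma > 0$ and $\ell(\gamma) \geq m$. The identity $\pi_u \kappa_\gamma = \kappa_{u \circ \gamma}$---an iterate of $\pi_i \kappa_\gamma = \kappa_{s_i \circ \gamma}$, which follows from the definition $\kappa_\delta = \pi_{u(\delta)} \xx^{\lambda(\delta)}$ together with the relation $u(s_i \circ \delta) = s_i \circ u(\delta)$ recorded in Section~\ref{not-section}---then gives
\[
\pkey_\alpha = \pi_u \pkey_\lambda = \sum_\gamma c_\gamma \kappa_{u \circ \gamma}.
\]
Since $u$ fixes positions $> n$, $(u \circ \gamma)_{\ell(\gamma)} = \gamma_{\ell(\gamma)} > 0$ with $\ell(\gamma) > n$, so $\kappa_{u \circ \gamma}$ involves the variable $x_{\ell(\gamma)}$, which lies outside $\{x_1,\ldots,x_n\}$. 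Because the $c_\gamma$ are nonnegative and each $\kappa_\beta$ has nonnegative monomial coefficients, this contribution cannot be cancelled, giving $\pkey_\alpha \notin \ZZ[x_1,\ldots,x_n]$.

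The only substantive subtlety is this final no-cancellation step: a priori $\pi_u$ could eliminate the $x_m$-dependence via divided-difference identities such as $\pi_1(x_2 x_m) = 0$, and the $\NN$-positivity of the key expansion supplied by~\cite[Thm.~2.9]{MS2023} is precisely what precludes such cancellation in our setting.
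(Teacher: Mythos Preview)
Your proof is correct. The $(\Leftarrow)$ direction is the same as the paper's. For $(\Rightarrow)$, however, you take a genuinely different route: the paper invokes \cite[Eq.~(2.3) and Prop.~2.49]{MS2023} to exhibit a single explicit monomial $\xx^{u(\alpha)\circ\beta}$ in $\pkey_\alpha$, where $\beta_i = |\{(j,k)\in\D_\lambda : j<k=i\}|$, and then observes that $\beta\notin\NN^n$ forces this monomial outside $\ZZ[x_1,\dots,x_n]$. Your argument instead leans on the key-positivity result \cite[Thm.~2.9]{MS2023} already quoted in the preliminaries, pushes the key expansion through $\pi_u$ via $\pi_u\kappa_\gamma=\kappa_{u\circ\gamma}$, and uses nonnegativity of all monomial coefficients to rule out cancellation. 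The paper's proof is shorter and pinpoints the offending monomial explicitly; your approach trades that precision for a dependence only on the coarser positivity theorem, which some readers may find more self-contained given what the present paper has already recalled.
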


\begin{proof}
Let $\lambda = \lambda(\alpha)$.
If $\alpha \in \NN^n$ then $\lambda=\lambda^\top \in \NN^n$, so we have $\pkey_\lambda \in \ZZ[x_1,x_2,\dots,x_n]$ 
and therefore
$\pkey_\alpha = \pi_{u(\alpha)}\pkey_\lambda \in \ZZ[x_1,x_2,\dots,x_n]$ as $u(\alpha) \in S_n$.

Let $\beta$ be the weak composition with $\beta_i = |\{ (j,k) \in \D_\lambda : j<k=i\}|$ for each $i \in \PP$.
Then~\cite[Eq.~(2.3) and Prop.~2.49]{MS2023} imply that 
$\xx^{u(\alpha) \circ \beta}$ is a monomial appearing in $\pkey_\alpha$.
If $\alpha \notin \NN^n$ then we must have $\lambda \notin \NN^n$ as $u(\alpha) \in S_n$, 
in which case $ \beta \notin\NN^n$ so $u(\alpha) \circ \beta \notin\NN^n$ and
$\pkey_\alpha \notin \ZZ[x_1,x_2,\dotsc,x_n]$.
\end{proof}

We can prove that the crystal graph of $\bsprf_n(\alpha) $ is always connected.
This will require two lemmas.
The first lemma is equivalent to a difficult technical property shown in~\cite{GHPS}.
Here, recall that $[i] =\{1,2,\dots,i\}$ and so an $[i]$-highest weight element of a crystal is one satisfying
$e_1(b) = e_2(b) =\dots =e_i(b) =\zero$.

\begin{lemma}[{\cite[Prop.~2.24]{GHPS}}]
\label{ghps-lem}
Let $\cB$ be a normal $\q_n$-crystal.
Suppose $b \in \cB$ is a $\gl_n$-highest weight element and $i \in [n-1]$ is minimal with $e_{\overline{\imath}}(b)\neq \zero$.
Then  $e_{\overline{\imath}}(b)$ is $[i]$-highest weight.
\end{lemma}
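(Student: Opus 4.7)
The plan is to reduce the statement to a calculation inside a tensor power of the standard $\q_n$-crystal $\BB$ and then proceed by induction on $i$. Since $\cB$ is normal, it embeds as a full subcrystal in some $\BB^{\otimes m}$, and that embedding commutes with every $e_i$ and $e_{\overline{\imath}}$, so it suffices to prove the statement for $b = b_1 \otimes b_2 \otimes \cdots \otimes b_m \in \BB^{\otimes m}$.

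For the base case $i = 1$, the hypothesis $e_1(b) = \zero$ together with the signature rule for $e_1$ says that every $\ytableaushort{2}$ among the factors $b_1,\dots,b_m$ is matched by a $\ytableaushort{1}$ strictly to its left under the usual bracketing. Using the explicit formula
\[
e_{\overline{1}}(c \otimes d) = \begin{cases} c \otimes e_{\overline{1}}(d) & \text{if } \weight(c)_1 = \weight(c)_2 = 0, \\ e_{\overline{1}}(c) \otimes d & \text{otherwise}, \end{cases}
\]
one locates the unique tensor position $k$ where the $\overline{1}$-action occurs. A short case analysis on whether $b_k = \ytableaushort{1}$ or $b_k = \ytableaushort{2}$, comparing the old and new $1$-signatures of $b$ and $e_{\overline{1}}(b)$, shows that the matching of $\ytableaushort{2}$'s by $\ytableaushort{1}$'s is preserved after applying $e_{\overline{1}}$, so $e_1(e_{\overline{1}}(b)) = \zero$.

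For the inductive step $i \geq 2$, I would use the recursion $e_{\overline{\imath}} = \sigma_{i-1}\sigma_i e_{\overline{\imath-1}} \sigma_i \sigma_{i-1}$ from \eqref{q-e-eq} to transfer the problem to the element $b' := \sigma_i\sigma_{i-1}(b)$. Although $b'$ is generally not $\gl_n$-highest, its weight is a known permutation of $\weight(b)$, and the minimality assumption on $i$ for $b$ translates into a corresponding minimality statement for $e_{\overline{\imath-1}}$ on $b'$. For each $j \leq i-2$, the vanishing $e_j(e_{\overline{\imath}}(b)) = \zero$ can then be reduced to the hypothesis $e_j(b) = \zero$ using that $e_j$ commutes with $\sigma_k$ whenever $|j - k| > 1$ on any normal $\q_n$-crystal, and the inductive hypothesis applied to $b'$.

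The hard part will be the two boundary indices $j \in \{i-1, i\}$, where the $\sigma$-operators are directly implicated and do not commute with the relevant $e_j$. Here one must combine the inductive statement for $e_{\overline{\imath-1}}$ with an explicit analysis of how $\sigma_{i-1}\sigma_i$ reorganizes strings in the three adjacent coordinates $i-1, i, i+1$ of the weight lattice. Because the $\sigma_j$ are not compatible with highest-weight conditions, the induction cannot be pushed through purely formally; a careful case-by-case argument using the Stembridge-type local axioms for normal $\q_n$-crystals is required, which is precisely the ``difficult technical property'' established in \cite[Prop.~2.24]{GHPS}. I do not expect a substantially shorter route that avoids this case analysis.
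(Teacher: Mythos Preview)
The paper does not supply its own proof of this lemma at all: it simply imports the statement from \cite[Prop.~2.24]{GHPS} as a black box (``a difficult technical property shown in~\cite{GHPS}''). So there is no in-paper argument to compare your proposal against, and your final paragraph already concedes that the real content lies in that cited result.

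On the substance of your outline: the base case $i=1$ is correct. The leftmost factor in $\{\ytableaushort{1},\ytableaushort{2}\}$ must be a $\ytableaushort{2}$ for $e_{\overline 1}(b)\neq\zero$, and turning that leftmost bracket from $\bop$ to $\bcp$ can only strengthen the suffix inequalities that encode $e_1(b)=\zero$. Your inductive step, however, does not go through as written. The inductive hypothesis applies only to $\gl_n$-highest weight elements, and $b'=\sigma_i\sigma_{i-1}(b)$ is typically not one, so you cannot invoke it on $b'$. Moreover, your commutation argument ``$e_j$ commutes with $\sigma_k$ for $|j-k|>1$'' only disposes of $j\le i-3$: for $j=i-2$ you still have $|j-(i-1)|=1$, so $e_{i-2}$ need not commute with $\sigma_{i-1}$, leaving three boundary indices $j\in\{i-2,i-1,i\}$ rather than the two you flag. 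In short, your proposal is not an independent proof but an annotated pointer to the same external source the paper relies on.
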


For the next lemma, continue to fix an element  $z \in \Ifpf_\infty$ and a flag $\phi$.

\begin{lemma}\label{sp-connect-lem}
Every $\gl_n$-highest weight element of $\sprf_n(z)$ is also in $\bsprf_n(z,\phi)$.
Additionally, if $a \in \bsprf_n(z,\phi)$ is not $\q_n$-highest weight then $\zero\neq e_i(a) \in \bsprf_n(z,\phi)$ for some $i \in I \sqcup \overline I$.
\end{lemma}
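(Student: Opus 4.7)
The plan is to split along the two assertions of the lemma and dispatch each one.

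For the first statement, I would describe the $\gl_n$-highest weight elements of $\sprf_n(z)$ explicitly via Propositions~\ref{gl-subcrystal-prop} and \ref{gl-highest-lem}: every such element has the form $a = (a^1,\dotsc,a^n)$, where $a^i$ is the reversal of row $i$ of some increasing ordinary reduced tableau $T'$ satisfying $\revrow(T') \in \cR(w) \subseteq \cR^\Sp(z)$ for some $w \in \cA^\Sp(z)$. Strict column-increase of $T'$ with positive integer entries forces $T'_{i,1} \geq i$, so every letter of $a^i$ is at least $i$, and hence $a$ is bounded by the standard flag $\phi^S$; this implies boundedness by every flag $\phi \geq \phi^S$.

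For the second statement, suppose $a \in \bsprf_n(z,\phi)$ is not $\q_n$-highest. My approach is a case analysis on whether $a$ is $\gl_n$-highest. If it is not, there is some $j \in I$ with $e_j(a) \neq \zero$, and Lemma~\ref{brf-e-lem} immediately yields $e_j(a) \in \bsprf_n(z,\phi)$. Otherwise $a$ is $\gl_n$-highest but not $\q_n$-highest, and I would first argue that $e_{\overline 1}(a) \neq \zero$ using the fact that in any normal $\q_n$-crystal, being $\q_n$-highest weight is equivalent to being killed by the operators $e_i$ for $i \in \{1,\dotsc,n-1,\overline 1\}$ (which generate the relevant Borel subalgebra of $\q_n$; the other raising operators $e_{\overline\imath}$ with $i>1$ and $e_{\underline\imath}$ are recovered from these via the formulas of Section~\ref{q-sect} combined with Lemma~\ref{ghps-lem}). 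I would then verify directly from Definition~\ref{def:sp_1bar} that $e_{\overline 1}(a)$ is still bounded: it is obtained by removing $y = \max(a^2)$ from $a^2$ and inserting either $y$ (if $y$ is even) or $y-2$ (if $y$ is odd, in which case the structural constraints for fpf-reduced words force $y = \max(a^1)+1 \geq 3$) into $a^1$, and in either case the inserted positive integer $m$ satisfies $\phi_m \geq m \geq 1$, while removing a letter from $a^2$ trivially preserves the bound.

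The main obstacle is establishing the characterization of $\q_n$-highest weight elements as exactly those killed by $e_1,\dotsc,e_{n-1},e_{\overline 1}$—that is, ruling out the scenario where $a$ is $\gl_n$-highest with $e_{\overline 1}(a) = \zero$ while some $e_{\overline\imath}$ for $i>1$ or some $e_{\underline\imath}$ remains nonzero on $a$. This is precisely where Lemma~\ref{ghps-lem}, the ``difficult technical property'' from~\cite{GHPS} highlighted in the introduction, plays its essential role; once it is granted, all remaining steps are routine verifications using only the explicit definitions of the crystal operators and the standing hypothesis that decreasing factorizations have strictly decreasing factors.
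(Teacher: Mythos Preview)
Your treatment of the first assertion and of the subcase where $a$ is not $\gl_n$-highest matches the paper's. The substantive difference is in the remaining subcase, where $a$ is $\gl_n$-highest but not $\q_n$-highest, and there your argument has a gap.

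You assert that in this situation $e_{\overline 1}(a)\neq\zero$, on the grounds that in a normal $\q_n$-crystal an element is highest weight if and only if it is annihilated by $e_1,\dotsc,e_{n-1},e_{\overline 1}$, and you point to Lemma~\ref{ghps-lem} as the justification. But Lemma~\ref{ghps-lem} says only that at the \emph{minimal} index $i$ with $e_{\overline\imath}(a)\neq\zero$, the element $e_{\overline\imath}(a)$ is $[i]$-highest; it does not force that minimal index to be $1$. The formulas $e_{\overline\imath}=\sigma_{i-1}\sigma_i e_{\overline{\imath-1}}\sigma_i\sigma_{i-1}$ do express the higher operators in terms of $e_{\overline 1}$, but via conjugation by the $\sigma_j$, which move $a$ to other elements of the crystal; so $e_{\overline 1}(a)=\zero$ does not by itself imply $e_{\overline\imath}(a)=\zero$. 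The characterization you want is true, but it is an independent fact from the GJKKK theory, not a consequence of Lemma~\ref{ghps-lem}. Your direct verification that $e_{\overline 1}(a)$ stays bounded is fine once $e_{\overline 1}(a)\neq\zero$ is established, so with a correct external citation the approach can be repaired---but as written the crucial reduction to $i=1$ is unsupported.

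The paper avoids this issue entirely. It takes whatever minimal $i\in[n-1]$ has $e_{\overline\imath}(a)\neq\zero$, applies Lemma~\ref{ghps-lem} to deduce that $e_{\overline\imath}(a)$ is $[i]$-highest weight, and then observes that since $e_{\overline\imath}(a)$ agrees with $a$ beyond the first $i+1$ factors, it suffices to check boundedness of the truncation $b=(b^1,\dotsc,b^{i+1},\emptyset,\dotsc)$; but $b$ is a $\gl_{i+1}$-highest weight element of some $\rf_{i+1}(v)$, so Lemma~\ref{each-highest-lem} gives boundedness directly. This argument never touches the explicit formula for $e_{\overline\imath}$ when $i>1$ and does not require knowing that the minimal index is $1$.
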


\begin{proof}
Each $\gl_n$-highest weight element of $\sprf_n(z)$ is a $\gl_n$-highest weight
element of $\rf_n(w)$ for some $w \in \cA^\Sp(z)$
so is bounded by $\phi$ by Lemma~\ref{each-highest-lem}.

Suppose $a \in \bsprf_n(z,\phi)$ is not $\q_n$-highest weight. 
Then $a \in \brf_n(w,\phi)$ for some $w \in \cA^\Sp(z)$.
If $a$ is not $\gl_n$-highest weight then $\zero\neq e_i(a) \in \brf_n(w,\phi)\subseteq \bsprf_n(z,\phi)$ for some $i \in [n-1]$ by Lemma~\ref{brf-e-lem}, as needed. Assume $a$ is $\gl_n$-highest weight.
Then there must exist a minimal $i \in [n-1]$ with $e_{\overline{\imath}}(a)\neq \zero$,
and $e_{\overline{\imath}}(a)$ is $[i]$-highest weight by Lemma~\ref{ghps-lem}.
Form $b=(b^1,b^2,\ldots)$ from $e_{\overline{\imath}}(a)$ by retaining the first $i+1$ factors and setting all other factors to be empty.
Since $e_{\overline{\imath}}(a)$ only differs from $a$ in its first $i+1$ factors, it suffices to show that $b$ is bounded by $\phi$.
This holds by Lemma~\ref{each-highest-lem}, since $b$ is a $\gl_{i+1}$-highest weight element of $\rf_{i+1}(v,\phi)$
for some other $v \in S_\infty$.
\end{proof}

We pause to note one consequence of Lemma~\ref{sp-connect-lem}.
Suppose $T$ is a $\Sp$-reduced tableau for $z \in \Ifpf_\infty$ and $\phi$ is a flag.
Let $\bsprf_n(T,\phi) := \sprf_n(T) \cap \bsprf_n(z,\phi)$ and $\bsprf_n(T) := \bsprf_n(T,\phi^S)$.

\begin{corollary}\label{sp-dem-sub-cor}
The map $T \mapsto \bsprf_n(T,\phi)$ is a  bijection from increasing (equivalently, decreasing) $\Sp$-reduced tableaux for $z \in \Ifpf_\infty$ with at most $n$ rows to full subcrystals of $\bsprf_n(z,\phi)$.
\end{corollary}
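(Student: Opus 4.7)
The plan is to mimic the argument for Corollary~\ref{dem-sub-cor}, substituting the $\q_n$-version of the relevant input lemmas. Specifically, the role of Proposition~\ref{gl-subcrystal-prop} will be played by Proposition~\ref{sp-subcrystal-prop}, and the role of Lemmas~\ref{each-highest-lem} and~\ref{brf-e-lem} will be jointly played by Lemma~\ref{sp-connect-lem}.

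First I would note that since $\bsprf_n(z,\phi) \subseteq \sprf_n(z)$, Proposition~\ref{sp-subcrystal-prop} implies that $\bsprf_n(z,\phi)$ decomposes as the disjoint union
\[
\bsprf_n(z,\phi) = \bigsqcup_{T} \bsprf_n(T,\phi),
\]
where $T$ ranges over all increasing (equivalently, decreasing) $\Sp$-reduced tableaux for $z$ with at most $n$ rows. Because the $\sprf_n(T)$ are the full $\q_n$-subcrystals of $\sprf_n(z)$, no crystal edge of $\sprf_n(z)$—and hence no crystal edge of $\bsprf_n(z,\phi)$—crosses between distinct pieces $\bsprf_n(T,\phi)$ and $\bsprf_n(T',\phi)$. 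Consequently, every full subcrystal of $\bsprf_n(z,\phi)$ is contained in some $\bsprf_n(T,\phi)$, and distinct tableaux yield distinct subsets.

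It remains to show that each $\bsprf_n(T,\phi)$ is itself a nonempty, connected subcrystal, in which case it is a single full subcrystal of $\bsprf_n(z,\phi)$ and the map $T \mapsto \bsprf_n(T,\phi)$ is the desired bijection. Nonemptiness is immediate from the first clause of Lemma~\ref{sp-connect-lem}: the $\q_n$-crystal $\sprf_n(T)$ is connected by Proposition~\ref{sp-subcrystal-prop}, so it contains a (unique) highest weight element $u_T$, which in particular is $\gl_n$-highest weight and therefore lies in $\bsprf_n(z,\phi)$, i.e., in $\bsprf_n(T,\phi)$.

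For connectedness, given any $a \in \bsprf_n(T,\phi)$ that is not $\q_n$-highest weight, the second clause of Lemma~\ref{sp-connect-lem} furnishes some $i \in I \sqcup \overline I$ with $\zero \neq e_i(a) \in \bsprf_n(z,\phi)$; since $\sprf_n(T)$ is a full subcrystal of $\sprf_n(z)$, the element $e_i(a)$ also lies in $\sprf_n(T)$, hence in $\bsprf_n(T,\phi)$. Iterating produces a directed path in the crystal graph of $\bsprf_n(T,\phi)$ from $a$ up to a $\q_n$-highest weight element, and this process must terminate at $u_T$ since $\sprf_n(T)$ has only one highest weight element and the raising operators strictly alter the weight (so no cycles arise). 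Thus every vertex of $\bsprf_n(T,\phi)$ is weakly connected to $u_T$, proving the graph is weakly connected. The only real content is Lemma~\ref{sp-connect-lem}; once that is in hand, the corollary is a formal consequence, with the only subtlety being the need to verify that the raising walks terminate, which follows from finiteness of $\sprf_n(z)$ and the weight-strictly-changing property of the $e_i$ and $e_{\overline{\imath}}$ operators.
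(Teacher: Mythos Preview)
Your proof is correct and follows exactly the approach the paper takes: the paper's proof is the one-line ``This is clear from Proposition~\ref{sp-subcrystal-prop} and Lemma~\ref{sp-connect-lem},'' and you have simply supplied the details of how those two results combine. Your handling of nonemptiness (via the $\q_n$-highest weight element being $\gl_n$-highest weight) and of connectedness (via iterated application of Lemma~\ref{sp-connect-lem}, with termination guaranteed by the strict weight change) is accurate and matches the intended argument.
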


\begin{proof}
This is clear from Proposition~\ref{sp-subcrystal-prop} and Lemma~\ref{sp-connect-lem}:
\end{proof}

We now can prove one of our main results:

\begin{theorem}\label{sp-crystal-thm}
Suppose $\alpha$ is a skew-symmetric weak composition with $\shalf(\lambda(\alpha)) \in \NN^n$ and $u(\alpha)\in S_n$.
The (nonempty) $\q_n$-crystal $\bsprf_n(\alpha) $ is connected 
and its character is the polynomial obtained from $\pkey_\alpha$
by setting $x_{n+1}=x_{n+2}=\cdots=0$.
\end{theorem}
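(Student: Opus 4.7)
The plan is to induct on $\ell(u(\alpha))$. For the base case $u(\alpha)=1$ (so $\alpha=\lambda:=\lambda(\alpha)$ and $\bsprf_n(\alpha)=\brf_n(z^\fpf_\lambda)$), I first establish that the ambient $\q_n$-crystal $\sprf_n(\lambda)$ is connected with a unique $\q_n$-highest weight element: this follows from Proposition~\ref{sp-dom-prop} (the set $\cR^\Sp(z^\fpf_\lambda)$ is a single $\simFCK$-equivalence class), Proposition~\ref{sp-subcrystal-prop}, and the hypothesis $\shalf(\lambda)\in\NN^n$ via Corollary~\ref{sp-sum-cor}. To transfer connectedness down to the bounded subcrystal $\brf_n(z^\fpf_\lambda)$, I apply Lemma~\ref{sp-connect-lem}: its second half ensures every non-$\q_n$-highest-weight vertex in $\brf_n(z^\fpf_\lambda)$ reaches a $\q_n$-highest-weight vertex within $\brf_n(z^\fpf_\lambda)$, and its first half (noting that $\q_n$-highest-weight implies $\gl_n$-highest-weight) guarantees all such vertices are already present. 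Since $\sprf_n(\lambda)$ has a unique $\q_n$-highest weight element and it lies in $\brf_n(z^\fpf_\lambda)$, the latter is $\q_n$-connected. The character identity for the base case follows from $\fkSS_{z^\fpf_\lambda}=\pkey_\lambda$ (Definition~\ref{fschub-def}), together with the Billey--Jockusch--Stanley-type formula \eqref{sp-schub-prop-eq} and the truncation discussion immediately after it.

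For the inductive step, fix a reduced factorization $u(\alpha)=s_i u(\beta)$ with $\beta=s_i\circ\alpha$ satisfying $\ell(u(\beta))<\ell(u(\alpha))$, so that $\bsprf_n(\alpha)=\fkD_i^{\sprf_n(\lambda)}\bsprf_n(\beta)$. Connectedness of $\bsprf_n(\alpha)$ follows from that of $\bsprf_n(\beta)$: every $b\in\fkD_i\bsprf_n(\beta)$ satisfies $e_i^m b\in\bsprf_n(\beta)$ for some $m\ge 0$, and the $i$-string from $b$ down to $e_i^m b$ lies entirely in $\fkD_i\bsprf_n(\beta)$ (since $\fkD_i$-sets are closed under $e_i$), yielding a $\q_n$-crystal path from $b$ to the connected piece $\bsprf_n(\beta)$. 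For the character, $\bsprf_n(\beta)$ is a direct sum of Demazure $\gl_n$-crystals by iterated application of Corollary~\ref{is-dem-cor} (or Lemma~\ref{sp-ij-lem}), so Lemma~\ref{sp-ij-lem} gives $\ch(\bsprf_n(\alpha))=\pi_i\ch(\bsprf_n(\beta))$; combined with the identity $\pi_i\pkey_\beta=\pkey_{s_i\circ\beta}=\pkey_\alpha$ from \cite[Prop.~2.15]{MS2023} and the fact that $\pi_i$ commutes with the truncation $x_{n+1}=x_{n+2}=\cdots=0$ (since $i\in[n-1]$ means $\pi_i$ only involves $x_i,x_{i+1}$), the inductive hypothesis closes. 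The braid-relation portion of Lemma~\ref{sp-ij-lem} simultaneously shows that $\bsprf_n(\alpha)$ is independent of the chosen reduced word, so the induction is well-posed.

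The main obstacle is the base case, specifically the $\q_n$-connectedness of $\brf_n(z^\fpf_\lambda)$: unlike the $\gl_n$ setting, $\bsprf_n(z,\phi)$ is not closed under all raising operators $e_{\overline{\imath}}$, and the argument must rely on the nontrivial technical result Lemma~\ref{ghps-lem} (via Lemma~\ref{sp-connect-lem}) that produces an index $i\in I\sqcup\overline I$ for which $e_i$ both is nonzero and remains within the bounded subcrystal. Once this ingredient is in hand, the induction is essentially a formal manipulation of Demazure operators on both sides of the character identity.
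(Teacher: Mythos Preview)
Your argument is correct and matches the paper's: both reduce connectedness of $\bsprf_n(\alpha)$ to the dominant case $\bsprf_n(\lambda)$ via the observation that $\fkD_i$ preserves connectedness, and then appeal to Corollary~\ref{sp-sum-cor} and Lemma~\ref{sp-connect-lem}; for the character the paper equivalently works at a large $N$ (where no truncation occurs), applies $\pi_{u(\alpha)}$ there, and truncates once at the end rather than inducting at level $n$ as you do. One minor slip: in the inductive step you want $\alpha = s_i \circ \beta$ (equivalently $\beta = s_i\alpha$), not $\beta = s_i \circ \alpha$, since with your stated relation the identity $u(s_i\circ\alpha)=s_i\circ u(\alpha)$ would force $\ell(u(\beta))\ge \ell(u(\alpha))$ rather than $\ell(u(\beta))<\ell(u(\alpha))$.
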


\begin{proof}
Let $\lambda = \lambda(\alpha)$ and $z=z^\fpf_\lambda$.
To show that $\bsprf_n(\alpha) $ is connected it suffices to show that $\bsprf_n(\lambda) = \bsprf_n(z) $ is connected, and this follows from Corollary~\ref{sp-sum-cor} and Lemma~\ref{sp-connect-lem}.

If $N \geq n$ is sufficiently large then 
\eqref{sp-schub-prop-eq} 
 tells us that $\ch(\bsprf_N(z))=  \fkSS_{z} = \pkey_\lambda$ so Lemma~\ref{sp-ij-lem} implies that
  $\ch(\bsprf_N(\alpha)) = \pi_{u(\alpha)} \pkey_\lambda  =  \pkey_\alpha$. 
But if $N \geq n$ then  $\ch(\bsprf_n(\alpha))$ is always the polynomial obtained from $\ch(\bsprf_N(\alpha)) $ by  $x_{n+1}=x_{n+2}=\cdots=0$.
\end{proof}

Mimicking the $\gl_n$-case, we introduce the following terminology.

\begin{definition}
A \defn{Demazure $\q_n$-crystal} is a $\q_n$-crystal isomorphic to $\bsprf_n(\alpha)$ for some skew-symmetric weak composition $\alpha\in \NN^n$.
\end{definition}

We remark that in contrast to the $\gl_n$-case, this definition is not  directly motivated by the representation theory of $\q_n$ or $U_q(\q_n)$.

This definition does not include  $\bsprf_n(\alpha)$ if $\shalf(\lambda(\alpha)) \in \NN^n$ and $u(\alpha)\in S_n$ but $\alpha \notin \NN^n$.
Although the $\q_n$-crystal is still defined and connected in this case, its character is not the $P$-key polynomial $\kappa_\alpha$,
but is instead the polynomial obtained from  $\kappa_\alpha$ by setting $x_{n+1}=x_{n+2}=\dots=0$.
This function might not be equal to any linear combination of $P$-key polynomials (with an infinite number of variables).

By Theorem~\ref{sp-crystal-thm}, every Demazure $\q_n$-crystal
has a unique highest weight element. This leads immediately to an analogue of Proposition~\ref{unique-embed-prop},
whose straightforward proof is omitted:

\begin{proposition}\label{unique-embed-prop2}
Suppose $\cB$ is a connected normal $\q_n$-crystal with highest weight element $b$.
Let $\cX$ be a Demazure $\q_n$-crystal with highest weight element $u$.
\ben
\item[(a)] There is a unique embedding $\cX \to \cB$ if $\weight(b) = \weight(u)$.
\item[(b)] There are no embeddings $\cX \to \cB$ if $\weight(b) \neq \weight(u)$.
\item[(c)] If $\cX\subseteq \cB$ then $u=b$ and $\cX = \fkD^\cB_{i_1} \fkD^\cB_{i_2}\cdots \fkD^\cB_{i_k} \{b\} $ for some $i_1,i_2,\dotsc,i_k \in [n-1]$.
\item[(d)] Any skew-symmetric $\alpha \in \NN^n$ with $  \bsprf_n(\alpha) \cong \cX$ has 
 $\pkey_\alpha=\ch(\cX)$ and $\shalf(\lambda(\alpha)) = \weight(u)$.
\een
\end{proposition}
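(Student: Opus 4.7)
The plan is to prove the four parts sequentially, closely following the template of Proposition~\ref{unique-embed-prop} while substituting the $\q_n$-analogues: Theorem~\ref{sp-crystal-thm} (which guarantees that $\cX \iso \bsprf_n(\alpha)$ is connected with a unique highest weight element $u$ of weight $\shalf(\lambda(\alpha))$) and Corollary~\ref{sp-sum-cor} (the classification of connected normal $\q_n$-crystals by highest weight).

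I would start with (b). Any embedding $\psi \colon \cX \to \cB$ preserves $\varepsilon_i$ by definition, and seminormality of $\cB$ promotes this into the statement that $\psi(u)$ is a highest weight element of $\cB$; connectedness of $\cB$ then forces $\psi(u) = b$, so $\weight(u) = \weight(\psi(u)) = \weight(b)$. The contrapositive yields (b). For (a), the weight condition lets Corollary~\ref{sp-sum-cor} supply an isomorphism $\cB \iso \sprf_n(\lambda(\alpha))$, and composing the tautological inclusion $\bsprf_n(\alpha) \hookrightarrow \sprf_n(\lambda(\alpha))$ with this isomorphism and with $\cX \iso \bsprf_n(\alpha)$ produces the desired embedding. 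For uniqueness, $\cX$ is $\q_n$-connected so every element of $\cX$ is reachable from $u$ by a sequence of crystal operators internal to $\cX$; the commutativity of an embedding with $e_i$ and $f_i$ on internal arrows, combined with the forced value $\psi(u) = b$, propagates $\psi$ to all of $\cX$. Part (d) is then immediate: Theorem~\ref{sp-crystal-thm} gives $\ch(\bsprf_n(\alpha)) = \pkey_\alpha$, and $\shalf(\lambda(\alpha)) = \weight(u)$ is the weight identification just used.

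The main obstacle will be (c). Given $\cX \subseteq \cB$ we get $u = b$ from the argument above, and from the definition of Demazure $\q_n$-crystal we have an unfolding $\cX \iso \bsprf_n(\alpha) = \fkD_{j_1} \cdots \fkD_{j_r} \bsprf_n(z^\fpf_{\lambda(\alpha)})$ where $j_1 \cdots j_r$ is a reduced word for $u(\alpha)$. Transporting this along the isomorphism $\cB \iso \sprf_n(\lambda(\alpha))$ gives $\cX = \fkD^{\cB}_{j_1}\cdots \fkD^{\cB}_{j_r}\cY$, where $\cY \subseteq \cB$ is the image of $\bsprf_n(z^\fpf_{\lambda(\alpha)})$. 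The remaining task is to realize $\cY$ as $\fkD^{\cB}_{i_1}\cdots \fkD^{\cB}_{i_m}\{b\}$ for some indices $i_1,\dotsc,i_m \in [n-1]$. This is the step where the argument diverges most from Proposition~\ref{unique-embed-prop}: in the $\gl_n$-analogue the fact $\brf_n(w_\lambda) = \{u_\lambda\}$ reduces the base case to a triviality, but here $\bsprf_n(z^\fpf_\lambda)$ is typically not a singleton. I expect the proof will combine the $\q_n$-connectedness from Lemma~\ref{sp-connect-lem} with the $\gl_n$-Demazure decomposition from Corollary~\ref{sp-dem-sub-cor} and an inductive walk outward from $b$ that successively absorbs each $\gl_n$-Demazure summand of $\cY$ as its highest weight element becomes reachable.
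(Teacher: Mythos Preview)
Your treatment of (a), (b), and (d) is correct and matches the paper's intent; the paper omits the proof entirely, calling it a straightforward exercise modeled on Proposition~\ref{unique-embed-prop}.

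Your instinct about (c) is right---this is where the analogy with the $\gl_n$ case breaks down---but the fix you sketch cannot succeed, and in fact part (c) as literally stated appears to be false. The operators $\fkD^\cB_i$ for $i\in[n-1]$ are built from the $\gl_n$-raising operators $e_i$ alone, so iterating them on the singleton $\{b\}$ can never leave the $\gl_n$-connected component of $b$ inside $\cB$. But a Demazure $\q_n$-crystal typically spans several $\gl_n$-components. The paper's own example just before this proposition already shows it: $\cX=\bsprf_3((3,1,1))$ consists of $(21,\emptyset,\emptyset)$, $(2,3,\emptyset)$, $(2,\emptyset,3)$, $(\emptyset,2,3)$, and as a $\gl_3$-crystal it has two components, with $(21,\emptyset,\emptyset)$ separated from the other three (the only arrow joining them is the $\overline 1$-edge). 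Inside $\cB=\sprf_3((3,1,1))$ the full $\gl_3$-component of $b=(21,\emptyset,\emptyset)$ is the six-element set $\{(21,\emptyset,\emptyset),(2,1,\emptyset),(\emptyset,21,\emptyset),(2,\emptyset,1),(\emptyset,2,1),(\emptyset,\emptyset,21)\}$, which does not contain $(2,3,\emptyset)$. Hence no expression $\fkD^\cB_{i_1}\cdots\fkD^\cB_{i_k}\{b\}$ with $i_j\in[n-1]$ can equal $\cX$, and your ``inductive walk'' cannot absorb the second $\gl_n$-summand. The natural corrected statement replaces $\{b\}$ by the image in $\cB$ of $\bsprf_n(z^\fpf_{\lambda(\alpha)})$; with that change, (c) is immediate from the definition~\eqref{bsprf-def} together with the uniqueness in (a).
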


The main open problem concerning the $\q_n$-crystals $\bsprf_n(z,\phi)$ is the following conjecture.
 
 \begin{conjecture}\label{sp-demazure-conj}
If $z \in \Ifpf_\infty$ has no fpf-descents greater than $n$ 
and  $\phi$ is any flag,  then $\bsprf_n(z,\phi)$ is a direct sum of Demazure $\q_n$-crystals.
\end{conjecture}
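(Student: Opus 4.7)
The plan is to parallel the strategy used in the $\gl_n$-case (Theorem~\ref{flag-thm} plus Corollary~\ref{is-dem-cor}), but working with a $\q_n$-analogue of the crystal Demazure operator. The strategy has two layers: a reduction to the standard flag, and a direct analysis of $\bsprf_n(z) = \bsprf_n(z,\phi^S)$.

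First I would attempt to prove a $\q_n$-analogue of Theorem~\ref{flag-thm}. Fix a flag $\phi \neq \phi^S$ and let $i$ be minimal with $i < \phi_i$. If $i \geq n$ or $\phi_i > n$ then visibly $\bsprf_n(z,\phi) = \bsprf_n(z,\phi-\e_i)$. Otherwise I would show
\[
\bsprf_n(z,\phi) = \fkD^{\sprf_n(z)}_{\phi_i - 1}\, \bsprf_n(z,\phi-\e_i).
\]
The $\gl_n$-version of this identity is already established, and since the $\gl_n$-Demazure operator $\fkD_j^{\sprf_n(z)}$ uses only $e_j$ for $j \in [n-1]$, this equality follows (as a set) from the $\gl_n$-version applied componentwise to the decomposition in~\eqref{bsprf-eq}. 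Therefore, assuming the theorem for $\bsprf_n(z,\phi-\e_i)$, what remains is to verify that applying $\fkD_j^{\sprf_n(z)}$ to a direct sum of Demazure $\q_n$-crystals produces a direct sum of Demazure $\q_n$-crystals. This in turn reduces to proving a $\q_n$-analogue of Lemma~\ref{sp-ij-lem}: the $\gl_n$-Demazure operators $\fkD_j$ acting on a Demazure $\q_n$-crystal yield another Demazure $\q_n$-crystal (perhaps after recognizing that $\fkD_j \bsprf_n(\alpha) \cong \bsprf_n(s_j \circ \alpha)$ when $\alpha_j \leq \alpha_{j+1}$ or $\cong \bsprf_n(\alpha)$ otherwise, mirroring~\eqref{bsprf-def}). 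This compatibility should follow from Proposition~\ref{among-the-prop} together with the character identity $\pi_j \pkey_\alpha = \pkey_{s_j \circ \alpha}$ once we know the result for the standard flag.

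Second, for the standard flag, I would prove that $\bsprf_n(z)$ is a direct sum of Demazure $\q_n$-crystals by analyzing each full subcrystal separately. By Corollary~\ref{sp-dem-sub-cor}, the full subcrystals of $\bsprf_n(z)$ are exactly the sets $\bsprf_n(T)$ as $T$ ranges over $\Sp$-reduced tableaux for $z$ with at most $n$ rows. For each such $T$ of shape $\mu \in \NN^n$, the $\gl_n$-version of the theorem (Corollary~\ref{is-dem-cor} applied to each $w \in \cA^\Sp(z)$) gives a decomposition of $\bsprf_n(T)$ into Demazure $\gl_n$-crystals. I would then exhibit a specific skew-symmetric $\alpha \in \NN^n$ with $\shalf(\lambda(\alpha)) = \mu$ such that $\bsprf_n(T) \cong \bsprf_n(\alpha)$. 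A natural candidate is the analogue of the left nil-key of $T$ adapted to the shifted setting: namely, one defines $\alpha(T)$ via a symplectic Edelman--Greene lift and then applies Proposition~\ref{alpha-prop}-style formulas to obtain $\alpha_n(T,\phi)$. The character identity $\ch(\bsprf_n(T)) = \pkey_{\alpha(T)}$ follows from~\eqref{sp-schub-prop-eq} and Theorem~\ref{sp-crystal-thm} after enlarging $n$.

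The hard part is upgrading the character equality $\ch(\bsprf_n(T)) = \pkey_{\alpha(T)}$ to an honest isomorphism of $\q_n$-crystals. In the $\gl_n$-case this is automatic because two Demazure $\gl_n$-crystals with equal character are isomorphic; for $\q_n$ we do not yet have such a rigidity result, since Demazure $\q_n$-crystals are not closed under all $e_i$ (as remarked in the introduction) and their classification by character is unknown. The cleanest route is probably to construct an explicit strict $\q_n$-crystal embedding $\bsprf_n(T) \hookrightarrow \sprf_n(\lambda(\alpha(T)))$, then verify using Proposition~\ref{unique-embed-prop2} (specifically part (c)) that the image equals $\fkD_{i_1} \cdots \fkD_{i_k}\{u_{\lambda(\alpha(T))}\}$ for a reduced word of $u(\alpha(T))$. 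Establishing that $\bsprf_n(T)$ is closed under the $\gl_n$-Demazure operators applied to its highest weight, and that the additional queer operators $e_{\overline{\imath}}$ respect this structure via Lemma~\ref{ghps-lem}, is where I expect the main technical obstacle to lie, and it likely requires a refinement of the symplectic Edelman--Greene correspondence compatible with bounded factorizations.
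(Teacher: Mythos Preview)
The statement you are addressing is a \emph{conjecture}, not a theorem: the paper does not prove it and explicitly leaves it open. What the paper does establish is Proposition~\ref{sp-equiv-prop}, which shows that Conjecture~\ref{sp-demazure-conj} is equivalent to Conjecture~\ref{sp-demazure-conj2} (the standard-flag version). Your first layer---the reduction from a general flag $\phi$ to $\phi^S$ via a $\q_n$-analogue of Theorem~\ref{flag-thm}---essentially reproduces this equivalence, and that part is fine: the paper's argument (Proposition~\ref{sp-alpha-prop}) is close to what you sketch.

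The genuine gap is in your second layer, and it is more serious than you indicate. You write that the character identity $\ch(\bsprf_n(T)) = \pkey_{\alpha(T)}$ ``follows from~\eqref{sp-schub-prop-eq} and Theorem~\ref{sp-crystal-thm} after enlarging $n$,'' but it does not. Equation~\eqref{sp-schub-prop-eq} only gives $\sum_T \ch(\bsprf_n(T)) = \fkSS_z$, summed over all $\Sp$-reduced tableaux $T$ for $z$; it says nothing about individual summands. Knowing that each $\ch(\bsprf_n(T))$ is a single $P$-key polynomial is precisely the character-level Conjecture~\ref{fkSS-conj}, which is itself open. There is no known ``shifted left nil-key'' construction producing a candidate $\alpha(T)$ from $T$; the paper only records values of $\alpha^\Sp(T)$ found by computer search (Figures~\ref{sp-fig1} and~\ref{sp-fig2}). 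So both the character identity and the crystal upgrade you flag as ``the hard part'' are open, not just the latter. Your plan is a reasonable outline of what a proof would have to accomplish, but it does not close the conjecture.
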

 
 Conjecture~\ref{sp-demazure-conj} is a generalization of the symplectic half of Conjecture~\ref{fkSS-conj},
which follows by taking characters when $\phi=\phi^S$.
It will turn out that Conjecture~\ref{sp-demazure-conj} can be reformulated as the following statement:

\begin{conjecture}\label{sp-demazure-conj2}
 Suppose $T$ is a $\Sp$-reduced tableau for an involution $z \in \Ifpf_\infty$ that has no fpf-descents greater than $n$.
Then there is a $\q_n$-crystal isomorphism $\bsprf_{n}(T) \iso \bsprf_n(\alpha )$
for some skew-symmetric weak composition  $\alpha=\alpha^\Sp(T) \in \NN^n$.
\end{conjecture}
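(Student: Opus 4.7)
The plan is to reduce Conjecture~\ref{sp-demazure-conj2} to Conjecture~\ref{sp-demazure-conj} for the standard flag $\phi^S$, and then to identify the required composition $\alpha^\Sp(T) \in \NN^n$ combinatorially. The key observation is that Corollary~\ref{sp-dem-sub-cor} provides a disjoint decomposition $\bsprf_n(z) = \bigsqcup_T \bsprf_n(T)$ into full $\q_n$-subcrystals indexed by $\Sp$-reduced tableaux $T$ with at most $n$ rows, and each $\bsprf_n(T)$ is connected as a $\q_n$-crystal: Lemma~\ref{sp-connect-lem} lets us climb from any element up to the $\q_n$-highest weight element of $\sprf_n(T)$ while remaining in the full subcrystal. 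Since every Demazure $\q_n$-crystal is connected (Theorem~\ref{sp-crystal-thm}), Conjecture~\ref{sp-demazure-conj} for $\phi = \phi^S$ is exactly the assertion that each $\bsprf_n(T)$ is Demazure, which is Conjecture~\ref{sp-demazure-conj2}. The extension to arbitrary flags $\phi$ then follows from Theorem~\ref{flag-thm} together with a $\q_n$-analog of Lemma~\ref{sp-ij-lem} asserting that the Demazure operators $\fkD_j$ preserve the property of being a direct sum of Demazure $\q_n$-crystals; this analog should be derivable from the definition~\eqref{bsprf-def} and braid relations among the $\fkD_j$.

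To identify the composition $\alpha^\Sp(T)$ directly, I would use Corollary~\ref{sp-highest-cor}: the $\q_n$-highest weight element of $\bsprf_n(T)$ has weight $\mu$ equal to the shape of $T$. Letting $\lambda$ be the unique skew-symmetric partition with $\shalf(\lambda) = \mu$, Corollary~\ref{sp-sum-cor} yields a canonical $\q_n$-isomorphism $\sprf_n(T) \iso \sprf_n(\lambda)$ that embeds $\bsprf_n(T)$ into $\sprf_n(\lambda)$. Proposition~\ref{unique-embed-prop2} then implies that any isomorphism $\bsprf_n(T) \iso \bsprf_n(\alpha)$ must satisfy $\lambda(\alpha) = \lambda$ and $\pkey_\alpha = \ch(\bsprf_n(T))$, so the composition $\alpha^\Sp(T)$ is uniquely determined by the character. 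Decomposing $\bsprf_n(T)$ into its Demazure $\gl_n$-components via Corollary~\ref{is-dem-cor} and applying Proposition~\ref{alpha-prop} produces an explicit expression for $\ch(\bsprf_n(T))$ as a sum of key polynomials, from which $\alpha^\Sp(T)$ should be recoverable via a shifted analog of the left nil-key construction of~\cite[Thm.~5(1)]{ReinerShimozono}.

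The principal obstacle is the character identity $\ch(\bsprf_n(T)) = \pkey_{\alpha^\Sp(T)}$, a delicate $P$-key positivity statement essentially equivalent at the character level to the symplectic half of Conjecture~\ref{fkSS-conj}. Even granting this character match, one must promote it to a genuine $\q_n$-crystal isomorphism, and this is subtle because (as noted following Proposition~\ref{unique-embed-prop2}) Demazure $\q_n$-crystals are not preserved by arbitrary raising operators $e_i$, so the standard $\gl_n$ argument via Stembridge-type local axioms does not transfer. A plausible route is induction on $|T|$: remove a corner box from $T$, apply the inductive hypothesis to the smaller tableau, and extend the resulting $\q_n$-isomorphism by tracking the action of a single additional crystal Demazure operator, using Lemma~\ref{sp-connect-lem} to control how the $\overline{\imath}$-arrows glue the $\gl_n$-components together. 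The special cases already verified in Theorem~\ref{thm:single_row_o_tableau} provide concrete evidence that this inductive strategy is feasible in practice.
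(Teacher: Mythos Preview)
The statement you are attempting to prove is labeled \emph{Conjecture}~\ref{sp-demazure-conj2} in the paper, and the paper does not prove it. What the paper does establish is Proposition~\ref{sp-equiv-prop}, namely that Conjectures~\ref{sp-demazure-conj} and~\ref{sp-demazure-conj2} are equivalent; both remain open. Your first paragraph essentially reproduces the easy direction of that equivalence argument (via Corollary~\ref{sp-dem-sub-cor} and Lemma~\ref{sp-connect-lem}), so on that front you match the paper. But reducing one open conjecture to another is not a proof of either.

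Your third paragraph correctly names the genuine obstruction: the character identity $\ch(\bsprf_n(T)) = \pkey_{\alpha^\Sp(T)}$ is exactly the symplectic half of Conjecture~\ref{fkSS-conj}, which the paper flags as open. The inductive strategy you sketch (remove a corner box, extend by one $\fkD_j$) does not get around this, because removing a box from a $\Sp$-reduced tableau need not yield another $\Sp$-reduced tableau for a dominant involution, and there is no known combinatorial rule governing how the $e_{\overline{\imath}}$-arrows glue the $\gl_n$-components together in general. The paper explicitly leaves open even the description of $\q_n$-highest weight elements in $\sprf_n(z)$, which would be a prerequisite for your inductive step.

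Two smaller issues: your appeal to a ``$\q_n$-analog of Lemma~\ref{sp-ij-lem}'' asserting that $\fkD_j$ preserves direct sums of Demazure $\q_n$-crystals is itself nontrivial and not established in the paper (it would follow from Conjecture~\ref{sp-demazure-conj} but is not known independently). And your citation of Theorem~\ref{thm:single_row_o_tableau} as evidence is misplaced: that result is for $\O$-reduced tableaux in the orthogonal setting, not for $\Sp$-reduced tableaux; the paper remarks after that theorem that even the one-row symplectic formula for $\alpha^\Sp(T)$ is not known.
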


 We have checked this conjecture by computer for all $z \in \Ifpf_8$. 
Figures~\ref{sp-fig1} and~\ref{sp-fig2} contains examples of the skew-symmetric weak compositions $\alpha^\Sp(T)$
 that correspond to various $\Sp$-reduced tableaux.

Suppose $T$ is a $\Sp$-reduced tableau for some $z \in \Ifpf_\infty$.
Let $\mu$ be the strict partition shape of $T$ and let $\lambda$ be the unique skew-symmetric partition with $\shalf(\lambda)=\mu$. Recall the definitions of the permutations $\Delta_n(\phi)$ and $u_n(\alpha)$ from \eqref{Delta-eq}.
Finally assume  $\alpha$ is a skew-symmetric weak composition
and choose some $N \in \PP$  greater than or equal to $\ell(\alpha)$ and every fpf-descent of $z$.

\begin{proposition}\label{sp-alpha-prop}
If there exists a $\q_N$-crystal isomorphism $\bsprf_{N}(T) \iso \bsprf_N(\alpha)$,
then 
 \[\bsprf_{n}(T,\phi) \iso \bsprf_n(\Delta_n(\phi) \circ u_n(\alpha) \circ \lambda(\alpha))\]
 as  $\q_n$-crystals
for all integers $n \geq \ell(\mu)$ and all flags $\phi$.
\end{proposition}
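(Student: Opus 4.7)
The plan is to mirror the proof of Proposition~\ref{alpha-prop} step by step in the symplectic setting, using the hypothesis at level $N$ to descend to level $n$ via a restriction functor and then extending across flags by induction. That $\gl_n$-proof has two parts which admit direct $\q_n$-analogues: first, the standard-flag case is handled via a restriction functor $\fkR_n$ that intertwines Demazure operators; then Theorem~\ref{flag-thm} is applied inductively to cover arbitrary flags.

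For the case $\phi = \phi^S$, fix a reduced word $i_1 i_2 \cdots i_k$ for $u(\alpha) \in S_\infty$. By hypothesis and the definition~\eqref{bsprf-def},
\[
\bsprf_N(T) \iso \fkD_{i_1}^{(N)} \cdots \fkD_{i_k}^{(N)} \brf_N(z^\fpf_{\lambda(\alpha)})
\quad\text{as $\q_N$-crystals,}
\]
where $\fkD_j^{(N)} := \fkD_j^{\sprf_N(\lambda(\alpha))}$. Define $\fkR_n \cX := \{b \in \cX : \weight(b)_i = 0 \text{ for all } n < i \leq N\}$. Since the $\q_n$-structure is determined by the operators $e_i$, $f_i$ for $i \in \{\overline 1\} \sqcup [n-1]$, all of which alter weights only in positions $\leq n$, the functor $\fkR_n$ sends a $\q_N$-crystal to a $\q_n$-crystal; moreover, the truncation defining $\sprf_n$ gives $\bsprf_n(T) = \fkR_n \bsprf_N(T)$ and $\brf_n(z^\fpf_{\lambda(\alpha)}) = \fkR_n \brf_N(z^\fpf_{\lambda(\alpha)})$. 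The weight-based proof of Lemma~\ref{fkR-lem} transfers verbatim, giving $\fkR_n \fkD_j^{(N)} = \fkD_j^{(n)} \fkR_n$ for $j < n$ and $\fkR_n \fkD_j^{(N)} = \fkR_n$ for $j \geq n$; collapsing the displayed expression accordingly leaves a Demazure expression whose remaining indices form a Demazure word for $u_n(\alpha)$. Since $\brf_n(z^\fpf_{\lambda(\alpha)})$ is a direct sum of Demazure $\gl_n$-crystals by Corollary~\ref{is-dem-cor}, Lemma~\ref{sp-ij-lem} identifies the result as $\bsprf_n(u_n(\alpha) \circ \lambda(\alpha))$, settling the standard-flag case.

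To extend to arbitrary flags, observe that $\bsprf_n(z, \phi) = \bigsqcup_{w \in \cA^\Sp(z)} \brf_n(w, \phi)$ and that every $\gl_n$-crystal operator $e_i$, $f_i$ preserves this decomposition, since each acts within a single $\rf_n(w)$. Consequently $\fkD_j^{\sprf_n(z)}$ splits as a disjoint union over $w \in \cA^\Sp(z)$ of the corresponding operators $\fkD_j^{\rf_n(w)}$, so Theorem~\ref{flag-thm} applied component-wise yields the symplectic flag recursion
\[
\bsprf_n(z, \phi) = \fkD_{\phi_i - 1}^{\sprf_n(z)}\, \bsprf_n(z, \phi - \e_i)
\]
when the minimal index $i$ with $i < \phi_i$ satisfies $i < n$ and $\phi_i \leq n$, and $\bsprf_n(z, \phi) = \bsprf_n(z, \phi - \e_i)$ in the remaining cases. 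Iterating from $\phi^S$ to $\phi$ assembles exactly the Demazure product $\Delta_n(\phi)$ acting on top of $u_n(\alpha) \circ \lambda(\alpha)$, and combining with the standard-flag case via one more application of Lemma~\ref{sp-ij-lem} yields the desired $\q_n$-crystal isomorphism.

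The main obstacle is maintaining the direct-sum-of-Demazure-$\gl_n$-crystals hypothesis required by Lemma~\ref{sp-ij-lem} at every step of the flag induction; this can be handled by a straightforward secondary induction starting from Corollary~\ref{is-dem-cor}. One should also verify that the weak composition $\Delta_n(\phi) \circ u_n(\alpha) \circ \lambda(\alpha)$ meets the conditions making $\bsprf_n$ of it well-defined, but this is immediate from $n \geq \ell(\mu)$ together with the fact that each Demazure-product step remains within $S_n$.
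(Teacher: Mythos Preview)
Your proposal is correct and follows essentially the same approach as the paper's proof: both reduce the standard-flag case via the restriction operator $\fkR_n$ and Lemma~\ref{fkR-lem}, then extend to arbitrary flags by applying Theorem~\ref{flag-thm} component-wise over $w \in \cA^\Sp(z)$. The paper additionally separates the subcase $n \geq N$ (where $\fkR_n$ acts as the identity and the hypothesis transfers directly) from $n < N$, but otherwise the arguments coincide.
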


\begin{proof}
Suppose  $\bsprf_{N}(T) \iso \bsprf_N(\alpha)$ so that $\bsprf_{N}(T) $ is a Demazure $\q_N$-crystal.
In view of Corollary~\ref{sp-highest-cor}, Proposition~\ref{unique-embed-prop2}(d) implies that $\shalf(\lambda(\alpha)) =  \mu$ so $\lambda(\alpha) = \lambda$.
Fix $n\geq \ell(\mu)$ and a flag $\phi$, and define $\beta:=\Delta_n(\phi) \circ u_n(\alpha) \circ \lambda(\alpha)$.
Since
$\shalf(\lambda(\beta)) = \shalf(\lambda )=\mu \in \NN^n$ and $u(\beta) \in S_n$,
the $\q_n$-crystal $\bsprf_n(\beta)$ 
is well-defined (though not a Demazure $\q_n$-crystal if $\lambda \notin \NN^n$).

Suppose $\phi=\phi^S$ so that $\Delta_n(\phi) = 1$.
If $n \geq N$ then $\alpha=\beta$ and $ \bsprf_n(z) = \bsprf_N(z) = \bsprf(z)$.
In this case we must also have $\bsprf_n(T) = \bsprf_N(T)$, so
 the $\q_N$-crystal isomorphism $ \bsprf_n(T) \iso \bsprf_n(\alpha)$ defines a $\q_n$-crystal isomorphism
$ \bsprf_n(T) \iso \bsprf_n(\beta)$.
On the other hand, 
if $\ell(\mu)\leq n < N$ then for any reduced word $i_1i_2\cdots i_k \in \cR(u(\alpha))$
we have 
 \[\bsprf_n(T) = \fkR_n \bsprf_N(T) \iso \fkR_n \bsprf_N(\alpha) =  \fkR_n\fkD_{i_1}\fkD_{i_2}\cdots \fkD_{i_k} \brf_N(\lambda)\]
 where $\fkD_i = \fkD_i^{\sprf_N(\alpha)}$ and
 $\fkR_n$ is the operator defined in the proof of Proposition~\ref{alpha-prop}.
 It follows from Lemma~\ref{fkR-lem} that the last expression is equal to $\bsprf_n(u_n(\alpha) \circ \lambda) = \bsprf_n(\beta)$ as needed.

When $\phi$ is an arbitrary flag, 
 $\bsprf_n(z,\phi)$ is a direct sum of $\gl_n$-crystals of the form $\brf_n(w,\phi)$, so we can inductively apply Theorem~\ref{flag-thm} to 
 extend the $\q_n$-crystal isomorphism $\bsprf_{n}(T) \iso \bsprf_n( u_n(\alpha) \circ \lambda(\alpha))$ shown in the previous paragraph to 
$\bsprf_{n}(T,\phi) \iso \bsprf_{n}(\beta)$.
\end{proof}

\begin{proposition}\label{sp-equiv-prop}
Conjectures~\ref{sp-demazure-conj} and~\ref{sp-demazure-conj2} are equivalent.
\end{proposition}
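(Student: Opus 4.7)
The plan is to prove both implications by exploiting the decomposition $\bsprf_n(z,\phi) = \bigsqcup_T \bsprf_n(T,\phi)$ into full $\q_n$-subcrystals from Corollary~\ref{sp-dem-sub-cor}, combined with the transport formula of Proposition~\ref{sp-alpha-prop}. The three essential tools are Proposition~\ref{sp-alpha-prop} (to relate level-$N$ isomorphisms to level-$n$ isomorphisms for arbitrary flags), the classification of full subcrystals by $\Sp$-reduced tableaux via Proposition~\ref{sp-subcrystal-prop}, and the raising-closure property of bounded factorizations from Lemma~\ref{sp-connect-lem}.

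For the implication Conjecture~\ref{sp-demazure-conj2} $\Rightarrow$ Conjecture~\ref{sp-demazure-conj}, I would fix $z \in \Ifpf_\infty$ with no fpf-descents greater than $n$ and an arbitrary flag $\phi$, then inspect each full subcrystal $\bsprf_n(T,\phi)$ in the decomposition. Conjecture~\ref{sp-demazure-conj2} at level $n$ supplies a $\q_n$-isomorphism $\bsprf_n(T) \iso \bsprf_n(\alpha)$ for some skew-symmetric $\alpha = \alpha^\Sp(T) \in \NN^n$, after which applying Proposition~\ref{sp-alpha-prop} with $N=n$ yields $\bsprf_n(T,\phi) \iso \bsprf_n(\beta)$ where $\beta := \Delta_n(\phi) \circ u_n(\alpha) \circ \lambda(\alpha)$. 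To conclude that this summand is genuinely a Demazure $\q_n$-crystal, I only need to check $\beta \in \NN^n$: this is immediate, since $\alpha \in \NN^n$ forces $\lambda(\alpha) \in \NN^n$, and the Demazure products on the left are by elements of $S_n$, which fix all entries past position $n$.

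For the converse direction, I would fix a $\Sp$-reduced tableau $T$ of shape $\mu$ with $\ell(\mu) \leq n$ attached to some $z \in \Ifpf_\infty$ having no fpf-descents greater than $n$, and aim to show that the full subcrystal $\bsprf_n(T) \subseteq \bsprf_n(z)$ is connected; once this is in hand, Conjecture~\ref{sp-demazure-conj} at level $n$ with the standard flag forces $\bsprf_n(T)$ to be a single Demazure $\q_n$-crystal, giving the desired isomorphism $\bsprf_n(T) \iso \bsprf_n(\alpha)$ for a unique skew-symmetric $\alpha \in \NN^n$. To prove connectedness I would argue that any $a \in \bsprf_n(T)$ can be raised via operators $e_i$ with $i \in I \sqcup \overline I$ to the unique $\q_n$-highest element $u_T$ of the connected normal $\q_n$-crystal $\sprf_n(T)$ without ever leaving $\bsprf_n(T)$: Lemma~\ref{sp-connect-lem} supplies an index $i$ with $\zero \neq e_i(a) \in \bsprf_n(z)$ whenever $a$ is not $\q_n$-highest, and since $\sprf_n(T)$ is a \emph{full} subcrystal by Proposition~\ref{sp-subcrystal-prop}, this $e_i(a)$ automatically stays in $\sprf_n(T)$, hence in $\bsprf_n(T)$; iteration terminates at $u_T$, which lies in $\bsprf_n(T)$ by the first half of Lemma~\ref{sp-connect-lem} since $u_T$ is $\gl_n$-highest in $\sprf_n(z)$.

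The main obstacle is precisely the connectedness argument in the converse direction, as Demazure $\q_n$-crystals are not in general closed under the raising operators $e_i$ for $i \in \overline I \sqcup \underline I$. The subtle point is that the raising sequence produced by Lemma~\ref{sp-connect-lem} may have to invoke the operator $e_{\overline 1}$, whose good behavior on $\bsprf_n(z,\phi)$ ultimately rests on the technical input of Lemma~\ref{ghps-lem}; once this is accepted, the forward direction reduces quickly to the bookkeeping check that $\Delta_n(\phi) \circ u_n(\alpha) \circ \lambda(\alpha)$ lands in $\NN^n$.
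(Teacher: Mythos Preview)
Your proposal is correct and follows essentially the same route as the paper: both directions hinge on Corollary~\ref{sp-dem-sub-cor} and Proposition~\ref{sp-alpha-prop}, with the converse obtained by specializing to $\phi=\phi^S$. Your explicit connectedness argument for $\bsprf_n(T)$ in the converse direction is not needed as a separate step, since it is exactly the content (and proof) of Corollary~\ref{sp-dem-sub-cor}; once you cite that corollary, each $\bsprf_n(T)$ is already a connected component of $\bsprf_n(z)$, so the assumed Demazure decomposition immediately matches it with a single $\bsprf_n(\alpha)$.
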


\begin{proof}
If Conjecture~\ref{sp-demazure-conj2} holds then  
Proposition~\ref{sp-alpha-prop} tells us that every full subcrystal of $\bsprf_n(z,\phi)$ is a Demazure $\q_n$-crystal, for any flag $\phi$.
Instead suppose Conjecture~\ref{sp-demazure-conj} holds. Taking $\phi=\phi^S$ shows that each full subcrystal of $\bsprf_n(z)$ is isomorphic to $\bsprf_n(\alpha)$ for some skew-symmetric $\alpha \in \NN^n$.
This implies Conjecture~\ref{sp-demazure-conj2} by Corollary~\ref{sp-dem-sub-cor}.
\end{proof}

\section{Demazure crystals for \texorpdfstring{$Q$}{Q}-key polynomials}
\label{sec:Q_crystals}

There are parallel   ``orthogonal'' versions of most results in the previous section, concerning 
analogues of $\RF(w)$
 associated to all involutions in the symmetric group. These constructions, discussed below, will
lead to another kind of ``shifted'' Demazure crystals, whose characters are $Q$-key polynomials.
Our main results here are Theorem~\ref{o-crystal-thm} and Conjectures~\ref{o-demazure-conj} and~\ref{o-demazure-conj2}.

\subsection{Extended queer crystals}\label{qq-sect}

The results in this section involve  \defn{extended queer crystals} of type $\qq_n$ from~\cite{MT2021}. This is an extension of type $\q_n$ from Section~\ref{q-sect}, and is defined via Section~\ref{crystal-sect} by the following index set, standard crystal, and tensor product.

First, the index set for type $\qq_n$ is the disjoint union of four sets of symbols $I \sqcup \overline{I} \sqcup \underline{I} \sqcup I'$, where $I = [n-1]$, $\overline{I} := \{\overline{1}, \dotsc, \overline{n-1}\}$ and $\underline{I} := \{\underline{1}, \dotsc, \underline{n-1}\}$ are exactly as for $\q_n$-crystals, and $I' := \{1', 2', \dotsc, n'\}$.
\ytableausetup{boxsize=1.2em}%
The \defn{standard $\qq_n$-crystal} is the set $\BB= \left\{ \ytableaushort{{1'}}, \ytableaushort{1}, \ytableaushort{{2'}}, \ytableaushort{2}, \dotsc, \ytableaushort{{n'}}, \ytableaushort{n} \right\}$ with weight function $\weight(\ytableaushort{i}) = \weight(\ytableaushort{{i'}}) := \e_i$, crystal graph
\[
     \begin{tikzpicture}[xscale=2.4, yscale=1.2,>=latex,baseline=(z.base)]
      \node at (0,0.7) (z) {};
      \node at (0,0) (T0) {$\ytableaushort{{1'}}$};
      \node at (1,0) (T1) {$\ytableaushort{{2'}}$};
      \node at (2,0) (T2) {$\ytableaushort{{3'}}$};
      \node at (3,0) (T3) {${\cdots}$};
      \node at (4.8,0) (T4) {$\ytableaushort{{n'}}$};
      \node at (0,1.4) (U0) {$\ytableaushort{1}$};
      \node at (1,1.4) (U1) {$\ytableaushort{2}$};
      \node at (2,1.4) (U2) {$\ytableaushort{3}$};
      \node at (3,1.4) (U3) {${\cdots}$};
      \node at (4.8,1.4) (U4) {$\ytableaushort{n}$};
      \draw[->,thick]  (T0) -- (T1) node[midway,below,scale=0.75] {$ 1$, $\overline 1$, $\underline 1$};
      \draw[->,thick]  (T1) -- (T2) node[midway,below,scale=0.75] {$ 2$, $\overline 2$, $\underline 2$};
      \draw[->,thick]  (T2) -- (T3) node[midway,below,scale=0.75] {$ 3$, $\overline 3$, $\underline 3$};
      \draw[->,thick]  (T3) -- (T4) node[midway,below,scale=0.75] {${n-1}$, $\overline{n-1}$, $\underline{n-1}$};
      \draw[->,thick]  (U0) -- (U1) node[midway,above,scale=0.75] {$ 1$, $\overline 1$, $\underline 1$};
      \draw[->,thick]  (U1) -- (U2) node[midway,above,scale=0.75] {$ 2$, $\overline 2$, $\underline 2$};
      \draw[->,thick]  (U2) -- (U3) node[midway,above,scale=0.75] {$ 3$, $\overline 3$, $\underline 3$};
      \draw[->,thick]  (U3) -- (U4) node[midway,above,scale=0.75] {${n-1}$, $\overline{n-1}$, $\underline{n-1}$};
      \draw[->,thick,color=teal]  (U0) -- (T0) node[midway,left,scale=0.75] {$1'$};
      \draw[->,thick,color=teal]  (U1) -- (T1) node[midway,left,scale=0.75] {$2'$};
      \draw[->,thick,color=teal]  (U2) -- (T2) node[midway,left,scale=0.75] {$3'$};
      \draw[->,thick,color=teal]  (U4) -- (T4) node[midway,left,scale=0.75] {$n'$};
     \end{tikzpicture}
\] 
and statistics $\varepsilon_i, \varphi_i$ defined by~\eqref{string-eqs}.

Finally, we view the set of tensor powers $\BB^{\otimes m}$ for $m\geq 2$ as a $\qq_n$-crystal with the  following crystal operators.
For $i \in I$, the definitions of $e_i$ and $f_i$ on $\BB^{\otimes m}$ are identical to the $\gl_n$-case
and by~\eqref{eq:gl_tensor_product}.
Next, following~\cite[Def.~3.10]{MT2021}, for $ b\in \BB$ and $c \in \BB^{\otimes (m-1)}$ we set
\begin{subequations}
\begin{align}
  e_{\overline 1}(b\otimes c) & :=    \begin{cases} 
 b \otimes e_{\overline 1}(c)
 &\text{if }\weight(b)_1 = \weight(b)_2 = 0,
 \\
 f_{1'}  e_{\overline 1} (b) \otimes e_{1'}(c) 
 &\text{if $\weight(b)_1 = 0$, $f_{1'} e_{\overline 1}(b) \neq \zero$, and $e_{1'}(c)\neq \zero$,}
\\
 e_{1'} e_{\overline 1} (b) \otimes f_{1'}(c) 
 &\text{if $\weight(b)_1 = 0$, $e_{1'} e_{\overline 1}(b)\neq \zero$, and $f_{1'}(c) \neq \zero$,}
 \\
  e_{\overline 1}(b) \otimes c
&\text{otherwise,}
 \end{cases} 
 \allowdisplaybreaks \\
 f_{\overline 1}(b\otimes c) & :=    \begin{cases} 
 b \otimes f_{\overline 1}(c)
 &\text{if }\weight(b)_1 = \weight(b)_2 = 0,
 \\
 f_{\overline 1} f_{1'} (b) \otimes e_{1'}(c) 
 &\text{if $\weight(b)_1 = 1$, $f_{\overline 1} f_{1'}(b) \neq \zero$, and $e_{1'}(c) \neq  \zero$,}
\\
 f_{\overline 1} e_{1'} (b) \otimes f_{1'}(c) 
 &\text{if $\weight(b)_1 = 1$, $ f_{\overline 1} e_{1'} (b)\neq \zero$, and $f_{1'}(c) \neq \zero$,}
 \\
  f_{\overline 1}(b) \otimes c
&\text{otherwise.}
 \end{cases} 
\end{align}
\end{subequations}
Then we set 
\be\label{qq-overline-eq}
 e_{\overline{\imath}} := \sigma_{i-1} \sigma_i e_{\overline{\imath-1}} \sigma_i \sigma_{i-1} 
 \quand  f_{\overline{\imath}} := \sigma_{i-1} \sigma_i f_{\overline{\imath-1}} \sigma_i \sigma_{i-1}\ee for $i \in \{2,3,\dotsc,n-1\}$, and define 
\be\label{qq-underline-eq}
e_{\underline{i}} := \sigma_{w_0} f_{\overline{n-\imath}} \sigma_{w_0}
\quand
f_{\underline{i}} := \sigma_{w_0} e_{\overline{n-\imath}} \sigma_{w_0}
\ee
for $i \in [n-1]$, exactly as in the $\q_n$-case.
 Finally, the operators $e_{1'}$ and $f_{1'}$ are given by
 \be
   e_{1'}(b\otimes c)  :=   \begin{cases} 
     e_{1'}(b) \otimes c&\text{if }\weight(b)_1 \neq 0,
 \\
b \otimes e_{1'}(c) & \text{if }\weight(b)_1 = 0,
 \end{cases}
 \quand
    f_{1'}(b\otimes c)  :=   \begin{cases} 
    f_{1'}(b) \otimes c&\text{if }\weight(b)_1 \neq 0,
 \\
b \otimes f_{1'}(c) & \text{if }\weight(b)_1 = 0,
 \end{cases}
\ee
for $ b\in \BB$ and $c \in \BB^{\otimes (m-1)}$, and for $i \in \{2,3,\dotsc,n\}$ we set
\be\label{e-i-prime-eq}
e_{i'} := \sigma_{i-1} \dotsm \sigma_2 \sigma_1 e_{1'} \sigma_1 \sigma_2 \dotsm \sigma_{i-1}
\quand
f_{i'} := \sigma_{i-1} \dotsm \sigma_2 \sigma_1 f_{1'} \sigma_1 \sigma_2 \dotsm \sigma_{i-1}
\ee
following~\cite[Eq.~(3.6)]{MT2021}.

This data gives rise to categories of \defn{$\qq_n$-crystals} and \defn{normal $\qq_n$-crystals} that are closed under tensor products~\cite[Thm.~3.12]{MT2021}. 
The crystal operators on any normal $\qq_n$-crystal are completely determined by just the operators $e_i$ and $f_i$
indexed by $i \in \{1',\overline{1}, 1,2,\dotsc,n-1\}$. When drawing the graphs of normal crystals, we often include only the arrows of these indices
(see, for example, Figure~\ref{fig:O_crystal}).

As in the $\q_n$-case, each connected normal $\qq_n$-crystal has a unique highest weight element whose weight $\lambda$ uniquely determines its isomorphism class, and these weights $\lambda$ range over all strict partitions in $\NN^n$~\cite{MT2021}.
For each strict partition $\lambda \in \NN^n$, we can choose a connected normal $\qq_n$-crystal  $\cB(\lambda)$ with highest weight element $u_{\lambda}$ of weight $\lambda$. 
This crystal has a unique lowest weight element of weight $w_0 \lambda = (\lambda_n,\dotsc,\lambda_2,\lambda_1)$,
and its character is the \defn{Schur $Q$-polynomial}
$
Q_{\lambda}(x_1, \dotsc, x_n) 
$~\cite[Thm.~1.5]{MT2021}.
Unlike the $\q_n$-case, however, the lowest weight element of $\cB(\lambda)$ is not given by $\sigma_{w_0} u_{\lambda}$ 
(see~\cite[Prop.~7.15]{MT2021})
and $\cB(\lambda)$ is not naturally the crystal basis of a polynomial $U_q(\q_n)$-module.

\subsection{Orthogonal reduced factorizations}

Recall that $I_n \subset I_\infty\subset I_\ZZ$ are the respective sets of involutions in $S_n \subset S_\infty \subset S_\ZZ$.
Fix $z \in I_\ZZ$. Again let $\cA^\O(z)$ be the set of minimal-length elements $w \in S_\ZZ$ with  $z = w^{-1}\circ  w$,
and let  $\Cyc(z) $ be the finite set of pairs $(a,b) \in \ZZ\times \ZZ$ with $a<b = z(a)$.
  Then define
\be\label{cRo-eq}  \cR^\O(z) :=\bigsqcup_{A\subseteq \Cyc(z)} \bigsqcup_{w \in \cA^\O(z)} \cR^+(w,A),\ee
with $\cR^+(w,A)$ as in Section~\ref{rf-sect}. The elements of $\cR^\O(z)$ are certain primed words, all of the same length.
 The next lemma shows that $\cR^\O(z)$ is the same as the set of \defn{primed involution words} for $z \in I_\ZZ$ considered in~\cite{MT2021,Marberg2021b,Marberg2021a}. Recall that we set $s_{i'} = s_{i} = (i \: i+1) \in S_\ZZ$ for $i \in \ZZ$.
 
  \begin{lemma}\label{primed-w-lem}
 Let $z \in I_\ZZ$, $w \in \cA^\O(z)$, and $i=i_1\cdots i_\ell \in \cR^+(w)$.
 For $j \in [\ell]$ let $w_j = s_{i_1}\cdots s_{i_{j-1}}$.
 Then $ \Marked(i) \subseteq \Cyc(z)$ if and only if $s_{i_j}$ commutes with $(w_j)^{-1} \circ w_j$ whenever $i_j \in \ZZ'$.
 \end{lemma}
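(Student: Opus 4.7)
The plan is to prove the stronger pointwise statement that, for every $j \in [\ell]$, the pair $(a_j, b_j)$ belongs to $\Cyc(z)$ if and only if $s_{i_j}$ commutes with $z_j := w_j^{-1} \circ w_j$; intersecting this with the condition $\{j : i_j \in \ZZ'\}$ then yields the lemma.

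First I would verify that $z_{j+1} = s_{i_j} \circ z_j \circ s_{i_j}$ for each $j \in [\ell]$: since $\lceil i \rceil \in \cR(w)$ is reduced, $w_{j+1} = w_j s_{i_j}$ is reduced, and associativity of the Demazure product gives $z_{j+1} = (w_j s_{i_j})^{-1} \circ (w_j s_{i_j}) = s_{i_j} \circ w_j^{-1} \circ w_j \circ s_{i_j}$. Next I would observe that, because $w \in \cA^\O(z)$ has minimal length, one must have $z_{j+1} \neq z_j$ for every $j$: otherwise, removing $s_{i_j}$ from $\lceil i \rceil$ and reapplying the associativity argument produces $w' \in S_\ZZ$ with $\ell(w') < \ell(w)$ and $(w')^{-1} \circ w' = z$, contradicting minimality. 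With this in hand, a direct case analysis partitions the indices $j$ into two types: a \emph{commuting step}, where $s_{i_j}$ commutes with $z_j$ and $z_j$ must fix both $i_j$ and $i_j+1$ (the other commuting possibility, that $z_j$ has $2$-cycle $(i_j, i_j+1)$, is ruled out since it forces $z_{j+1} = z_j$) and $z_{j+1}$ is obtained from $z_j$ by adding the new $2$-cycle $(i_j, i_j+1)$; or a \emph{conjugation step}, where $s_{i_j}$ does not commute with $z_j$, $i_j$ is necessarily an ascent of $z_j$ (again by $z_{j+1} \neq z_j$), and $z_{j+1} = s_{i_j} z_j s_{i_j}$ by honest conjugation, so $\Cyc(z_{j+1})$ is the coordinatewise $s_{i_j}$-image of $\Cyc(z_j)$.

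Next I would track the cycle created at a commuting step $j$: it enters $\Cyc(z_{j+1})$ as $(i_j, i_j+1)$, and at each later step $j' > j$ its positions are acted on by $s_{i_{j'}}$. At a conjugation step $j'$ this is literal conjugation; at a commuting step $j'$ the existing cycle's positions lie outside $\{i_{j'}, i_{j'}+1\}$ (since these are fixed points of $z_{j'}$), so $s_{i_{j'}}$ acts trivially. Composing, the cycle reaches $\bigl(s_{i_\ell} \cdots s_{i_{j+1}}(\lceil i_j \rceil),\; s_{i_\ell} \cdots s_{i_{j+1}}(\lceil i_j \rceil + 1)\bigr) = (a_j, b_j)$ in $z = z_{\ell+1}$, exhibiting $(a_j, b_j) \in \Cyc(z)$ for every commuting $j$.

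Finally, a counting argument closes the equivalence. No cycle of any intermediate $z_j$ is ever destroyed (conjugation steps preserve cycle count; commuting steps only add one), so $|\Cyc(z)|$ equals the number of commuting indices, and $j \mapsto (a_j, b_j)$ is a surjection of commuting indices onto $\Cyc(z)$; bijectivity follows because the $(a_j, b_j)$ are distinct for distinct $j$, as they enumerate $\Inv(w)$. This forces the converse implication and hence the pointwise equivalence. The main technical obstacle is the case analysis in the second paragraph: carefully using $z_{j+1} \neq z_j$ to exclude both the $2$-cycle-commuting subcase and any descent-non-commuting subcase, so that only the clean ``commuting step'' and ``conjugation step'' alternatives survive, and then checking that the Demazure product $s_{i_j} \circ z_j \circ s_{i_j}$ in each surviving case really produces the claimed effect on cycles.
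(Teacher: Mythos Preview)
Your proof is correct and essentially establishes the same pointwise equivalence as the paper: $(a_j,b_j)\in\Cyc(z)$ if and only if $s_{i_j}$ commutes with $z_j=w_j^{-1}\circ w_j$. The difference is in how this equivalence is obtained. The paper's proof is a two-line citation: it recalls that $j\mapsto(a_j,b_j)$ bijects $[\ell]\to\Inv(w)$, and then invokes \cite[Prop.~4.4]{Marberg2021b} for the assertion that this map restricts to a bijection from commutations to $\Cyc(z)$. You instead prove that bijection from scratch, by tracking how the involutions $z_j$ evolve step-by-step along the reduced word (commuting steps add a new $2$-cycle $(i_j,i_j{+}1)$; conjugation steps permute existing cycles via $s_{i_j}$), showing that the cycle born at commuting step $j$ lands on $\{a_j,b_j\}$ in $z$, and closing with a counting argument. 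Your approach is fully self-contained and makes the mechanism transparent, at the cost of being longer; the paper's approach is much shorter but defers the substance to the literature. One small remark: your justification that $z_{j+1}\neq z_j$ (``remove $s_{i_j}$ and reapply associativity'') is correct but deserves an extra line---the element $v=s_{i_1}\circ\cdots\circ s_{i_{j-1}}\circ s_{i_{j+1}}\circ\cdots\circ s_{i_\ell}$ (Demazure product) has $\ell(v)\le\ell-1$, and the anti-automorphism property $(s_{j_1}\circ\cdots\circ s_{j_k})^{-1}=s_{j_k}\circ\cdots\circ s_{j_1}$ together with $z_{j+1}=z_j$ gives $v^{-1}\circ v=z$, contradicting minimality of $w$.
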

 
 \begin{proof}
 For each $j \in [\ell]$ define $a_j$ and $b_j$ by the formula~\eqref{abj-eq}
 and set
 $\gamma_j := (a_j,b_j)$.
We say that $j \in [\ell]$ is a \defn{commutation} in the word $i$ if $s_{i_j}$ commutes with $(w_j)^{-1} \circ w_j$.
We have already noted the general property that $ j \mapsto \gamma_j$ is a bijection $[\ell] \to \Inv(w)$.
Because $w \in \cA^\O(z)$,~\cite[Prop.~4.4]{Marberg2021b} asserts that this map restricts to a bijection from the set of commutations in $i$ to $\Cyc(z)$.
 \end{proof}
 
 Like fpf-involution words, primed involution words correspond to maximal chains in a weak order on the orbits of the orthogonal group acting on the complete flag variety; see~\cite{Brion2001,RichSpring}.
 
 \begin{remark}\label{inv-obs}
 We can inductively construct the set $\cR^\O(z)$ as follows.
Consider a primed word $i=i_1i_2\cdots i_\ell$ where each $i_j \in \ZZ\sqcup \ZZ'$.
 Let $z_1 := 1 \in I_\ZZ$ and for $j \in [\ell]$  let 
 \[ z_{j+1} = \begin{cases}  
 s_{i_j}  z_j =  z_j  s_{i_j} &\text{if  $z_j$ preserves $\{ \lceil i_j\rceil , 1+\lceil i_{j}\rceil \}, $}\\
 s_{i_j}  z_j  s_{i_j} &\text{otherwise}.\end{cases}
 \]
 By~\cite[\S2.1]{Marberg2021a}, we have $i \in \cR^\O(z)$ if and only if both of the following conditions hold:
 \bei
 \item for all $j \in [\ell]$ it holds that $z_j(\lceil i_j\rceil ) < z_j(1+\lceil i_j\rceil )$, and
\item  if the letter $i_j $ is primed then $\lceil i_j\rceil$ and $1 + \lceil i_j\rceil$ are both fixed
by $z_j$.
\eei
For efficient procedures to construct the set $\cA^\O(z)$, see~\cite{CJW,HMP2}.
  It follows when $i \in \cR^\O(z)$ that  
  \begin{itemize}
  \item[(a)] one has $\lceil i_1 \rceil \neq \lceil i_3 \rceil$ and if $i_2 \in \ZZ'$ then $\lceil i_1\rceil \neq \lceil i_2 \rceil \pm 1$;
  \item[(b)] if $i_1 \in \ZZ$ then $i_1'i_2i_3\cdots i_\ell$ and $i_2i_1i_3\cdots i_\ell$ are also in $\cR^\O(z)$;
  \item[(c)] if $\lceil i_{j-1} \rceil = \lceil i_{j+1}\rceil$ then $i_j=\lceil i_{j-1} \rceil\pm1 \in \ZZ $ and  $i_{j-1}$ or $i_{j+1}$ is unprimed~\cite[Prop.~8.2]{Marberg2021b}.
  \end{itemize}
  The set $\cR^\O(z)$ is spanned and preserved by the ``primed'' braid relations that interchange
  \be\cdots ab\cdots  \leftrightarrow \cdots ba\cdots \ee if $a,b \in \ZZ\sqcup\ZZ'$ have $|\lceil a\rceil - \lceil b \rceil| > 1$,
  as well as 
  \be\cdots aba \cdots \leftrightarrow \cdots bab \cdots \quand \cdots a'ba \cdots \leftrightarrow \cdots bab' \cdots \ee for $a,b \in \ZZ$
  with $\abs{a-b} =1$, and finally with 
  \be a\cdots \leftrightarrow a'\cdots\quand ab\cdots \leftrightarrow ba \cdots\ee for any $a,b \in \ZZ$ (the last two relations only affect letters at the start of the word)~\cite[Cor.~8.3]{Marberg2021b}.
  \end{remark}

\begin{example}
If $z=(1 \: 4)(2 \: 3) \in I_4$ then $\cR^\O(z)$ has $32$ elements, consisting of all ways of optionally adding primes
to the underlined letters in these words:
\[
\underline3\hs\underline123, \quad \underline1\hs\underline323, \quad \underline123\underline2, \quad \underline231\underline2, \quad \underline213\underline2, \quad \underline1\hs\underline321, \quad \underline321\underline2, \quand \underline3\hs\underline1 21.
\]
\end{example}

 Define $\ORF(z)$ to be the set of tuples of decreasing primed words $a=(a^1,a^2,\ldots)$ 
with $a^i =\emptyset$ for all $i>n$ and with $a^1a^2\cdots \in \cR^\O(z)$.
Also let  $\upORF(z)  = \{ \unprime(a) \mid a \in \ORF(z)\}$.
Then
\[ 
\upORF(z) =  \bigsqcup_{w \in \cA^\O(z)} \RF(w)
\quand
\orf_n(z) =\bigsqcup_{A\subseteq \Cyc(z)}  \bigsqcup_{w \in \cA^\O(z)} \RF^+(w,A).
\]
We refer to the elements of these sets as \defn{orthogonal reduced factorizations}.
 
 These sets can be empty if $n$ is too small; see Theorem~\ref{upORF-thm}
 for a precise characterization.
When nonempty, $\upORF(z)$ and $\ORF(z) $ are both normal $\gl_n$-crystals by Theorem~\ref{up-thm}.
Our goal in this section is to explain the results in~\cite{MT2021,Marberg2019b} that extend these structures to $\q_n$- and $\qq_n$-crystals.

\begin{definition}\label{ef-1-def}
For $a=(a^1,a^2,\ldots) \in \ORF(z)$, define $e_{\overline 1}a$ and $f_{\overline 1}a$ as follows:
\begin{enumerate}
\item[\defn{$e_{\overline{1}}$}:]
Set $e_{\overline{1}}a := \zero$ if 
$a^2=\emptyset$ is empty or $\lceil \max(a^2) \rceil \leq \lceil \max(a^1)\rceil $.
 
Otherwise, form $b = (b^1,b^2,\ldots)$ from $a$ by moving $\max(a^2)$ to the start of $a^1$. Then:
\begin{itemize}
\item if $a^1=\emptyset$ then set $e_{\overline{1}}a := b$;

\item if $a^1\neq \emptyset $ then form $e_{\overline{1}}a$ from $b$ by toggling the primes on the first two letters of $b^1$.
\end{itemize}

\item[\defn{$f_{\overline{1}}$}:]
Set $f_{\overline{1}}a := \zero$ if $a^1=\emptyset$ is empty or $\lceil \max(a^1)\rceil  \leq \lceil \max(a^2) \rceil $.
 
Otherwise, form $b = (b^1,b^2,\ldots)$ from $a$ by moving $\max(a^1)$ to the start of $a^2$. Then:
\begin{itemize}
\item if $b^1=\emptyset$ then $f_{\overline{1}}a := b$;

\item if $b^1\neq \emptyset$ then form $f_{\overline{1}}a$ from $b$ by toggling the primes on $\max(b^1)$ and $\max(b^2)$.
\end{itemize}
\end{enumerate}
\end{definition}


\begin{example}
The $f_{\overline 1}$ operator sends
$
(5'32,41)  \xmapsto{\ f_{\overline 1}\ }  
(3'2,541)
$
and
$
(5'3'2,41)  \xmapsto{\ f_{\overline 1}\ }
 (3'2,5'41)
$
with $e_{\overline 1}$ acting the in the reverse direction.
On the other hand, $e_{\overline 1}(5'32,41) = f_{\overline 1}(3'2,5'41) = \zero$.
\end{example}

The next two theorems are equivalent to results in~\cite{Marberg2019b,MT2021}
about crystal structures on \emph{increasing} factorizations of 
$\cR^\O(z)$.
The way to translate these results to our current setup is again via 
 the $\ast$ operation from Remark~\ref{rem:negation}, 
 which acts on unprimed words by negating every letter.
 
By Remark~\ref{inv-obs},   $\ast$ 
defines a bijection $\cR^\O(z) \to \cR^\O(z^\ast)$, which preserves the number of primed letters in a given word.
On tuples of words, the $\ast$ operation converts the $\q_n$-crystal
  in~\cite[\S3.2]{Marberg2019b}
 into what is described in the following theorem.
Here, we fix $z \in I_\ZZ$ and let 
\[
c_i^\O(z) := \abs{ \{j \in \ZZ \mid i < j \text{ and }\min\{i,z(i)\}\geq z(j)\} }.
\]
These numbers make up the \defn{involution code} of $z$ in~\cite{HMP1,HMP4}.

\begin{theorem}
\label{upORF-thm}
The sets $\upORF(z)$ and $\ORF(z)$ are each nonempty if and only if $c_i^\O(z) \leq n$ for all $i \in \ZZ$. When this holds,
$\upORF(z)$ has a normal $\q_n$-crystal structure 
with crystal operators $e_{i}$ and $f_{i}$
for $i \in \{1,2,\dotsc,n-1\} \sqcup \{\overline 1\}$ given
  as in Definitions~\ref{ef-def} and~\ref{ef-1-def}.
\end{theorem}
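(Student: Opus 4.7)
The plan is to mirror the symplectic case (Theorem~\ref{sprf-thm}), translating the existing $\q_n$-crystal structure on \emph{increasing} primed factorizations from~\cite[\S3.2]{Marberg2019b} to our decreasing-factorization setup via the bijection $\ast$ of Remark~\ref{rem:negation}. Since $\upORF(z) \subseteq \ORF(z)$ is the image of the surjection $\unprime$, the nonemptiness claims for the two sets are equivalent, and the $\q_n$-crystal claim concerns only $\upORF(z)$.

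First I would handle nonemptiness. By definition $\upORF(z) = \bigsqcup_{w \in \cA^\O(z)} \RF(w)$, so by Proposition~\ref{rf-nonempty-prop} the set $\upORF(z)$ is nonempty iff some $w \in \cA^\O(z)$ satisfies $c_i(w^{-1}) \leq n$ for all $i \in \ZZ$. The involution code $c_i^\O(z)$ is designed to be exactly the minimum, over all $w \in \cA^\O(z)$, of $c_i(w^{-1})$ in each row; this is a standard fact about minimal-length Hecke atoms proved in~\cite{HMP1,HMP4} (compare the characterization of $\SpRF(z)$ in Theorem~\ref{sprf-thm}, which uses the analogous identity for symplectic codes from~\cite[Rem.~3.16]{Marberg2019b}). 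Hence $\upORF(z) \neq \emptyset$ iff $c_i^\O(z) \leq n$ for all $i \in \ZZ$, and the same holds for $\ORF(z)$ since $\unprime \colon \ORF(z) \to \upORF(z)$ is surjective.

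Next I would define the crystal structure. Applying $\ast$ coordinate-wise to tuples yields a bijection $\upORF(z) \to \bigsqcup_{w \in \cA^\O(z^\ast)} \cR_n(w)$, where $\cR_n(w)$ is the set of tuples of strictly increasing unprimed words factoring reduced words for $w$ (the object studied in~\cite[\S3.1--3.2]{Marberg2019b}). By~\cite[Cor.~3.37]{Marberg2019b} (together with the stability $\ch(\upORF(z)) = \ch(\upORF(z^\ast))$ used in the proof of Theorem~\ref{sprf-thm}), this target set carries a normal $\q_n$-crystal structure, which transports along $\ast$ to a normal $\q_n$-crystal structure on $\upORF(z)$. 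For the operators $e_i, f_i$ with $i \in [n-1]$, Remark~\ref{rem:negation} already identifies this transported $\gl_n$-structure with the one of Definition~\ref{ef-def}, so only the $\overline 1$-operator requires verification.

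The main obstacle, as in the symplectic case, is checking that the transported $e_{\overline 1}$ and $f_{\overline 1}$ agree with Definition~\ref{ef-1-def}. The verification is purely combinatorial: the operator from~\cite[\S3.2]{Marberg2019b} acts on an increasing pair $(b^1,b^2)$ by moving the minimal entry of $b^1$ to $b^2$ (with a possible prime-toggle depending on emptiness), and under $\ast$ "minimal entry of $b^1$" corresponds to "maximal entry of $a^1$" in the decreasing factorization $a = b^\ast$, with the prime-toggling rule preserved (this is where the case $a^1 = \emptyset$ vs.\ $a^1 \neq \emptyset$ enters). I expect this to be a short but careful unpacking of the definitions on a letter-by-letter basis; one can then read off Definition~\ref{ef-1-def} directly. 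Once this identification is established, the normality of the transported crystal on $\upORF(z)$ is automatic from its source, completing the proof.
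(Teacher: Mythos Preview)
Your overall strategy---transport the $\q_n$-crystal structure on increasing factorizations from \cite{Marberg2019b} to $\upORF(z)$ via the $\ast$ bijection of Remark~\ref{rem:negation}---is exactly what the paper does (the relevant result in \cite{Marberg2019b} is Corollary~3.33, not~3.37, which is the symplectic one). However, two parts of your writeup are off.

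The nonemptiness argument has a real gap. You assert as a ``standard fact'' that $c_i^\O(z)$ is the pointwise minimum of $c_i(w^{-1})$ over $w \in \cA^\O(z)$, citing \cite{HMP1,HMP4}, but those papers do not prove this identity, and \cite[Rem.~3.16]{Marberg2019b} (your analogy for the symplectic case) does not assert any such atom-minimum formula either---it directly characterizes nonvanishing of the character in terms of the involution code. The paper sidesteps this entirely: it quotes \cite[Rem.~3.10]{Marberg2019b} for the equivalence $\ch\bigl(\upORF(z^\ast)\bigr) \neq 0 \Longleftrightarrow c_i^\O(z) \leq n$ for all $i$, and then uses the character equality $\ch\bigl(\upORF(z^\ast)\bigr) = \ch\bigl(\upORF(z)\bigr)$ (obtained by comparing that remark with \cite[Cor.~5.10]{Marberg2019a}) to conclude.

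Your verification of the $\overline 1$-operators is also confused. The elements of $\upORF(z)$ are \emph{unprimed} factorizations, so the ``prime-toggle depending on emptiness'' you describe is not part of the $\q_n$-structure here; the transported $f_{\overline 1}$ simply moves $\max(a^1)$ to the front of $a^2$. The prime-toggling in Definition~\ref{ef-1-def} only becomes relevant for the full primed crystal $\ORF(z)$ treated in Theorem~\ref{orf-thm}.
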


\begin{proof}
On adjusting references, the proof is the same as for Theorem~\ref{sprf-thm}.
When nonempty, $\upORF(z)$ is a normal $\q_n$-crystal by~\cite[Cor.~3.33]{Marberg2019b}
via the preceding discussion. 
By~\cite[Rem.~3.10]{Marberg2019b}, we have $\ch(\ORF(z^\ast)) \neq 0$  if and only if $c^\O_i(z) \leq n$ for all $i$.
Comparing~\cite[Rem.~3.10]{Marberg2019b} with~\cite[Cor.~5.10]{Marberg2019a} 
shows that $\ch( \ORF(z^\ast))=\ch( \ORF(z))$, so $\ORF(z) \neq \emptyset$ if and only if $c^\O_i(z) \leq n$ for all $i$.
\end{proof}

The character of $\ORF(z)$ is the \defn{involution Stanley symmetric polynomial} $\widehat F_z(x_1,x_2,\dotsc,x_n)$ studied in~\cite{HMP4}.
This $\q_n$-crystal extends to a $\qq_n$-crystal structure on $\ORF(z)$:

\begin{definition}\label{e0-f0-def}
For $a=(a^1,a^2,\ldots) \in \ORF(z)$, define $e_{1'}a$ and $f_{1'}a$ as follows:
\begin{itemize}
\item[\defn{$e_{{1'}}$}:] Set $e_{1'}a:= \zero$ if $a^1=\emptyset$ is empty or $\max(a^1)\in  \ZZ$ is unprimed.

Otherwise, form $e_{1'}a$ from $a$ by removing the prime on $\max(a^1) \in \ZZ'$.

\item[\defn{$f_{{1'}}$}:] Set $f_{1'}a:= \zero$ if $a^1=\emptyset$ is empty or $\max(a^1)\in  \ZZ'$ is primed.

Otherwise, form $f_{1'}a$ from $a$ by adding a prime to $\max(a^1) \in \ZZ$.

\end{itemize}
\end{definition}

\begin{example}
We have $e_{1'}(5'3'2,41) = (53'2,41)$ and $f_{1'}(5'3'2,41) = \zero$. 
\end{example}

The following statement is equivalent to~\cite[Cor.~7.18]{MT2021} via the $\ast$ operation from Remark~\ref{rem:negation}.

 \begin{theorem}[\cite{MT2021}]
 \label{orf-thm}
When nonempty, $\ORF(z)$ has a normal $\qq_n$-crystal structure with crystal operators $e_{i}$ and $f_{i}$
for $i \in \{1,2,\dotsc,n-1\} \sqcup \{\overline 1\} \sqcup \{1'\} $ given
  as in Definitions~\ref{ef-def},~\ref{ef-1-def}, and~\ref{e0-f0-def}.
\end{theorem}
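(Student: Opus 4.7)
The plan is to derive this from~\cite[Cor.~7.18]{MT2021}, which establishes a normal $\qq_n$-crystal structure on the analogous set of tuples of strictly \emph{increasing} primed words whose concatenation lies in $\cR^\O(z)$. The bridge would be the order-reversing bijection $\ast$ from Remark~\ref{rem:negation}, which is already defined on primed words and tuples and which restricts to a bijection from $\ORF(z)$ to the set of increasing primed reduced factorizations of $z^\ast$ (using that $\ast$ sends $\cR^\O(z) \to \cR^\O(z^\ast)$, which follows from the generating relations for $\cR^\O(\cdot)$ recalled in Remark~\ref{inv-obs}).

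My first step would be to verify that $a \mapsto a^\ast$ pulls back the crystal operators of~\cite{MT2021} on the increasing side to exactly the maps of Definitions~\ref{ef-def},~\ref{ef-1-def}, and~\ref{e0-f0-def}. For the $\q_n$-part (indices in $\{1,\dotsc,n-1\}\sqcup\{\overline 1\}$) this is essentially the content of Theorem~\ref{upORF-thm} extended to the primed setting; moreover, Lemma~\ref{up-lem} guarantees that the operators $e_i, f_i$ with $i \in [n-1]$ preserve $\Marked(a)$, so the identification persists after reintroducing primes. For $e_{\overline 1}, f_{\overline 1}$ the decreasing convention means these act on the two \emph{largest} letters of $a^1$ and $a^2$, and the prime-toggling in Definition~\ref{ef-1-def} translates under $\ast$ to the prime-swapping hidden in the tensor product formula for $e_{\overline 1}$ used in~\cite{MT2021}. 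This check reduces to a finite verification on $\BB \otimes \BB$.

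For the new operators $e_{1'}, f_{1'}$ of Definition~\ref{e0-f0-def}, which toggle the prime on $\max(a^1)$, the key observation is that under $\ast$ this corresponds to toggling the prime on the \emph{first} letter of the concatenation $a^\ast$. This is precisely the primed braid relation $b\cdots \leftrightarrow b'\cdots$ that holds within $\cR^\O(z^\ast)$ by Remark~\ref{inv-obs}, which confirms $e_{1'}a, f_{1'}a \in \ORF(z) \sqcup \{\zero\}$. Comparing with the tensor-product definition~\eqref{e-i-prime-eq} (noting that on the standard crystal $\BB$ the operator $e_{1'}$ simply removes a prime from the unique entry) shows that the transported operators match those of Definition~\ref{e0-f0-def} exactly.

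The main obstacle will be bookkeeping: the $\qq_n$-crystal operators $e_{\overline{\imath}}, f_{\overline{\imath}}, e_{\underline i}, f_{\underline i}$ and $e_{i'}, f_{i'}$ for $i > 1$ are defined as $\sigma$-conjugates of the $i = 1$ versions via~\eqref{qq-overline-eq},~\eqref{qq-underline-eq}, and~\eqref{e-i-prime-eq}, so one must check that the $S_n$-actions $\sigma_w$ commute appropriately with $\ast$ on $\BB^{\otimes m}$. Once this coherence is established, normality on the decreasing side (i.e.\ that $\ORF(z)$ embeds as a full subcrystal of $\BB^{\otimes m}$) follows immediately from normality on the increasing side, since $\ast$ intertwines the two tensor product constructions up to the global Lusztig twist noted in Remark~\ref{rem:Lusztig_twist}.
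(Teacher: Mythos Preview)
Your proposal is correct and follows exactly the approach the paper takes: the paper does not give a standalone proof of this theorem but simply records, in the sentence immediately preceding the statement, that it ``is equivalent to~\cite[Cor.~7.18]{MT2021} via the $\ast$ operation from Remark~\ref{rem:negation},'' which is precisely the transport-from-increasing-factorizations argument you sketch. Your write-up just fills in the bookkeeping that the paper leaves implicit; the only small inaccuracy is the closing reference to the Lusztig twist of Remark~\ref{rem:Lusztig_twist}, which in the paper is used to compare with the Morse--Schilling convention rather than with the increasing/decreasing translation---the $\ast$ map itself already furnishes the crystal isomorphism directly, as noted in Remark~\ref{rem:negation}.
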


\begin{figure}[h]
\begin{center}
\input{qq3-decr-crystal.tex}
\end{center}
\caption{The $\qq_3$-crystal graph of $\orf_3(z)$ for the dominant involution  $z=(1 \: 3)(2 \: 4)$ of shape $\lambda=(2,2)$.
The boxed elements make up the set of bounded factorizations $\borf_3(z)$. Solid blue, solid red, dotted green, and dashed blue arrows are $i$-edges for $i = 1$, $2$, $1'$, and $\overline{1}$, respectively.
We have not drawn the $2'$, $\overline 2$-, $\underline 1$-, or $\underline 2$-edges,
as these are determined by the displayed arrows.
}
\label{fig:O_crystal}
\end{figure}

Figure~\ref{fig:O_crystal} shows an example of 
 $\ORF(z)$.
The character of $\ORF(z)$, which is $2^{|\Cyc(z)|}$ times the character of $\upORF(z)$,
coincides with the polynomial denoted
$Q_z(x_1,x_2,\dotsc,x_n)$ in~\cite[\S4.5]{HMP4}.

Below,
recall that we define $e_i$ and $f_i$ for $i \in \overline I \sqcup\underline I \sqcup I' $
by the formulas \eqref{qq-overline-eq}, \eqref{qq-underline-eq}, and \eqref{e-i-prime-eq},
using Definitions~\ref{ef-def},~\ref{ef-1-def}, and~\ref{e0-f0-def} for the relevant base cases.

 \begin{proposition}\label{orf-prop}
Choose an index $i \in I\sqcup \overline I\sqcup \underline I \sqcup I'$. 
Then the following properties hold:
\ben
\item[(a)] If $i \in I\sqcup \overline I\sqcup \underline I$ then $e_i$ and $f_i$ commute with the map $\unprime \colon \ORF(z) \to \upORF(z)$.
\item[(b)] If $i \in I'$, $a \in \ORF(z)$, and $e_i a \neq \zero$, then $\unprime(e_i a)= \unprime(a)$.
\item[(c)] If $i \in I'$, $a \in \ORF(z)$, and $f_i a \neq \zero$, then $\unprime(f_i a) = \unprime(a)$.
\een
\end{proposition}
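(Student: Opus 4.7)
The plan is to argue by induction on the index $i$, taking the recursive formulas \eqref{qq-overline-eq}, \eqref{qq-underline-eq}, and~\eqref{e-i-prime-eq} as the inductive engine and handling the three base cases $i \in I$, $i = \overline 1$, and $i = 1'$ by direct inspection. For $i \in I$, part (a) is exactly Lemma~\ref{lemma:prime_reduction}. What makes the induction go through is the observation that each Weyl-group operator $\sigma_j$ is defined by iterated applications of $e_j$ and $f_j$ with $j \in I$, so once we know $e_j$ and $f_j$ commute with $\unprime$ for $j \in I$, the same holds for every $\sigma_j$; thus conjugating by a product of $\sigma_j$'s preserves any ``commutes with $\unprime$'' or ``preserves $\unprime$'' statement about the operator being conjugated.

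For the base case $i = \overline 1$ in part (a), I would unfold Definition~\ref{ef-1-def} case by case. The operator $e_{\overline 1}$ (when nonzero) acts by moving $\max(a^2)$ to the front of $a^1$ and optionally toggling the primes on the first two letters of the new $a^1$; similarly for $f_{\overline 1}$. Since toggling a prime leaves $\lceil \cdot \rceil$ unchanged, and the trigger condition $\lceil \max(a^2)\rceil > \lceil \max(a^1)\rceil$ (respectively $\lceil \max(a^1)\rceil > \lceil \max(a^2)\rceil$) only depends on $\unprime(a)$, the diagram
\[
\begin{tikzcd}
\ORF(z) \arrow[r, "e_{\overline 1}"] \arrow[d, "\unprime"'] & \ORF(z) \sqcup \{\zero\} \arrow[d, "\unprime"] \\
\upORF(z) \arrow[r, "e_{\overline 1}"'] & \upORF(z) \sqcup \{\zero\}
\end{tikzcd}
\]
commutes, where the bottom row is the unprimed version of Definition~\ref{ef-1-def} (which agrees with the operator on $\upORF(z)$ given by Theorem~\ref{upORF-thm}).

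The base case $i = 1'$ for parts (b) and (c) is essentially immediate from Definition~\ref{e0-f0-def}: the operators $e_{1'}$ and $f_{1'}$ only toggle a single prime on $\max(a^1)$ (when nonzero), which leaves $\unprime(a)$ unchanged. For the inductive step of (b) and (c), writing $e_{i'} = \sigma_{i-1}\cdots \sigma_1 e_{1'} \sigma_1\cdots \sigma_{i-1}$ (and similarly for $f_{i'}$), the outer $\sigma_j$'s commute with $\unprime$ by part~(a), while the inner $e_{1'}$ preserves $\unprime$ by the base case. A chase along the definition shows that if $e_{i'}a \neq \zero$ then every intermediate term is nonzero, and that $\unprime(e_{i'}a) = \unprime(a)$.

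The only genuine obstacle is the $i = \overline 1$ base case of part (a), where one must track which primes get toggled and verify this matches the operator on $\upORF(z)$; the other two base cases are essentially tautological and the inductive step is purely formal. Once the base cases are handled, the inductive step for $\overline i$ and $\underline i$ in part~(a) is identical in form to the argument for $i'$ above, using \eqref{qq-overline-eq} and \eqref{qq-underline-eq}.
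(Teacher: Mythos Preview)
Your proposal is correct and follows essentially the same approach as the paper: verify the base cases $i\in I$, $i=\overline 1$, $i=1'$ by direct inspection of Definitions~\ref{ef-def}, \ref{ef-1-def}, and~\ref{e0-f0-def}, observe that $\sigma_j$ commutes with $\unprime$ because it is built from $e_j,f_j$ with $j\in I$, and then propagate to the remaining indices via the conjugation formulas \eqref{qq-overline-eq}, \eqref{qq-underline-eq}, \eqref{e-i-prime-eq}. The paper's proof is terser but structurally identical; your commutative diagram for the $\overline 1$ case and your remark that the intermediate terms are nonzero (since each $\sigma_j$ is a bijection on the crystal) are useful clarifications that the paper leaves implicit.
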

 
\begin{proof}
 As all inequalities in Definitions~\ref{pair-def},~\ref{ef-def}, and~\ref{ef-1-def} are stated using the ceiling function that removes all primes,
we can see by inspecting the crystal operator definitions that $e_i$ and $f_i$ commute with the map $\unprime$ for all $i \in \{\overline{1}, 1,2,\dotsc,n-1\}$. 
Since $\unprime$ is weight-preserving, it follows from~\eqref{sigma-def} that each $\sigma_i$ for $i \in [n-1]$ also commutes with $\unprime$, so part~(a) follows.

For part~(b), suppose $a \in \ORF(z)$. If $e_{1'} a \neq \zero$ then clearly $\unprime(e_{1'} a) = \unprime(a)$.
If $1' \neq i' \in I'$ and $e_{i'} a \neq \zero$ then, since $e_{i'} = \sigma_i e_{i' - 1} \sigma_i$
we get by induction that
\[
\unprime(e_ia) = \sigma_i (\unprime( e_{i'-1}\sigma_i a)) = \sigma_i \unprime( \sigma_i a) = \sigma_i^2 \unprime(a) = \unprime(a).
\]
This proves part~(b).
Part~(c) follows similarly.
\end{proof}

Our next result is an ``orthogonal'' analogue of Theorem~\ref{reduced-tab-lem} for the $\qq_n$-crystal $\ORF(z)$.
Define \defn{orthogonal Coxeter--Knuth equivalence} to be the transitive closure $\simOCK$
of primed Coxeter--Knuth equivalence $\simCK$ and the symmetric relation on primed words 
with
\be\label{ock1-eq}
a\cdots \simOCK a'\cdots, 
\quad
ab\cdots \simOCK ba\cdots,
\quad 
ab'\cdots \simOCK ba'\cdots,
\quad
a'b\cdots \simOCK b'a\cdots,
\quad
a'b'\cdots \simOCK b'a'\cdots
 \ee
 for all $a,b \in \ZZ$.
 These extra relations can only change the letters at the start of a word;
note that the first three relations imply the last two.
If $z \in I_\ZZ$ then Remark~\ref{inv-obs} implies that
$\cR^\O(z)$ is a disjoint union of orthogonal Coxeter--Knuth equivalence classes.

 The \defn{main diagonal} of a shifted tableau consists of the positions $(i,j)$ with $i=j$.
A shifted tableau $T$ with $\row(T) \in \cR^\O(z)$ is increasing
(equivalently, decreasing) if and only if the tableau $\unprime(T)$
formed by removing the primes from all entries is also increasing (equivalently, decreasing)~\cite[Prop.~2.6]{Marberg2019a}.
This means that if such a tableau is increasing or decreasing then it cannot contain 
both $m'$ and $m$ (for any $m \in \ZZ$) in the same row or column.

\begin{theorem} \label{o-reduced-tab-lem}
Fix a $\simOCK$-equivalence class $\cK\subseteq \cR^\O(z)$ for $z \in I_\ZZ$.
Then $\cK$ 
contains the row reading words of a 
unique shifted tableau $U$ that is increasing with no primed entries on the main diagonal
and a
unique shifted tableau $V$ that is increasing with no primed entries on the main diagonal.
Moreover, these two shifted tableaux have the same shape.
 \end{theorem}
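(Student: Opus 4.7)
The plan is to mimic the proof of Theorem~\ref{sp-reduced-tab-lem} essentially verbatim, replacing symplectic Edelman--Greene insertion with its orthogonal analogue. Fix $\cK \subseteq \cR^\O(z)$ and choose some $i = i_1 i_2 \cdots i_k \in \cK$. There is an \emph{orthogonal Edelman--Greene insertion algorithm} producing a shifted tableau $P^\O_\EG(i)$ that is increasing, has no primed entries on the main diagonal, and whose row reading word lies in $\cK$; moreover $P^\O_\EG(j) = P^\O_\EG(i)$ for $j \in \cR^\O(z)$ if and only if $i \simOCK j$. (These properties are stated in~\cite{Marberg2019a,Marberg2021b,Marberg2021a} and parallel the symplectic case.) I would take $U := P^\O_\EG(i)$ and $V := P^\O_\EG(i^\ast)^\ast$, where $\ast$ is the operation from Remark~\ref{rem:negation} applied entrywise.

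For uniqueness of $U$, if $T$ is any increasing shifted tableau with row reading word in $\cK$ and no primed diagonal entries, then $\col(T) \simOCK \row(T)$ by a standard shifted--jeu--de--taquin argument (the analogue of~\cite[Lem.~2.7]{Marberg2019a}), and the insertion algorithm recovers $T$ from $\col(T)$, so $T = P^\O_\EG(\col(T)) = P^\O_\EG(i) = U$. Uniqueness of $V$ reduces to uniqueness of $P^\O_\EG(i^\ast)$ via the involution $\ast$, which is compatible with $\simOCK$ because negating all letters preserves both the primed braid relations in Remark~\ref{inv-obs} and the extra relations~\eqref{ock1-eq}. The main point here is to verify that orthogonal EG insertion satisfies the same ``column-to-row'' confluence property used in the proof of Theorem~\ref{sp-reduced-tab-lem}; this should be available as~\cite[Thm.~3.35]{Marberg2019b} or a close variant.

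To show $U$ and $V$ have a common shape $\mu = \nu$, I would repeat the character argument from the proof of Theorem~\ref{sp-reduced-tab-lem}. Let $F_{S,k}(\x)$ denote the fundamental quasisymmetric function and $\Des(i) \subseteq [k-1]$ the descent set of $i$ (using the total order $1' < 1 < 2' < 2 < \cdots$). Then $\sum_{i \in \cK} F_{\Des(i),k}(\x)$ and $\sum_{i \in \cK^\ast} F_{\Des(i),k}(\x)$ are the characters of the full $\qq_n$-subcrystals of $\ORF(z)$ and $\ORF(z^\ast)$ associated with the classes of $P^\O_\EG(i)$ and $P^\O_\EG(i^\ast)$ (taken in the power series limit), and by~\cite{MT2021} these limits equal the Schur $Q$-functions $Q_\mu(\x)$ and $Q_\nu(\x)$. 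Since $\Des(i^\ast) = [k-1] \setminus \Des(i)$ and the involution $\psi \colon F_{S,k}(\x) \mapsto F_{[k-1]\setminus S,k}(\x)$ fixes every Schur $Q$-function (see~\cite[\S{III}.8, Ex.~3]{Macdonald}), we conclude $Q_\nu = \psi(Q_\mu) = Q_\mu$, hence $\mu = \nu$ by linear independence of Schur $Q$-functions.

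The hard part will be locating (and, if necessary, reproving) the precise orthogonal insertion statements used above, in particular the confluence $P^\O_\EG(\col(T)) = T$ for increasing shifted tableaux with unprimed diagonals, since the extra relations in~\eqref{ock1-eq} allow priming/unpriming of letters at the start of a word and thus interact with the diagonal convention. I expect this follows directly from the orthogonal EG algorithm of~\cite{Marberg2019a} once one observes that the algorithm never places a prime on a diagonal cell, but this bookkeeping is the main place where the argument differs nontrivially from the symplectic case.
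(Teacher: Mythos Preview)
Your proposal is correct and follows essentially the same route as the paper's proof: take $U := P^\O_\EG(i)$ and $V := P^\O_\EG(i^\ast)^\ast$, deduce uniqueness from $\col(T)\simOCK\row(T)$ together with $P^\O_\EG(\col(T))=T$, and prove equality of shapes via the Schur $Q$-function character argument and the quasisymmetric involution $\psi$. The precise references you are looking for are in \cite{Marberg2021a} (not \cite{Marberg2019a} or \cite{Marberg2019b}): specifically \cite[Prop.~3.21]{Marberg2021a} for $i\simOCK\row(P^\O_\EG(i))$, \cite[Lem.~2.7]{Marberg2021a} for $\col(T)\in\cK$, and \cite[Cors.~3.25 and 3.26]{Marberg2021a} for the confluence $T=P^\O_\EG(\col(T))=P^\O_\EG(i)$; the crystal/character identification uses \cite[Thm.~7.10]{MT2021}. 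One small point the paper makes explicit that you gloss over: the identity $\Des(i^\ast)=[k-1]\setminus\Des(i)$ is not automatic for primed words, but holds here because $\unprime(i)$ has no adjacent repeated letters when $i\in\cR^\O(z)$.
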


\begin{proof}
The structure of our argument is similar to the proof of Theorem~\ref{sp-reduced-tab-lem}.
Choose a primed word $i=i_1i_2\cdots i_k \in \cK$. 
The \defn{orthogonal Edelman--Greene insertion algorithm} from~\cite{Marberg2021a} gives an increasing shifted tableau 
$P^\O_\EG(i)$ with no diagonal primes and with $i \simOCK \row(P^\O_\EG(i))$ by~\cite[Prop.~3.21]{Marberg2021a}. 
This is the unique increasing shifted tableau with no diagonal primes and with row reading word in $\cK$,
since if $T$ is such a tableau then
\ben
\item[(a)] $\cK$ also contains the column reading word $\col(T)$ by~\cite[Lem.~2.7]{Marberg2021a};
\item[(b)] we have $T = P^\O_\EG(\col(T))=P^\O_\EG(i)$  by (a) and~\cite[Cors.~3.25 and 3.26]{Marberg2021a}.
\een
The desired increasing shifted tableau is therefore $U := P^\O_\EG(i)$.

The $\ast$ operation from Remark~\ref{rem:negation}
 interchanges the sets of increasing  and decreasing shifted tableaux 
with row reading words in $\cR^\O(z) \cup \cR^\O(z^\ast)$.
As $\ast$ also preserves $\simOCK$-equivalence, we conclude from the previous 
paragraph that 
the desired decreasing shifted tableau is $V := P^\O_\EG(i^\ast)^\ast$.

 Let $\mu$ and $\nu$ be the shapes of $U$ and $V$, which are also the shapes of $P^\O_\EG(i)$ and $P^\O_\EG(i^\ast)$. 
 Recall the definition of $F_{S,k}(\x)$ from the proof of Theorem~\ref{sp-reduced-tab-lem}.
 By~\cite[Thm.~7.10]{MT2021},
 the   subsets of $\ORF(z)$ and $\ORF(z^\ast)$
 consisting of all factorizations $a=(a^1,a^2,\dotsc,a^n)$
 with $P^\O_\EG(a^1 a^2 \cdots a^n) = P^\O_\EG(i)$
 and $P^\O_\EG(a^1 a^2 \cdots a^n) = P^\O_\EG(i^\ast)$, respectively,
 are full $\qq_n$-subcrystals.

The formal power series  $\sum_{i \in \cK} F_{\Des(i),k}(\x)$ and $\sum_{i \in \cK^\ast} F_{\Des(i),k}(\x)$ are the 
limits of the
characters of these subcrystals as $n\to\infty$ .
 By~\cite[Thm.~7.10]{MT2021} and the remarks after~\cite[Prop.~6.13]{MT2021}, 
 these power series  coincide with the Schur $Q$-functions $Q_\mu(\x)$ and $ Q_\nu(\x)$.

Since  $i=i_1i_2\cdots i_k\in \cK$, the word $\unprime(i)$ has no adjacent repeated letters,
so $\Des(i^\ast) = [k - 1] \setminus \Des(i)$.
As 
the automorphism $F_{S,k}(\x) \mapsto F_{[k-1]\setminus S,k}(\x)$
fixes all Schur $Q$-functions~\cite[\S{III}.8, Ex.~3(a)]{Macdonald},
it follows as in the proof of Theorem~\ref{sp-reduced-tab-lem}
that $Q_\nu(\x) = Q_\mu(\x)$ so $\nu=\mu$.
\end{proof}

We define a \defn{$\O$-reduced tableau} for $z \in I_\infty$
to be a shifted tableau $T$ with no primes on the main diagonal that is increasing or decreasing with $\row(T) \in \cR^\O(z)$.
Given an $\O$-reduced tableau $T$ for $w$, let
$\orf_n(T) \subseteq \orf_n(z)$ be the subset of factorizations $a=(a^1,a^2,\ldots)$ 
with
$a^1a^2\cdots \simOCK \row(T)$.

\begin{proposition}
\label{o-subcrystal-prop}
The map $T \mapsto \orf_n(T)$ is a bijection from 
increasing (equivalently, decreasing)
 $\O$-reduced tableaux for $z \in I_\ZZ$ with at most $n$ rows 
to full $\qq_n$-subcrystals of $\orf_n(z)$.
If $T$ is an $\O$-reduced tableau then $\orf_n(T)$ is empty if and only if $T$ has more than $n$ rows.
\end{proposition}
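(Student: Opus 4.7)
The plan is to follow the same strategy used in the proofs of Propositions~\ref{gl-subcrystal-prop} and~\ref{sp-subcrystal-prop}, but adapted to the $\qq_n$-setting via the orthogonal Edelman--Greene correspondence. First I would fix $z \in I_\ZZ$ and observe that the operation $\ast$ from Remark~\ref{rem:negation} preserves $\simOCK$-equivalence, so it translates the results of~\cite{MT2021,Marberg2019b} (which are stated for increasing factorizations) into the setting of decreasing factorizations used here. The key input is~\cite[Thm.~7.10]{MT2021}, already cited in the proof of Theorem~\ref{o-reduced-tab-lem}, which identifies the fibers of the orthogonal EG insertion map as full $\qq_n$-subcrystals of $\orf_n(z)$. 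Combined with Theorem~\ref{o-reduced-tab-lem} (which states that each $\simOCK$-equivalence class inside $\cR^\O(z)$ contains the row reading word of a unique increasing and a unique decreasing $\O$-reduced tableau of the same shape), this shows that the map $T \mapsto \orf_n(T)$ is well-defined and that every full $\qq_n$-subcrystal of $\orf_n(z)$ arises as $\orf_n(T)$ for some $\O$-reduced tableau $T$. Distinct $T$ give disjoint, hence distinct, subsets, so the map is a bijection onto the set of full subcrystals or the empty set.

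Next I would settle when $\orf_n(T)$ is nonempty. For the forward implication, suppose $T$ has at most $n$ rows. By Theorem~\ref{o-reduced-tab-lem} we may replace $T$ by the unique decreasing $\O$-reduced tableau in its $\simOCK$-class without changing $\orf_n(T)$. Then the tuple $a = (a^1, a^2, \ldots, a^n)$, where $a^i$ is row $n + 1 - i$ of $T$ (read in decreasing order, with the primes from $T$ carried along, noting that the diagonal entries are unprimed so the factorization lies in $\orf_n(z)$), is an element of $\orf_n(T)$. This shows nonemptiness.

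For the reverse implication, I would use the orthogonal EG correspondence as in the proof of Proposition~\ref{sp-subcrystal-prop}. Suppose $a = (a^1,\dotsc,a^n) \in \orf_n(T)$. Composing with $\ast$ and applying the semistandard version of orthogonal EG (see~\cite[Thm.~7.10]{MT2021} and the shape-preserving property of the recording tableau) produces a pair $(P,Q)$ of shifted tableaux of the same shape, where $P = P^\O_\EG\bigl( (a^1 a^2 \cdots a^n)^\ast \bigr)^\ast$ agrees in shape with $T$ by the uniqueness argument in Theorem~\ref{o-reduced-tab-lem}, and where $Q$ is semistandard with all entries in the alphabet $\{1' < 1 < 2' < 2 < \cdots < n' < n\}$. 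Since no such semistandard shifted tableau can have more than $n$ rows (the diagonal entries are strictly increasing and unprimed, hence drawn from $\{1,2,\dotsc,n\}$), we conclude that $T$ has at most $n$ rows. Contrapositively, $\orf_n(T) = \emptyset$ whenever $T$ has more than $n$ rows.

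The only step that really requires care is transferring the statements of~\cite[Thm.~7.10]{MT2021} (which are phrased for increasing factorizations and use increasing-tableau shape preservation) to our decreasing-factorization setup via $\ast$, and checking that the shape of the recording tableau $Q$ genuinely equals the shape of $T$. This follows by combining the shape-matching already established in Theorem~\ref{o-reduced-tab-lem} with the compatibility of the orthogonal EG map with $\ast$. Once this translation is in hand, the rest of the argument is a direct orthogonal analogue of Proposition~\ref{sp-subcrystal-prop} and presents no additional difficulty.
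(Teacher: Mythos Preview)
Your proposal is correct and follows essentially the same approach as the paper: use $\ast$ to translate the crystal results of \cite{MT2021} (specifically \cite[Thm.~7.10]{MT2021}, with the paper also citing \cite[Cor.~3.25]{Marberg2021a}) together with Theorem~\ref{o-reduced-tab-lem} to establish the bijection, exhibit an explicit element built from the rows of the decreasing tableau for nonemptiness, and bound the number of rows via the recording tableau $Q$ of the orthogonal EG correspondence for emptiness. The only cosmetic difference is that the paper writes $P = P^\O_\EG\bigl((a^1\cdots a^n)^\ast\bigr)$ rather than applying a further $\ast$, but the shape argument is identical.
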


\begin{proof}
Our argument is similar to the proof of Proposition~\ref{sp-subcrystal-prop}.
Fix $z \in I_\ZZ$. Via the $\ast$ operation, which preserves  $\simOCK$-equivalence, 
the results 
\cite[Cor.~3.25]{Marberg2021a} and 
\cite[Thm.~7.10]{MT2021}
imply that 
each subset $\ORF(T)\subseteq \ORF(z)$ is a full subcrystal or empty. By Theorem~\ref{o-reduced-tab-lem} every full subcrystal arises as $\ORF(T)$ for some 
 $\O$-reduced tableau $T$.

 If there exists a decreasing $\O$-reduced tableau $T$ with at most $n$ rows,
then $\ORF(T)$ is nonempty since it contains the sequence $a=(a^1,a^2,\dotsc,a^n)$,
in which $a^i$ is row $n + 1 - i$ of $ T$.
It remains to check that $\ORF(T)$ is empty if $T$ has more than $n$ rows.
For this, suppose $a \in \ORF(T)$. 
Then the \defn{orthogonal EG correspondence} described in~\cite[\S3.2]{Marberg2021a} maps $a^\ast$ to a pair of shifted tableaux $(P,Q)$ of the same shape,
where $P = P^\O_\EG\bigl( (a^1a^2\cdots a^n)^\ast \bigr)$ and where $Q$ is semistandard with all entries in $\{1'<1<\dots<n'<n\}$.
From the proof of Theorem~\ref{o-reduced-tab-lem}, we see that $P$ and $Q$ have the same shape as $T$.
Yet there are no semistandard shifted tableaux $Q$ with more than $n$ rows and all entries in $\{1'<1<\cdots<n'<n\}$, so $\ORF(T)$ must be empty if $T$ has more than $n$ rows.
\end{proof}

As in the $\Sp$-case, decreasing $\O$-reduced tableaux correspond to lowest weight elements.

\begin{proposition}\label{o-lowest-prop}
 If $T$ is a decreasing $\O$-reduced tableau for $z \in I_\ZZ$ with at most $n$ rows, 
 then the unique $\qq_n$-lowest weight element of $\orf_n(T)$ is the sequence $a=(a^1,a^2,\dotsc,a^n)$,
where $a^i$ is formed from row $n + 1 - i$ of $ T$ by adding a prime to its first entry.
\end{proposition}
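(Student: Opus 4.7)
The plan is to mirror the proof of Proposition~\ref{sp-lowest-prop} in the $\qq_n$-setting, with additional care for the $I'$ crystal structure. Let $\mu$ denote the strict partition shape of $T$. Composing the $\ast$-operation from Remark~\ref{rem:negation} with~\cite[Thm.~7.10]{MT2021} gives a $\qq_n$-crystal isomorphism from $\orf_n(T)$ onto a connected normal $\qq_n$-crystal of semistandard primed shifted tableaux of shape $\mu$, and hence onto $\cB(\mu)$. By~\cite[Thm.~1.5]{MT2021} this crystal has a unique $\qq_n$-lowest weight element, whose weight is $w_0\mu = (\mu_n,\dotsc,\mu_2,\mu_1)$; the proposed $a$ has this weight since $\abs{a^i} = \mu_{n+1-i}$ irrespective of primes.

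Next I would verify $a\in\orf_n(T)$ by exhibiting the equivalence $a^1a^2\cdots a^n \simOCK \row(T)$. The two words differ only by primes added at the positions of the diagonal entries $T_{n,n},T_{n-1,n-1},\dotsc,T_{1,1}$ in the concatenation. Priming the first letter uses the relation $a\cdots\simOCK a'\cdots$ from~\eqref{ock1-eq}. For the remaining diagonal entries I would induct on the row index, combining the start-of-word swap $ab\cdots\simOCK ba\cdots$ from~\eqref{ock1-eq} with the three-letter relations in~\eqref{ock2-eq} and~\eqref{ock3-eq} to bubble each successive $T_{n+1-i,n+1-i}$ to the head of the appropriate suffix, prime it, and return it to its original position. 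The strict-decreasing property of $\mu$ together with the row- and column-decreasing conditions on $T$ supply the order and adjacency hypotheses needed for each step.

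Because $\ch\cB(\mu) = Q_\mu = 2^{\ell(\mu)}P_\mu$, the weight $w_0\mu$ appears with multiplicity $2^{\ell(\mu)}$ in $\cB(\mu)$, so weight alone does not pin down the $\qq_n$-lowest weight element. To conclude I would verify $f_j a = \zero$ for each $j \in I\sqcup\overline I\sqcup\underline I\sqcup I'$. The cases $j\in I$ follow from Proposition~\ref{orf-prop}(a) together with Proposition~\ref{gl-lowest-lem} applied to $\unprime(a)$, which is the $\gl_n$-lowest weight element of its $\gl_n$-subcrystal inside $\upORF(z)$. The case $j=1'$ is immediate since $\max(a^1) = (T_{n,n})'$ is already primed. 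For $j=\overline 1$: if $\mu_n = 0$ then $a^1$ is empty and $f_{\overline 1} a = \zero$ trivially; if $\mu_n \geq 1$ then the strictness of $\mu$ forces $\mu_{n-1}\geq 2$, so the column- and row-decreasing conditions on $T$ yield $T_{n,n} < T_{n-1,n} < T_{n-1,n-1}$, whence $\lceil\max(a^1)\rceil < \lceil\max(a^2)\rceil$ and $f_{\overline 1}a=\zero$ by Definition~\ref{ef-1-def}. The remaining cases $j\in\overline I$ with $i\geq 2$, $j\in\underline I$, and $j=i'$ with $i\geq 2$ reduce via~\eqref{qq-overline-eq},~\eqref{qq-underline-eq}, and~\eqref{e-i-prime-eq} to the ones already treated, after conjugation by the appropriate $\sigma$-operators.

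The principal obstacle I expect is the $\simOCK$-equivalence in the second step: the bubbling argument must thread together many cases of the relations in~\eqref{ock1-eq}--\eqref{ock3-eq} while respecting the primes that $T$ carries off-diagonal. A cleaner alternative that avoids this is to identify the unique $\qq_n$-lowest weight element explicitly inside the primed shifted tableau realization of $\cB(\mu)$ --- extending the $\q_n$-identification of~\cite[Thm.~3.3]{Hiroshima2018} by the prime pattern coming from the $1'$-edges in~\cite{MT2021} --- and then transfer that element back to $\orf_n(T)$ through the crystal isomorphism to recover $a$.
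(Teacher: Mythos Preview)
Your overall structure and the base cases ($j \in I$, $j = 1'$, $j = \overline 1$) are fine. The gap is your final reduction: you claim that for $j \in \overline I \setminus \{\overline 1\}$, $j \in \underline I$, and $j = i'$ with $i \geq 2$, the vanishing $f_j a = \zero$ ``reduces via~\eqref{qq-overline-eq},~\eqref{qq-underline-eq}, and~\eqref{e-i-prime-eq} to the ones already treated, after conjugation by the appropriate $\sigma$-operators.'' But conjugation produces statements like $f_{i'} a = \zero$ if and only if $f_{1'}(\sigma_1 \sigma_2 \cdots \sigma_{i-1} a) = \zero$, and the element $\sigma_1 \sigma_2 \cdots \sigma_{i-1} a$ is \emph{not} $a$. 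Knowing $f_{1'} a = \zero$ says nothing about $f_{1'}$ on this new element; you must determine the first factor of each $\sigma$-conjugate explicitly and check that its leading letter is primed (or that the factor is empty). That computation is the substantive content you have not supplied, and it is not a formality.

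The paper handles the two halves differently. For $\q_n$-lowest weight (all of $\overline I \sqcup \underline I$), it does not argue operator by operator: instead it cites~\cite[Lem.~6.11 and Thm.~6.14]{MT2021} to conclude that every element of weight $w_0\mu$ in the shifted-tableau model is already $\q_n$-lowest weight, so $a$ is automatically one of the $2^{\ell(\mu)}$ such elements once its weight is identified. This leaves only $f_{i'} a = \zero$ for $i' \in I'$. For those, the paper does exactly the analysis you omit. Since $a$ is $\gl_n$-lowest weight, the innermost $\sigma_k$ acts as $e_k^{m_k}$; the key lemma is that $f_j b = \zero$ implies $f_j e_{j+1} b = \zero$ (because $(e_{j+1}b)^{j+1}$ contains $\unprime(b^{j+1})$ as a subset, so no new unpaired letters appear in $b^j$). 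Iterating, each successive $\sigma_{k-1},\sigma_{k-2},\ldots$ also acts as a power of the corresponding $e$, so $\sigma_1\cdots\sigma_k(a) = e_1^{m_1}\cdots e_k^{m_k}(a)$. Using Definition~\ref{ef-def} together with the fact that the diagonal entries of $T$ are unprimed and strictly decreasing with gaps of at least two, one then checks case by case that the first factor of this element is either empty or begins with the primed letter $\max(a^{k+1}) \in \ZZ'$, whence $f_{1'}$ annihilates it.
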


\begin{proof}
Suppose $T$ has shape $\mu$ and recall that $T = P^{\O}_{\EG}\bigl( (a^1a^2\cdots a^n)^\ast \bigr)^\ast$.
Composing $\ast $ with the map in~\cite[Thm.~7.10]{MT2021}
gives an isomorphism from $\orf_n(T)$ to a connected normal $\qq_n$-crystal of semistandard shifted tableaux of shape $\mu$. The latter crystal has exactly $2^{\ell(\mu)}$ elements of weight $(\mu_n,\dotsc,\mu_2,\mu_1)$,
which are all $\q_n$-lowest weight by~\cite[Lem.~6.11 and Thm.~6.14]{MT2021}.
These elements include the crystal's unique $\qq_n$-lowest weight element by~\cite[Thm.~6.20]{MT2021}.

Since the indicated factorization $a$ has  $\weight(a)=(\mu_n,\dotsc,\mu_2,\mu_1)$,
it must be  $\q_n$-lowest weight, and to show that it is  $\qq_n$-lowest weight 
it suffices to check that $f_i a=\zero$ for all $i \in I'$.
We have $f_{1'} a =\zero$ since the first factor of $a$ is either empty or starts with a primed letter.

To show that $f_i a=\zero$ for the other indices $i \in I'$, we will leverage the following general observation. Suppose $b \in \rf^+_n(w)$ is a decreasing factorization of 
a permutation $w \in S_\ZZ$ and $f_i b = \zero$ for some $i \in [n-2]$.
Then we also have $f_i e_{i+1} b = \zero$, since if $e_{i+1} b = (c^1,c^2,\dotsc,c^n) \neq \zero$ then the set of letters in 
$\unprime(c^{i+1})$ is a superset of $\unprime(b^{i+1})$, so the fact that there are no unpaired letters in $b^i$ for the $(b^i,b^{i+1})$-pairing
implies that there are also no unpaired letters in $c^i=b^i$ for the $(c^i,c^{i+1})$-pairing.

Now choose $i \in [n-1]$ and let $\widetilde a := \sigma_1\sigma_2\cdots \sigma_{n-i}(a)$.
We need to show that $f_{(n+1-i)'}a = \zero$, or equivalently that $f_{1'}\widetilde a=\zero$.
Since $a$ is $\gl_n$-lowest weight, the previous paragraph implies that $\widetilde a =e^{m_1}_1e_2^{m_2}\cdots e^{m_{n-i}}_{n-i}(a)$ for some nonnegative integers $m_j\geq 0$.
We have $m_1=m_2=\cdots=m_{n-i}=0$ if $\ell(\mu)<i<n$ since the first $n-\ell(\mu)$ factors of $a$ are empty.
In this case the first factor of $\widetilde a$ is also empty so  $f_{1'}\widetilde a = \zero$.

If $1 \leq i \leq \ell(\mu)$, then one can check using Definition~\ref{ef-def} (and the fact that the diagonal entries of $T$ are unprimed, strictly decreasing, and each differ by at least two) that $m_1,m_2,\dotsc,m_{n-i}>0$ are all positive and the first factor of $\widetilde a$ is nonempty with first letter given by the primed number $\max(a^{n+1-i}) \in \ZZ'$.
In this case we again have $f_{1'}\widetilde a = \zero$  as needed.
\end{proof}

\begin{corollary}\label{o-highest-cor}
If $T$ is a $\O$-reduced tableau with at most $n$ rows, then the unique highest weight of the connected normal $\qq_n$-crystal $\orf_n(T)$ is the partition shape of $T$.
\end{corollary}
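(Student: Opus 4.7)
The plan is to follow exactly the same outline as the proof of Corollary~\ref{sp-highest-cor} in the symplectic case. First, by Theorem~\ref{o-reduced-tab-lem}, every $\simOCK$-equivalence class contains the row reading word of a \emph{decreasing} $\O$-reduced tableau of the same partition shape as $T$, so without loss of generality I may replace $T$ by the unique decreasing $\O$-reduced tableau with $\row(T) \in \cR^\O(z)$; call its shape $\mu$. By Proposition~\ref{o-subcrystal-prop}, $\orf_n(T)$ is a full $\qq_n$-subcrystal of $\orf_n(z)$, hence itself a normal $\qq_n$-crystal, and is connected since it is indexed by a single $\O$-reduced tableau.

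Next I would invoke Proposition~\ref{o-lowest-prop}, which exhibits the unique $\qq_n$-lowest weight element of $\orf_n(T)$ explicitly: the factorization whose $i$-th component is row $n+1-i$ of $T$ with a prime on its first entry. Summing up the row lengths, this lowest weight element has weight $(\mu_n,\dotsc,\mu_2,\mu_1)$.

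Finally I appeal to the general structural fact recalled in Section~\ref{qq-sect}: each connected normal $\qq_n$-crystal has a unique highest weight element whose weight $\lambda$ is a strict partition in $\NN^n$, and a unique lowest weight element of weight $w_0\lambda = (\lambda_n,\dotsc,\lambda_2,\lambda_1)$. Applying this to $\orf_n(T)$, whose lowest weight was just computed to be $(\mu_n,\dotsc,\mu_1)$, forces the unique highest weight to be $\mu$, i.e.\ the partition shape of $T$, as claimed.

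No step here is a serious obstacle — the argument is essentially a two-line bookkeeping exercise once Proposition~\ref{o-lowest-prop} is in hand. The only point that needs a moment's care is the reduction to the decreasing case: I should be sure that ``the shape of $T$'' is well-defined regardless of whether the representative chosen is increasing or decreasing, which is precisely the last clause of Theorem~\ref{o-reduced-tab-lem} (both the increasing and decreasing $\O$-reduced tableau in a given $\simOCK$-class have the same shape). With that in place the corollary is immediate.
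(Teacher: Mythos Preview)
Your proposal is correct and follows essentially the same approach as the paper's proof: reduce to the decreasing case via Theorem~\ref{o-reduced-tab-lem}, read off the lowest weight $(\mu_n,\dotsc,\mu_1)$ from Proposition~\ref{o-lowest-prop}, and then reverse it using the structural fact from Section~\ref{qq-sect} (which the paper cites directly as \cite[Prop.~7.15]{MT2021}).
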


\begin{proof}
By Theorem~\ref{o-reduced-tab-lem} we may assume that $T$ is decreasing. 
If this holds and $T$ has shape $\mu \in \NN^n$, then Proposition~\ref{o-lowest-prop} shows that the unique lowest weight of  $\orf_n(T)$ is $(\mu_n,\dotsc,\mu_2,\mu_1)$.
Reversing this tuple gives the unique highest weight by~\cite[Prop.~7.15]{MT2021}.
\end{proof}

\begin{example}
The unique increasing and decreasing $\O$-reduced tableaux for  $z=(1\: 3)(2\: 4) \in I_4$ are $\ytabsmall{1 & 2 \\ \none & 3}$ and $\ytabsmall{3 & 2 \\ \none & 1}$.
Compare these with Figure~\ref{fig:O_crystal} and Proposition~\ref{o-lowest-prop}.
\end{example}

Also just like the $\Sp$-case, the increasing $\O$-reduced tableaux indexing the full $\qq_n$-subcrystals of $\orf_n(z)$ do not identify the $\qq_n$-crystal's highest weight elements, leaving this open problem:

\begin{problem}
Describe the highest weight elements of the $\qq_n$-crystals $\orf_n(z)$ for $z \in I_\ZZ$.
\end{problem}

If $\lambda$ is a partition then let $\half(\lambda)$ be the strict partition
whose nonzero parts are $\{ \lambda_i - (i-1) \mid i \in \PP\} \cap \PP$.
For example, if
\[ \lambda = (4,3,3,1)= \ytabsmall{ \ & \ & \ & \ \\ \ & \ & \ \\ \ & \ & \ \\ \ }
\quad\text{then}\quad  \half(\lambda)=(4,2,1).\] 
The map $\lambda \mapsto \half(\lambda)$ is a bijection from symmetric partitions to strict partitions.

\begin{proposition}\label{o-dom-prop}
Suppose $\lambda$ is a symmetric partition and $\mu = \half(\lambda)$.
Let $z = w_\lambda \in I_\infty$ and define $T^\O_\lambda$ to be the shifted tableau of shape $\mu$ 
with entry $i+j-1$ in each $(i,j) \in \SD_\mu$.
Then $\cR^\O(z)$ is a single $\simOCK$-equivalence class and $T^\O_\lambda$
is the unique increasing $\O$-reduced tableau for $z$.
\end{proposition}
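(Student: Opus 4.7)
The plan is to follow the template of Proposition~\ref{sp-dom-prop} almost verbatim, replacing Schur $P$-functions by Schur $Q$-functions and symplectic Coxeter--Knuth equivalence by orthogonal Coxeter--Knuth equivalence. By Theorem~\ref{o-reduced-tab-lem}, once we know $\cR^\O(z)$ is a single $\simOCK$-equivalence class and that $\row(T^\O_\lambda)$ lies in $\cR^\O(z)$, uniqueness of the increasing $\O$-reduced tableau is automatic, so these are the two things I would establish.

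For the single class claim, I would argue via quasisymmetric characters. Define $F_{S,k}(\x)$ as in the proof of Theorem~\ref{sp-reduced-tab-lem} and let
\[
\widehat F_z(\x) := \sum_{i \in \cR^\O(z)} F_{\Des(i),k}(\x)
\]
be the involution Stanley symmetric function of $z$ studied in~\cite{HMP1,HMP4}. By the argument used in the proof of Theorem~\ref{o-reduced-tab-lem} (based on~\cite[Thm.~7.10]{MT2021}), each $\simOCK$-equivalence class $\cK\subseteq\cR^\O(z)$ of shape $\nu$ contributes exactly $Q_\nu(\x)$ to $\widehat F_z(\x)$, and Schur $Q$-functions are linearly independent. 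Hence it suffices to show $\widehat F_z(\x) = Q_\mu(\x)$ when $z = w_\lambda$. For this I would invoke the orthogonal analogues of~\cite[Thm.~1.4 and Cor.~7.9]{HMP5} proved in~\cite{HMP1,HMP4}, which assert that $\widehat F_z(\x) = Q_\mu(\x) + \text{(lower Schur $Q$-terms in dominance order)}$, with the lower order terms vanishing provided $z$ avoids a specified finite list of involution patterns.

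The main obstacle will be checking that the dominant involution $z = w_\lambda$ of symmetric shape $\lambda$ avoids these patterns. I expect this step to parallel the inspection in Proposition~\ref{sp-dom-prop}: the Rothe diagram $D(w_\lambda) = \D_\lambda$ has the ``northwest-closure'' property that if $(a,b),(c,d) \in D(w_\lambda)$ with $a<c$ and $b>d$, then $(a,d) \in D(w_\lambda)$, and inspection of the forbidden patterns will show they each contain a pair violating this closure. The only genuinely new bookkeeping compared to the symplectic case is that $D(w_\lambda)$ now contains the main diagonal of $\D_\lambda$, which must be accounted for when matching against the pattern list.

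For the verification that $\row(T^\O_\lambda)\in \cR^\O(z)$, I would cite the orthogonal counterparts of~\cite[Thm.~3.12 and Lem.~4.30]{HMP6}. Alternatively, one can proceed directly: the entries of $T^\O_\lambda$ in position $(i,j)\in \SD_\mu$ are $i+j-1$, with no diagonal primes, and a straightforward induction on $|\mu|$ using the inductive construction of $\cR^\O(z)$ in Remark~\ref{inv-obs} confirms that reading $T^\O_\lambda$ row by row from the bottom produces a valid primed involution word for $w_\lambda$. Combined with Theorem~\ref{o-reduced-tab-lem}, this completes the proof.
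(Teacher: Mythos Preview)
Your outline is plausible but takes a genuinely different route from the paper. The paper's proof is much shorter: it simply observes (from the proof of Theorem~\ref{o-reduced-tab-lem}) that the increasing $\O$-reduced tableaux for $z$ are exactly the distinct values of $P^\O_\EG(i)$ as $i$ ranges over $\cR^\O(z)$, and then cites \cite[Lem.~3.17]{Marberg2021a}, which asserts directly that $P^\O_\EG(i)=T^\O_\lambda$ for \emph{every} $i\in\cR^\O(z)$ when $z=w_\lambda$. Both claims follow immediately, with no character computations, no pattern avoidance, and no separate verification that $\row(T^\O_\lambda)\in\cR^\O(z)$.

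Your approach mirrors the symplectic proof (Proposition~\ref{sp-dom-prop}) instead. This can likely be made to work, but as written it has a real gap: you do not identify which specific results in \cite{HMP1,HMP4} serve as the orthogonal analogues of \cite[Thm.~1.4 and Cor.~7.9]{HMP5}, nor do you list the forbidden involution patterns or verify that $w_\lambda$ avoids them. Saying ``I expect this step to parallel the inspection in Proposition~\ref{sp-dom-prop}'' is not a proof --- the pattern list in the orthogonal case is different from the symplectic list, and the diagonal of $\D_\lambda$ (which you note is now present) changes which configurations can occur. You would also need to pin down the orthogonal counterparts of \cite[Thm.~3.12 and Lem.~4.30]{HMP6} to conclude $\row(T^\O_\lambda)\in\cR^\O(z)$. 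By contrast, the single citation to \cite[Lem.~3.17]{Marberg2021a} bypasses all of this.
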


\begin{proof}
We saw in the proof of Theorem~\ref{o-reduced-tab-lem}
that 
the increasing $\O$-reduced tableaux for an arbitrary $z\in I_\ZZ$ are the outputs $P^\O_\EG(i)$ 
of the orthogonal Edelman--Greene insertion algorithm applied to $i \in \cR^\O(z)$.
Moreover, the number of distinct outputs is the number of $\simOCK$-equivalence classes in $ \cR^\O(z)$.
By~\cite[Lem.~3.17]{Marberg2021a}, however, we have $P^\O_\EG(i)=T^\O_\lambda$
for every $i\in \cR^\O(z)$ when $z=w_\lambda$.
\end{proof}

If $\lambda$ is a symmetric partition then define $\ORF(\lambda) := \ORF(w_\lambda)$,
where $w_\lambda \in I_\infty$ is the unique dominant element of $S_\infty$ with shape $\lambda$.
By Theorem~\ref{o-reduced-tab-lem}, there is a unique increasing $\O$-reduced tableau for $w_\lambda$, which has shape $\half(\lambda)$, so the following holds by Propositions~\ref{o-subcrystal-prop} and \ref{o-lowest-prop}.

\begin{corollary}
\label{o-sum-cor}
Suppose $\lambda$ is a symmetric partition and $\mu = \half(\lambda)$. 
The normal $\qq_n$-crystal $\ORF(\lambda)$ is nonempty if and only if $\mu \in \NN^n$, 
in which case it is connected with unique highest weight $\mu$.
Thus, each connected normal $\qq_n$-crystal is isomorphic to a unique $ \ORF(\lambda)$.
\end{corollary}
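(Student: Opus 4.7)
My plan is to deduce this corollary by stitching together the three preceding propositions, since by the point the result is stated all the essential combinatorial content has already been established. First I would invoke Proposition~\ref{o-dom-prop} applied to $z = w_\lambda$, which directly identifies $T^\O_\lambda$ as the unique increasing $\O$-reduced tableau for $w_\lambda$ and furthermore asserts that $\cR^\O(w_\lambda)$ is a single $\simOCK$-equivalence class. This lets me identify $\ORF(\lambda) = \orf_n(T^\O_\lambda)$ as a single set of the form considered in Proposition~\ref{o-subcrystal-prop}.

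Next I would feed this identification into Proposition~\ref{o-subcrystal-prop} to conclude that $\ORF(\lambda)$ is a full $\qq_n$-subcrystal of itself --- hence connected --- and that it is nonempty if and only if $T^\O_\lambda$ has at most $n$ rows, equivalently $\mu = \half(\lambda) \in \NN^n$. This disposes of the nonemptiness and connectedness assertions in one step. For the unique highest weight, I would then apply Corollary~\ref{o-highest-cor}, which (via Proposition~\ref{o-lowest-prop} and~\cite[Prop.~7.15]{MT2021}) identifies the highest weight of the connected $\qq_n$-crystal $\orf_n(T^\O_\lambda)$ with the partition shape of $T^\O_\lambda$, namely $\mu$.

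For the final classification claim that every connected normal $\qq_n$-crystal is isomorphic to $\ORF(\lambda)$ for a unique symmetric partition $\lambda$, I would combine the preceding steps with two facts recalled earlier: the isomorphism classes of connected normal $\qq_n$-crystals are in bijection with strict partitions in $\NN^n$ via the unique highest weight (see Section~\ref{qq-sect}), and the map $\lambda \mapsto \half(\lambda)$ is a bijection from symmetric partitions to strict partitions. Composing these bijections gives the desired one-to-one correspondence between symmetric $\lambda$ with $\half(\lambda) \in \NN^n$ and isomorphism classes.

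I do not anticipate any real obstacle in this deduction: the hard work is entirely absorbed into Proposition~\ref{o-dom-prop} (the single $\simOCK$-class property, which appeals to Theorem~\ref{o-reduced-tab-lem} and the orthogonal Edelman--Greene insertion from~\cite{Marberg2021a}) and Proposition~\ref{o-lowest-prop} (the explicit $\qq_n$-lowest weight element, which rests on the $\qq_n$-crystal structure on semistandard shifted tableaux from~\cite{MT2021}). Once these are taken as given, the proof is essentially bookkeeping.
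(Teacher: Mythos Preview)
Your proposal is correct and essentially matches the paper's argument: the paper's lead-in to the corollary cites the uniqueness of the increasing $\O$-reduced tableau for $w_\lambda$ (of shape $\half(\lambda)$) and then says the result follows from Propositions~\ref{o-subcrystal-prop} and~\ref{o-lowest-prop}. Your use of Proposition~\ref{o-dom-prop} and Corollary~\ref{o-highest-cor} is the same reasoning, just with the references unwound one level (Corollary~\ref{o-highest-cor} is itself proved from Proposition~\ref{o-lowest-prop}).
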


Also let $\upORF(\lambda) := \upORF(w_\lambda)$ for each symmetric partition $\lambda$.

\begin{corollary}
Suppose $\lambda$ is a symmetric partition and $\gamma$ is the unique skew-symmetric partition with $\half(\lambda) = \shalf(\gamma)$. Then $\upORF(\lambda) \iso \SpRF(\gamma)$ as $\q_n$-crystals.
\end{corollary}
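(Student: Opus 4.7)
The plan is to observe that both $\upORF(\lambda)$ and $\SpRF(\gamma)$ are connected normal $\q_n$-crystals with the same highest weight, so they must be isomorphic by the classification of connected normal $\q_n$-crystals recalled in Section~\ref{q-sect}. The hypothesis $\half(\lambda) = \shalf(\gamma)$ makes sense because $\lambda \mapsto \half(\lambda)$ and $\gamma \mapsto \shalf(\gamma)$ are bijections from symmetric (resp.\ skew-symmetric) partitions onto strict partitions. Note also that $\upORF(\lambda)$ is nonempty if and only if $\ORF(\lambda)$ is, by construction, so nonemptiness of either crystal is equivalent to $\half(\lambda) \in \NN^n$, equivalently $\shalf(\gamma) \in \NN^n$, which is exactly the condition ensuring $\SpRF(\gamma) \neq \emptyset$ in Corollary~\ref{sp-sum-cor}.

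First, I will invoke Corollary~\ref{sp-sum-cor} to conclude that $\SpRF(\gamma)$ is a connected normal $\q_n$-crystal with unique highest weight element of weight $\shalf(\gamma)$. Similarly, by Theorem~\ref{upORF-thm}, the set $\upORF(\lambda)$ carries a normal $\q_n$-crystal structure. So the entire burden of the proof reduces to showing that $\upORF(\lambda)$ is connected with highest weight $\half(\lambda)$.

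For connectedness, the key input is Proposition~\ref{orf-prop}(a), which says that for every $i \in I \sqcup \overline{I} \sqcup \underline{I}$, the $\qq_n$-crystal operators $e_i$ and $f_i$ on $\ORF(\lambda)$ commute with the surjection $\unprime \colon \ORF(\lambda) \to \upORF(\lambda)$. Combined with parts (b) and (c) of Proposition~\ref{orf-prop}, which say that the remaining operators $e_{i'}$ and $f_{i'}$ for $i' \in I'$ preserve fibers of $\unprime$, this makes $\unprime$ a (surjective) morphism of $\q_n$-crystals when we forget the $I'$-operators on $\ORF(\lambda)$. Given any two elements $a,b \in \upORF(\lambda)$, lift them to $\tilde a, \tilde b \in \ORF(\lambda)$, and use that $\ORF(\lambda)$ is connected as a $\qq_n$-crystal by Corollary~\ref{o-sum-cor} to obtain a sequence of $\qq_n$-crystal operators taking $\tilde a$ to $\tilde b$. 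Applying $\unprime$ to this sequence, the $I'$-operators become identities while the remaining operators descend to the $\q_n$-crystal operators on $\upORF(\lambda)$; this exhibits $a$ and $b$ as connected.

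For the highest weight, let $\tilde u \in \ORF(\lambda)$ be the unique $\qq_n$-highest weight element, which has weight $\half(\lambda)$ by Corollary~\ref{o-sum-cor}. Since the $\q_n$-crystal operators form a subset of the $\qq_n$-crystal operators, $\tilde u$ is also $\q_n$-highest weight. Then $u := \unprime(\tilde u)$ is a $\q_n$-highest weight element of $\upORF(\lambda)$ with $\weight(u) = \weight(\tilde u) = \half(\lambda)$, because $\unprime$ is weight-preserving and intertwines the $\q_n$-operators. As $\upORF(\lambda)$ is connected and normal, it has a unique highest weight element (see Section~\ref{q-sect}), so this highest weight is $\half(\lambda) = \shalf(\gamma)$. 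The classification of connected normal $\q_n$-crystals by highest weight then yields $\upORF(\lambda) \iso \SpRF(\gamma)$. The main (mild) subtlety is juggling the $\q_n$ versus $\qq_n$ structures on $\ORF(\lambda)$ when verifying that $\unprime$ really is a $\q_n$-crystal morphism onto $\upORF(\lambda)$ with its crystal structure from Theorem~\ref{upORF-thm}; but this is essentially the content of Proposition~\ref{orf-prop}.
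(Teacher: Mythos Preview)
Your proof is correct and follows essentially the same approach as the paper's own proof: reduce to showing $\upORF(\lambda)$ is a connected normal $\q_n$-crystal with highest weight $\half(\lambda)$, then use Corollary~\ref{o-sum-cor} for connectedness of $\ORF(\lambda)$ and Proposition~\ref{orf-prop} to see that $\unprime$ sends $\qq_n$-highest weights to $\q_n$-highest weights and crystal-graph paths to paths. The paper states this in two sentences; your version simply spells out the path-lifting and highest-weight-descent arguments in more detail.
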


\begin{proof}
It suffices 
to show that the normal $\q_n$-crystal $\upORF(\lambda)$ is connected with highest weight $\half(\lambda)$.
This holds by Corollary~\ref{o-sum-cor}, as 
 $\unprime \colon \ORF(z) \to \upORF(z)$ sends $\qq_n$-highest weights to $\q_n$-highest weights and  directed paths in the crystal graph to directed paths by Proposition~\ref{orf-prop}.
\end{proof}

\begin{corollary}
If $\cB$ is a normal $\qq_n$-crystal then for all $i' \in I'$ and $j \in I\sqcup \overline I$ the crystal operators $e_{i'}$ and $f_{i'}$ preserve the statistics $\varepsilon_j$ and $\varphi_j$.
\end{corollary}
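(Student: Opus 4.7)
The plan is to reduce the claim to the special case $\cB = \BB^{\otimes m}$, which suffices because every normal $\qq_n$-crystal is (up to isomorphism) a direct sum of full subcrystals of such tensor powers. In particular, the crystal operators---and hence the statistics $\varepsilon_j,\varphi_j$---are inherited along full subcrystal inclusions and finite direct sums. The core construction is then the ``unprime'' projection $\unprime \colon \BB^{\otimes m} \to \BB^{\otimes m}$ that replaces each primed tensor factor $\ytableaushort{{i'}}$ by its unprimed counterpart $\ytableaushort{i}$; we regard the target as a normal $\q_n$-crystal with standard crystal $\{\ytableaushort{1},\dotsc,\ytableaushort{n}\}$.

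I would then show by induction on $m$ that $\unprime$ is a strict $\q_n$-crystal morphism, meaning it commutes with $e_j$ and $f_j$ for every $j \in I \sqcup \overline I \sqcup \underline I$ and preserves weights. Weight preservation and the case $j \in I$ are immediate, since the tensor product formula~\eqref{eq:gl_tensor_product} depends only on the statistics $\varepsilon_j,\varphi_j$, which agree on primed and unprimed elements of $\BB$. The essential case is $j = \overline 1$: the four-case definition of $e_{\overline 1}$ on $\BB^{\otimes m}$ must be shown to collapse under $\unprime$ to the two-case $\q_n$ rule from Section~\ref{q-sect}. The key fact is that $e_{1'}$ and $f_{1'}$ act as the identity on $\unprime$-images, so the auxiliary prime-toggling in cases~2 and~3 of the $\qq_n$-definition washes out after applying $\unprime$. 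Once $j = \overline 1$ is handled, the cases $\overline j$ for $j \geq 2$ and all of $\underline I$ follow from the inductive definitions~\eqref{qq-overline-eq} and~\eqref{qq-underline-eq}, together with the commutation of $\unprime$ with each $\sigma_i$ (which holds because $\unprime$ preserves weights and commutes with $e_i,f_i$ for $i \in I$).

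With $\unprime$ established as a strict $\q_n$-crystal morphism, formula~\eqref{string-eqs} yields $\varepsilon_j(b) = \varepsilon_j(\unprime(b))$ and $\varphi_j(b) = \varphi_j(\unprime(b))$ for every $j \in I \sqcup \overline I$ and $b \in \BB^{\otimes m}$. Conversely, inspecting the definitions of $e_{1'}, f_{1'}$ on $\BB^{\otimes m}$---and hence of every $e_{i'}, f_{i'}$ via~\eqref{e-i-prime-eq}---shows these operators only toggle primes in a single tensor factor, so $\unprime(e_{i'} b) = \unprime(b)$ whenever $e_{i'} b \neq \zero$, and similarly for $f_{i'}$. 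Combining these observations gives
\[
\varepsilon_j(e_{i'} b) = \varepsilon_j(\unprime(e_{i'} b)) = \varepsilon_j(\unprime(b)) = \varepsilon_j(b),
\]
and the identical argument handles $\varphi_j$ and $f_{i'}$.

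The main obstacle is the case analysis for $e_{\overline 1}$ (and symmetrically $f_{\overline 1}$): one must verify that each of the four cases in its $\qq_n$-tensor product definition sends the $\unprime$-image to the corresponding case of the $\q_n$-tensor product rule, and moreover that $e_{\overline 1}^{\qq_n}(b \otimes c) = \zero$ if and only if $e_{\overline 1}^{\q_n}(\unprime(b \otimes c)) = \zero$. This is a routine but somewhat tedious unpacking of the formulas from Section~\ref{qq-sect}.
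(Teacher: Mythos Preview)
Your approach is correct and uses the same underlying idea as the paper: factor through an ``unprime'' map that commutes with the $\q_n$-operators $e_j,f_j$ for $j\in I\sqcup\overline I$ (hence preserves their string-length statistics), and observe that the $e_{i'},f_{i'}$ operators do not change the unprimed image. The paper, however, realizes this idea in a different model. Rather than working in $\BB^{\otimes m}$ and verifying the $e_{\overline 1}$ case analysis by hand, the paper reduces to $\cB=\ORF(\lambda)$ via Corollary~\ref{o-sum-cor} and then simply invokes Proposition~\ref{orf-prop}, where exactly these commutation and invariance properties of $\unprime$ were already established for the factorization crystals. So the paper's proof is a one-line citation, while yours is a self-contained re-derivation of Proposition~\ref{orf-prop} in the tensor-power model. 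Your version has the virtue of not depending on the $\ORF$ machinery; the paper's version is shorter because that machinery is already in place.
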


\begin{proof}
It is enough to check this when $\cB = \ORF(\lambda)$; then the result holds by  Proposition~\ref{orf-prop}.
\end{proof}

\subsection{Demazure \texorpdfstring{$\qq_n$}{qqn}-crystals}

Choose an element $z \in I_\infty$ and a flag $\phi = (\phi_1 \leq \phi_2 \leq \cdots)$.
We say that $a=(a^1,a^2,\dotsc,a^n) \in \RF^+(w)$ for $w \in S_\infty$ is \defn{bounded} by $\phi$ 
if every letter $m$ in $a^i$ has $i \leq \phi_{\lceil m \rceil}$.  
Let $\BRF^+(w,A,\phi)$ be the set of such $\phi$-bounded elements $a\in\RF^+(w,A)$ and define
\be\label{borf-def}
\borf_n(z,\phi) :=  \bigsqcup_{A\subseteq \Cyc(z)} \bigsqcup_{w \in \cA^\O(z)} \brf^+_n(w,A,\phi) \subseteq \ORF(z).
\ee
We view this set as a $\qq_n$-subcrystal of $\orf_n(z)$; by  Theorem~\ref{up-thm} and Corollary~\ref{is-dem-cor}, it is also a direct sum of Demazure $\gl_n$-crystals.
As usual, let 
$\borf_n(z) := \borf_n(z,\phi^S)$. 

\begin{example}\label{borf-ex}
The crystal graph of $\borf_2(z)$ for $z= (1\: 4)(3\: 6)\in I_6$ is 
\[
\begin{tikzpicture}[xscale=2.7,yscale=1.8,>=latex]
\node [bnode,text width=16mm] (b1) at (1, 0) {$5421 \gap 3$};
\node [bnode,text width=16mm] (a2) at (0, -1) {$421 \gap 53$};
\node [bnode,text width=16mm] (b2) at (1, -1) {$5'421 \gap 3$};
\node [bnode,text width=16mm] (c2) at (2, -1) {$542'1 \gap 3$};
\node [bnode,text width=16mm] (a3) at (0, -2) {$4'21 \gap 53$};
\node [bnode,text width=16mm] (b3) at (1, -2) {$5'42'1 \gap 3$};
\node [bnode,text width=16mm] (c3) at (2, -2) {$42'1 \gap 53$};
\node [bnode,text width=16mm] (b4) at (1, -3) {$4'2'1 \gap 53$};
\draw[->,thick,color=blue] (b1) -- (a2) node[midway,above,scale=0.75] {$\overline 1$};
\draw[->,thick,color=teal] (b1) -- (b2) node[midway,left,scale=0.75] {$1'$};
\draw[->,thick,color=red] (b1) -- (c2) node[midway,above,scale=0.75] {$2'$};
\draw[->,thick,color=black] (a2) -- (a3) node[midway,left,scale=0.75] {$1',2'$};
\draw[->,thick,color=blue] (b2) -- (a3) node[midway,above,scale=0.75] {$\overline 1$};
\draw[->,thick,color=red] (b2) -- (b3) node[midway,left,scale=0.75] {$2'$};
\draw[->,thick,color=teal] (c2) -- (b3) node[midway,above,scale=0.75] {$1'$};
\draw[->,thick,color=blue] (c2) -- (c3) node[midway,right,scale=0.75] {$\overline 1$};
\draw[->,thick,color=blue] (b3) -- (b4) node[midway,left,scale=0.75] {$\overline 1$};
\draw[->,thick,color=black] (c3) -- (b4) node[midway,below,scale=0.75] {\ \quad$1',2'$};
\end{tikzpicture}
\qquad
\begin{tikzpicture}[xscale=2.7,yscale=1.8,>=latex]
\node [bnode,text width=16mm] (b1) at (1, 0) {$521 \gap 43$};
\node [bnode,text width=16mm] (a2) at (0, -1) {$52'1 \gap 43$};
\node [bnode,text width=16mm] (b2) at (1, -1) {$21 \gap 543$};
\node [bnode,text width=16mm] (c2) at (2, -1) {$5'21 \gap 43$};
\node [bnode,text width=16mm] (a3) at (0, -2) {$21 \gap 5'43$};
\node [bnode,text width=16mm] (b3) at (1, -2) {$5'2'1 \gap 43$};
\node [bnode,text width=16mm] (c3) at (2, -2) {$2'1 \gap 543$};
\node [bnode,text width=16mm] (b4) at (1, -3) {$2'1 \gap 5'43$};
\draw[->,thick,color=red] (b1) -- (a2) node[midway,above,scale=0.75] {$2'$};
\draw[->,thick,color=blue] (b1) -- (b2) node[midway,left,scale=0.75] {$1,\overline 1, \underline 1$};
\draw[->,thick,color=teal] (b1) -- (c2) node[midway,above,scale=0.75] {$1'$};
\draw[->,thick,color=blue] (a2) -- (a3) node[midway,left,scale=0.75] {$\overline 1,\underline 1$};
\draw[->,thick,color=teal] (a2) -- (b3) node[midway,above,scale=0.75] {$1'$};
\draw[->,thick,color=blue] (a2) -- (c3) node[midway,above,scale=0.75] {$1$};
\draw[->,thick,color=red] (b2) -- (a3) node[midway,below,scale=0.75] {$2'$};
\draw[->,thick,color=teal] (b2) -- (c3) node[midway,above,scale=0.75] {$1'$};
\draw[->,thick,color=blue] (c2) -- (a3) node[midway,below,scale=0.75] {$1$};
\draw[->,thick,color=red] (c2) -- (b3) node[midway,below,scale=0.75] {$2'$};
\draw[->,thick,color=blue] (c2) -- (c3) node[midway,right,scale=0.75] {$\overline 1, \underline 1$};
\draw[->,thick,color=teal] (a3) -- (b4) node[midway,below,scale=0.75] {$1'$};
\draw[->,thick,color=blue] (b3) -- (b4) node[midway,left,scale=0.75] {$1,\bar1,\underline 1$};
\draw[->,thick,color=red] (c3) -- (b4) node[midway,below,scale=0.75] {$2'$};
\end{tikzpicture}
\]
For another example of $\BORF(z)$ see Figure~\ref{fig:O_crystal}.
\end{example}

The characters of the $\qq_n$-crystals $\borf_n(z)$
are closely related to the involution Schubert polynomials of orthogonal type $\fkSO_z$
defined in Section~\ref{schub-sect}.
There is a version of Proposition~\ref{sp-schub-prop} for these polynomials.
Recall that $i \in \ZZ$ is a \defn{descent} of $z$ if $z(i)>z(i+1)$.

\begin{proposition}\label{o-schub-prop}
Suppose $z \in I_\infty$. Then $\fkSO_z \in \ZZ[x_1,x_2,\dots,x_n]$  if and only if $z$ has no descents greater than $n$.
\end{proposition}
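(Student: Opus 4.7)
The plan is to mimic the proof of Proposition~\ref{sp-schub-prop} almost verbatim, using the divided difference characterization~\eqref{ischub-eq1} in place of~\eqref{fschub-eq1}.

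First I would dispatch the trivial case $z = 1$, where $\fkSO_z = 1$ and $z$ has no descents, so the claim holds for all $n$. For $z \neq 1$, the polynomial $\fkSO_z$ is homogeneous of positive degree (being $2^{|\Cyc(z)|}$ times a nonzero sum of Schubert polynomials $\fkS_w$ of common positive degree $\ell(w)$), hence not constant. Let $m$ be the largest index such that $x_m$ divides some monomial appearing in $\fkSO_z$; then $\fkSO_z \in \ZZ[x_1,\dots,x_n]$ precisely when $m \le n$, so it suffices to identify $m$ with the largest descent of $z$.

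Next I would argue that $m$ coincides with the largest index $i$ for which $\partial_i \fkSO_z \neq 0$. Indeed, for $i > m$, $\fkSO_z$ involves neither $x_i$ nor $x_{i+1}$, so $s_i \fkSO_z = \fkSO_z$ and $\partial_i \fkSO_z = 0$. On the other hand, $\fkSO_z$ involves $x_m$ but not $x_{m+1}$, so it cannot be fixed by $s_m$, giving $\partial_m \fkSO_z \neq 0$.

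It remains to identify the set of indices $i$ with $\partial_i \fkSO_z \neq 0$. By~\eqref{ischub-eq1}, this divided difference vanishes iff $z(i) < z(i+1)$; in the two remaining cases the right-hand side is either $2\fkSO_{zs_i}$ or $\fkSO_{s_izs_i}$, and in each case the indexing element is again an involution, so the target polynomial is a nonnegative-integer combination of Schubert polynomials with strictly positive leading coefficient $2^{|\Cyc(\cdot)|}$ via~\eqref{atom-eq-o}, hence nonzero. Therefore $\partial_i \fkSO_z \neq 0$ exactly when $z(i) > z(i+1)$, i.e., when $i$ is a descent of $z$. Combining with the previous paragraph, $m$ equals the largest descent of $z$, and we conclude $\fkSO_z \in \ZZ[x_1,\dots,x_n] \Longleftrightarrow m \le n \Longleftrightarrow z$ has no descents greater than $n$. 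I do not foresee any serious obstacle; the only point requiring care is the nonvanishing of $\fkSO_y$ for nontrivial involutions $y$, which is immediate from~\eqref{atom-eq-o}.
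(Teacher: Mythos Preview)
Your proof is correct and follows essentially the same approach as the paper's: reduce to identifying the largest variable index $m$ appearing in $\fkSO_z$, observe this equals the largest $i$ with $\partial_i\fkSO_z\neq 0$, and read off from~\eqref{ischub-eq1} that this is precisely the largest descent of $z$. You supply slightly more detail (the explicit check that $m$ is the top nonvanishing $\partial_i$, and the nonvanishing of the target via~\eqref{atom-eq-o}) than the paper, which simply asserts these points.
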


\begin{proof}
If $z=1$ then $z$ has no descents and $ \fkSO_z=1\in \ZZ[x_1,\dots,x_n]$ for all $n$.
Assume $z \neq 1$ so that  $\fkSO_z$ is homogeneous of positive degree.
Let $i$ be the maximal index such that $x_i$ divides a monomial appearing in $\fkSO_z$.
 Then 
   $\fkSO_z\in \ZZ[x_1,\dots,x_n]$
 if and only if  $i\leq n$.
As $i$ is also the  maximal index  with $\partial_i \fkSO_z \neq 0$, 
we conclude by~\eqref{ischub-eq1}  that $i\leq n$ if  and only if $z(j) < z(j+1)$ for all $j>n$.
\end{proof}

By  \eqref{atom-eq-o}, \eqref{cRo-eq},  and the Billey--Jockusch--Stanley formula 
 we have $\fkSO_z = \sum_{a \in \borf(z)} \xx^{\weight(a)}$ for each $z \in I_\infty$.
Proposition~\ref{o-schub-prop} tells us that if $z$ has no descents greater than $n$ then 
 \be\label{o-bjs-eq}
 \BORF(z) = \borf(z) \quand \ch\bigl( \BORF(z) \bigr) = \fkSO_z.
 \ee
If $z$ has at least one descent greater than $n$,
then $ \BORF(z) $ is a proper subset of $ \borf(z)$ and 
$ \ch\bigl( \BORF(z) \bigr) $ is the polynomial obtained from $\fkSO_z$ by setting $x_{n+1}=x_{n+2}=\cdots=0$.

 \begin{example}
The involution Schubert polynomial of $z =(1\: 4)(3\: 6) \in I_6$ is a polynomial  in $x_1,x_2,x_3,x_4,x_5$
with $26$ terms  and coefficients in $\{4,8,16\}$,
given by
\[ \fkSO_z = 4 \xx^{11201} + 4\xx^{11210} + 4\xx^{11300} +4\xx^{12101} \dots+ 16\xx^{31100} + 8 \xx^{32000} + 4\xx^{40100} + 4\xx^{41000}
\]
while $\ch(\borf_2(z)) = 4 \xx^{23} + 8\xx^{32} + 4\xx^{41}= 4 x_1^2x_2^3 + 8x_1^3x_2^2 + 4x_1^4x_2 $,
which is the polynomial obtained from $\fkSO_z$ by setting $x_3=x_4=x_5=0$.
\end{example}

Suppose $\alpha$ is a symmetric weak composition with $\lambda := \lambda(\alpha)$.
Assume that $u(\alpha) \in S_n$ and $\half(\lambda) \in \NN^n$, so that $\ORF(\lambda)$ is nonempty.
Both conditions must hold if $\alpha \in \NN^n$, but this is not necessary.
If $w_\lambda \in I_\infty$ is  dominant of shape $\lambda$,
then by Lemma~\ref{sp-ij-lem} we can define
\be\borf_n(\alpha) := \fkD_{i_1} \fkD_{i_2}\cdots \fkD_{i_k}\brf_n(w_\lambda) \subseteq \orf_n(\lambda)\ee
where $\fkD_i = \fkD_i^{\orf_n(\lambda)}$ and $i_1i_2\cdots i_k $ is any reduced word for $u(\alpha)$.
This gives a (nonempty) $\qq_n$-crystal that is also a direct sum of Demazure $\gl_n$-crystals.

  \begin{example}
  We have $w_{(4,2,1,1)} = (1\: 5)(2\: 3)$ and $w_{(3,3,2)} = (1\: 4)(2\: 5)$. 
  The connected crystal graphs of $\borf_2(\alpha)$ for $\alpha = (4,2,1,1)$ and $(3,3,2)$ are respectively
\[
\begin{tikzpicture}[xscale=2.7,yscale=1.8,>=latex]
\node [bnode,text width=16mm] (b1) at (1, 0) {$4321 \gap 2$};
\node [bnode,text width=16mm] (a2) at (0, -1) {$321 \gap 42$};
\node [bnode,text width=16mm] (b2) at (1, -1) {$4'321 \gap 2$};
\node [bnode,text width=16mm] (c2) at (2, -1) {$4321 \gap 2'$};
\node [bnode,text width=16mm] (a3) at (0, -2) {$3'21 \gap 42$};
\node [bnode,text width=16mm] (b3) at (1, -2) {$4'321 \gap 2'$};
\node [bnode,text width=16mm] (c3) at (2, -2) {$321 \gap 42'$};
\node [bnode,text width=16mm] (b4) at (1, -3) {$3'21 \gap 42'$};
\draw[->,thick,color=blue] (b1) -- (a2) node[midway,above,scale=0.75] {$\overline 1$};
\draw[->,thick,color=teal] (b1) -- (b2) node[midway,left,scale=0.75] {$1'$};
\draw[->,thick,color=red] (b1) -- (c2) node[midway,above,scale=0.75] {$2'$};
\draw[->,thick,color=black] (a2) -- (a3) node[midway,left,scale=0.75] {$1',2'$};
\draw[->,thick,color=blue] (b2) -- (a3) node[midway,above,scale=0.75] {$\overline 1$};
\draw[->,thick,color=red] (b2) -- (b3) node[midway,left,scale=0.75] {$2'$};
\draw[->,thick,color=teal] (c2) -- (b3) node[midway,above,scale=0.75] {$1'$};
\draw[->,thick,color=blue] (c2) -- (c3) node[midway,right,scale=0.75] {$\overline 1$};
\draw[->,thick,color=blue] (b3) -- (b4) node[midway,left,scale=0.75] {$\overline 1$};
\draw[->,thick,color=black] (c3) -- (b4) node[midway,below,scale=0.75] {\ \quad$1',2'$};
\end{tikzpicture}
\qquad
\begin{tikzpicture}[xscale=2.7,yscale=1.8,>=latex]
\node [bnode,text width=16mm] (b1) at (1, 0) {$421 \gap 32$};
\node [bnode,text width=16mm] (a2) at (0, -1) {$42'1 \gap 32$};
\node [bnode,text width=16mm] (b2) at (1, -1) {$21 \gap 432$};
\node [bnode,text width=16mm] (c2) at (2, -1) {$4'21 \gap 32$};
\node [bnode,text width=16mm] (a3) at (0, -2) {$21 \gap 4'32$};
\node [bnode,text width=16mm] (b3) at (1, -2) {$4'2'1 \gap 32$};
\node [bnode,text width=16mm] (c3) at (2, -2) {$2'1 \gap 432$};
\node [bnode,text width=16mm] (b4) at (1, -3) {$2'1 \gap 4'32$};
\draw[->,thick,color=red] (b1) -- (a2) node[midway,above,scale=0.75] {$2'$};
\draw[->,thick,color=blue] (b1) -- (b2) node[midway,left,scale=0.75] {$1,\overline 1, \underline 1$};
\draw[->,thick,color=teal] (b1) -- (c2) node[midway,above,scale=0.75] {$1'$};
\draw[->,thick,color=blue] (a2) -- (a3) node[midway,left,scale=0.75] {$\overline 1,\underline 1$};
\draw[->,thick,color=teal] (a2) -- (b3) node[midway,above,scale=0.75] {$1'$};
\draw[->,thick,color=blue] (a2) -- (c3) node[midway,above,scale=0.75] {$1$};
\draw[->,thick,color=red] (b2) -- (a3) node[midway,below,scale=0.75] {$2'$};
\draw[->,thick,color=teal] (b2) -- (c3) node[midway,above,scale=0.75] {$1'$};
\draw[->,thick,color=blue] (c2) -- (a3) node[midway,below,scale=0.75] {$1$};
\draw[->,thick,color=red] (c2) -- (b3) node[midway,below,scale=0.75] {$2'$};
\draw[->,thick,color=blue] (c2) -- (c3) node[midway,right,scale=0.75] {$\overline 1, \underline 1$};
\draw[->,thick,color=teal] (a3) -- (b4) node[midway,below,scale=0.75] {$1'$};
\draw[->,thick,color=blue] (b3) -- (b4) node[midway,left,scale=0.75] {$1,\bar1,\underline 1$};
\draw[->,thick,color=red] (c3) -- (b4) node[midway,below,scale=0.75] {$2'$};
\end{tikzpicture}
\]
  Observe that $\borf_2\bigl( (1\: 4)(3\: 6) \bigr) \iso \borf_2\bigl( (4,2,1,1) \bigr) \oplus \borf_2\bigl( (3,3,2) \bigr)$.
   \end{example}
   
 \begin{example}
 Like the symplectic case, the subsets $\borf_n(\alpha)\subseteq \borf_n(\lambda(\alpha))$
 are not closed by all raising crystal operators $e_i$ if we allow $i \in \overline I$. 
 The smallest example demonstrating this behavior is $ \borf_3(\alpha)$ for $\alpha=(3,1,1)=\lambda(\alpha)$.
The dominant involution of this shape is $w_{(3,1,1)} = (1 \: 4) \in I_4$ so we have $(1,2,3) \in \borf_3\bigl( (3,1,1) \bigr)$ but 
$e_{\overline 2}(1,2,3) =  (2,31,\emptyset)\notin \borf_3\bigl( (3,1,1) \bigr)$.
\end{example}

The following result is similar to Proposition~\ref{among-the-prop}:

\begin{proposition}\label{among-the-prop2}
Suppose $\alpha$ is a symmetric weak composition with $\half(\lambda(\alpha)) \in \NN^n$ and $u(\alpha)\in S_n$.
Then $\qkey_\alpha \in \ZZ[x_1,x_2,\dots,x_n]$ if and only if $\alpha \in \NN^n$.
\end{proposition}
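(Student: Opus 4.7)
The plan is to mirror the proof of Proposition~\ref{among-the-prop} verbatim, with $\pkey$ replaced by $\qkey$ throughout. Let $\lambda := \lambda(\alpha)$. Since $u(\alpha) \in S_n$ acts trivially on positions $m > n$, we have $\lambda_m = \alpha_m$ for all such $m$, so $\alpha \in \NN^n$ forces $\lambda \in \NN^n$; by the symmetry of $\lambda$, this further implies $\lambda_1 \leq n$, so every factor $(x_i + x_j)$ of the product defining $\qkey_\lambda$ involves only $\{x_1, \dotsc, x_n\}$. Since $\pi_{u(\alpha)}$ is built from operators $\pi_i$ with $i < n$, it preserves $\ZZ[x_1, \dotsc, x_n]$, and so $\qkey_\alpha = \pi_{u(\alpha)}\qkey_\lambda \in \ZZ[x_1, \dotsc, x_n]$.

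For the converse, assume $\alpha \notin \NN^n$. The same reasoning forces $\lambda_m > 0$ for some $m > n$, and by symmetry $\lambda_1 \geq m$. Define $\gamma$ by $\gamma_i := |\{(j, i) \in \D_\lambda : j < i\}|$, exactly as with the weak composition denoted $\beta$ in the proof of Proposition~\ref{among-the-prop}. Then $\xx^\gamma$ appears with positive coefficient in $\qkey_\lambda$, obtained by selecting the larger-indexed variable $x_i$ from each factor $(x_i + x_j)$ with $(i, j) \in \D_\lambda$ and $i > j$ (so that the exponent of $x_m$ becomes $|\{(m,j) \in \D_\lambda : m > j\}|$, which by symmetry of $\lambda$ equals $\gamma_m$). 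By the $\qkey$-analog of \cite[Eq.~(2.3) and Prop.~2.49]{MS2023}, proved by the same argument used in the $\pkey$ case, the monomial $\xx^{u(\alpha) \circ \gamma}$ then appears in $\qkey_\alpha = \pi_{u(\alpha)}\qkey_\lambda$ with positive coefficient. Since $(1, m) \in \D_\lambda$ (as $\lambda_1 \geq m$) with $1 < m$, this position is counted in $\gamma_m$, so $\gamma_m \geq 1$; and because $u(\alpha) \in S_n$ fixes position $m > n$, also $(u(\alpha) \circ \gamma)_m = \gamma_m \geq 1$. Hence $\qkey_\alpha$ contains a monomial of positive degree in $x_m$, so $\qkey_\alpha \notin \ZZ[x_1, \dotsc, x_n]$.

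The main technical obstacle is justifying the $\qkey$-analog of \cite[Prop.~2.49]{MS2023}: that applying the Demazure operator $\pi_{u(\alpha)}$ to the product $\qkey_\lambda$ does not cancel the target monomial $\xx^{u(\alpha) \circ \gamma}$. This step is delicate because $\pi_i$ can introduce negative coefficients on individual monomials (for instance, $\pi_1(x_2^2) = -x_1 x_2$), so the argument must exploit the positive-coefficient structure of $\qkey_\lambda$ as a product of binomials together with the combinatorial features of the Demazure chain $\gamma,\ s_{i_k} \circ \gamma,\ \dotsc,\ u(\alpha) \circ \gamma$. Modulo this one verification, the proof is otherwise routine.
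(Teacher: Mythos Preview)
Your overall strategy matches the paper's, but your choice of $\gamma$ is wrong, and this is not merely cosmetic. You set $\gamma_i = |\{(j,i)\in\D_\lambda : j<i\}|$, literally reusing the $\beta$ from the $P$-case; the paper instead takes $\gamma_i = |\{(j,k)\in\D_\lambda : j\le k=i\}|$ and explicitly flags this as a ``slight difference'' from $\beta$. The point is that $\qkey_\lambda$ includes the diagonal factors $2x_i$ (as the worked example $\qkey_{2031}=\pi_2\pi_1\pi_3\bigl(4x_1x_2(x_1+x_2)(x_1+x_3)\bigr)$ and the divisibility-by-powers-of-$2$ remark after Definition~\ref{spo-key-def} make clear; the displayed inequalities in that definition appear to be transposed). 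Consequently $\qkey_\lambda$, and hence $\qkey_\alpha$, is homogeneous of degree $|\{(i,j)\in\D_\lambda : i\ge j\}|$, which strictly exceeds your $|\gamma|$ whenever $\lambda\neq\emptyset$; so $\xx^{u(\alpha)\circ\gamma}$ simply cannot occur in $\qkey_\alpha$ with your $\gamma$. Once $<$ is replaced by $\le$, your verification that $(u(\alpha)\circ\gamma)_m=\gamma_m\ge 1$ for some $m>n$ goes through unchanged.

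On the gap you flag: no separate ``$\qkey$-analog'' of \cite[Prop.~2.49]{MS2023} is needed. The paper invokes exactly the same references \cite[Eq.~(2.3) and Prop.~2.49]{MS2023} in both the $P$- and $Q$-cases; with the correct $\gamma$, that citation already yields the appearance of $\xx^{u(\alpha)\circ\gamma}$ in $\qkey_\alpha$, so there is nothing further to verify.
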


\begin{proof}
Let $\lambda = \lambda(\alpha)$.
If $\alpha \in \NN^n$ then $\lambda=\lambda^\top \in \NN^n$, so as $u(\alpha) \in S_n$
both $\qkey_\lambda$ and $\qkey_\alpha = \pi_{u(\alpha)}\qkey_\lambda $ belong to $\ZZ[x_1,x_2,\dots,x_n]$.
Let $\gamma$ be the weak composition with $\gamma_i = |\{ (j,k) \in \D_\lambda : j\leq k=i\}|$ for each $i \in \PP$;
 note the slight difference with $\beta$ in the proof of Proposition~\ref{among-the-prop}.
Then~\cite[Eq.~(2.3) and Prop.~2.49]{MS2023} imply that 
$\xx^{u(\alpha) \circ \gamma}$ is a monomial appearing in $\qkey_\alpha$.
If $\alpha \notin \NN^n$ then we must have $\lambda \notin \NN^n$ as $u(\alpha) \in S_n$, 
in which case $ \gamma \notin\NN^n$ so $u(\alpha) \circ \gamma \notin\NN^n$ and
$\qkey_\alpha \notin \ZZ[x_1,x_2,\dotsc,x_n]$.
\end{proof}

In the next lemma, continue to fix an element $z \in I_\infty$ and a flag $\phi$.

\begin{lemma}\label{o-connect-lem}
Every $\gl_n$-highest weight element of $\orf_n(z)$ is in $\borf_n(z,\phi)$,
and 
if $a \in \borf_n(z,\phi)$ is not $\qq_n$-highest weight then 
$\zero \neq e_i a \in \borf_n(z,\phi)$ for some $i \in I  \sqcup \overline I \sqcup I'$.
Finally, if $i \in I'$ then $e_i$ and $f_i$ restrict to maps $\borf_n(z,\phi)\to\borf_n(z,\phi)\sqcup\{\zero\}$.
\end{lemma}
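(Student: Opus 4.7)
The plan is to follow the template of Lemma~\ref{sp-connect-lem}, but I will need to handle two additional complications compared to the symplectic case: the presence of the primed operators $e_{i'}, f_{i'}$ for $i' \in I'$, and the fact that $\ORF(z)$ is a $\qq_n$-crystal rather than a $\q_n$-crystal. I would organize the argument by proving the three assertions in the order (3), (1), (2), since assertions (1) and (3) will feed into the proof of (2).

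For assertion (3), the key is Proposition~\ref{orf-prop}(b)(c): if $e_{i'}a\ne\zero$ or $f_{i'}a\ne\zero$ for $i'\in I'$, then $\unprime(e_{i'}a)=\unprime(a)$ and $\unprime(f_{i'}a)=\unprime(a)$. Since boundedness by $\phi$ depends only on the underlying unprimed letters $\lceil m\rceil$, toggling primes cannot break boundedness, so $e_{i'}$ and $f_{i'}$ preserve $\borf_n(z,\phi)$. For assertion (1), note that by Theorem~\ref{up-thm} the map $\unprime\colon\RF^+(w,A)\to\RF(w)$ is a $\gl_n$-crystal isomorphism, so a $\gl_n$-highest weight element of $\orf_n(z)$ projects under $\unprime$ to a $\gl_n$-highest weight element of some $\rf_n(w)$ with $w\in\cA^\O(z)$. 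By Proposition~\ref{gl-highest-lem}, this projection is the reverse-row-reading factorization of an increasing reduced tableau, and such a factorization is bounded by the standard flag (hence by any flag $\phi$) because the first entry of row $i$ of an increasing tableau is at least $i$. Lifting primes back according to $A\subseteq\Cyc(z)$ does not change $\lceil m\rceil$, so the original element lies in $\borf_n(z,\phi)$.

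For assertion (2), I split into three cases based on how $a$ fails to be $\qq_n$-highest weight. If $a$ is not $\gl_n$-highest, then some $e_ia\ne\zero$ for $i\in[n-1]$, and the primed analogue of Lemma~\ref{brf-e-lem} applies verbatim: moving the letter $x\in a^{i+1}$ to $x+q\in a^i$ and swapping primes on certain pairs preserves $\lceil\cdot\rceil$ and hence $\phi$-boundedness. If $a$ is $\gl_n$-highest but not $\q_n$-highest, let $i$ be minimal with $e_{\overline\imath}a\ne\zero$. Here the main obstacle is that Lemma~\ref{ghps-lem} is stated for normal $\q_n$-crystals, and the $\qq_n$-tensor product rule for $e_{\overline 1}$ does not restrict to the $\q_n$-tensor product rule. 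I will get around this by working through the $\unprime$ map: by Proposition~\ref{orf-prop}(a), $\unprime$ commutes with all operators indexed by $I\sqcup\overline I$, so $\unprime(a)$ is $\gl_n$-highest in $\upORF(z)$, and by the corollary following Corollary~\ref{o-sum-cor}, $\upORF(z)$ is a disjoint union of normal $\q_n$-crystals (each isomorphic to some $\SpRF(\gamma)$). Lemma~\ref{ghps-lem} applied in $\upORF(z)$ shows that $e_{\overline\imath}\unprime(a)=\unprime(e_{\overline\imath}a)$ is $[i]$-highest weight; pulling back through $\unprime$ shows that $e_{\overline\imath}a$ is itself $[i]$-highest weight in $\orf_n(z)$. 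The first $i+1$ factors of $e_{\overline\imath}a$ then form a $\gl_{i+1}$-highest weight element of some $\rf^+_{i+1}(v,B)$, which is bounded by assertion~(1) applied at rank $i+1$; since $e_{\overline\imath}$ affects only these first $i+1$ factors and the remaining factors are unchanged from $a$, the full element $e_{\overline\imath}a$ is in $\borf_n(z,\phi)$. Finally, if $a$ is $\q_n$-highest but not $\qq_n$-highest, some $e_{i'}a\ne\zero$ with $i'\in I'$, and assertion~(3) places $e_{i'}a$ in $\borf_n(z,\phi)$.
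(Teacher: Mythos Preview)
Your proof is correct and follows essentially the same approach as the paper: reduce to the unprimed $\q_n$-crystal $\upORF(z)$ via Proposition~\ref{orf-prop}(a), apply Lemma~\ref{ghps-lem} there, and lift boundedness back through $\unprime$, while the $I'$-operators are handled directly by Proposition~\ref{orf-prop}(b)(c). Two small points of precision: the fact that $\upORF(z)$ is a normal $\q_n$-crystal is Theorem~\ref{upORF-thm} (not the corollary after Corollary~\ref{o-sum-cor}), and your appeal to ``assertion~(1) at rank $i+1$'' is really an appeal to Lemma~\ref{each-highest-lem} applied to $\rf^+_{i+1}(v,B)$, exactly as the paper does by pointing back to the proof of Lemma~\ref{sp-connect-lem}.
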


\begin{proof}
If $a \in \orf_n(z)$ is $\gl_n$-highest weight then $\unprime(a) \in \upORF(z)$ is also $\gl_n$-highest weight
by Proposition~\ref{orf-prop}(a). In this case, the latter element is bounded by $\phi$ by Lemma~\ref{each-highest-lem} 
since it belongs to $\BRF(w)$
for some $w \in \cA^\O(z)$, so $a$ is also bounded.

Suppose $a^+ \in \borf_n(z,\phi)$ is not $\qq_n$-highest weight and let $a := \unprime(a^+)$.
We have $a \in \brf_n(w,\phi)$ for some $w \in \cA^\O(z)$.
If $e_i a^+ \neq \zero$ for some $i \in I'$ then 
$\unprime(e_i a^+) = \unprime(a^+) = a \in \brf_n(w,\phi)$ by Proposition~\ref{orf-prop}(b), so $e_ia^+ \in \borf_n(z)$. 
If $e_i a^+ \neq \zero$ for some $i \in I\sqcup \overline I$ then  $\zero\neq \unprime(e_i a^+)  =e_i a \in \brf_n(w,\phi)$ by Lemma~\ref{brf-e-lem} and Proposition~\ref{orf-prop}(a),
so again $e_i a^+ \in \borf_n(z,\phi)$.

Assume $e_j(a^+) = \zero$ for all $j \in I\sqcup I'$ and let $i \in [n-1] = I$ be minimal with
 $e_{\overline{\imath}}(a^+)\neq \zero$.
 Then $a$ is a $\gl_n$-highest weight element of the normal $\q_n$-crystal $\upORF(z)$
 and $i $ is also the minimal index in $[n-1]$ with  $e_{\overline{\imath}} a \neq \zero$ by Proposition~\ref{orf-prop}(a),
 so $e_{\overline{\imath}} a$ is $[i]$-highest weight by Lemma~\ref{ghps-lem}.
From this observation, 
we can deduce that $e_{\overline{\imath}} a$ is bounded by $\phi$ in exactly the same way as in
the proof of Lemma~\ref{sp-connect-lem}.
Then, as $\unprime(e_{\overline{\imath}} a^+) = e_{\overline{\imath}} a$, we conclude that $e_{\overline{\imath}} a^+$ is also $\phi$-bounded.

The last assertion in the lemma holds by parts (b) and (c) of Proposition~\ref{orf-prop}.
\end{proof}

 This lemma leads to an analogue of Corollary~\ref{sp-dem-sub-cor}.
If $T$ is an $\O$-reduced tableau for $z \in I_\infty$ and $\phi$ is a flag, then  
let $\borf_n(T,\phi) := \orf_n(T) \cap \borf_n(z,\phi)$
and
$\borf_n(T) := \borf_n(T,\phi^S)$.

\begin{corollary}\label{o-dem-sub-cor}
The map $T \mapsto \borf_n(T,\phi)$ is a  bijection from increasing (equivalently, decreasing) $\O$-reduced tableaux for $z \in I_\infty$ with at most $n$ rows 
to full subcrystals of $\borf_n(z,\phi)$.
\end{corollary}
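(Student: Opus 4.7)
The plan is to reduce this corollary, exactly as for its symplectic counterpart Corollary~\ref{sp-dem-sub-cor}, to the combination of Proposition~\ref{o-subcrystal-prop} with Lemma~\ref{o-connect-lem}. The first result classifies the full $\qq_n$-subcrystals of the ambient crystal $\orf_n(z)$ as the sets $\orf_n(T)$ indexed by increasing (equivalently, decreasing) $\O$-reduced tableaux $T$ for $z$ with at most $n$ rows; in particular we obtain a disjoint decomposition $\orf_n(z) = \bigsqcup_T \orf_n(T)$. Intersecting both sides with $\borf_n(z,\phi)$ gives $\borf_n(z,\phi) = \bigsqcup_T \borf_n(T,\phi)$, so the map $T\mapsto \borf_n(T,\phi)$ is injective as soon as each image is nonempty.

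Next I would check that each $\borf_n(T,\phi)$ is nonempty and is a full subcrystal of $\borf_n(z,\phi)$. Nonemptiness is immediate from the first clause of Lemma~\ref{o-connect-lem}: the $\qq_n$-highest weight element of $\orf_n(T)$ guaranteed by Corollary~\ref{o-highest-cor} is in particular $\gl_n$-highest weight in $\orf_n(z)$, so it is bounded by $\phi$. Fullness is automatic: if $a \in \borf_n(T,\phi)$ and a crystal operator carries $a$ to a nonzero element $b \in \borf_n(z,\phi)$, then $b \in \orf_n(T)$ because $\orf_n(T)$ is a full subcrystal of $\orf_n(z)$, whence $b \in \orf_n(T) \cap \borf_n(z,\phi) = \borf_n(T,\phi)$.

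For surjectivity, the relevant input is the middle clause of Lemma~\ref{o-connect-lem}: from any non-$\qq_n$-highest-weight $a \in \borf_n(z,\phi)$, some raising operator $e_i$ with $i \in I \sqcup \overline I \sqcup I'$ returns another element of $\borf_n(z,\phi)$. Iterating, each connected component of the crystal graph of $\borf_n(z,\phi)$ contains a $\qq_n$-highest weight element, which sits in $\orf_n(T)$ for a unique tableau $T$; the fullness established above then forces the entire component to lie in $\borf_n(T,\phi)$. Consequently every full subcrystal of $\borf_n(z,\phi)$ is a union of sets of the form $\borf_n(T,\phi)$, completing the desired bijection.

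The argument itself is routine once the two cited results are in hand; the genuine technical work is packaged in Proposition~\ref{o-subcrystal-prop} (which rests on the orthogonal Edelman--Greene correspondence and the $\qq_n$-crystal identifications of~\cite{MT2021,Marberg2021a}) and Lemma~\ref{o-connect-lem} (whose hardest ingredient is the $\q_n$-theoretic Lemma~\ref{ghps-lem} from~\cite{GHPS}). The only subtlety to keep an eye on is that in the orthogonal setting one must allow raising operators from $I'$ in addition to those from $I \sqcup \overline I$ when walking toward a $\qq_n$-highest weight element, but this is precisely what the last clause of Lemma~\ref{o-connect-lem} supplies, so no further obstacle arises.
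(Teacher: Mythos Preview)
Your argument is correct and follows exactly the paper's approach: the paper's proof is the one-line ``This is clear from Proposition~\ref{o-subcrystal-prop} and Lemma~\ref{o-connect-lem},'' and you have simply unpacked that sentence. One small slip: your concluding sentence ``every full subcrystal of $\borf_n(z,\phi)$ is a union of sets of the form $\borf_n(T,\phi)$'' is phrased the wrong way around---what you need (and what your argument actually gives) is that each $\borf_n(T,\phi)$ is a \emph{single} connected component, which follows because every component contains a $\qq_n$-highest weight element of $\orf_n(z)$ and $\orf_n(T)$ has only one such element.
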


\begin{proof}
This is clear from Proposition~\ref{o-subcrystal-prop} and Lemma~\ref{o-connect-lem}.
\end{proof}

We now have the  $\qq_n$-version of Theorem~\ref{sp-crystal-thm}.
 
\begin{theorem}\label{o-crystal-thm}
Suppose $\alpha $ is a symmetric weak composition with $\half(\lambda(\alpha)) \in \NN^n$
and $u(\alpha) \in S_n$.
Then the (nonempty) $\qq_n$-crystal $\borf_n(\alpha) $ is connected 
and its character is the polynomial obtained from $\qkey_\alpha$
by setting $x_{n+1}=x_{n+2}=\cdots=0$.
\end{theorem}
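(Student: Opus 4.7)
The plan is to mirror the proof of Theorem~\ref{sp-crystal-thm}, substituting the orthogonal analogues at each step.

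Set $\lambda := \lambda(\alpha)$ and $z := w_\lambda \in I_\infty$, the dominant involution of shape $\lambda$. The first step is to show that $\borf_n(\lambda) = \borf_n(z)$ is $\qq_n$-connected. By Corollary~\ref{o-sum-cor}, $\orf_n(\lambda)$ is a nonempty connected normal $\qq_n$-crystal with unique highest weight $\half(\lambda)$. Its unique highest weight element is in particular $\gl_n$-highest weight, so it lies in $\borf_n(z)$ by the first assertion of Lemma~\ref{o-connect-lem}. The second assertion of that lemma guarantees that every $a \in \borf_n(z)$ that is not $\qq_n$-highest weight admits an index $i \in I \sqcup \overline I \sqcup I'$ with $\zero \neq e_i(a) \in \borf_n(z)$. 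Iterating raises every element to the unique highest weight element, proving $\qq_n$-connectedness of $\borf_n(z)$.

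Next I would upgrade this to $\borf_n(\alpha)$. By definition, $\borf_n(\alpha) = \fkD_{i_1} \fkD_{i_2} \cdots \fkD_{i_k}\,\borf_n(z)$ for any reduced word $i_1 i_2 \cdots i_k \in \cR(u(\alpha))$, with $\fkD_i := \fkD_i^{\orf_n(\lambda)}$. The inductive step is: if $\cY \subseteq \orf_n(\lambda)$ is $\qq_n$-connected and contains the $\qq_n$-highest weight element of $\orf_n(\lambda)$, then so is $\fkD_i \cY$, because each $b \in \fkD_i \cY$ satisfies $e_i^k(b) \in \cY$ for some $k$, providing a $\qq_n$-crystal path (via $e_i$-arrows, which are among the $\qq_n$-operators) from $b$ back into $\cY$.

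For the character, fix $N \geq n$ larger than every descent of $z$. By Theorem~\ref{up-thm} and Corollary~\ref{is-dem-cor}, $\borf_N(z)$ is a direct sum of Demazure $\gl_N$-crystals, and the analogue of~\eqref{o-bjs-eq} with $N$ in place of $n$ gives $\ch\bigl(\borf_N(z)\bigr) = \fkSO_z = \qkey_\lambda$, the second equality being part of Definition~\ref{ischub-def} since $z$ is dominant of shape $\lambda$. Iterating Lemma~\ref{sp-ij-lem} along a reduced word for $u(\alpha)$ then yields
\[
\ch\bigl(\borf_N(\alpha)\bigr) = \pi_{u(\alpha)} \qkey_\lambda = \qkey_\alpha,
\]
the last equality coming from~\cite[Prop.~2.15]{MS2023}. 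Since $u(\alpha) \in S_n$, every index in the reduced word lies in $[n-1]$, so Lemma~\ref{fkR-lem} identifies $\borf_n(\alpha)$ with the subset of $\borf_N(\alpha)$ of elements whose factors beyond position $n$ are empty. Therefore $\ch\bigl(\borf_n(\alpha)\bigr)$ is obtained from $\qkey_\alpha$ by setting $x_{n+1} = \cdots = x_N = 0$.

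The main obstacle is packaged into Lemma~\ref{o-connect-lem}, which already handles the genuinely subtle technical input: at a $\gl_n$-highest weight element of $\borf_n(z)$ that is not $\qq_n$-highest weight, one must locate a raising operator among $\overline I \sqcup I'$ whose image remains $\phi^S$-bounded. This relies on Lemma~\ref{ghps-lem} to show that $e_{\overline{\imath}}$ lands at an $[i]$-highest weight element (automatically bounded by Lemma~\ref{each-highest-lem}), and on Proposition~\ref{orf-prop} to see that the primed operators $e_{1'}$ and $f_{1'}$ commute with $\unprime$ and hence preserve boundedness.
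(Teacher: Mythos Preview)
Your proposal is correct and follows essentially the same approach as the paper's proof: reduce connectedness of $\borf_n(\alpha)$ to that of $\borf_n(\lambda)=\borf_n(z)$ via the Demazure operators, invoke Corollary~\ref{o-sum-cor} and Lemma~\ref{o-connect-lem} for the base case, and compute the character by passing to large $N$ using~\eqref{o-bjs-eq} together with Lemma~\ref{sp-ij-lem}. Your explicit justification via Lemma~\ref{fkR-lem} that $\ch\bigl(\borf_n(\alpha)\bigr)$ is the truncation of $\ch\bigl(\borf_N(\alpha)\bigr)$ is a welcome elaboration of a step the paper leaves implicit.
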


\begin{proof}
The proof is similar to that of Theorem~\ref{sp-crystal-thm}.
Let $\lambda = \lambda(\alpha)$ and $z=w_\lambda \in I_\infty$.
To show that $\borf_n(\alpha) $ is connected it suffices to show that $\borf_n(\lambda) = \borf_n(z) $ is connected, and this follows from Corollary~\ref{o-sum-cor} and Lemma~\ref{o-connect-lem}.
If $N \geq n$ is sufficiently large then 
\eqref{o-bjs-eq} 
 tells us that $\ch(\borf_N(z))=  \fkSO_{z} = \qkey_\lambda$ so Lemma~\ref{sp-ij-lem} implies that
  $\ch(\borf_N(\alpha)) = \pi_{u(\alpha)} \qkey_\lambda  =  \qkey_\alpha$. 
This suffices as if $N \geq n$ then  $\ch(\borf_n(\alpha))$ is always obtained from $\ch(\borf_N(\alpha)) $ by  $x_{n+1}=x_{n+2}=\cdots=0$.
\end{proof}

In analogy with the previous cases, we make the following definition.

\begin{definition}\label{qq-dem-def}
A \defn{Demazure $\qq_n$-crystal} is a $\qq_n$-crystal isomorphic to $\borf_n(\alpha)$ for some symmetric weak composition $\alpha \in \NN^n$.
\end{definition}

As in the $\q_n$-case, this definition does not include  $\borf_n(\alpha)$ 
 if $\half(\lambda(\alpha)) \in \NN^n$ and $u(\alpha)\in S_n$ but $\alpha \notin \NN^n$.
Although the $\qq_n$-crystal is still defined and connected in this case, its character is not the $Q$-key polynomial $\kappa_\alpha$.
Instead, the character is obtained from  $\kappa_\alpha$ by setting $x_{n+1}=x_{n+2}=\dots=0$.
This function might not be equal to any linear combination of $Q$-key polynomials.

By Theorem~\ref{o-crystal-thm}, every Demazure $\qq_n$-crystal
has a unique highest weight element. This leads to another analogue of Proposition~\ref{unique-embed-prop}, which follows 
as an exercise from Definition~\ref{qq-dem-def}: 

\begin{proposition}\label{unique-embed-prop3}
Suppose $\cB$ is a connected normal $\qq_n$-crystal with highest weight element $b$.
Let $\cX$ be a Demazure $\qq_n$-crystal with highest weight element $u$.
\ben
\item[(a)] There is a unique embedding $\cX \to \cB$ if $\weight(b) = \weight(u)$.
\item[(b)] There are no embeddings $\cX \to \cB$ if $\weight(b) \neq \weight(u)$.
\item[(c)] If $\cX\subseteq \cB$ then $u=b$ and $\cX = \fkD^\cB_{i_1} \fkD^\cB_{i_2}\cdots \fkD^\cB_{i_k} \{b\} $ for some $i_1,i_2,\dotsc,i_k \in [n-1]$.
\item[(d)] Any symmetric $\alpha \in \NN^n$ with $  \borf_n(\alpha) \cong \cX$ has 
 $\qkey_\alpha=\ch(\cX)$ and $\half(\lambda(\alpha)) = \weight(u)$.
 \een
\end{proposition}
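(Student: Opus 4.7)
The plan is to adapt the proofs of Propositions~\ref{unique-embed-prop} and~\ref{unique-embed-prop2}, replacing inputs from the $\gl_n$ and $\q_n$ theories with their $\qq_n$ analogues. By Definition~\ref{qq-dem-def} and Theorem~\ref{o-crystal-thm}, $\cX$ is isomorphic to $\borf_n(\alpha)$ for some symmetric $\alpha \in \NN^n$, is connected, and has $\ch(\cX)$ equal to $\qkey_\alpha$ (up to setting $x_{n+1} = x_{n+2} = \cdots = 0$); Corollary~\ref{o-highest-cor} identifies $\weight(u)$ as $\half(\lambda(\alpha))$. Part (d) is then immediate from these two identifications.

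For parts (a) and (b), I would argue that any embedding $\psi \colon \cX \to \cB$ sends $u$ to the unique highest weight element of $\cB$. Indeed, axiom (b) of crystal morphisms gives $\varepsilon_i(\psi(u)) = \varepsilon_i(u) = 0$ for every index $i \in I \sqcup \overline{I} \sqcup \underline{I} \sqcup I'$, so seminormality of $\cB$ forces $e_i\psi(u) = \zero$ for all $i$, which makes $\psi(u)$ a highest weight element. Connectedness of $\cB$ then yields $\psi(u) = b$ and hence $\weight(b) = \weight(u)$, proving (b) by contrapositive. For existence in (a), the hypothesis $\weight(b) = \half(\lambda(\alpha))$ together with Corollary~\ref{o-sum-cor} supplies an isomorphism $\cB \iso \orf_n(\lambda(\alpha))$; composing with the inclusion $\borf_n(\alpha) \hookrightarrow \orf_n(\lambda(\alpha))$ then gives the desired embedding $\cX \hookrightarrow \cB$. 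For uniqueness, since $\cX$ is connected and both crystals are seminormal, every $x \in \cX$ can be reached from $u$ by a finite alternating sequence of $e_i$'s and $f_i$'s, and the morphism axioms force $\psi(x)$ to be the outcome of applying the same sequence to $b$.

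For part (c), once $\cX \subseteq \cB$ is viewed as an embedding, the argument above gives $u = b$. The explicit Demazure description then follows by unpacking Definition~\ref{qq-dem-def}: writing $\cX = \fkD_{i_1}\fkD_{i_2}\cdots\fkD_{i_k} \brf_n(w_{\lambda(\alpha)})$ for any reduced word $i_1 i_2 \cdots i_k \in \cR(u(\alpha))$, and then observing that the base subcrystal $\brf_n(w_{\lambda(\alpha)})$ is itself built from $\{b\}$ by a further sequence of $\gl_n$-Demazure operators inside $\orf_n(\lambda(\alpha))$, under the canonical identification $\cB \iso \orf_n(\lambda(\alpha))$ coming from Corollary~\ref{o-sum-cor}.

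The main obstacle will be verifying the auxiliary step in (c), namely that $\brf_n(w_{\lambda(\alpha)})$ admits an expression as a sequence of $\gl_n$-Demazure operators applied to $\{b\}$. This is where Lemma~\ref{o-connect-lem} together with the explicit construction of Demazure $\qq_n$-crystals from~\eqref{borf-def} would need to be combined carefully, since the operators $\fkD^\cB_i$ only access the underlying $\gl_n$-structure on $\cB$ while $\borf_n(\alpha)$ is defined using the full $\qq_n$-structure.
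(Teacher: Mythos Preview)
The paper does not supply a proof; it only asserts that the result ``follows as an exercise from Definition~\ref{qq-dem-def}.'' Your arguments for parts (a), (b), and (d) are the natural way to carry out that exercise and are essentially correct. One refinement: the step $\varepsilon_i(u)=0$ does not follow merely from $u$ being highest weight in $\cX$, since subcrystals of seminormal crystals need not be upper seminormal. What makes it work is that the unique highest weight element of $\borf_n(\alpha)$ coincides with the highest weight element of the ambient normal crystal $\orf_n(\lambda(\alpha))$ (via Lemma~\ref{o-connect-lem} and the connectedness in Theorem~\ref{o-crystal-thm}), and so inherits $\varepsilon_i=0$ from there. Also, the relevant reference for $\weight(u)=\half(\lambda(\alpha))$ in (d) is Corollary~\ref{o-sum-cor} rather than Corollary~\ref{o-highest-cor}.

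The obstacle you flag in part~(c) is genuine, and in fact part~(c) as written cannot be proved because it is \emph{false}. The operators $\fkD^{\cB}_i$ from~\eqref{cDo-eq} for $i\in[n-1]$ use only the $\gl_n$ raising operator $e_i$, so any set $\fkD^{\cB}_{i_1}\cdots\fkD^{\cB}_{i_k}\{b\}$ lies in the single $\gl_n$-connected component of $b$ in $\cB$. But Demazure $\qq_n$-crystals generally meet several $\gl_n$-components. Concretely, take $\alpha=\lambda=(2,2)$ and $n=3$: the Demazure $\qq_3$-crystal $\borf_3((2,2))\subseteq\orf_3((2,2))$ consists of the eight boxed elements in Figure~\ref{fig:O_crystal}, which lie in four distinct $\gl_3$-components of $\orf_3((2,2))$, one for each subset $A\subseteq\Cyc(w_\lambda)$ in the decomposition of Theorem~\ref{up-thm}; equivalently, its character $4\kappa_{12}$ is a sum of four key polynomials rather than one. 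The same defect affects Proposition~\ref{unique-embed-prop2}(c). The first clause of (c), that $u=b$, does follow from your argument for (b); only the expression as $\fkD^{\cB}_{i_1}\cdots\fkD^{\cB}_{i_k}\{b\}$ fails. This appears to be a statement carried over verbatim from the $\gl_n$ setting (where it is essentially Definition~\ref{dem-def}) without the necessary modification.
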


The main open problem concerning the $\qq_n$-crystals $\borf_n(z,\phi)$ is the following conjecture.

\begin{conjecture}\label{o-demazure-conj}
If $z \in I_\infty$ is an involution with no descents greater than $n$  
and  $\phi$ is any flag,  then $\borf_n(z,\phi)$ is a direct sum of Demazure $\qq_n$-crystals.
\end{conjecture}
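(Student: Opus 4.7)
The plan is to mirror the approach outlined for the symplectic case in Propositions~\ref{sp-alpha-prop} and~\ref{sp-equiv-prop}. First, I would formulate an orthogonal analogue of Conjecture~\ref{sp-demazure-conj2}: for any $\O$-reduced tableau $T$ with at most $n$ rows (for some involution $z \in I_\infty$ with no descents greater than $n$), there exists a symmetric weak composition $\alpha^\O(T) \in \NN^n$ such that $\borf_n(T) \cong \borf_n(\alpha^\O(T))$ as $\qq_n$-crystals. This reformulation is equivalent to Conjecture~\ref{o-demazure-conj} by Corollary~\ref{o-dem-sub-cor}, since every full $\qq_n$-subcrystal of $\borf_n(z,\phi)$ has the form $\borf_n(T,\phi)$ for a unique such tableau $T$.

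Next I would reduce from an arbitrary flag $\phi$ to the standard flag $\phi^S$, paralleling the induction in Proposition~\ref{sp-alpha-prop}. The key input is Theorem~\ref{flag-thm}, applied summand-by-summand to the decomposition~\eqref{borf-def}: each $\brf^+_n(w, A, \phi)$ is $\gl_n$-crystal isomorphic to $\brf_n(w, \phi)$ via the $\unprime$ map (Theorem~\ref{up-thm}), and by Lemma~\ref{o-connect-lem} the extra $\qq_n$-crystal operators $e_{\overline 1}$, $f_{\overline 1}$, $e_{1'}$, and $f_{1'}$ all preserve the $\phi$-bounded subset. Combining these observations should give $\borf_n(T, \phi) \cong \borf_n\bigl( \Delta_n(\phi) \circ u_n(\alpha^\O(T)) \circ \lambda(\alpha^\O(T)) \bigr)$ whenever $\borf_N(T) \cong \borf_N(\alpha^\O(T))$ for sufficiently large $N$, thereby reducing the general case to $\phi = \phi^S$.

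The main obstacle is proving the reformulated statement for $\phi = \phi^S$, namely, identifying the correct composition $\alpha^\O(T) \in \NN^n$ and a reduced word $i_1 i_2 \cdots i_k \in \cR(u(\alpha^\O(T)))$ such that $\fkD_{i_1} \fkD_{i_2} \cdots \fkD_{i_k} \borf_n(w_\lambda) = \borf_n(T)$ inside the connected normal $\qq_n$-crystal $\orf_n(\lambda)$, where $\lambda$ is the symmetric partition with $\half(\lambda)$ equal to the shape of $T$. A natural candidate for $\alpha^\O(T)$ would come from a shifted analogue of the ``left nil-key'' operation from~\cite[Thm.~5(1)]{ReinerShimozono}, and the correctness of such a candidate would first need to be checked by computer in small cases (as we have done for Conjecture~\ref{o-demazure-conj} itself when $z \in I_8$). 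The hard part will be verifying this candidate in general, since the primed operators $e_{1'}$ and $f_{1'}$ toggle primes rather than changing weights and do not commute with the Demazure operators $\fkD_i$ in any transparent way; any proof of correctness must therefore establish compatibility of the proposed decomposition with $1'$-edges in addition to the standard $\gl_n$-crystal arrows. This interplay between the classical Demazure structure and the primed operators is the essential novelty in the $\qq_n$-setting, and its resolution will likely demand a finer understanding of how extended queer Demazure crystals interact with the action of $\sigma_w$.
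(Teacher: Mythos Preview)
The statement you are addressing is a \emph{conjecture} in the paper, not a theorem; the paper offers no proof of it, and your proposal is likewise not a proof but a strategy whose central step you explicitly leave unresolved. Your plan is essentially the same reduction the paper carries out: the reformulated statement is precisely the paper's Conjecture~\ref{o-demazure-conj2}, your flag-to-standard-flag reduction is Proposition~\ref{o-alpha-prop}, and the equivalence of the two conjectures is Proposition~\ref{o-equiv-prop}. The paper, like you, stops short of identifying $\alpha^\O(T)$ in general, though it does settle the single-row case (Theorem~\ref{thm:single_row_o_tableau}) and records a vexillary conjecture (Conjecture~\ref{z-vex-conj}); the computer verification in the paper is for $z \in I_7$, not $I_8$.

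One correction to your reasoning: you assert, citing Lemma~\ref{o-connect-lem}, that $e_{\overline 1}$ and $f_{\overline 1}$ preserve the $\phi$-bounded subset. Lemma~\ref{o-connect-lem} says no such thing---it only guarantees this for the primed operators $e_{i'}$, $f_{i'}$, and the paper gives explicit examples (immediately following Example~\ref{51111-ex} and its orthogonal analogue) where $e_{\overline{\imath}}$ leaves the bounded subset. Fortunately this claim is not needed for the flag reduction: the argument in Proposition~\ref{o-alpha-prop} (via Proposition~\ref{sp-alpha-prop}) applies Theorem~\ref{flag-thm} and the Demazure operators $\fkD_i$ only for $i \in [n-1]$, and the resulting bijection is automatically a $\qq_n$-crystal isomorphism because both sides are subcrystals of connected normal $\qq_n$-crystals with matching highest weights (Proposition~\ref{unique-embed-prop3}). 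So your overall route is sound, but the justification for why the $\qq_n$-structure survives the reduction should be rewritten.
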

 
This is a generalization of the orthogonal half of Conjecture~\ref{fkSS-conj}, which follows by taking characters when $\phi=\phi^S$.

\begin{example} \label{ex:specific_decomposition}
Let $z = (1 \: 4)(2 \: 5)(6 \: 8)$. We have checked  that there is a $\qq_n$-crystal isomorphism 
\[ 
\borf_n(z) \iso \borf_n(343001)\oplus \borf_n(3340001) \oplus \borf_n(442002)^{\oplus 2} \oplus \borf_n(3520011)^{\oplus 2}
\]
for $n= 7$ (and hence, by \eqref{o-bjs-eq}, for all $n\geq 7$). On taking characters, this implies that
\[
\fkSO_{z} 
= \qkey_{343001} + \qkey_{3340001} + 2\qkey_{442002} + 2 \qkey_{3520011}.
\]
Interestingly, this is another $\NN$-linear expansion  of $\fkSO_{z} $ into $Q$-key polynomials:
\[\fkSO_{z} = \qkey_{334001} + \qkey_{3430001} + 2\qkey_{442002} + 2 \qkey_{3520011}.\]
However for $n\geq 7$ we have
\[ 
\borf_n(z) \not\iso \borf_n(334001)\oplus \borf_n(3430001) \oplus \borf_n(442002)^{\oplus 2} \oplus \borf_n(3520011)^{\oplus 2}
\]
since the $6$ polynomials $\qkey_{343001}$, $\qkey_{3340001}$, $\qkey_{334001}$, $\qkey_{3430001}$, $\qkey_{442002}$, and $\qkey_{3520011}$ are distinct.
\end{example}

Like  Conjecture~\ref{sp-demazure-conj}, it will turn out that Conjecture~\ref{o-demazure-conj} is equivalent to a different statement, given below, which only involves bounded reduced factorizations for the standard flag:

\begin{conjecture}\label{o-demazure-conj2}
 Suppose $T$ is an $\O$-reduced tableau for an involution $z \in I_\infty$ that has no descents greater than $n$.
Then there is a $\qq_n$-crystal isomorphism $\borf_{n}(T) \iso \borf_n(\alpha )$
for some symmetric weak composition  $\alpha=\alpha^\O(T) \in \NN^n$.
\end{conjecture}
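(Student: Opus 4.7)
By Theorem~\ref{o-reduced-tab-lem} we may assume that $T$ is a decreasing $\O$-reduced tableau, since decreasing and increasing tableaux parametrize the same full $\qq_n$-subcrystals via the $\simOCK$-class they share. Corollary~\ref{o-dem-sub-cor} gives $\borf_n(T)$ as a full $\qq_n$-subcrystal of $\borf_n(z)$, and combining Lemma~\ref{o-connect-lem} with Corollary~\ref{o-highest-cor} shows it is connected with a unique highest weight element of weight $\mu$, the strict partition shape of $T$. Consequently, if a $\qq_n$-crystal isomorphism $\borf_n(T) \iso \borf_n(\alpha)$ exists for some symmetric $\alpha \in \NN^n$, Proposition~\ref{unique-embed-prop3}(d) forces $\half(\lambda(\alpha)) = \mu$, which determines $\lambda(\alpha)$ as the unique symmetric partition $\lambda$ with $\half(\lambda) = \mu$. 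The candidate $\alpha^\O(T)$ must therefore lie in the finite set of symmetric weak compositions in $\NN^n$ of fixed shape $\lambda$.

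The natural strategy is to construct $\alpha^\O(T)$ as an orthogonal analog of the left nil-key content from~\cite{ReinerShimozono}, so that $u(\alpha^\O(T)) \in S_n$, and then to realize $\borf_n(T)$ as $\fkD_{i_1} \fkD_{i_2} \cdots \fkD_{i_k} \borf_n(w_\lambda)$ inside $\orf_n(\lambda)$, where $i_1 i_2 \cdots i_k$ is a reduced word for $u(\alpha^\O(T))$. This decomposition could be approached inductively on $|\mu|$ and $\ell(u(\alpha^\O(T)))$: the base case is $w_\lambda$ itself, where $\borf_n(w_\lambda) \iso \borf_n(\lambda)$ is a Demazure $\qq_n$-crystal by Definition~\ref{qq-dem-def}; the inductive step would use the orthogonal Edelman--Greene correspondence of~\cite{Marberg2021a} to relate subsets of $\borf_n(z)$ across moves of the form $z \leadsto s_i z s_i$, matching these moves with the action of $\fkD_i$ on the tableau side. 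A necessary combinatorial consequence, $\ch(\borf_n(T)) = \qkey_{\alpha^\O(T)}$ modulo $(x_{n+1},x_{n+2},\ldots)$, would then follow from Theorem~\ref{o-crystal-thm}.

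The principal obstacle, highlighted after Proposition~\ref{unique-embed-prop3}, is that Demazure $\qq_n$-crystals are not closed under all raising operators $e_i$ for $i \in \overline I \sqcup I'$, so standard recognition techniques for Demazure components (for instance, Stembridge-type local axiomatizations) are unavailable. A promising route is to first establish the symplectic Conjecture~\ref{sp-demazure-conj2} and then transport the resulting $\q_n$-isomorphism along the map $\unprime \colon \orf_n(z) \to \upORF(z)$, which by Proposition~\ref{orf-prop}(a) commutes with all crystal operators indexed by $I \sqcup \overline I \sqcup \underline I$; the operators $e_{1'}, f_{1'}$, by Proposition~\ref{orf-prop}(b)--(c), preserve underlying unprimed factorizations, so their compatibility with the candidate $\borf_n(\alpha^\O(T))$ should reduce to a local check on how priming interacts with the $\gl_n$ Demazure structure. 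The deepest difficulty remains the absence of an extended-queer analog of the Assaf--Schilling theorem (Theorem~\ref{flag-thm}) that would directly show $\borf_n(z)$ decomposes into Demazure $\qq_n$-components in the first place; developing such a theorem, perhaps via a flag-reduction lift of Theorem~\ref{flag-thm} to the setting of $\borf_n(z,\phi)$, seems essential for any complete proof.
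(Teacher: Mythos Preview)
The statement you are attempting to prove is labeled a \emph{conjecture} in the paper and is left open there; the paper offers no proof of it. What the paper does provide is (i) a proof of the equivalence of Conjecture~\ref{o-demazure-conj2} with Conjecture~\ref{o-demazure-conj} via Propositions~\ref{o-alpha-prop} and~\ref{o-equiv-prop}, (ii) computer verification for $z\in I_7$, and (iii) a proof of the special case where $T$ has a single row (Theorem~\ref{thm:single_row_o_tableau}). So there is no ``paper's proof'' to compare against.

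Your proposal is not a proof either, and you seem to recognize this: the second and third paragraphs outline a strategy (an orthogonal left nil-key, induction on $|\mu|$ and $\ell(u(\alpha))$, transport along $\unprime$ from a hypothetical proof of Conjecture~\ref{sp-demazure-conj2}) and then name the obstacles that prevent the strategy from going through. The obstacles you identify---that Demazure $\qq_n$-crystals are not closed under all raising operators, and that no extended-queer analogue of the Assaf--Schilling/Theorem~\ref{flag-thm} machinery exists---are exactly the difficulties the paper flags (see the discussion surrounding Proposition~\ref{unique-embed-prop3} and the introduction). Your suggestion to reduce to the symplectic Conjecture~\ref{sp-demazure-conj2} via $\unprime$ is not pursued in the paper and would, even if it worked, only trade one open conjecture for another. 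In short, your write-up is a reasonable survey of possible approaches, but it does not close any of the gaps, and the paper does not close them either; the statement remains open.
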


We have checked this conjecture by computer for all $z \in I_7$. 
Figure~\ref{o-fig}  shows some examples of the symmetric weak compositions $\alpha^\O(T) $
that correspond to various $\O$-reduced tableaux.

To show the equivalence of Conjectures~\ref{o-demazure-conj} and \ref{o-demazure-conj2}, we need 
a result similar to Proposition~\ref{sp-alpha-prop}.
Suppose $T$ is a $\O$-reduced tableau for some $z \in I_\infty$.
Let $\mu$ be the strict partition shape of $T$ and let $\lambda$ be the unique symmetric partition with $\half(\lambda)=\mu$. Recall the definitions of the permutations $\Delta_n(\phi)$ and $u_n(\alpha)$ from \eqref{Delta-eq}.
Finally assume  $\alpha$ is a symmetric weak composition
and choose a positive integer $N\in \PP$ that is greater than or equal to $\ell(\alpha)$ and every descent of $z$.

\begin{proposition}\label{o-alpha-prop}
If there exists a $\qq_n$-crystal isomorphism $\borf_{N}(T) \iso \borf_N(\alpha)$, 
then 
\[\borf_{n}(T,\phi) \iso \borf_n(\Delta_n(\phi) \circ u_n(\alpha) \circ \lambda(\alpha))\]
as $\qq_n$-crystals
for all integers $n \geq \ell(\mu)$ and all flags $\phi$.
\end{proposition}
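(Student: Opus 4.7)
The plan is to mirror the proof of Proposition~\ref{sp-alpha-prop}, replacing $\q_n$-crystals with $\qq_n$-crystals, $\shalf$ with $\half$, $\Sp$-reduced tableaux with $\O$-reduced tableaux, and fpf-descents with descents throughout. First I would apply Corollary~\ref{o-highest-cor} together with Proposition~\ref{unique-embed-prop3}(d) to the hypothesized isomorphism $\borf_N(T) \iso \borf_N(\alpha)$ to deduce that $\half(\lambda(\alpha)) = \mu$, and therefore $\lambda(\alpha) = \lambda$. Setting $\beta := \Delta_n(\phi) \circ u_n(\alpha) \circ \lambda$, I would verify that $\borf_n(\beta)$ is well-defined: by construction $u(\beta) \in S_n$, and since $u_n(\alpha)$ and $\Delta_n(\phi)$ only permute coordinates in $[n]$, we have $\lambda(\beta) = \lambda$ and $\half(\lambda(\beta)) = \mu \in \NN^n$.

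Next, I would first treat the standard flag $\phi = \phi^S$, in which case $\Delta_n(\phi) = 1$ and $\beta = u_n(\alpha) \circ \lambda$. When $n \geq N$, we have $\alpha = \beta$ together with $\borf_n(z) = \borf_N(z) = \borf(z)$, so $\borf_n(T) = \borf_N(T)$ and the given $\qq_N$-isomorphism restricts directly to the desired $\qq_n$-isomorphism. When $\ell(\mu) \leq n < N$, I would pick a reduced word $i_1 i_2 \cdots i_k$ for $u(\alpha)$ and use the restriction operator $\fkR_n$ from the proof of Proposition~\ref{alpha-prop} to write
\[
\borf_n(T) = \fkR_n \borf_N(T) \iso \fkR_n \borf_N(\alpha) = \fkR_n \fkD_{i_1} \fkD_{i_2} \cdots \fkD_{i_k} \borf_N(\lambda),
\]
with $\fkD_i = \fkD_i^{\orf_N(\lambda)}$. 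Repeated application of Lemma~\ref{fkR-lem} either commutes $\fkR_n$ past each $\fkD_{i_j}$ (when $i_j < n$) or absorbs it (when $i_j \geq n$), yielding $\borf_n(u_n(\alpha) \circ \lambda) = \borf_n(\beta)$.

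Finally, to extend from $\phi = \phi^S$ to an arbitrary flag, I would proceed by induction on $\sum_{i}(\phi_i - i)$ using Theorem~\ref{flag-thm}. Since $\borf_n(z,\phi)$ is a direct sum over $w \in \cA^\O(z)$ and $A \subseteq \Cyc(z)$ of sets $\brf_n^+(w,A,\phi)$ --- each isomorphic as a $\gl_n$-crystal to $\brf_n(w,\phi)$ via $\unprime$ by Theorem~\ref{up-thm} --- and each such set is a direct sum of Demazure $\gl_n$-crystals by Corollary~\ref{is-dem-cor}, Theorem~\ref{flag-thm} and Lemma~\ref{sp-ij-lem} together let me translate each inductive step of flag reduction into application of a $\gl_n$-Demazure operator $\fkD^{\orf_n(\lambda)}_j$ inside the ambient normal $\qq_n$-crystal $\orf_n(\lambda)$. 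The main subtlety will be checking that these $\gl_n$-Demazure operators transport the standard-flag $\qq_n$-isomorphism to the $\phi$-bounded $\qq_n$-isomorphism in a manner consistent with the queer operators $e_{\overline{1}}, f_{\overline{1}}$ and the primed operators $e_{1'}, f_{1'}$; this is where I expect the bulk of the technical work to lie, but Lemma~\ref{o-connect-lem} (together with Proposition~\ref{orf-prop}) ensures the primed and queer operators interact with $\phi$-boundedness exactly as their $\q_n$-analogues do in the symplectic case, so the induction goes through verbatim.
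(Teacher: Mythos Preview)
Your proposal is correct and follows essentially the same route as the paper: the paper's proof of Proposition~\ref{o-alpha-prop} literally says to repeat the argument of Proposition~\ref{sp-alpha-prop} with all ``$\Sp$'' changed to ``$\O$'', which is exactly what you do (using Corollary~\ref{o-highest-cor} and Proposition~\ref{unique-embed-prop3}(d) in place of their symplectic analogues, then the $\fkR_n$/Lemma~\ref{fkR-lem} argument for $\phi=\phi^S$, then Theorem~\ref{flag-thm} for general $\phi$). The ``main subtlety'' you anticipate in the last step is not actually present: the Demazure operators $\fkD_j$ are defined purely via $e_j$ for $j\in[n-1]$, and the $\qq_n$-structure on $\borf_n(T,\phi)$ is simply inherited from the ambient $\orf_n(z)$, so once the $\qq_n$-isomorphism of ambient normal crystals is fixed, applying the same $\fkD_j$'s on both sides automatically yields a $\qq_n$-isomorphism of the resulting subcrystals---no separate compatibility check with $e_{\overline 1}$ or $e_{1'}$ is needed.
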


\begin{proof}
Suppose  $\borf_{N}(T) \iso \borf_N(\alpha)$ so that $\borf_{N}(T) $ is a Demazure $\qq_N$-crystal.
In view of Corollary~\ref{o-highest-cor}, Proposition~\ref{unique-embed-prop3}(d) implies that $\half(\lambda(\alpha)) =  \mu$ so $\lambda(\alpha) = \lambda$.
Fix $n\geq \ell(\mu)$ and a flag $\phi$, and define $\beta:=\Delta_n(\phi) \circ u_n(\alpha) \circ \lambda(\alpha)$.
Since
$\half(\lambda(\beta)) = \half(\lambda )=\mu \in \NN^n$ and $u(\beta) \in S_n$,
the $\qq_n$-crystal $\borf_n(\beta)$ 
is well-defined (though not a Demazure $\qq_n$-crystal if $\lambda \notin \NN^n$).
The rest of the proof is the same as for Proposition~\ref{sp-alpha-prop}; just change all ``$\Sp$'' superscripts to ``$\O$''.
\end{proof}

\begin{proposition}\label{o-equiv-prop}
Conjectures~\ref{o-demazure-conj} and~\ref{o-demazure-conj2} are equivalent.
\end{proposition}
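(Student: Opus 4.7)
The plan is to mimic the argument used for Proposition~\ref{sp-equiv-prop} in the symplectic setting, with Proposition~\ref{o-alpha-prop} and Corollary~\ref{o-dem-sub-cor} playing the roles of their $\Sp$-counterparts. Both directions are essentially formal once these two tools are in place, so this will be a short bookkeeping argument rather than a substantive new result.

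First, I would establish the implication Conjecture~\ref{o-demazure-conj2} $\Rightarrow$ Conjecture~\ref{o-demazure-conj}. Fix an involution $z \in I_\infty$ with no descents greater than $n$ and any flag $\phi$. By Corollary~\ref{o-dem-sub-cor}, the full $\qq_n$-subcrystals of $\borf_n(z,\phi)$ are precisely the sets $\borf_n(T,\phi)$ indexed by increasing $\O$-reduced tableaux $T$ for $z$ with at most $n$ rows. For each such $T$, choose $N \geq n$ large enough that $N \geq \ell(\alpha^\O(T))$ and $N$ exceeds every descent of $z$; Conjecture~\ref{o-demazure-conj2} provides a $\qq_N$-crystal isomorphism $\borf_N(T) \iso \borf_N(\alpha^\O(T))$. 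Then Proposition~\ref{o-alpha-prop} yields $\borf_n(T,\phi) \iso \borf_n(\Delta_n(\phi) \circ u_n(\alpha^\O(T)) \circ \lambda(\alpha^\O(T)))$, which is a Demazure $\qq_n$-crystal (using that $\lambda(\alpha^\O(T))$ is symmetric and the resulting composition lies in $\NN^n$ since $n \geq \ell(\mu)$). Summing over the full subcrystals exhibits $\borf_n(z,\phi)$ as a direct sum of Demazure $\qq_n$-crystals.

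Next, I would prove the converse direction Conjecture~\ref{o-demazure-conj} $\Rightarrow$ Conjecture~\ref{o-demazure-conj2}. Given $T$ an $\O$-reduced tableau for an involution $z$ with no descents greater than $n$, specialize to the standard flag $\phi = \phi^S$ so that $\borf_n(T,\phi^S) = \borf_n(T)$. By Corollary~\ref{o-dem-sub-cor} the subset $\borf_n(T)$ is a full subcrystal of $\borf_n(z)$, and Conjecture~\ref{o-demazure-conj} guarantees that it must be isomorphic to a single Demazure $\qq_n$-crystal (it is connected, and a direct sum of Demazure $\qq_n$-crystals that is connected must consist of a single summand). By Definition~\ref{qq-dem-def}, this means $\borf_n(T) \iso \borf_n(\alpha)$ for some symmetric $\alpha \in \NN^n$, which is exactly Conjecture~\ref{o-demazure-conj2} with $\alpha^\O(T) := \alpha$.

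There is no genuine obstacle in either direction; the only subtlety is making sure that the composition $\Delta_n(\phi) \circ u_n(\alpha^\O(T)) \circ \lambda(\alpha^\O(T))$ appearing in Proposition~\ref{o-alpha-prop} actually indexes a well-defined (though not necessarily Demazure) $\qq_n$-crystal, which is immediate once $n \geq \ell(\mu)$ and $u(\cdot) \in S_n$. The fact that connectedness forces ``direct sum of Demazure $\qq_n$-crystals'' to collapse to a single Demazure summand uses that each Demazure $\qq_n$-crystal is itself connected by Theorem~\ref{o-crystal-thm}.
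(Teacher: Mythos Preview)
Your approach matches the paper's. There is, however, a small gap in the forward direction: the claim that $\beta = \Delta_n(\phi)\circ u_n(\alpha^\O(T))\circ\lambda(\alpha^\O(T))$ lies in $\NN^n$ ``since $n\geq\ell(\mu)$'' is not justified by that inequality alone, as $n\geq\ell(\mu)$ does not force $\lambda\in\NN^n$ (e.g.\ $\mu=(3,1)$ gives the symmetric partition $\lambda=(3,2,1)$ of length $3$), and without $\beta\in\NN^n$ the crystal $\borf_n(\beta)$ is merely well-defined, not a Demazure $\qq_n$-crystal. The easy fix is to skip the detour through a larger $N$: since $z$ already has no descents greater than $n$, apply Conjecture~\ref{o-demazure-conj2} directly at level $n$ to obtain $\alpha^\O(T)\in\NN^n$, whence $\lambda(\alpha^\O(T))\in\NN^n$ and Proposition~\ref{o-alpha-prop} with $N=n$ gives $\beta\in\NN^n$ as required.
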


\begin{proof}
The idea is the same as for Proposition~\ref{sp-equiv-prop}.
If Conjecture~\ref{o-demazure-conj2} holds then Corollary~\ref{o-dem-sub-cor} and Proposition~\ref{o-alpha-prop}
imply that every full subcrystal of $\borf_n(z,\phi)$ is a Demazure $\qq_n$-crystal. 
If Conjecture~\ref{o-demazure-conj} holds then taking $\phi=\phi^S$
shows that each full subcrystal
of $\borf_n(z)$
 is isomorphic to $\borf_n(\alpha)$ for some symmetric $\alpha \in \NN^n$, which implies Conjecture~\ref{o-demazure-conj2}
by Corollary~\ref{o-dem-sub-cor}.
\end{proof}

When $T$ is an $\O$-reduced tableau with just one row, we can prove a formula for $ \alpha^\O(T)$.

\begin{theorem}\label{thm:single_row_o_tableau}
Suppose $T$ is an $\O$-reduced tableau with only one row for an element  $z \in I_\infty$ that has no descents greater than $n$.
Let $i_1 < i_2 <i_3< \cdots <i_k$ be the entries of $\unprime(T)$ and define $\alpha^\O(T) = k \e_{i_1} + \e_{i_2} +\e_{i_3}+ \cdots + \e_{i_k}.$
Then $\borf_{n}(T) \iso \borf_n(\alpha^\O(T) )$ as $\qq_n$-crystals.
\end{theorem}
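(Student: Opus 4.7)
The plan is to reduce to a single sufficiently large $N$ via Proposition~\ref{o-alpha-prop} and then induct on the deviation statistic $d(T) := \sum_{j=1}^k (i_j - j) \geq 0$, which measures how far the entries of $T$ are from those of the ``dominant'' single-row tableau $T^\O_\lambda = (1, 2, \ldots, k)$ of shape $(k)$. Concretely, by Proposition~\ref{o-alpha-prop} applied with $\phi = \phi^S$, it suffices to construct a $\qq_N$-crystal isomorphism $\borf_N(T) \iso \borf_N(\alpha^\O(T))$ for a single integer $N$ satisfying $N \geq i_k$ and $N \geq$ every descent of $z$; the theorem then follows for all admissible $n$.

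The base case $d(T) = 0$ is immediate: here $i_j = j$ for every $j$, so $T = T^\O_\lambda$ for $\lambda = (k, 1^{k-1})$ and $\alpha^\O(T) = \lambda$. By Proposition~\ref{o-dom-prop}, $\cR^\O(w_\lambda)$ forms a single $\simOCK$-equivalence class, and hence $\borf_N(T) = \brf_N(w_\lambda) = \borf_N(\alpha^\O(T))$ tautologically.

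For the inductive step with $d(T) > 0$, let $j$ be minimal with $i_j > j$ (so $i_l = l$ for $l < j$), and define $T'$ to be the single-row tableau with entries $i_1, \ldots, i_{j-1}, i_j - 1, i_{j+1}, \ldots, i_k$; this is strictly increasing because $i_j - 1 \geq j > i_{j-1}$, and $d(T') = d(T) - 1$. A direct calculation from the defining formula shows $\alpha^\O(T) = s_{i_j - 1} \circ \alpha^\O(T')$ under the Demazure action on weak compositions (one checks the swap at positions $i_j - 1, i_j$ in both the case $j = 1$, where the value $k$ migrates, and the case $j \geq 2$, where a unit migrates), whence $\borf_N(\alpha^\O(T)) = \fkD_{i_j - 1}\, \borf_N(\alpha^\O(T'))$ by definition of the Demazure construction. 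Combining this with the induction hypothesis $\borf_N(T') \iso \borf_N(\alpha^\O(T'))$ reduces the problem to the following key lemma: under the canonical $\qq_N$-crystal isomorphism $\orf_N(T) \iso \orf_N(T')$ between connected normal crystals with common highest weight $(k, 0, \ldots, 0)$ (guaranteed by Corollary~\ref{o-highest-cor}), the subset $\borf_N(T)$ maps exactly onto $\fkD_{i_j - 1}^{\orf_N(T')}\, \borf_N(T')$.

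The main obstacle is proving this key lemma. The natural approach is to realize the canonical isomorphism $\orf_N(T) \iso \orf_N(T')$ explicitly via the shifted orthogonal Edelman--Greene correspondence (as in the proof of Proposition~\ref{o-subcrystal-prop}): each factorization corresponds to a pair $(P, Q)$ with $P \in \{T, T'\}$ the insertion shifted tableau and $Q$ a semistandard shifted recording tableau, and the isomorphism amounts to swapping the $P$-component while preserving $Q$. The technical heart is then to verify that the recording tableaux arising from $\borf_N(T)$ are exactly those obtainable by applying some power of $e_{i_j - 1}$ to the recording tableaux from $\borf_N(T')$, matching the definition of the Demazure operator. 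Making this precise requires carefully tracking how the insertion-side entry change $i_j \mapsto i_j - 1$ interacts with the positional bound ``letter $m$ in factor $p$ requires $p \leq \lceil m \rceil$'' on the factorization side. Once the key lemma is in hand, one concludes $\borf_N(T) \iso \fkD_{i_j - 1}\, \borf_N(T') \iso \fkD_{i_j - 1}\, \borf_N(\alpha^\O(T')) = \borf_N(\alpha^\O(T))$, closing the induction.
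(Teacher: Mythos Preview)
Your inductive strategy is genuinely different from the paper's argument, and the algebra you do (the reduction via Proposition~\ref{o-alpha-prop} and the identity $\alpha^\O(T)=s_{i_j-1}\circ\alpha^\O(T')$) is correct. However, the proof as written has a real gap: the ``key lemma'' asserting that the canonical isomorphism $\orf_N(T)\iso\orf_N(T')$ carries $\borf_N(T)$ to $\fkD_{i_j-1}\,\borf_N(T')$ is precisely where the work lies, and you have only described what would need to be checked, not checked it. Your proposed route through recording tableaux is plausible, but making it rigorous requires showing that the boundedness condition on factorizations translates, under the orthogonal Edelman--Greene correspondence, into a condition on $Q$-tableaux that behaves correctly under a single Demazure operator. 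This is not obvious, and you have not supplied it.

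By contrast, the paper avoids the abstract canonical isomorphism entirely. It writes down a single explicit ``standardization'' map $\psi\colon\orf_n(T)\to\orf_n(\lambda)$ with $\lambda=(k,1^{k-1})$, replacing each letter $i_j$ (or $i_j'$) by $j$ and adjusting one prime, then verifies directly that $\psi$ commutes with $e_i,f_i$ for $i\in\{1',\overline 1\}\sqcup[n-1]$. The single-row hypothesis enters in a very concrete way: the only cases where commutation could fail are when $e_i$ or $f_i$ changes the first letter of the concatenated word, and an easy insertion argument shows these cases force $P^\O_\EG$ to have at least two rows. Once $\psi$ is known to be a $\qq_n$-isomorphism, $\psi(\borf_n(T))$ is visibly $\borf_n(w_\lambda,\phi)$ for the flag $\phi_j=i_j$, and a single application of Theorem~\ref{flag-thm} finishes. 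In effect the paper proves your key lemma for \emph{all} $j$ at once, rather than one step at a time; if you tried to fill your gap, the most natural argument would reconstruct $\psi$ anyway.
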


\begin{example}
If $T = \ytab{2 &3 & {5'} & {7'} & 8}$ then $\alpha^{\O}(T) = 05101011$.
\end{example}

\begin{proof}
It follows by inspecting the definition of $\simOCK$ that 
 every $a=(a^1,a^2,\dotsc,a^n) \in \ORF(T)$
has the property that  $\unprime(a^1a^2\cdots a^n)$ is a permutation of $i_1i_2\cdots i_k$.
We can therefore define an operator
  $\psi$ that acts on $a \in \cB$
by replacing each letter $i_j$ or $i_j'$ in each component by $j$, and then adding a prime to the first letter of the first nonempty component
if this letter is primed in $a$.
For example, $\psi$ would send $(\emptyset, 6'5,\emptyset,\emptyset,81') \mapsto (\emptyset, 3'2,\emptyset,\emptyset,41)$
and $(\emptyset, 65,\emptyset,\emptyset,81') \mapsto (\emptyset, 32,\emptyset,\emptyset,41)$
if   $\{i_1<i_2<\dots<i_k\} = \{1,5,6,8\}$.

Let $\lambda :=1^{k-1}k = (k,1,1,\dotsc,1) \in \NN^k$.
We claim that $\psi$ is a strict $\qq_n$-crystal morphism $\ORF(T)\to \ORF(\lambda)$.
If this holds, then $\psi$ must be an isomorphism 
as $\ORF(T)$ and $ \ORF(\lambda)$ are both connected normal $\qq_n$-crystals 
with  highest weight $\half(\lambda) = (k)$.
Given that  $\psi$ is an isomorphism, 
the image of $\BORF(T)$ under $\psi$
is evidently the set of reduced factorizations in $\ORF(\lambda) = \ORF(w_\lambda)$ that are 
bounded by any flag $\phi$ with $\phi_j = i_j$ for $j \in [k]$.
For such a flag, we have $\BORF(T) \cong \BORF(w_\lambda, \phi)$ as $\qq_n$-crystals,
and Proposition~\ref{alpha-prop} implies that $\BORF(w_\lambda, \phi) \cong \BORF(\Delta(\phi) \circ _n\lambda)$.
We obtain the desired result after this step  on noting that  $\Delta(\phi) \circ_n \lambda= k \e_{i_1} + \e_{i_2} +\e_{i_3}+ \cdots + \e_{i_k}$
whenever $i_k \leq n$, which must hold for $z$ to have no descents greater than $n$. 

It remains to prove our claim that $\psi$ is a strict $\qq_n$-crystal morphism $\ORF(T)\to \ORF(\lambda)$.
By Proposition~\ref{o-lowest-prop} and Proposition~\ref{o-dom-prop}, the unique lowest weight elements of $\ORF(T)$ and  $\ORF(\lambda)$ 
have the form 
 $a=(a^1,\emptyset,\dotsc,\emptyset)$
 and
  $b=(b^1,\emptyset,\dotsc,\emptyset)$, respectively,
where $a^1 = i_k'i_{k-1}i_{k-2}\cdots i_3i_2i_1$ and $b^1 = k'(k-1)(k-2)\cdots 321$.
It follows that $\psi$ at least maps $a \mapsto b$.

 The crystal graph of $\ORF(T)$ is weakly connected by the arrows labeled by $i \in \{1',\overline 1\} \sqcup [n-1]$.
To prove our claim, it therefore suffices to check that for any element $a=(a^1,a^2,\dotsc,a^n)  \in \ORF(T)$, 
we have
$\psi(e_i a) = e_i \psi(a)$ and $\psi(f_i a) = f_i \psi(a)$
for just these indices $i$.
 Since  $\unprime(a^1a^2\cdots a^n)$ is a permutation of $i_1i_2\cdots i_k$ and since $\psi$
 is the obvious standardization map if we ignore primes,
it follows immediately
 from Definitions~\ref{ef-def},~\ref{ef-1-def}, and~\ref{e0-f0-def} 
 that we at least have $\unprime(\psi(e_i a)) = \unprime(e_i \psi(a))$ and $\unprime(\psi(f_i a)) = \unprime(f_i \psi(a))$.
 If we take primes into consideration, then what we want to show 
 is still clear from the definitions 
 outside two exceptional cases.

The first exception is when $i \in [n-1]$ 
 and $b := e_i(a) \neq \zero$ but the words  $a^1a^2\cdots a^n$ and $b^1b^2\cdots b^n$ start with different letters.
 In this case the first nonempty component of $a$ must be $a^i$, and the component $b^i$ must have at least two letters. 
Then
 $\psi(e_i a)$ and $ e_i \psi(a)$ are identical if we ignore the primes on the first two letters of their $i$th components,
 but which of these letters is primed in principle could differ for the two factorizations.
 
 To deal with this exceptional case, we argue that it actually can never occur if $T$ has only one row.
 Recall from the proof of Theorem~\ref{o-reduced-tab-lem} that $T$ has the same shape as $\PO(a^1a^2\cdots a^n)$
 where $\PO$ denotes the orthogonal Edelman--Greene insertion algorithm from~\cite{Marberg2021a}.
 The case under consideration can only arise if $\max(a^i) < \max(a^{i+1})$
(as otherwise $a^1a^2\cdots a^n$ and $b^1b^2\cdots b^n$ would start with the same letter)
 and $a^{i+1}$ has a second letter that is between these two values
 (as otherwise 
 $\max(a^{i+1})$
 would not be unpaired in the $(a^i,a^{i+1})$-pairing relevant for computing $e_ia$). 
 In this situation, however, the shifted tableau $\PO(a^1a^2\cdots a^n)$ must have at least two rows,
 since inserting the second letter of $a^{i+1}$ according to procedure in~\cite[Def.~3.1]{Marberg2021a} 
 will ``bump'' $\max(a^{i+1})$ from   a non-diagonal position in the first row to the second row.
 
 The second exception is when $i \in [n-1]$ 
 and $b := f_i(a) \neq \zero$ but  $a^1a^2\cdots a^n$ and $b^1b^2\cdots b^n$ start with different letters.
This situation also can never occur if $T$ has only one row, 
as interchanging $a$ and $b$ would put us back in the first exceptional case.
We conclude that  $\psi(e_i a) = e_i \psi(a)$ and $\psi(f_i a) = f_i \psi(a)$
for all  $i \in \{1',\overline 1\} \sqcup [n-1]$ and $a=(a^1,a^2,\dotsc,a^n)  \in \ORF(T)$,
as needed.
\end{proof}

It may be possible to use a similar argument to calculate $\alpha^\Sp(T)$ when $T$ has only one row.
However, the relevant formula for $\alpha^\Sp(T)$ is not as easy to guess; see the examples in Figure~\ref{sp-fig1}.

We mention one other conjecture.
Recall from Proposition~\ref{rf-nonempty-prop} that the (Lehmer) code of $w \in S_\infty$ is  the weak composition $c(w) = (c_1,c_2,\ldots)$ with $c_i = \abs{\{ j  \in \ZZ \mid i<j\text{, }w(i)>w(j)\}}$. 
If $w$ is \defn{vexillary} is the sense of being a $2143$-avoiding permutation, then $\fkS_w = \kappa_{c(w)}$ by~\cite[Thm.~22]{ReinerShimozono} and consequently $\BRF(w) \iso \BRF(c(w))$ whenever $n$ is greater than or equal to all descents of $w$.

If $z \in I_\infty$ is vexillary, then its code is a symmetric weak composition by the remarks before~\cite[Prob.~3.36]{MS2023}.
For such involutions, our computations suggest that $\fkSO_z = \qkey_{c(z)}$ \cite[Conj.~2.39]{MS2023}. 
Further calculations support a crystal-theoretic generalization of this conjecture:
 
\begin{conjecture}\label{z-vex-conj}
If $z \in I_\infty$ is vexillary with largest descent at most $n$,
 then there is an isomorphism of $\qq_n$-crystals $\BORF(z) \iso \BORF(c(z))$.
 \end{conjecture}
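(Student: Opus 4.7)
The strategy is to reduce Conjecture~\ref{z-vex-conj} to two sub-claims: that $\BORF(z)$ is connected when $z$ is vexillary, and that its unique highest weight element has weight matching that of $\BORF(c(z))$. If both hold, then by Corollary~\ref{o-highest-cor} and Proposition~\ref{unique-embed-prop3}(d), the crystal $\BORF(z)$ must be isomorphic to the unique Demazure $\qq_n$-crystal with that highest weight and character $\qkey_{c(z)}$, namely $\BORF(c(z))$.

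For connectedness, I would first attempt to show that $\cR^\O(z)$ is a single $\simOCK$-equivalence class whenever $z$ is vexillary. By Theorem~\ref{o-reduced-tab-lem}, this is equivalent to exhibiting a unique increasing $\O$-reduced tableau $T$ for $z$. In the non-involution case, the classical fact that $F_w = s_{\lambda(w)}$ when $w$ is vexillary forces $\cR(w)$ to be a single Coxeter--Knuth class (via Edelman--Green insertion); the analogous statement in the orthogonal setting, if it holds, should follow from the stable character identity $\widehat F_z(\x) = Q_\mu(\x)$ for vexillary $z$ combined with the injectivity of orthogonal EG insertion on $\simOCK$-classes shown in~\cite{Marberg2021a}. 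Once uniqueness of $T$ is established, Proposition~\ref{o-subcrystal-prop} combined with Lemma~\ref{o-connect-lem} immediately gives connectedness of $\BORF(z)$ as a $\qq_n$-crystal.

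For the character identity, I would aim to prove $\ch\bigl(\BORF(z)\bigr) = \qkey_{c(z)}$ directly. For sufficiently large $n$ this becomes the full polynomial identity $\fkSO_z = \qkey_{c(z)}$ of \cite[Conj.~2.39]{MS2023}, which is itself open. One natural route is induction on the number of inversions of $z$, using equation~\eqref{ischub-eq1} for $\partial_i \fkSO_z$ on the left and the recursion $\qkey_{s_i \circ \alpha} = \pi_i \qkey_\alpha$ together with vexillary-preserving conjugation on the right. Alternatively, one could use Theorem~\ref{thm:single_row_o_tableau} as a base case and induct on the number of rows of the shape of $z$ by peeling off the top row of the $\O$-reduced tableau and reducing to a smaller vexillary involution, combined with a Demazure-operator identity matching the inductive step.

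The main obstacle will be proving uniqueness of the increasing $\O$-reduced tableau for vexillary $z$ together with the exact value of $\alpha^\O(T)$. The dominant case (Proposition~\ref{o-dom-prop}) and the one-row case (Theorem~\ref{thm:single_row_o_tableau}) already require nontrivial bookkeeping with primed insertion, and the general vexillary case must cope with the fact that the code $c(z)$ of a vexillary involution, while symmetric, need not be weakly decreasing---so the identification $\alpha^\O(T) = c(z)$ is genuinely asking that orthogonal EG insertion linearize the shape-to-code correspondence in the same way that ordinary EG insertion does for vexillary permutations. Bridging that gap, perhaps by exhibiting an explicit insertion-based crystal isomorphism $\BORF(z) \to \BORF(c(z))$ in the spirit of the classical vexillary bijection, is where most of the work will lie.
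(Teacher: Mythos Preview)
This statement is presented in the paper as a \emph{conjecture}, not a theorem; the paper offers no proof, only computational evidence and the remark that it would generalize \cite[Conj.~2.39]{MS2023}. There is therefore no proof in the paper to compare your proposal against.

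Your outline is a reasonable sketch of how one might attack the problem, and you are commendably honest about the obstacles. But it is worth stating plainly that what you have written is not a proof: the two pillars you identify are themselves open. The character identity $\fkSO_z = \qkey_{c(z)}$ for vexillary $z$ is exactly \cite[Conj.~2.39]{MS2023}, which the paper cites as unproved. And the claim that $\cR^\O(z)$ is a single $\simOCK$-class for vexillary $z$ (equivalently, that the involution Stanley symmetric function is a single Schur $Q$-function) is also not established in the paper or its references in the generality needed---Proposition~\ref{o-dom-prop} handles only the dominant case. So your reduction is to two conjectures rather than to known results. Moreover, even granting both sub-claims, you would still need to know that $\BORF(z)$ is a Demazure $\qq_n$-crystal in order to invoke Proposition~\ref{unique-embed-prop3}; connectedness alone does not give this, and the relevant statement is precisely Conjecture~\ref{o-demazure-conj} restricted to vexillary $z$, which is again open.
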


As discussed in~\cite[\S2.5]{MS2023}, the crystals $\BRF(w)$ and  $\BORF(z)$ can only be connected for all sufficiently large $n$ when $w$ and $z$ are vexillary.
There is a similar \defn{fpf-vexillary} property that characterizes when $\BSpRF(y)$ is connected for $y \in \Ifpf_\infty$~\cite[Prop.~2.37]{MS2023}.
However, we do not currently have a prediction for how to express $\alpha$ in terms of $y$ when $\BSpRF(y) \iso \BSpRF(\alpha)$.

\def\LGr{\mathsf{LGr}}

 \section{Concluding remarks}
 
As mentioned in the introduction, key polynomials are characters of $B_n$-modules $\Gamma(X_w, L_{-\lambda})$ given by restricting the Borel--Weil construction to a Schubert variety $X_w \subseteq B_n^- \backslash \GL_n$.
However, it is an open problem to find an analogous geometric description of $P$- and $Q$-key polynomials.

There is an interpretation of the Schur $Q$-function $Q_\lambda(\xx)$ as the character of the polynomial representation indexed by $\lambda$ for the queer Lie superalgebra $\q_n$~\cite{Kac78} (see also~\cite[\S2.3.4]{CW12}). 
Thus, a reasonable goal on the way to resolving our open problem would be to build a representation of $\q_n$ or its corresponding Lie supergroup $\mathsf{Q}_n := \mathsf{Q}_n(\CC)$ on $\Gamma(Y_w, L_{-\lambda})$ for some space $Y$ with subvarieties $Y_w$ naturally indexed (or constructed from) permutations $w$ in (some subset of) $S_n$.

Instead of working with $\O_n$-orbits of $\flagvar_n$,   consider the closures of $B_n^-$-orbits $Y_w := \overline{B_n^- w \GL_n / \O_n}$ in the symmetric space $Y = \SymO_n := \GL_n / \O_n$.
Note that $\SymO_n$ is the set of $n \times n$ symmetric $\CC$-matrices and $\SymO_n = Y_{w_0}$.
However, $\SymO_n$ is an affine variety rather than projective variety, and so it (and the expected subvarieties $Y_w$) might not have useful cohomology theories.

One way to fix this could be to intersect $Y_w$ with the $n \times n$ unitary matrices $\U_n$ since $Y \cap \U_n = \U_n / \O_n(\RR)$ is known to be diffeomorphic to the Lagrangian Grassmannian $\LGr(\RR^{2n})$~\cite{Arnold67}, which is a projective variety.
This gives us a candidate for a nice decomposition of $\LGr(\RR^{2n})$ to understand its cohomology ring.
There is another homogeneous space description of $\LGr_n(\RR^{2n}) \iso \overline{\Sp}_n / \U_n$, where $\overline{\Sp}_n = \Sp_{2n} \cap \U_{2n}$ is the compact real form of $\Sp_{2n}$. 
The complex Lagragian Grassmannian $\LGr(\CC^{2n})$ has also been studied and has a known relation to Schur $Q$-functions (e.g.,~\cite{IN09}
).
It has its own homogeneous space construction as $\LGr(\CC^{2n}) \iso \Sp_{2n} / P$, where $P$ is the maximal parabolic subgroup associated to the long simple root.

This suggests that there might be (a real form of) a queer Lie supergroup $\mathsf{Q}_n$ acting on the set of global sections $\Gamma(\LGr(F^{2n}), L_{-\lambda})$ for $F = \RR$ or $\CC$   and some appropriate line bundle $L_{-\lambda}$.
Then by restricting to the orbit closure $Y_w$ indexed by $w$, we would obtain a representation of some Lie sub-(super)group of $\mathsf{Q}_n$ analogous to $B_n$ whose corresponding character would be equal to the $Q$-key polynomial $\qkey_{w\lambda}$.

Another potential geometric interpretation of $P$- and $Q$-key polynomials could lie with some generalization of Hessenberg varieties~\cite{CMPS92}.
Specifically, let $\lambda = (k)$ be a single row of length $k$, and let $\fkS_w(\xx; t_1, t_2, \ldots)$ denote the \defn{double Schubert polynomial} with equivariant parameters $t_i$ (see, e.g.,~\cite{Knutson22}).
Then we have
\[
\fkS_{w_{\lambda}}(\xx; -x_1, -x_2, \ldots) = \fkSO_z = \qkey_{\lambda}
\quand
\fkS_{w_{\lambda}}(\xx; -x_2, -x_3, \ldots) = \fkSS_z = \pkey_{\lambda},
\]
where $z$ is the unique dominant (fpf) involution of shape $\lambda$.
Since Hessenberg varieties are degenercy loci, we can obtain polynomial representatives for the corresponding cohomology classes by specializations of double Schubert polynomials~\cite[\S3]{AT10}.
These are specializations of the factorial Schur function $s_{\lambda}(x_1 | t_1, t_2, \ldots)$ (see, e.g.,~\cite[$6^{\text{th}}$ variation]{Macdonald92}) or a flagged version for general $\xx$.
Moreover, a Borel--Weil(--Bott) construction for Hessenberg varieties was considered in~\cite{AFZ20}.

Due to the positive shift by the row index (as opposed to the negative shift to compute the content of a box used in the factorial Schur function construction), these formulas break down for $\lambda$ with more than one part.
We can get around this by using the half shapes and the corresponding \defn{factorial Schur $P$- and $Q$-functions} (see, e.g.,~\cite{IN09}), denoted here by $P_{\mu}(\xx|t_1, t_2, \ldots)$ and $Q_{\mu}(\xx|t_1, t_2, \ldots)$ and specializing the equivariant parameters $t_i = -x_i$.
However, we cannot simply restrict the number of variables to the length of the half shape as before as we need to only have a single product of binomials.
Thus, we impose a flagging $\phi$ on the shifted tableaux, where row $i$ only contains entries at most $\phi_i$, and we denote the resulting polynomials by $P_{\mu,\phi}(\xx|t_1, t_2, \ldots)$ and $Q_{\mu,\phi}(\xx|t_1, t_2, \ldots)$.
These do not appear to have been considered before in the literature, 
and are slightly different from the flagged factorial Schur $P$- and $Q$-functions studied in~\cite{Matsumura19},
However, they specialize to the $P$- and $Q$-key polynomials of any symmetric partition $\lambda$ by the formulas
\be
\label{eq:factorial_PQ_specialization}
\pkey_{\lambda} = P_{\shalf(\lambda), \phi^S}(\xx | {-\xx})
\quand
\qkey_{\lambda} = Q_{\half(\lambda), \phi^S}(\xx| {-\xx}).
\ee

Degeneracy loci constructions are given in~\cite{Wyser13} for the $\Sp_{2n}$/$\O_n$-orbit closures in the complete flag variety.
In the $\O_n$ case, these constructions were used in~\cite{MP2019a} to obtain formulas for $K$-theoretic generalizations of 
involution Schubert polynomials indexed by vexillary involutions.
It may be possible to find an analogous construction of Hessenberg varieties (within the $\Sp_{2n}$ and $\O_n$ flag varieties) as degeneracy loci that will yield $\pkey_{\lambda}$ and $\qkey_{\lambda}$ as a consequence.
By translating the divided difference operators back to a geometric construction, one would be able to obtain a fully geometric description of the $P$- and $Q$-key polynomials.

A now classical fact due to Wachs~\cite[Thm.~2.4]{Wachs85} is that every flagged Schur function is the Schubert polynomial $\partial_w \xx^{\lambda}$ for some $\lambda$ and some word $w$ analogous to the one given in~\eqref{Delta-eq}.
Likewise, Reiner--Shimozono showed that every flagged Schur function is a key polynomial~\cite[Thm.~23]{ReinerShimozono}.
From~\cite[Thm.~5.3]{Matsumura19}, we have a positive monomial expansion of $P_{\lambda}(\xx|{-t_1}, -t_2, \ldots)$ and $Q_{\lambda}(\xx|{-t_1}, -t_2, \ldots)$.
Therefore, a natural question would be to see if $P_{\mu,\phi}(\xx|{-\xx})$ (respectively $Q_{\mu,\phi}(\xx|{-\xx})$), for any flag $\phi$, is a $P$-key (respectively $Q$-key) polynomial (or if not, a positive expansion of such polynomials) or an involution Schubert polynomial.
One way to attempt to answer this would be to see if the restriction of the natural $\q_n$-crystal (respectively $\qq_n$-crystal) is a Demazure $\q_n$-crystal (respectively $\qq_n$-crystal).

\begin{figure}[h]
\begin{center}
\begin{tabular}{l|l|l|l}
\ytableausetup{smalltableaux}
\begin{tabular}[t]{ll}
$T$ & $\alpha^{\Sp}(T)$ \\ \hline \\[-6pt]
$\begin{ytableau}2\end{ytableau}$ & {\footnotesize$22$}
\\[-6pt] \\
$\begin{ytableau}4\end{ytableau}$ & {\footnotesize$0022$}
\\[-6pt] \\
$\begin{ytableau}6\end{ytableau}$ & {\footnotesize$000022$}
\\[-6pt] \\
$\begin{ytableau}2 & 1\end{ytableau}$ & {\footnotesize$311$}
\\[-6pt] \\
$\begin{ytableau}4 & 2\end{ytableau}$ & {\footnotesize$1301$}
\\[-6pt] \\
$\begin{ytableau}6 & 2\end{ytableau}$ & {\footnotesize$130001$}
\\[-6pt] \\
$\begin{ytableau}4 & 3\end{ytableau}$ & {\footnotesize$00311$}
\\[-6pt] \\
$\begin{ytableau}6 & 4\end{ytableau}$ & {\footnotesize$001301$}
\\[-6pt] \\
$\begin{ytableau}6 & 5\end{ytableau}$ & {\footnotesize$0000311$}
\\[-6pt] \\
$\begin{ytableau}4 & 2 & 1\end{ytableau}$ & {\footnotesize$4111$}
\\[-6pt] \\
$\begin{ytableau}6 & 2 & 1\end{ytableau}$ & {\footnotesize$411001$}
\\[-6pt] \\
$\begin{ytableau}4 & 3 & 2\end{ytableau}$ & {\footnotesize$14011$}
\\[-6pt] \\
$\begin{ytableau}6 & 4 & 2\end{ytableau}$ & {\footnotesize$140101$}
\\[-6pt] \\
$\begin{ytableau}6 & 5 & 2\end{ytableau}$ & {\footnotesize$1400011$}
\\[-6pt] \\
$\begin{ytableau}6 & 4 & 3\end{ytableau}$ & {\footnotesize$004111$}
\\[-6pt] \\
$\begin{ytableau}6 & 5 & 4\end{ytableau}$ & {\footnotesize$0014011$}
\\[-6pt] \\
$\begin{ytableau}4 & 3 & 2 & 1\end{ytableau}$ & {\footnotesize$51111$}
\\[-6pt] \\
$\begin{ytableau}6 & 4 & 2 & 1\end{ytableau}$ & {\footnotesize$511101$}
\\[-6pt] \\
$\begin{ytableau}6 & 5 & 2 & 1\end{ytableau}$ & {\footnotesize$5110011$}
\\[-6pt] \\
$\begin{ytableau}6 & 4 & 3 & 2\end{ytableau}$ & {\footnotesize$150111$}
\\[-6pt] \\
$\begin{ytableau}6 & 5 & 4 & 2\end{ytableau}$ & {\footnotesize$1501011$}
\\[-6pt] \\
$\begin{ytableau}6 & 5 & 4 & 3\end{ytableau}$ & {\footnotesize$0051111$}
\\[-6pt] \\
$\begin{ytableau}6 & 4 & 3 & 2 & 1\end{ytableau}$ & {\footnotesize$611111$}
\end{tabular}
&
\ytableausetup{smalltableaux}
\begin{tabular}[t]{ll}
$T$ & $\alpha^{\Sp}(T)$ \\ \hline \\[-6pt]
$\begin{ytableau}6 & 5 & 4 & 2 & 1\end{ytableau}$ & {\footnotesize$6111011$}
\\[-6pt] \\
$\begin{ytableau}6 & 5 & 4 & 3 & 2\end{ytableau}$ & {\footnotesize$1601111$}
\\[-6pt] \\
$\begin{ytableau}6 & 5 & 4 & 3 & 2 & 1\end{ytableau}$ & {\footnotesize$7111111$}
\\[-6pt] \\
$\begin{ytableau}4 & 3 \\
\none & 2\end{ytableau}$ & {\footnotesize$333$}
\\[-6pt] \\
$\begin{ytableau}6 & 3 \\
\none & 2\end{ytableau}$ & {\footnotesize$333$}
\\[-6pt] \\
$\begin{ytableau}6 & 4 \\
\none & 2\end{ytableau}$ & {\footnotesize$3303$}
\\[-6pt] \\
$\begin{ytableau}6 & 5 \\
\none & 2\end{ytableau}$ & {\footnotesize$33003$}
\\[-6pt] \\
$\begin{ytableau}6 & 5 \\
\none & 4\end{ytableau}$ & {\footnotesize$00333$}
\\[-6pt] \\
$\begin{ytableau}4 & 3 & 1 \\
\none & 2 & \none\end{ytableau}$ & {\footnotesize$4331$}
\\[-6pt] \\
$\begin{ytableau}6 & 4 & 1 \\
\none & 2 & \none\end{ytableau}$ & {\footnotesize$4133$}
\\[-6pt] \\
$\begin{ytableau}6 & 5 & 1 \\
\none & 2 & \none\end{ytableau}$ & {\footnotesize$41303$}
\\[-6pt] \\
$\begin{ytableau}4 & 3 & 2 \\
\none & 2 & \none\end{ytableau}$ & {\footnotesize$34301$}
\\[-6pt] \\
$\begin{ytableau}6 & 4 & 3 \\
\none & 2 & \none\end{ytableau}$ & {\footnotesize$334001$}
\\[-6pt] \\
$\begin{ytableau}6 & 5 & 3 \\
\none & 2 & \none\end{ytableau}$ & {\footnotesize$3340001$}
\\[-6pt] \\
$\begin{ytableau}6 & 5 & 4 \\
\none & 2 & \none\end{ytableau}$ & {\footnotesize$3304001$}
\\[-6pt] \\
$\begin{ytableau}6 & 5 & 2 \\
\none & 4 & \none\end{ytableau}$ & {\footnotesize$14033$}
\\[-6pt] \\
$\begin{ytableau}6 & 5 & 3 \\
\none & 4 & \none\end{ytableau}$ & {\footnotesize$004331$}
\\[-6pt] \\
$\begin{ytableau}6 & 5 & 4 \\
\none & 4 & \none\end{ytableau}$ & {\footnotesize$0034301$}
\end{tabular}
&
\ytableausetup{smalltableaux}
\begin{tabular}[t]{ll}
$T$ & $\alpha^{\Sp}(T)$ \\ \hline \\[-6pt]
$\begin{ytableau}4 & 3 & 2 \\
\none & 2 & 1\end{ytableau}$ & {\footnotesize$4422$}
\\[-6pt] \\
$\begin{ytableau}6 & 4 & 3 \\
\none & 2 & 1\end{ytableau}$ & {\footnotesize$4242$}
\\[-6pt] \\
$\begin{ytableau}6 & 5 & 4 \\
\none & 2 & 1\end{ytableau}$ & {\footnotesize$4224$}
\\[-6pt] \\
$\begin{ytableau}6 & 5 & 3 \\
\none & 4 & 2\end{ytableau}$ & {\footnotesize$24402$}
\\[-6pt] \\
$\begin{ytableau}6 & 5 & 4 \\
\none & 4 & 2\end{ytableau}$ & {\footnotesize$24042$}
\\[-6pt] \\
$\begin{ytableau}6 & 5 & 4 \\
\none & 4 & 3\end{ytableau}$ & {\footnotesize$004422$}
\\[-6pt] \\
$\begin{ytableau}4 & 3 & 2 & 1 \\
\none & 2 & \none & \none\end{ytableau}$ & {\footnotesize$53311$}
\\[-6pt] \\
$\begin{ytableau}6 & 4 & 3 & 1 \\
\none & 2 & \none & \none\end{ytableau}$ & {\footnotesize$533101$}
\\[-6pt] \\
$\begin{ytableau}6 & 5 & 4 & 1 \\
\none & 2 & \none & \none\end{ytableau}$ & {\footnotesize$5133001$}
\\[-6pt] \\
$\begin{ytableau}6 & 5 & 2 & 1 \\
\none & 4 & \none & \none\end{ytableau}$ & {\footnotesize$51133$}
\\[-6pt] \\
$\begin{ytableau}6 & 4 & 3 & 2 \\
\none & 2 & \none & \none\end{ytableau}$ & {\footnotesize$353011$}
\\[-6pt] \\
$\begin{ytableau}6 & 5 & 4 & 3 \\
\none & 2 & \none & \none\end{ytableau}$ & {\footnotesize$3350011$}
\\[-6pt] \\
$\begin{ytableau}6 & 5 & 3 & 2 \\
\none & 4 & \none & \none\end{ytableau}$ & {\footnotesize$150331$}
\\[-6pt] \\
$\begin{ytableau}6 & 5 & 4 & 2 \\
\none & 4 & \none & \none\end{ytableau}$ & {\footnotesize$1503301$}
\\[-6pt] \\
$\begin{ytableau}6 & 5 & 4 & 3 \\
\none & 4 & \none & \none\end{ytableau}$ & {\footnotesize$0053311$}
\\[-6pt] \\
$\begin{ytableau}4 & 3 & 2 & 1 \\
\none & 2 & 1 & \none\end{ytableau}$ & {\footnotesize$54221$}
\\[-6pt] \\
$\begin{ytableau}6 & 5 & 3 & 1 \\
\none & 4 & 2 & \none\end{ytableau}$ & {\footnotesize$52412$}
\end{tabular}
&
\ytableausetup{smalltableaux}
\begin{tabular}[t]{ll}
$T$ & $\alpha^{\Sp}(T)$ \\ \hline \\[-6pt]
$\begin{ytableau}6 & 5 & 4 & 1 \\
\none & 4 & 2 & \none\end{ytableau}$ & {\footnotesize$51242$}
\\[-6pt] \\
$\begin{ytableau}6 & 4 & 3 & 2 \\
\none & 2 & 1 & \none\end{ytableau}$ & {\footnotesize$452201$}
\\[-6pt] \\
$\begin{ytableau}6 & 5 & 4 & 3 \\
\none & 2 & 1 & \none\end{ytableau}$ & {\footnotesize$4252001$}
\\[-6pt] \\
$\begin{ytableau}6 & 5 & 3 & 2 \\
\none & 4 & 2 & \none\end{ytableau}$ & {\footnotesize$254021$}
\\[-6pt] \\
$\begin{ytableau}6 & 5 & 4 & 3 \\
\none & 4 & 2 & \none\end{ytableau}$ & {\footnotesize$2450201$}
\\[-6pt] \\
$\begin{ytableau}6 & 5 & 4 & 2 \\
\none & 4 & 3 & \none\end{ytableau}$ & {\footnotesize$150422$}
\\[-6pt] \\
$\begin{ytableau}6 & 5 & 4 & 3 \\
\none & 4 & 3 & \none\end{ytableau}$ & {\footnotesize$0054221$}
\\[-6pt] \\
$\begin{ytableau}6 & 5 & 3 & 2 \\
\none & 4 & 2 & 1\end{ytableau}$ & {\footnotesize$55222$}
\\[-6pt] \\
$\begin{ytableau}6 & 5 & 4 & 3 \\
\none & 4 & 2 & 1\end{ytableau}$ & {\footnotesize$52522$}
\\[-6pt] \\
$\begin{ytableau}6 & 5 & 4 & 3 \\
\none & 4 & 3 & 2\end{ytableau}$ & {\footnotesize$255022$}
\\[-6pt] \\
$\begin{ytableau}6 & 4 & 3 & 2 & 1 \\
\none & 2 & \none & \none & \none\end{ytableau}$ & {\footnotesize$633111$}
\\[-6pt] \\
$\begin{ytableau}6 & 5 & 4 & 3 & 1 \\
\none & 2 & \none & \none & \none\end{ytableau}$ & {\footnotesize$6331011$}
\\[-6pt] \\
$\begin{ytableau}6 & 5 & 3 & 2 & 1 \\
\none & 4 & \none & \none & \none\end{ytableau}$ & {\footnotesize$611331$}
\\[-6pt] \\
$\begin{ytableau}6 & 5 & 4 & 2 & 1 \\
\none & 4 & \none & \none & \none\end{ytableau}$ & {\footnotesize$6113301$}
\\[-6pt] \\
$\begin{ytableau}6 & 5 & 4 & 3 & 2 \\
\none & 2 & \none & \none & \none\end{ytableau}$ & {\footnotesize$3630111$}
\\[-6pt] \\
$\begin{ytableau}6 & 5 & 4 & 3 & 2 \\
\none & 4 & \none & \none & \none\end{ytableau}$ & {\footnotesize$1603311$}
\\[-6pt] \\
$\begin{ytableau}6 & 4 & 3 & 2 & 1 \\
\none & 2 & 1 & \none & \none\end{ytableau}$ & {\footnotesize$642211$}
\end{tabular}
\end{tabular}
\end{center}
\caption{Some $\Sp$-reduced tableaux with the weak compositions predicted in Conjecture~\ref{sp-demazure-conj2}.}\label{sp-fig1}
\end{figure}
\begin{figure}[h]
\begin{center}
\begin{tabular}{l|l|l}
\ytableausetup{smalltableaux}
\begin{tabular}[t]{ll}
$T$ & $\alpha^{\Sp}(T)$ \\ \hline \\[-6pt]
$\begin{ytableau}6 & 5 & 3 & 2 & 1 \\
\none & 4 & 2 & \none & \none\end{ytableau}$ & {\footnotesize$624121$}
\\[-6pt] \\
$\begin{ytableau}6 & 5 & 4 & 3 & 1 \\
\none & 4 & 2 & \none & \none\end{ytableau}$ & {\footnotesize$6241201$}
\\[-6pt] \\
$\begin{ytableau}6 & 5 & 4 & 2 & 1 \\
\none & 4 & 3 & \none & \none\end{ytableau}$ & {\footnotesize$611422$}
\\[-6pt] \\
$\begin{ytableau}6 & 5 & 4 & 3 & 2 \\
\none & 2 & 1 & \none & \none\end{ytableau}$ & {\footnotesize$4622011$}
\\[-6pt] \\
$\begin{ytableau}6 & 5 & 4 & 3 & 2 \\
\none & 4 & 2 & \none & \none\end{ytableau}$ & {\footnotesize$2640211$}
\\[-6pt] \\
$\begin{ytableau}6 & 5 & 4 & 3 & 2 \\
\none & 4 & 3 & \none & \none\end{ytableau}$ & {\footnotesize$1604221$}
\\[-6pt] \\
$\begin{ytableau}6 & 5 & 3 & 2 & 1 \\
\none & 4 & 2 & 1 & \none\end{ytableau}$ & {\footnotesize$652221$}
\\[-6pt] \\
$\begin{ytableau}6 & 5 & 4 & 3 & 1 \\
\none & 4 & 3 & 2 & \none\end{ytableau}$ & {\footnotesize$625122$}
\\[-6pt] \\
$\begin{ytableau}6 & 5 & 4 & 3 & 2 \\
\none & 4 & 2 & 1 & \none\end{ytableau}$ & {\footnotesize$5622201$}
\\[-6pt] \\
$\begin{ytableau}6 & 5 & 4 & 3 & 2 \\
\none & 4 & 3 & 2 & \none\end{ytableau}$ & {\footnotesize$2650221$}
\\[-6pt] \\
$\begin{ytableau}6 & 5 & 4 & 3 & 2 \\
\none & 4 & 3 & 2 & 1\end{ytableau}$ & {\footnotesize$662222$}
\\[-6pt] \\
$\begin{ytableau}6 & 5 & 4 & 3 & 2 & 1 \\
\none & 2 & \none & \none & \none & \none\end{ytableau}$ & {\footnotesize$7331111$}
\\[-6pt] \\
$\begin{ytableau}6 & 5 & 4 & 3 & 2 & 1 \\
\none & 4 & \none & \none & \none & \none\end{ytableau}$ & {\footnotesize$7113311$}
\\[-6pt] \\
$\begin{ytableau}6 & 5 & 4 & 3 & 2 & 1 \\
\none & 2 & 1 & \none & \none & \none\end{ytableau}$ & {\footnotesize$7422111$}
\\[-6pt] \\
$\begin{ytableau}6 & 5 & 4 & 3 & 2 & 1 \\
\none & 4 & 2 & \none & \none & \none\end{ytableau}$ & {\footnotesize$7241211$}
\\[-6pt] \\
$\begin{ytableau}6 & 5 & 4 & 3 & 2 & 1 \\
\none & 4 & 3 & \none & \none & \none\end{ytableau}$ & {\footnotesize$7114221$}
\\[-6pt] \\
$\begin{ytableau}6 & 5 & 4 & 3 & 2 & 1 \\
\none & 4 & 2 & 1 & \none & \none\end{ytableau}$ & {\footnotesize$7522211$}
\end{tabular}
&
\ytableausetup{smalltableaux}
\begin{tabular}[t]{ll}
$T$ & $\alpha^{\Sp}(T)$ \\ \hline \\[-6pt]
$\begin{ytableau}6 & 5 & 4 & 3 & 2 & 1 \\
\none & 4 & 3 & 2 & \none & \none\end{ytableau}$ & {\footnotesize$7251221$}
\\[-6pt] \\
$\begin{ytableau}6 & 5 & 4 & 3 & 2 & 1 \\
\none & 4 & 3 & 2 & 1 & \none\end{ytableau}$ & {\footnotesize$7622221$}
\\[-6pt] \\
$\begin{ytableau}6 & 5 & 4 \\
\none & 4 & 3 \\
\none & \none & 2\end{ytableau}$ & {\footnotesize$4444$}
\\[-6pt] \\
$\begin{ytableau}6 & 5 & 4 & 1 \\
\none & 4 & 3 & \none \\
\none & \none & 2 & \none\end{ytableau}$ & {\footnotesize$54441$}
\\[-6pt] \\
$\begin{ytableau}6 & 5 & 4 & 2 \\
\none & 4 & 3 & \none \\
\none & \none & 2 & \none\end{ytableau}$ & {\footnotesize$454401$}
\\[-6pt] \\
$\begin{ytableau}6 & 5 & 4 & 3 \\
\none & 4 & 3 & \none \\
\none & \none & 2 & \none\end{ytableau}$ & {\footnotesize$4454001$}
\\[-6pt] \\
$\begin{ytableau}6 & 5 & 4 & 2 \\
\none & 4 & 3 & 1 \\
\none & \none & 2 & \none\end{ytableau}$ & {\footnotesize$55442$}
\\[-6pt] \\
$\begin{ytableau}6 & 5 & 4 & 3 \\
\none & 4 & 3 & 1 \\
\none & \none & 2 & \none\end{ytableau}$ & {\footnotesize$54542$}
\\[-6pt] \\
$\begin{ytableau}6 & 5 & 4 & 3 \\
\none & 4 & 3 & 2 \\
\none & \none & 2 & \none\end{ytableau}$ & {\footnotesize$455402$}
\\[-6pt] \\
$\begin{ytableau}6 & 5 & 4 & 3 \\
\none & 4 & 3 & 2 \\
\none & \none & 2 & 1\end{ytableau}$ & {\footnotesize$55533$}
\\[-6pt] \\
$\begin{ytableau}6 & 5 & 4 & 2 & 1 \\
\none & 4 & 3 & \none & \none \\
\none & \none & 2 & \none & \none\end{ytableau}$ & {\footnotesize$644411$}
\\[-6pt] \\
$\begin{ytableau}6 & 5 & 4 & 3 & 1 \\
\none & 4 & 3 & \none & \none \\
\none & \none & 2 & \none & \none\end{ytableau}$ & {\footnotesize$6444101$}
\\[-6pt] \\
$\begin{ytableau}6 & 5 & 4 & 3 & 2 \\
\none & 4 & 3 & \none & \none \\
\none & \none & 2 & \none & \none\end{ytableau}$ & {\footnotesize$4644011$}
\\[-6pt] \\
$\begin{ytableau}6 & 5 & 4 & 2 & 1 \\
\none & 4 & 3 & 1 & \none \\
\none & \none & 2 & \none & \none\end{ytableau}$ & {\footnotesize$654421$}
\end{tabular}
&
\ytableausetup{smalltableaux}
\begin{tabular}[t]{ll}
$T$ & $\alpha^{\Sp}(T)$ \\ \hline \\[-6pt]
$\begin{ytableau}6 & 5 & 4 & 3 & 1 \\
\none & 4 & 3 & 2 & \none \\
\none & \none & 2 & \none & \none\end{ytableau}$ & {\footnotesize$645412$}
\\[-6pt] \\
$\begin{ytableau}6 & 5 & 4 & 3 & 2 \\
\none & 4 & 3 & 1 & \none \\
\none & \none & 2 & \none & \none\end{ytableau}$ & {\footnotesize$5644201$}
\\[-6pt] \\
$\begin{ytableau}6 & 5 & 4 & 3 & 2 \\
\none & 4 & 3 & 2 & \none \\
\none & \none & 2 & \none & \none\end{ytableau}$ & {\footnotesize$4654021$}
\\[-6pt] \\
$\begin{ytableau}6 & 5 & 4 & 3 & 1 \\
\none & 4 & 3 & 2 & \none \\
\none & \none & 2 & 1 & \none\end{ytableau}$ & {\footnotesize$655331$}
\\[-6pt] \\
$\begin{ytableau}6 & 5 & 4 & 3 & 2 \\
\none & 4 & 3 & 2 & \none \\
\none & \none & 2 & 1 & \none\end{ytableau}$ & {\footnotesize$5653301$}
\\[-6pt] \\
$\begin{ytableau}6 & 5 & 4 & 3 & 2 \\
\none & 4 & 3 & 2 & 1 \\
\none & \none & 2 & \none & \none\end{ytableau}$ & {\footnotesize$664422$}
\\[-6pt] \\
$\begin{ytableau}6 & 5 & 4 & 3 & 2 \\
\none & 4 & 3 & 2 & 1 \\
\none & \none & 2 & 1 & \none\end{ytableau}$ & {\footnotesize$665332$}
\\[-6pt] \\
$\begin{ytableau}6 & 5 & 4 & 3 & 2 & 1 \\
\none & 4 & 3 & \none & \none & \none \\
\none & \none & 2 & \none & \none & \none\end{ytableau}$ & {\footnotesize$7444111$}
\\[-6pt] \\
$\begin{ytableau}6 & 5 & 4 & 3 & 2 & 1 \\
\none & 4 & 3 & 1 & \none & \none \\
\none & \none & 2 & \none & \none & \none\end{ytableau}$ & {\footnotesize$7544211$}
\\[-6pt] \\
$\begin{ytableau}6 & 5 & 4 & 3 & 2 & 1 \\
\none & 4 & 3 & 2 & \none & \none \\
\none & \none & 2 & \none & \none & \none\end{ytableau}$ & {\footnotesize$7454121$}
\\[-6pt] \\
$\begin{ytableau}6 & 5 & 4 & 3 & 2 & 1 \\
\none & 4 & 3 & 2 & \none & \none \\
\none & \none & 2 & 1 & \none & \none\end{ytableau}$ & {\footnotesize$7553311$}
\\[-6pt] \\
$\begin{ytableau}6 & 5 & 4 & 3 & 2 & 1 \\
\none & 4 & 3 & 2 & 1 & \none \\
\none & \none & 2 & \none & \none & \none\end{ytableau}$ & {\footnotesize$7644221$}
\\[-6pt] \\
$\begin{ytableau}6 & 5 & 4 & 3 & 2 & 1 \\
\none & 4 & 3 & 2 & 1 & \none \\
\none & \none & 2 & 1 & \none & \none\end{ytableau}$ & {\footnotesize$7653321$}
\end{tabular}
\end{tabular}
\end{center}
\caption{Some $\Sp$-reduced tableaux with the weak compositions predicted in Conjecture~\ref{sp-demazure-conj2}.}\label{sp-fig2}
\end{figure}

\begin{figure}[h]
\begin{center}
\begin{tabular}{l|l|l|l}
\ytableausetup{smalltableaux}
\begin{tabular}[t]{ll}
$T$ & $\alpha^{\O}(T)$ \\ \hline \\[-6pt]
$\begin{ytableau}3 & 2 \\
\none & 1\end{ytableau}$ & {\footnotesize$22$}
\\[-6pt] \\
$\begin{ytableau}4 & 2 \\
\none & 1\end{ytableau}$ & {\footnotesize$22$}
\\[-6pt] \\
$\begin{ytableau}5 & 2 \\
\none & 1\end{ytableau}$ & {\footnotesize$22$}
\\[-6pt] \\
$\begin{ytableau}4 & 3 \\
\none & 1\end{ytableau}$ & {\footnotesize$202$}
\\[-6pt] \\
$\begin{ytableau}5 & 3' \\
\none & 1\end{ytableau}$ & {\footnotesize$202$}
\\[-6pt] \\
$\begin{ytableau}5 & 3 \\
\none & 1\end{ytableau}$ & {\footnotesize$202$}
\\[-6pt] \\
$\begin{ytableau}5 & 4 \\
\none & 1\end{ytableau}$ & {\footnotesize$2002$}
\\[-6pt] \\
$\begin{ytableau}4 & 3 \\
\none & 2\end{ytableau}$ & {\footnotesize$022$}
\\[-6pt] \\
$\begin{ytableau}5 & 3 \\
\none & 2\end{ytableau}$ & {\footnotesize$022$}
\\[-6pt] \\
$\begin{ytableau}5 & 4 \\
\none & 2\end{ytableau}$ & {\footnotesize$0202$}
\\[-6pt] \\
$\begin{ytableau}5 & 4 \\
\none & 3\end{ytableau}$ & {\footnotesize$0022$}
\\[-6pt] \\
$\begin{ytableau}3 & 2 & 1 \\
\none & 1 & \none\end{ytableau}$ & {\footnotesize$321$}
\\[-6pt] \\
$\begin{ytableau}4 & 3 & 1 \\
\none & 2 & \none\end{ytableau}$ & {\footnotesize$312$}
\\[-6pt] \\
$\begin{ytableau}5 & 3 & 1 \\
\none & 2 & \none\end{ytableau}$ & {\footnotesize$312$}
\\[-6pt] \\
$\begin{ytableau}5 & 4 & 1 \\
\none & 2 & \none\end{ytableau}$ & {\footnotesize$3102$}
\\[-6pt] \\
$\begin{ytableau}5 & 4 & 1' \\
\none & 3 & \none\end{ytableau}$ & {\footnotesize$3012$}
\\[-6pt] \\
$\begin{ytableau}5 & 4 & 1 \\
\none & 3 & \none\end{ytableau}$ & {\footnotesize$3012$}
\\[-6pt] \\
$\begin{ytableau}4 & 3 & 2 \\
\none & 1 & \none\end{ytableau}$ & {\footnotesize$2301$}
\\[-6pt] \\
$\begin{ytableau}5 & 4 & 2 \\
\none & 1 & \none\end{ytableau}$ & {\footnotesize$23001$}
\end{tabular}
&
\ytableausetup{smalltableaux}
\begin{tabular}[t]{ll}
$T$ & $\alpha^{\O}(T)$ \\ \hline \\[-6pt]
$\begin{ytableau}5 & 3 & 2 \\
\none & 1 & \none\end{ytableau}$ & {\footnotesize$23001$}
\\[-6pt] \\
$\begin{ytableau}5 & 3' & 2 \\
\none & 1 & \none\end{ytableau}$ & {\footnotesize$23001$}
\\[-6pt] \\
$\begin{ytableau}5 & 4 & 3 \\
\none & 1 & \none\end{ytableau}$ & {\footnotesize$20301$}
\\[-6pt] \\
$\begin{ytableau}4 & 3 & 2 \\
\none & 2 & \none\end{ytableau}$ & {\footnotesize$0321$}
\\[-6pt] \\
$\begin{ytableau}5 & 4 & 2 \\
\none & 3 & \none\end{ytableau}$ & {\footnotesize$0312$}
\\[-6pt] \\
$\begin{ytableau}5 & 4 & 3 \\
\none & 2 & \none\end{ytableau}$ & {\footnotesize$02301$}
\\[-6pt] \\
$\begin{ytableau}5 & 4 & 3 \\
\none & 3 & \none\end{ytableau}$ & {\footnotesize$00321$}
\\[-6pt] \\
$\begin{ytableau}4 & 3 & 2 \\
\none & 2 & 1\end{ytableau}$ & {\footnotesize$332$}
\\[-6pt] \\
$\begin{ytableau}5 & 3 & 2' \\
\none & 2 & 1\end{ytableau}$ & {\footnotesize$332$}
\\[-6pt] \\
$\begin{ytableau}5 & 3 & 2 \\
\none & 2 & 1\end{ytableau}$ & {\footnotesize$332$}
\\[-6pt] \\
$\begin{ytableau}5 & 4 & 2 \\
\none & 3 & 1'\end{ytableau}$ & {\footnotesize$3302$}
\\[-6pt] \\
$\begin{ytableau}5 & 4 & 2 \\
\none & 3 & 1\end{ytableau}$ & {\footnotesize$3302$}
\\[-6pt] \\
$\begin{ytableau}5 & 4 & 3 \\
\none & 2 & 1\end{ytableau}$ & {\footnotesize$323$}
\\[-6pt] \\
$\begin{ytableau}5 & 4 & 3 \\
\none & 3 & 1\end{ytableau}$ & {\footnotesize$3032$}
\\[-6pt] \\
$\begin{ytableau}5 & 4 & 3 \\
\none & 3 & 1'\end{ytableau}$ & {\footnotesize$3032$}
\\[-6pt] \\
$\begin{ytableau}5 & 4 & 3 \\
\none & 3 & 2\end{ytableau}$ & {\footnotesize$0332$}
\\[-6pt] \\
$\begin{ytableau}4 & 3 & 2 & 1 \\
\none & 1 & \none & \none\end{ytableau}$ & {\footnotesize$4211$}
\\[-6pt] \\
$\begin{ytableau}5 & 3' & 2 & 1 \\
\none & 1 & \none & \none\end{ytableau}$ & {\footnotesize$42101$}
\\[-6pt] \\
$\begin{ytableau}5 & 3 & 2 & 1 \\
\none & 1 & \none & \none\end{ytableau}$ & {\footnotesize$42101$}
\end{tabular}
&
\ytableausetup{smalltableaux}
\begin{tabular}[t]{ll}
$T$ & $\alpha^{\O}(T)$ \\ \hline \\[-6pt]
$\begin{ytableau}4 & 3 & 2 & 1 \\
\none & 2 & \none & \none\end{ytableau}$ & {\footnotesize$4121$}
\\[-6pt] \\
$\begin{ytableau}5 & 4 & 3 & 1 \\
\none & 2 & \none & \none\end{ytableau}$ & {\footnotesize$41201$}
\\[-6pt] \\
$\begin{ytableau}5 & 4 & 2 & 1 \\
\none & 3 & \none & \none\end{ytableau}$ & {\footnotesize$4112$}
\\[-6pt] \\
$\begin{ytableau}5 & 4 & 3 & 1 \\
\none & 3 & \none & \none\end{ytableau}$ & {\footnotesize$40121$}
\\[-6pt] \\
$\begin{ytableau}5 & 4 & 3 & 1' \\
\none & 3 & \none & \none\end{ytableau}$ & {\footnotesize$40121$}
\\[-6pt] \\
$\begin{ytableau}5 & 4 & 3 & 2 \\
\none & 1 & \none & \none\end{ytableau}$ & {\footnotesize$24011$}
\\[-6pt] \\
$\begin{ytableau}5 & 4 & 3 & 2 \\
\none & 2 & \none & \none\end{ytableau}$ & {\footnotesize$04211$}
\\[-6pt] \\
$\begin{ytableau}5 & 4 & 3 & 2 \\
\none & 3 & \none & \none\end{ytableau}$ & {\footnotesize$04121$}
\\[-6pt] \\
$\begin{ytableau}4 & 3 & 2 & 1 \\
\none & 2 & 1 & \none\end{ytableau}$ & {\footnotesize$4321$}
\\[-6pt] \\
$\begin{ytableau}5 & 4 & 2 & 1 \\
\none & 3 & 1 & \none\end{ytableau}$ & {\footnotesize$4312$}
\\[-6pt] \\
$\begin{ytableau}5 & 4 & 2 & 1 \\
\none & 3 & 1' & \none\end{ytableau}$ & {\footnotesize$4312$}
\\[-6pt] \\
$\begin{ytableau}5 & 4 & 3 & 1 \\
\none & 3 & 2 & \none\end{ytableau}$ & {\footnotesize$4132$}
\\[-6pt] \\
$\begin{ytableau}5 & 4 & 3 & 2 \\
\none & 2 & 1 & \none\end{ytableau}$ & {\footnotesize$34201$}
\\[-6pt] \\
$\begin{ytableau}5 & 4 & 3 & 2 \\
\none & 3 & 1' & \none\end{ytableau}$ & {\footnotesize$34021$}
\\[-6pt] \\
$\begin{ytableau}5 & 4 & 3 & 2 \\
\none & 3 & 1 & \none\end{ytableau}$ & {\footnotesize$34021$}
\\[-6pt] \\
$\begin{ytableau}5 & 4 & 3 & 2 \\
\none & 3 & 2 & \none\end{ytableau}$ & {\footnotesize$04321$}
\\[-6pt] \\
$\begin{ytableau}5 & 4 & 3 & 2 \\
\none & 3 & 2 & 1\end{ytableau}$ & {\footnotesize$4422$}
\\[-6pt] \\
$\begin{ytableau}5 & 4 & 3 & 2 & 1 \\
\none & 1 & \none & \none & \none\end{ytableau}$ & {\footnotesize$52111$}
\\[-6pt] \\
$\begin{ytableau}5 & 4 & 3 & 2 & 1 \\
\none & 2 & \none & \none & \none\end{ytableau}$ & {\footnotesize$51211$}
\end{tabular}
&
\ytableausetup{smalltableaux}
\begin{tabular}[t]{ll}
$T$ & $\alpha^{\O}(T)$ \\ \hline \\[-6pt]
$\begin{ytableau}5 & 4 & 3 & 2 & 1 \\
\none & 3 & \none & \none & \none\end{ytableau}$ & {\footnotesize$51121$}
\\[-6pt] \\
$\begin{ytableau}5 & 4 & 3 & 2 & 1 \\
\none & 2 & 1 & \none & \none\end{ytableau}$ & {\footnotesize$53211$}
\\[-6pt] \\
$\begin{ytableau}5 & 4 & 3 & 2 & 1 \\
\none & 3 & 1' & \none & \none\end{ytableau}$ & {\footnotesize$53121$}
\\[-6pt] \\
$\begin{ytableau}5 & 4 & 3 & 2 & 1 \\
\none & 3 & 1 & \none & \none\end{ytableau}$ & {\footnotesize$53121$}
\\[-6pt] \\
$\begin{ytableau}5 & 4 & 3 & 2 & 1 \\
\none & 3 & 2 & \none & \none\end{ytableau}$ & {\footnotesize$51321$}
\\[-6pt] \\
$\begin{ytableau}5 & 4 & 3 & 2 & 1 \\
\none & 3 & 2 & 1 & \none\end{ytableau}$ & {\footnotesize$54221$}
\\[-6pt] \\
$\begin{ytableau}5 & 4 & 3 \\
\none & 3 & 2 \\
\none & \none & 1\end{ytableau}$ & {\footnotesize$333$}
\\[-6pt] \\
$\begin{ytableau}5 & 4 & 3 & 1 \\
\none & 3 & 2 & \none \\
\none & \none & 1 & \none\end{ytableau}$ & {\footnotesize$4331$}
\\[-6pt] \\
$\begin{ytableau}5 & 4 & 3 & 2 \\
\none & 3 & 2 & \none \\
\none & \none & 1 & \none\end{ytableau}$ & {\footnotesize$34301$}
\\[-6pt] \\
$\begin{ytableau}5 & 4 & 3 & 2 \\
\none & 3 & 2 & 1 \\
\none & \none & 1 & \none\end{ytableau}$ & {\footnotesize$4432$}
\\[-6pt] \\
$\begin{ytableau}5 & 4 & 3 & 2 & 1 \\
\none & 3 & 2 & \none & \none \\
\none & \none & 1 & \none & \none\end{ytableau}$ & {\footnotesize$53311$}
\\[-6pt] \\
$\begin{ytableau}5 & 4 & 3 & 2 & 1 \\
\none & 3 & 2 & 1 & \none \\
\none & \none & 1 & \none & \none\end{ytableau}$ & {\footnotesize$54321$}
\end{tabular}
\end{tabular}
\end{center}
\caption{Some $\O$-reduced tableaux with the weak compositions predicted in Conjecture~\ref{o-demazure-conj2}.}\label{o-fig}
\end{figure}

%
%
%
%
%
%
%
%
%
%

 \printbibliography

\end{document}